\documentclass[11pt,letterpaper]{article}
\usepackage{amssymb,amsfonts,amsmath,amsthm,amsopn,amstext,amscd,latexsym,stmaryrd}
\usepackage[margin=0.8in]{geometry}
\usepackage[latin1]{inputenc}
\usepackage{amsmath, amssymb, amsbsy, mathdots, mathabx}
\usepackage{mathtools}
\usepackage{mathrsfs}
\usepackage{amsthm}
\usepackage{enumitem,makeidx}
\usepackage[all]{xy}
\usepackage{tikz}
\usepackage{changepage, comment}
\usepackage[mathcal]{euscript}
\usepackage{thmtools, thm-restate}
\usepackage{url}

\title{Derived Hecke action at $p$}
\author{Chandrashekhar Khare, Niccol\`o Ronchetti}

\date{\today}

\theoremstyle{theorem}
\newtheorem*{thm*}{Theorem}
\newtheorem{thm}{Theorem}[section]
\newtheorem{lem}[thm]{Lemma}
\newtheorem{prop}[thm]{Proposition}
\newtheorem{hypothesis}[thm]{Hypothesis}
\newtheorem{fact}[thm]{Fact}
\newtheorem{claim}[thm]{Claim}
\newtheorem{cor}[thm]{Corollary}
\newtheorem{conj}[thm]{Conjecture}
\newtheorem*{conj*}{Conjecture}
\theoremstyle{definition}
\newtheorem{defn}{Definition}[section]

\theoremstyle{remark}
\newtheorem{rem}{Remark}

\newcommand{\End}{\ensuremath{\mathrm{End}}}
\newcommand{\Hom}{\ensuremath{\mathrm{Hom}}}
\newcommand{\Ext}{\ensuremath{\mathrm{Ext}}}

\newcommand{\Mod}{\ensuremath{\mathrm{Mod}}}
\newcommand{\Lie}{\ensuremath{\mathrm{Lie} \,}}

\newcommand{\Ad}{\ensuremath{\mathrm{Ad}}}
\newcommand{\coad}{\ensuremath{\mathrm{coad}}} 
\newcommand{\Gal}{\ensuremath{\mathrm{Gal}}}
\newcommand{\Rep}{\ensuremath{\mathrm{Rep}}}

\newcommand{\cInd}[2]{\ensuremath{\iota^{#1}_{#2}}} 

\newcommand{\unr}{\ensuremath{\mathrm{unr}}} 
\newcommand{\Stab}{\ensuremath{\mathrm{Stab}}}

\newcommand{\Frob}{\ensuremath{\mathrm{Frob}}}

\newcommand{\coker}{\ensuremath{\mathrm{coker}}}

\newcommand{\rk}{\ensuremath{\mathrm{rk}}}
\newcommand{\projdim}{\ensuremath{\mathrm{projdim}}} 
\newcommand{\dpth}{\ensuremath{\mathrm{depth}}} 


\newcommand{\Lift}{\ensuremath{\mathrm{Lift}}}
\newcommand{\Def}{\ensuremath{\mathrm{Def}}}
\newcommand{\Sets}{\ensuremath{\mathrm{Sets}}}

\newcommand{\ab}{\ensuremath{\mathrm{ab}}}

\newcommand{\ordi}{\ensuremath{\mathrm{ord}}} 
\newcommand{\crys}{\ensuremath{\mathrm{crys}}} 

\newcommand{\ad}{\ensuremath{\mathrm{ad}}}
\newcommand{\im}{\ensuremath{\mathrm{Im}}}

\newcommand{\lra}{\ensuremath{\longrightarrow}}

\newcommand{\Q}{\ensuremath{\mathbb Q}}
\newcommand{\Qp}{\ensuremath{\mathbb Q_p}}
\newcommand{\Gm}{\ensuremath{\mathbb G_m}}

\newcommand{\Fp}{\ensuremath{\mathbb F_p}}

\newcommand{\R}{\ensuremath{\mathbb R}}
\newcommand{\Z}{\ensuremath{\mathbb Z}}
\newcommand{\Zp}{\ensuremath{\mathbb Z_p}}

\newcommand{\A}{\ensuremath{\mathbb A_{\Q}}}
\newcommand{\Af}{\ensuremath{\mathbb A^{\infty}_{\Q}}}

\newcommand{\Gl}[1]{\ensuremath{\mathrm{GL}_{#1} }}

\newcommand{\Sl}[1]{\ensuremath{\mathrm{SL}_{#1} }}

\newcommand{\res}[2]{\ensuremath{\mathrm{res}^{#1}_{#2} }} 
\newcommand{\cores}[2]{\ensuremath{\mathrm{cores}^{#1}_{#2} }} 
\newcommand{\OO}{\ensuremath{\mathcal O}}

\setcounter{tocdepth}{1}

\begin{document}

\title{Derived Hecke action at $p$ and the ordinary $p$-adic cohomology of arithmetic manifolds}
\maketitle

\begin{abstract}
 We study the derived Hecke action at $p$ on the ordinary $p$-adic cohomology of arithmetic subgroups of reductive groups $\mathrm G(\Q)$, i.e., we study the derived version of Hida's theory for ordinary Hecke algebras. This is the analog at $\ell=p$ of derived Hecke actions studied by Venkatesh in the tame case. We show that properties of the derived Hecke action at $p$ are related to deep conjectures in Galois cohomology which are higher analogs of the classical Leopoldt conjecture.
\end{abstract}

\tableofcontents

\section{Introduction}

Let $F$ be a number field and $p$ a rational prime.   The classical conjecture of Leopoldt  asserts that the kernel of the map $\widehat{\mathcal O_{F}^*} \rightarrow \overline{\mathcal O_{F}^*}$, from the pro-$p$ completion of  the units of $\mathcal O_F^*$ to its $p$-adic completion, is trivial. Denote by $\delta_{F,p}$ the $\Z_p$-rank of the kernel which is conjectured to be 0.  The classical Leopoldt conjecture has several equivalent formulations. It can also be cast as the assertion that the map $H^1_f(G_{F,S},\Q_p(1)) \rightarrow H^1(G_p,\Q_p(1))$ is injective; the $\Q_p$-dimension of the kernel of the map is  the Leopoldt defect $\delta_{F,p}$.  Note also that  ${\rm dim}_{\Q_p} H^1((G_{F,S}, \Q_p)=1+r_2+\delta_{F,p}$  (where $[F:\Q]=r_1+2r_2$ and $r_2$ is the number of  complex places of $F$). One also knows that the Selmer group $H^1_f(G_{F,S}, \Q_p)=0$ which is a consequence of  (the $p$-part of  the) class group of $F$  being finite.

 In this paper we want to consider higher dimensional, non-abelian analogs of these conjectures and relate them to derived Hecke actions (cf. \cite{akshay}). We recall the folklore conjecture that if $V$ is a $p$-adic representation arising from a  motive $M$ over $F$, $S$ a finite set of places  of $F$ containing the infinite and $p$-adic places of $F$ and places at which $M$ has bad reduction,   such that  $M$ is generic at all places $v \in S$, i.e., $H^2(G_v,V)=0$ for  places $v \in S$ of $F$, then $H^2(G_{F,S}, V)=0$.   (Furthermore the genericity is expected to hold as long as $V$ is not 1-dimensional, more precisely does not arise from $\Q_p(1)$ up to twist by a finite order character.)   
  
  The rest of the paper is focused on cases when $V$  is the $p$-adic representation $\ad(\rho_\pi)$, the adjoint of the representation $\rho_\pi$ arising  a cohomological cuspidal  automorphic representation $\pi$ of $\mathrm G(\mathbf A_\Q)$ with $\mathrm G$ a split, connected, reductive $\Q$-group of $\Q$-rank $r$ and we consider  the conjectured injection  $H^1_f(\Z[{ 1 \over S}],\ad(\rho_\pi)(1))  \hookrightarrow H^1_f(G_p,\ad(\rho_\pi)(1))$.   (More precisely, $\rho_\pi=\rho_{\pi,\iota}$ is the Galois representation associated to $\pi$ and  an embedding $\iota:E_\pi  \hookrightarrow \overline \Q_p$ of the
  Hecke field $E_\pi$ of $\pi$: for the introduction we assume $\iota(E_\pi) $ is contained in $\Q_p$.)     The Poitou-Tate sequence shows that such an injection  implies that $H^2(G_{F,S}, \ad(\rho_{\pi}))=0$ if we assume further that  $\pi_v$ is generic at places $v \in S$.  We relate this deep conjecture in Galois cohomology  to considering cohomology of arithmetic groups and derived Hecke actions which  have a topological definition. The work of Hida in \cite{Hida} is an important precursor to our work as he drew a line  between the Leopoldt  conjecture
  (and its higher dimension analogs)   and structure of ordinary cohomology of arithmetic groups. Our paper may be viewed as an elaboration  of  the study of ordinary $p$-adic Hecke algebras in \cite{Hida} (see also \cite{KT})    using   the beautiful idea introduced in Venkatesh's work \cite{akshay} of studying (degree-shifting)  derived Hecke actions.  We  now turn to describing more concretely the work we do in this paper.
  
  \subsection*{Main results}
  
It is well-known that in good situations, the tempered cohomology of an arithmetic manifold $Y$  arising from congruence subgroups $\Gamma$ of  $\mathrm G(\Z)$  is concentrated around the middle degree and has dimension resembling that of an exterior algebra: \[ \dim_{\Q} H^{q_0+i} \left( Y, \Q \right)_{\pi }= \binom{l_0}{i}. \]
A thorough discussion of this phenomenon can be found for instance in \cite{akshay2} - here  $l_0, q_0$ are quantities associated to the specific arithmetic manifold $Y$, defined in section \ref{notationsection}.

In a recent series of papers (\cite{akshay,GV,PV,HV}) the above phenomenon of  redundancy in the cohomology  of $Y$ is explained  by introducing a \emph{degree-increasing} action of certain \emph{derived Hecke operators} on the Hecke-eigenspace $H^* \left( Y, \Q_p \right)_{\pi}$,  which conjecturally should be the extra symmetries that produce the redundancy.  In loc. cit. it is  shown (under certain assumptions) that this action generates the entire eigenspace over the lowest nonzero degree, as well as that this action has a Galois interpretation, using simultaneous actions of derived Hecke operators at many good primes $l \neq p$. 
 
 We consider in this  paper a derived Hecke action at $l=p$ under the assumption 
 that $\pi$ is {\it ordinary at all the places above $p$} which allows us to use Hida theory to realize the derived Hecke action at $p$ as arising from the covering group of the Hida tower. Thus we construct the algebra of  \emph{derived diamond operators} at $p$,  $\bigwedge^* \Hom \left( \mathrm T(\Zp)_p, \Qp \right)$,  where $\mathrm T$ is a maximal split torus of $\mathrm G$, and study its  action on the ordinary tower and on $H^* \left( Y, \Qp \right)_{\pi}$. The ordinarity assumption is helpful to us as  Hida theory  gives that the ordinary part of the cohomology of arithmetic manifolds is controlled by the Iwasawa algebra of the diamond operators $\Z_p[[\mathrm T(\Z_p)]]$ (see \S \ref{sectionordinarycohomology} for precise statements).    We use the notation $Y_{1,1}$ below  for the arithmetic manifold attached to a congruence subgroup $\Gamma$ that at $p$ is a pro-$p$ Iwahori subgroup.

\begin{thm*}[Generation of cohomology over the bottom degree, Theorem \ref{cohomologygeneratedbybottomdegree}] \label{generationcohomologythmintro} Suppose that the complex $F_{\pi}^{\bullet} \left[ \frac{1}{p} \right]$ interpolating the $\pi$-primary eigenspace in ordinary cohomology is a quotient of $\Zp \left[ \left[ \mathrm T(\Zp)_p \right] \right] \left[ \frac{1}{p} \right]$ by a system of parameters of length $l_0$.

Then the action of the graded algebra $\bigwedge^* \Hom \left( \mathrm T(\Zp)_p, \Qp \right)$ of  derived diamond operators generates $H^* \left( Y_{1,1}, \Qp \right)_{\pi} $ over the bottom degree $H^{q_0} \left( Y_{1,1}, \Qp \right)_{\pi} $.
\end{thm*}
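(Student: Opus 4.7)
The plan is to combine Hida's theory with a Koszul-complex computation. Set $\Lambda := \Zp[[\mathrm{T}(\Zp)_p]][\tfrac 1 p]$ and write $\mathfrak{a} \subset \Lambda$ for the augmentation ideal, so $\Lambda/\mathfrak{a} \cong \Qp$. Section \ref{sectionordinarycohomology} identifies the $\pi$-primary cohomology at level $Y_{1,1}$ with the derived fiber
\[ H^*(Y_{1,1}, \Qp)_\pi \;\cong\; H^*\bigl( F_\pi^\bullet[\tfrac 1 p] \otimes^L_\Lambda \Qp \bigr), \]
so the problem reduces to computing this derived fiber equivariantly for the derived diamond action.

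By hypothesis, $F_\pi^\bullet[\tfrac 1 p]$ is equivalent in the derived category of $\Lambda$-modules to $\Lambda/(f_1, \ldots, f_{l_0})$ placed in a single cohomological degree. Since $\Lambda$ is regular local and $(f_1, \ldots, f_{l_0})$ is a system of parameters, it is a regular sequence, and the Koszul complex $K^\bullet(f_1, \ldots, f_{l_0})$ is a free $\Lambda$-resolution of this quotient. Because each $f_i$ vanishes at the $\pi$-point it lies in $\mathfrak{a}$; tensoring the Koszul resolution with $\Qp$ therefore kills all differentials, and the cohomology collapses to $\bigwedge^* \Qp^{l_0}$, supported in $l_0+1$ consecutive degrees starting at $q_0$ with ranks $\binom{l_0}{i}$ matching the exterior-algebra prediction.

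It remains to identify the action of $\bigwedge^* \Hom(\mathrm{T}(\Zp)_p, \Qp)$ on this derived fiber with wedge multiplication on $\bigwedge^* \Qp^{l_0}$. Each derived diamond operator is a class in $\Ext^*_\Lambda(\Qp, \Qp) \cong \bigwedge^* \Hom(\mathrm{T}(\Zp)_p, \Qp)$, the isomorphism being furnished by the Koszul resolution of $\Qp$ over $\Lambda$, and its action on the derived fiber is by Yoneda/cup product. Under the identification of the previous paragraph, this cup product becomes wedge multiplication, mediated by the quotient $\Hom(\mathrm{T}(\Zp)_p, \Qp) \twoheadrightarrow \Qp^{l_0}$ determined by the differentials $df_1, \ldots, df_{l_0} \in \mathfrak{a}/\mathfrak{a}^2$. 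Since $\bigwedge^* \Qp^{l_0}$ is generated over its one-dimensional bottom piece by wedge multiplication, the generation claim follows. The principal obstacle is precisely this compatibility: reconciling the derived diamond action arising from the Hida-tower covering group with the Koszul/Ext picture, which requires carefully tracing through the construction of the derived diamond operators made earlier in the paper.
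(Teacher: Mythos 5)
Your proposal is correct and follows essentially the same route as the paper: the paper's proof (Corollary \ref{cohomologygeneratedbybottomdegree} via Theorem \ref{complexes} and Lemma \ref{naf}) likewise takes the presentation $F^{\bullet}_{\mathfrak m} \sim \Lambda_E/(f_1,\ldots,f_{l_0})$, computes $\Ext^*_{\Lambda_E}(\Lambda_E/I, E)$ and $\Ext^*_{\Lambda_E}(E,E)$ by Koszul resolutions, and identifies the derived diamond (Yoneda) action with the map $\bigwedge^i\Hom(\mathfrak a/\mathfrak a^2,E)\to\bigwedge^i\Hom(I/\mathfrak a I,E)$, whose surjectivity is exactly the system-of-parameters condition. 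The only cosmetic difference is your initial use of the derived fiber $\otimes^L_{\Lambda}\Qp$ (which computes the dual, homological side) before passing to the $\Ext$/Yoneda picture that the paper works with throughout.
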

We prove this theorem in section \ref{generationofcohomologysection}.
Notice that this result makes no mention of Galois representations and in fact we do not need them in the picture yet, to prove theorem \ref{cohomologygeneratedbybottomdegree}.

Let now $\rho_{\pi}: \Gal_{\Q} \lra \check {\mathrm G} (\Qp)$ be the Galois representation conjecturally  associated to $\pi$  satisfying the usual compatibility relations (see conjecture \ref{Galoisrepexists1} for a detailed list of its properties).

We show in  Lemma \ref{keylemma} that there exists a natural morphism from the space of derived diamond operators to the dual of the  dual Selmer group for $\rho_{\pi}$: \[ \Phi:  \Hom \left( \mathrm T(\Zp)_p, \Qp \right) \lra H^1_f \left( \Z \left[ \frac{1}{S} \right], \left( \Lie \check {\mathrm G} \right)^* (1) \right)^{\vee}, \] and we conjecture the following relation between our derived diamond action and the map above.
\begin{conj*}[Dual Selmer action, conjecture \ref{derivedactionthruSelmerconj}] \label{factorsthrudualselmerintro} 

The derived diamond action of $\bigwedge^* \Hom \left( \mathrm T(\Zp)_p, \Qp \right)$ on $H^* \left( Y_{1,1}, \Qp \right)_{\pi} $ factors through  the map induced by $\Phi$:  \[ \bigwedge^* \Phi: \bigwedge^* \Hom \left( \mathrm T(\Zp)_p, \Qp \right) \lra \bigwedge^* H^1_f \left( \Z \left[ \frac{1}{S} \right], \left( \Lie \check {\mathrm G} \right)^* (1) \right)^{\vee}, \]  to yield an action of  the exterior algebra on the dual of the dual Selmer group above, which acts faithfully on $H^* \left( Y_{1,1}, \Qp \right)_{\pi} $.
\end{conj*}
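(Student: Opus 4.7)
The strategy is to match both sides of the asserted factorisation via Hida's ordinary Galois deformation theory. On the automorphic side, the hypothesis of Theorem \ref{cohomologygeneratedbybottomdegree} presents $F_\pi^\bullet[\tfrac{1}{p}]$ as a Koszul-type complex on $l_0$ generators of $\Z_p[[\mathrm T(\Zp)_p]][\tfrac{1}{p}]$, so the action of $\bigwedge^* \Hom(\mathrm T(\Zp)_p,\Qp)$ on cohomology should be nothing but differentiation of the Iwasawa parameters at the identity, i.e., the canonical action of the cotangent space of $\Z_p[[\mathrm T(\Zp)_p]][\tfrac{1}{p}]$ (and of any of its complete-intersection quotients) on the associated Koszul cohomology. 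On the Galois side, the map $\Phi$ of Lemma \ref{keylemma} is built, up to duality, from Wiles's identification of the tangent space of an appropriate ordinary Galois deformation ring for $\rho_\pi$ with $H^1_f(\Z[\tfrac{1}{S}], (\Lie \check{\mathrm G})^*(1))$. The plan is therefore to transport the derived diamond action to the Galois side via an $R=T$ compatibility and then exploit the Koszul formalism of the first step.

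Concretely, I would first localise Hida's ordinary Hecke algebra at the maximal ideal of $\pi$ to obtain $\mathbb T^{\mathrm{ord}}$, identified with the complete-intersection quotient of $\Z_p[[\mathrm T(\Zp)_p]][\tfrac{1}{p}]$ by the regular sequence assumed in Theorem \ref{cohomologygeneratedbybottomdegree}; this step reinterprets each derived diamond operator as the natural Koszul action of an element of $\mathfrak m_{\mathbb T^{\mathrm{ord}}}/\mathfrak m_{\mathbb T^{\mathrm{ord}}}^2$ on the cohomology of $F_\pi^\bullet[\tfrac{1}{p}]$. Next I would invoke a big ordinary modularity lifting theorem to produce a surjection $R^{\mathrm{ord}}_{\rho_\pi} \twoheadrightarrow \mathbb T^{\mathrm{ord}}$ from a framed ordinary deformation ring; the induced map on cotangent spaces, composed with the Wiles isomorphism, should recover $\Phi$ on the nose. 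Establishing this compatibility character-by-character and then extending multiplicatively yields the desired factorisation of the derived diamond action through $\bigwedge^* \Phi$.

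For the faithfulness statement, the Koszul structure makes $H^*(Y_{1,1},\Qp)_\pi$ free of rank one over the exterior algebra on the cotangent space of $\mathbb T^{\mathrm{ord}}$, so faithfulness of the induced action of $\bigwedge^* H^1_f(\Z[\tfrac{1}{S}], (\Lie \check{\mathrm G})^*(1))^\vee$ reduces to surjectivity of $\Phi$. The main obstacle, and the reason the assertion must remain conjectural, is precisely this surjectivity: dually, it amounts to the injectivity of $H^1_f(\Z[\tfrac{1}{S}], \ad(\rho_\pi)(1)) \hookrightarrow H^1(G_p, \ad(\rho_\pi)(1))$, equivalently via Poitou--Tate to the higher-Leopoldt-type vanishing $H^2(G_{F,S}, \ad(\rho_\pi))=0$ discussed in the introduction, which is out of reach. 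A secondary difficulty is establishing the required $R=T$ compatibility, or at least its cotangent-level shadow, for arbitrary split reductive $\mathrm G$; outside $\mathrm{GL}_n$ and a handful of small groups this typically requires additional hypotheses or patched/derived variants of the standard modularity lifting machine.
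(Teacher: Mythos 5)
Your proposal is correct and follows essentially the same route as the paper's conditional proof (Theorems \ref{dimensionality} and \ref{main}): assume an $R=\mathbb T$ identification (Hypothesis \ref{R=T}) together with the dimension conjecture so that the ordinary deformation/Hecke ring is $\Lambda_E$ modulo a regular sequence of length $l_0$, identify the resulting cotangent map $I/\mathfrak m_{\Lambda_E} I \to \mathfrak m_{\Lambda_E}/\mathfrak m_{\Lambda_E}^2$ with the dual of $\Phi$ via the Poitou--Tate sequence of Proposition \ref{PT} (following Hansen--Thorne), and conclude by the Koszul/exterior-algebra formalism of Lemma \ref{naf}; faithfulness then reduces, exactly as you say, to surjectivity of $\Phi$, i.e.\ to the smoothness of $R_\rho^{\ordi}$ (the higher Leopoldt statement).
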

We can prove this conjecture assuming that the ordinary deformation ring of $\rho$ is smooth (conjecture \ref{smoothnessconj}) and the crystalline deformation ring is trivial.

\begin{thm*}[Theorem \ref{dimensionality} and Theorem \ref{main}]  $ $

\begin{itemize}

\item  Suppose that the ring $R_{\rho}^{\ordi}$ classifying deformations of $\rho$ which are ordinary at $p$ and unramified outside $S$ is of the expected dimension $r-l_0$, and that the ring $R_{\rho}^{\crys}$ classifying deformations crystalline at $p$ and unramified outside $S$ is isomorphic to $\Qp$. Then the derived  diamond action of $\bigwedge^i \Hom \left( \mathrm T(\Zp)_p, \Qp \right)$ on $H^* ( Y_{1,1}, \Qp )_{\pi}$ factors through the map \[\bigwedge^* \Phi:  \bigwedge^* \Hom \left( \mathrm T(\Zp)_p, \Qp \right) \lra \bigwedge^* H^1_f \left( \Z \left[ \frac{1}{S} \right], \left( \Lie \check {\mathrm G} \right)^* (1) \right)^{\vee}. \]

\item Suppose further that the ring $R_{\rho}^{\ordi}$ classifying deformations of $\rho$ which are ordinary at $p$ and unramified outside $S$ is smooth, and that the ring $R_{\rho}^{\crys}$ classifying deformations crystalline at $p$ and unramified outside $S$ is isomorphic to $\Qp$. Then $H^* \left( Y_{1,1}, \Qp \right)_{\pi} $ as a module over the   graded $\Q_p$-algebra  $\bigwedge^* \Hom \left( \mathrm T(\Zp)_p, \Qp \right)$  is generated by the bottom degree $H^{q_0}\left( Y_{1,1}, \Qp \right)_{\pi}$.

\end{itemize}

Thus the conjecture is true  if  $R_{\rho}^{\ordi}$ is smooth of dimension $r-l_0$.
\end{thm*}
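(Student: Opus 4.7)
The plan is to deduce both assertions from Theorem \ref{cohomologygeneratedbybottomdegree} and Lemma \ref{keylemma} by means of an $R=T$-style identification of $F_\pi^\bullet[\tfrac{1}{p}]$ with the $\pi$-component of the ordinary deformation ring $R_\rho^\ordi \otimes_{\Zp} \Qp$. First I would make this identification precise: using Hida's control theorem for the ordinary part of cohomology (recalled in \S\ref{sectionordinarycohomology}), the complex $F_\pi^\bullet[\tfrac{1}{p}]$ localizes the universal ordinary Hecke algebra at $\pi$, and this carries a natural surjection from $R_\rho^\ordi \otimes \Qp$ induced by the universal ordinary deformation and $\rho_\pi$. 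The hypothesis $R_\rho^\crys \cong \Qp$ ensures there is a unique crystalline point in the ordinary locus matching $\pi$, so the surjection is an isomorphism on the component of interest.

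With this identification in place, the first bullet (factorization) is handled by a tangent-space comparison. The Iwasawa algebra $\Lambda = \Zp[[\mathrm T(\Zp)_p]]$ is, after inverting $p$, formally smooth of dimension $r$ at the augmentation, with cotangent space $\Hom(\mathrm T(\Zp)_p, \Qp)$. The cotangent space of $R_\rho^\ordi \otimes \Qp$ at $\rho$ is, by standard Galois deformation theory, naturally identified with the dual Selmer group $H^1_f(\Z[\tfrac{1}{S}], (\Lie \check {\mathrm G})^*(1))^\vee$; the triviality of $R_\rho^\crys$ kills the crystalline part of the local conditions and leaves precisely the ordinary dual Selmer. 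The key compatibility to verify is that the tangent map of the projection $\Lambda[\tfrac{1}{p}] \twoheadrightarrow R_\rho^\ordi \otimes \Qp$ agrees, under this identification, with the map $\Phi$ of Lemma \ref{keylemma}; this requires matching the weight/diamond-operator description of $\Phi$ with the Galois-cohomological avatar produced by the ordinary deformation functor. Because the derived diamond action on $H^*(Y_{1,1}, \Qp)_\pi$ factors through the action of the tangent quotient of $\Lambda[\tfrac{1}{p}]$ on $F_\pi^\bullet[\tfrac{1}{p}]$, this compatibility immediately yields factorization through $\bigwedge^* \Phi$.

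For the second bullet, the strengthened hypothesis that $R_\rho^\ordi$ is smooth of dimension $r-l_0$, combined with the identification above, shows that $F_\pi^\bullet[\tfrac{1}{p}]$ is the quotient of $\Lambda[\tfrac{1}{p}]$ by an ideal generated by a system of parameters of length exactly $l_0$ (any surjection from a regular local ring of dimension $r$ onto a regular local ring of dimension $r-l_0$ has kernel cut out by a regular sequence of length $l_0$). This is precisely the hypothesis required by Theorem \ref{cohomologygeneratedbybottomdegree}, whence generation of $H^*(Y_{1,1}, \Qp)_\pi$ over $H^{q_0}(Y_{1,1}, \Qp)_\pi$ by the derived diamond algebra.

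The hard part is step three: the compatibility between the topologically/Hecke-theoretically defined map $\Phi$ of Lemma \ref{keylemma} and the Galois-cohomological cotangent map of the ordinary deformation ring. Identifying these two a priori different constructions of the same cotangent space requires a careful tracking of how diamond operators interact with the weight filtration on ordinary deformations, and is really where the topological definition of derived Hecke operators at $p$ must be reconciled with Galois deformation theory (this is where the expected shape of $\rho_\pi$ from Conjecture \ref{Galoisrepexists1} enters). Once this compatibility is established, both bullets follow cleanly from the structural result Theorem \ref{cohomologygeneratedbybottomdegree} combined with the regularity input on $R_\rho^\ordi$.
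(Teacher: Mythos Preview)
Your strategy for the second bullet is correct and is essentially what the paper does: under $R=T$ and smoothness of $R_\rho^{\ordi}$, the kernel of $\Lambda_E \twoheadrightarrow R_\rho^{\ordi}$ is generated by part of a regular system of parameters of length $l_0$, and Theorem~\ref{cohomologygeneratedbybottomdegree} applies directly.

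For the first bullet, however, there is a genuine gap in your tangent-space argument. You write that ``the cotangent space of $R_\rho^{\ordi}\otimes\Qp$ at $\rho$ is, by standard Galois deformation theory, naturally identified with the dual Selmer group $H^1_f(\Z[\tfrac{1}{S}],(\Lie\check{\mathrm G})^*(1))^\vee$.'' This is not correct: standard deformation theory identifies the \emph{tangent} space of $R_\rho^{\ordi}$ with the ordinary Selmer group $H^1_{\ordi}(\Z[\tfrac{1}{S}],\Lie\check{\mathrm G})$, so the cotangent space $\mathfrak m_R/\mathfrak m_R^2$ is dual to $H^1_{\ordi}$, which has dimension $r-l_0$ (under the dimension hypothesis) rather than $l_0$. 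Likewise, the tangent map of $\psi:\Lambda_E\to R_\rho^{\ordi}$ is $\psi^\vee: H^1_{\ordi}\to \Hom(\mathrm T(\Zp)_p,E)$, and this is \emph{not} the map $\Phi$ of Lemma~\ref{keylemma}; $\Phi$ goes from $\Hom(\mathrm T(\Zp)_p,E)$ to the dual of the dual Selmer group, which is a different space of a different dimension.

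What is actually needed, and what the paper supplies, is the identification of the \emph{relations module} $I/\mathfrak m_{\Lambda_E}I$ (where $R_\rho^{\ordi}=\Lambda_E/I$) with the dual Selmer group $H^1_f(\Z[\tfrac{1}{S}],(\Lie\check{\mathrm G})^*(1))$, in such a way that the natural map $I/\mathfrak m_{\Lambda_E}I \to \mathfrak m_{\Lambda_E}/\mathfrak m_{\Lambda_E}^2$ is dual to $\Phi$. This is not ``standard deformation theory'' but requires the Poitou--Tate exact sequence (Proposition~\ref{PT}), following Hansen--Thorne. Once this identification is made, Lemma~\ref{naf} shows that the derived diamond action, which is the map $\bigwedge^*\Hom(\mathfrak m_{\Lambda_E}/\mathfrak m_{\Lambda_E}^2,E)\to\bigwedge^*\Hom(I/\mathfrak m_{\Lambda_E}I,E)$, factors through $\bigwedge^*\Phi$ as claimed. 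Your sketch skips over this Poitou--Tate step entirely, and without it the factorization through $\Phi$ cannot be established.
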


We further remark that it is possible by using patching methods to prove that  $R_{\rho}^{\crys}$ classifying deformations crystalline at $p$ and unramified outside $S$ is isomorphic to $\Qp$ under some assumptions (see \cite{KT}, \cite{CGH}).

 Theorems \ref{dimensionality} and \ref{main} in fact show that the conjectures in Galois cohomology of Appendix B, higher dimensional generalizations of the Leopoldt conjecture,   are equivalent to properties of derived Hecke actions at $p$ on the cohomology of symmetric manifolds conjectured above.  This equivalence has a certain charm as the derived Hecke action is of a topological origin (as we explain in  greater detail in \S \ref{sectionderivedactions}).   In the polarized case which corresponds to the defect $l_0=0$, $H^2(G_{F,S}, \ad \rho_\pi)=0$  is proved in \cite{Allen}, and in this case $H^1_f(\Z[{ 1 \over S}],\ad(\rho_\pi))=H^1_f(\Z[{ 1 \over S}],\ad(\rho_\pi)(1))=0$. The case when $l_0>0$ is to our knowledge completely open.

We end the introduction with some remarks about our methods and possible refinements and generalizations of the work done in this paper.

\begin{itemize}

\item  Our use of Hida's control theorems for ordinary cohomology (cf. Proposition  \ref{complexordinarytower}) replaces the use of Taylor-Wiles primes in Venkatesh's work. This is not surprising as it was Hida's work which inspired Wiles in his choice of what are known as Taylor-Wiles primes and the patching arguments that use augmentation of levels at Taylor-Wiles places.

\item  In \cite{HT}  a degree raising action of the dual Selmer group we consider is constructed on $H^*(\Gamma,\Q_p)_\pi$ under smoothness assumptions on the deformation ring $R_\pi$. We discuss in Appendix \ref{HaTho} the relation between the two papers which are dual in a certain sense.

\item In forthcoming work with P. Allen \cite{AKR}  we will explore how using Taylor-Wiles patching  (done in the context of Hida theory in \cite{KT}), following a strategy used in Lemm 8.25 of \cite{akshay},  one can relax most of the assumptions we need here (except genericity of $\pi$ at $p$), to prove that the derived diamond action at $p$  on the cohomology of $\Gamma$ factors through as in Conjecture \ref{derivedactionthruSelmerconj} to a  (dual of ) dual Selmer action without  invoking  the dimension conjecture Conjecture \ref{dimensionconj}. This improves
Theorem \ref{dimensionality} but comes at the expense of the proof depending on patching and hence becoming more elaborate.

\item The  construction of the derived Hecke action at $p$ on cohomology of arithmetic groups   in this paper   does not depend on any global conjectures, which is a key reason that we will be able to show using patching  in subsequent work \cite{AKR}  that this action factors through dual of a dual Selmer group. This is in contrast to Theorem 4.9 (2) of  \cite{HT} where to construct a degree lowering action of a dual torus  the authors needs to assume a  dimension conjecture analogous to
Conjecture \ref{dimensionconj}.  

\item  One would  expect  heuristically  that the  derived Hecke action on $H^*(\Gamma,\Q_p)_\pi$ factors through the map $\Phi$ to global Galois cohomology as $H^*(\Gamma,\Q_p)_\pi$ is a ``global'' object. This is what we prove in  forthcoming  work  via patching, although we cannot rule out the possibility that there is a more direct proof of such a result that circumvents the use of patching.

\item We would expect that one can extend the results of this paper to the case where we no longer assume $\pi$ is ordinary at places above $p$, and include the more general cases of $\pi$ that have a non-critical stabilization as in \cite{HT}.

\end{itemize}

\subsection{Notation} \label{notationsection}
We set up most of the notation that will be used in the rest of the paper.

We denote by $\A$ the adeles of $\Q$ and for every finite set of places $P$, we let $\A^P$ be the subgroup of elements which are $1$ at every place in $P$.

Let $\mathrm G$ be a split, connected, reductive $\Q$-group.  (The reader might prefer to  focus on the case $G=GL_n$ as all features of our work in this paper is captured by that case.) As explained in \cite{Conrad1,Conrad2}, there exists a finite set of primes $T$ such that $\mathrm G$ admits a smooth, split, connected, reductive model over $\Z \left[ \frac{1}{T} \right]$, still denoted by $\mathrm G$.
There exists a finite set $\widehat T \supset T$ such that $\mathrm G(\Z_q)$ is a hyperspecial subgroup of $\mathrm G(\Q_q)$ for every $q \not\in \widehat T$: we let $\mathrm G(\A)$ be the restricted tensor product of the $\left\{ \mathrm G(\Q_l) \right\}_l$ with respect to the $\left\{ \mathrm G(\Z_q) \right\}_{q \not\in \widehat T}$. Similar notations hold for $\mathrm G(\A^P)$ where $P$ is any finite set of places.

Let $p \not\in \widehat T$ be an odd prime such that $p > \left| \mu \left( \mathrm Z_{\mathrm G} \right) \right|$, the torsion subgroup of the center $\mathrm Z_{\mathrm G}$ of $\mathrm G$, and $p > |W|$, where $W$ is the Weyl group of $\mathrm G$.

We fix once and for all an embedding $\iota: \overline \Q \hookrightarrow \overline \Qp$. We also denote by $E$ our coefficient field: a large enough, finite extension of $\Qp$.

Inside the smooth, split, connected reductive $\Zp$-group $\mathrm G$, we fix a maximal split $\Zp$-torus $\mathrm T$, a Borel subgroup $\mathrm B$ containing $\mathrm T$ and we let $\mathrm U = \mathcal R_u(\mathrm B)$ be the unipotent radical of $\mathrm B$. Let $ r = \rk \mathrm T$ be the rank of the torus $\mathrm T$.

We denote by $X^*(\mathrm T)$ and by $X_*(\mathrm T)$ respectively the character and cocharacter lattices of $\mathrm T$. They are in perfect duality, with the pairing $\langle \, , \, \rangle$ being Weyl-equivariant. We let $\Phi \left( \mathrm G, \mathrm T \right) \subset X^* (\mathrm T)$ be the set of roots.

The choice of the Borel subgroup $\mathrm B$ fixes a choice of positive roots $\Phi^+ = \Phi^+ \left( \mathrm G, \mathrm T \right)$ and thus a basis of simple roots $\Delta \subset \Phi^+$. Consequently, we obtain a choice of negative roots $\Phi^-$ such that $\Phi = \Phi^+ \coprod \Phi^-$ - the corresponding opposite unipotent subgroup is denoted by $\mathrm U^-$.

For each root $\alpha \in \Phi(\mathrm G, \mathrm T)$, we denote by $\mathrm U_{\alpha}$ the corresponding root subgroup, a split unipotent $\Zp$-group scheme having dimension $d_{\alpha}$.

We fix a pinning, which is to say the data of isomorphisms of $\Zp$-group schemes \[ \begin{gathered} \phi: \mathbf G_m^r \stackrel{\sim}{\lra}  \mathrm T  \\ \phi_{\alpha}: \mathbf G_a^{d_{\alpha}} \stackrel{\sim}{\lra}  \mathrm U_{\alpha}  \quad \forall \alpha \in \Phi \left( \mathrm G, \mathrm T \right) \end{gathered} \] satisfying in particular the following property \begin{equation} \label{pinningproperty} t \phi_{\alpha} (u_{\alpha}) t^{-1} = \phi_{\alpha} \left( \alpha(t) u_{\alpha} \right) \end{equation} for every $\Zp$-algebra $R$, and every $t \in \mathrm T(R)$, $u_{\alpha} \in \mathbf G_a^{d_{\alpha}} (R)$.

We will abuse notation by not mentioning the pinning from now on - so for example $\mathrm U_{\alpha}(R)$ denotes the subgroup $\phi_{\alpha} \left( \mathbf G_a^{d_{\alpha}} (R) \right)$ for any $\Zp$-algebra $R$, while $\mathrm T \left( 1 + p^k \Zp \right)$ is the subgroup of $\mathrm T(\Zp) \stackrel{\phi^{-1}}{\lra} \left( \Zp^* \right)^r$ corresponding to $\left( 1 + p^k \Zp \right)^r$.
We will also denote by $\mathrm T(\Zp)_p = \mathrm T(1 + p \Zp)$ the pro-$p$-radical of $\mathrm T(\Zp)$.

Inside the cocharacter lattice of $\mathrm T$, we denote the dominant cone by \[ X_*^+ \left( \mathrm T \right) = \left\{ \lambda \in X_*(\mathrm T) \, | \, \langle \lambda, \alpha \rangle > 0 \, \forall \alpha \in \Phi^+ \right\}. \]
We choose $\lambda_1, \ldots, \lambda_r \in X_* (\mathrm T)$ generating a finite-index subcone of $X_*^+ (\mathrm T)$. If $\mathrm G$ is simply connected we can ensure that $\lambda_1, \ldots, \lambda_r$ generate precisely $X_*^+ (\mathrm T)$.

The choice of the uniformizer $p$ yields an isomorphism \[ X_* (\mathrm T) \stackrel{\sim}{\lra} \mathrm T(\Qp) / \mathrm T(\Zp) \qquad \lambda \mapsto \lambda(p) \bmod \mathrm T(\Zp), \] and we denote by $X_*^+(\Zp)$ the preimage of $X_*^+ \left( \mathrm T \right) \subset X_*(\mathrm T)$ under the quotient map $\mathrm T(\Qp) \twoheadrightarrow \mathrm T(\Qp) / \mathrm T(\Zp)$.

We denote by $\check {\mathrm G}$ the algebraic group dual to $\mathrm G$. We again fix a smooth, split, connected, reductive $\Zp$-model, still denoted by $\check {\mathrm G}$. This contains a conjugacy class of maximal $\Zp$-split tori dual to $\mathrm T$ - we will later choose an element of this conjugacy class, denoted by $\check {\mathrm T}$.

The construction of the dual torus yields natural isomorphisms $X^* \left( \mathrm T \right) \cong X_* \left( \check {\mathrm T} \right)$ and $X_* \left( \mathrm T \right) \cong X^* \left( \check {\mathrm T} \right)$. We let $\check \Phi \left( \check {\mathrm G}, \check { \mathrm T} \right)$ be the roots of the dual root system.
Our choice of positive roots $\Phi^+(\mathrm G, \mathrm T)$ yields positive coroots for $\left( \mathrm G, \mathrm T \right)$, and thus a choice of positive roots $\check \Phi^+ \left( \check {\mathrm G}, \check { \mathrm T} \right)$ which correspond to the choice of a Borel subgroup $\check {\mathrm B}$ containing $\check{\mathrm T}$. Let $\check{\mathrm U}$ be the unipotent radical of $\check{\mathrm B}$.

We also denote the $\Qp$-Lie algebras of these groups by $\Lie \check{\mathrm G}$, $\Lie \check{\mathrm B}$, $\Lie \check{\mathrm T}$ and $\Lie \check{\mathrm U}$.

Given a ring $R$ and a topological group $G$ and a subsemigroup $H$ having the property that each double coset $HgH$ is a \emph{finite} union of left (equivalently, right) $H$-cosets, we denote by $\mathcal H_R \left( G, H \right)$ the Hecke algebra of bi-$H$-invariant functions $G \stackrel{f}{\lra} R$ supported on finitely many double cosets.

For every field $F$ with a fixed algebraic closure $\overline F$, we denote by $\Gal_F = \Gal \left( \overline F / F \right)$ its absolute Galois group.
We fix embeddings $\overline {\Q} \hookrightarrow \overline {\Q_l}$ for every prime $l$, and therefore we get injections $\Gal_{\Q_l} \hookrightarrow \Gal_{\Q}$.  We denote by $I_{\Q_l}$ the inertia subgroup of $\Gal_{\Q_l}$, and also use $G_l$ and $I_l$ to denote the corresponding (conjugacy class of)
 subgroups of $\Gal_{\Q}$. Given a group homomorphism $\Gal_{\Q} \lra H$, we denote by $\sigma_l$ the restriction of $\sigma$ to $\Gal_{\Q_l}$.

Let $\rho: \Gal_{\Q} \lra \Gl{}(V)$ be a Galois representation. We denote by $C^{\bullet} \left( \Q, V \right)$ the complex of continuous inhomogenous cochains of $\Gal_{\Q}$ valued in $V$, by $Z^{\bullet} \left( \Q, V \right)$ the cocycles and by $H^* \left( \Q, V \right)$ the continuous Galois cohomology. Similar notations hold for the subgroup $\Gal_{\Q_l}$.

We let $\omega$ be the cyclotomic character, which by convention has Hodge-Tate weight $-1$. If $V$ is a ($p$-adic or $\bmod p$) Galois representations, we let $V^*$ be its contragredient, and $V(n)$ be its Tate twist by the $n$-th power of the cyclotomic character.

Given a local, complete, Noetherian $\Zp$-algebra $A$ with residue field $k$, we let $\mathcal C_A$ be the category of local, complete, Noetherian $A$-algebras with residue field $k$ and $\mathcal C_A^f$ be the subcategory of Artinian $A$-algebras. If $R_1, R_2 \in \mathcal C_A$, the homomorphisms of $A$-algebras respecting the augmentation map to $k$ are denoted by $\Hom_* (R_1, R_2)$.
We also denote by $\mathcal D(R)$ the derived category of $\Zp$-modules with a continuous actions of $R$.

Let $K_{\infty}$ be a maximal compact subgroup of $\mathrm G(\R)$ and let $Z = \mathrm Z_{\mathrm G}(\R)^{\circ}$ be the identity component of the real points of the center of $\mathrm G$.
We will consider different levels for our arithmetic manifolds, depending on open compact subgroups $K = \prod_{q < \infty} K_q \subset \mathrm G(\Af)$.
We fix $K_0 = \prod_{q < \infty} K_q$ to be our base level, and we assume that \begin{itemize}
\item $K_0$ is a good subgroup in the sense of \cite{KT}, section 6.1 (which is to say, $K_q \subset \mathrm G(\Z_q)$ for all $q \not\in \widehat T$ and $gK_0g^{-1} \cap \mathrm G(\Q)$ is neat for all $g \in \mathrm G(\Af)$).\footnote{Notice that this condition can always be arranged at the cost of possibly enlarging $\widehat T$ and shrinking $K_q$ for some $q \in \widehat T$.}
\item $K_p$ is a Iwahori subgroup of $\mathrm G(\Qp)$, which is to say that $K_p$ is the preimage of $\mathrm B(\Fp)$ under the reduction map $\mathrm G(\Zp) \lra \mathrm G(\Fp)$.
\end{itemize}
We also let $K_1 = \prod_{q < \infty} K'_q$ be the subgroup of $K_0$ having pro-$p$-Iwahori level at $p$, which is to say that $K'_p$ is the preimage of $\mathrm U(\Fp)$ under the reduction map $\mathrm G(\Zp) \lra \mathrm G(\Fp)$.

We notice that as explained in \cite{KT}, section 6.1, any open compact subgroup of $K_0$ is automatically good.

In this paper, we will consider good subgroups $K = \prod_q K_q$ of $K_0$. As before, if $P$ is a finite set of primes we denote $K^P = \prod_{q \not\in P} K_q$, an open compact subgroup of $\mathrm G \left( \A^{P, \infty} \right)$. We also let $K_P = \prod_{q \in P} K_q$.

For any good subgroup $K$, we let \[ Y(K) = \mathrm G(\Q) \backslash \mathrm G(\A) / \left( K_{\infty} \cdot Z^{\circ} \right) K = \mathrm G(\Q) \backslash \mathrm G(\Af) \times \mathrm G(\R) / \left( K_{\infty} \cdot Z^{\circ} \right) K \] be the associated arithmetic manifold.
We will denote $Y(K_0) = Y_0$. Let $d$ be the dimension of $Y(K)$ as a real manifold - it does not depend on the good subgroup $K$ under consideration.
The \emph{defect} of $\mathrm G$ is $l_0 = \rk \mathrm G(\R) - \rk K_{\infty}$ and we also denote $q_0 = \frac{d - l_0}{2}$.

We will study cohomological cuspidal automorphic representations $\pi$ of $\mathrm G(\A)$, and we denote by $E_\pi$ the  Hecke field  attached to such $\pi$.  The field $E_\pi$  is known to be  a number field, cf.   \cite{Clozel} and  \cite{BG} for more details. 
We fix an embedding $\iota:E_\pi \rightarrow \overline\Q_p$, and a finite extension $E$ of $\Q_p$ (which will be our field of coefficients)  which contains 
  $\iota(E_{\pi})$.

\subsection{Outline of the paper}

In section \ref{sectionordinarycohomology} we recall several results from Hida's theory of ordinary parts. We also describe a general Hida theory for a reductive split group $\mathrm G$ - this is certainly known to experts but we were unable to find a reference.

Section \ref{sectionderivedactions} is devoted to introducing the general setup of derived Hecke algebras, following \cite{akshay}. We also introduce the derived diamond operators and relate them to the general machinery of derived Hecke algebras. Finally, we explain how these operators acts on the cohomology of arithmetic manifolds and their interaction with Hida's Iwasawa algebra introduced in section \ref{sectionordinarycohomology}.

In section \ref{automorphicsection}   we collect results on the automorphic representations appearing in the cohomology of the arithmetic manifolds we are interested in. 

In section \ref{generationofcohomologysection} we prove  (cf. Theorem  \ref{cohomologygeneratedbybottomdegree}) that the derived diamond operators acting on $H^{q_0} \left( Y_{1,1}, \Qp \right)_{\pi}$ generate the whole tempered cohomology range $H^* \left( Y_{1,1}, \Qp \right)_{\pi}$.

We introduce the setup for deformations valued into the dual group $\check {\mathrm G}$ in section \ref{deformationsection}, following \cite{Patrikis}. We also recall the definitions of Selmer and dual Selmer group, as well as some results on their dimensions. We will apply these results both for deformations of a $\bmod p$ representation $\overline \rho$ and for deformations of a $\Qp$-valued representation $\rho$.

We then bring Galois representations into the picture in section \ref{Galoisrepforautrep}: letting $\rho_\pi$ be the $\Qp$-valued representation corresponding to $\pi$ as in \cite{HLTT} and \cite{Scholze}, we describe in Lemma \ref{keylemma} the map $\Phi$ from the space of derived diamond operators into the dual of a dual Selmer group for $\rho$.
Then, we prove in Theorem \ref{dimensionality} and Theorem \ref{main} that, assuming the dimension conjecture \ref{dimensionconjectureforring} for $R^{\ordi}_\rho$, the derived diamond action on $H^* \left( Y_{1,1}, \Qp \right)_{\pi}$ factors through the  map $\Phi$, and assuming the smoothness conjecture for $R^{\ordi}_\rho$, that $H^* \left( Y_{1,1}, \Qp \right)_{\pi}$ is generated by the bottom degree under the derived Hecke/diamond action.

\subsection{Acknowledgements}

 We would  like to thank
Patrick Allen and Najmuddin Fakhruddin for their help  with some of the work done in the paper.  We would like to thank Michel Harris and Akshay Venkatesh for their encouragement and interest.

\section{Recollections on Hida theory and ordinary cohomology} \label{sectionordinarycohomology}
In this section we recall the setup of Hida theory and in particular Hida's theory of ordinary parts. We follow closely \cite{KT}, in particular section 6.

We let $C_{\bullet}$ be the complex of singular chains with $\Z$-coefficients valued in $\mathrm G(\Af) \times \mathrm G(\R) / \left( K_{\infty} \cdot Z^{\circ} \right)$, and define \[ \begin{split} C_{\bullet} \left( Y(K), \Z \right) = C_{\bullet} \otimes_{\Z \left[ \mathrm G(\Q) \times K \right]} \Z, \\ C^{\bullet} \left( Y(K), \Z \right) = \Hom_{\mathrm G(\Q) \times K} \left( C_{\bullet} , \Z \right) \end{split} \] for every good subgroup $K$ of $K_0$.
We have then as in proposition 6.2 of \cite{KT} that $H^* (Y(K), \Z) \cong H^* \left( C^{\bullet}(Y(K), \Z) \right)$. 

We can define Hecke operators at the level of complexes, as in \cite{KT}, section 2. Let $U = \prod_q U_q$ and $V = \prod_q V_q$ two good subgroups such that $U_q = V_q$ for all $q$'s outside of a finite set of primes $S(U,V)$ depending on $U$ and $V$. Then for each $g \in \mathrm G \left( \A^{S(U,V)} \right)$ we have a map \[ \left[ UgV \right]: C^{\bullet} \left( Y(U), \Z \right) \lra C^{\bullet} \left( Y(V), \Z \right) \] defined as follows: we fix a finite decomposition $U g V = \coprod_i g_i V$, then \[ \left( \left[ UgV \right] \phi \right) (\sigma) = \sum_i g_i \phi \left( g_i^{-1} \sigma \right) \qquad \forall \sigma \in C_{\bullet}. \]
\begin{rem} Since we are taking trivial coefficients, the elements $g_i$ act trivially, but we still keep track of them in the formula for future work on the case of non-trivial coefficients.
\end{rem}
Taking $U=V$, we get an action of $\mathcal H_{\Z} \left( \mathrm G(\Af), U \right)$ on $C^{\bullet} \left( Y(U), \Z \right)$.
    
This global Hecke algebra is the restricted tensor product of the local Hecke algebras $\mathcal H_{\Z} \left( \mathrm G(\Q_q), U_q \right)$ if $U = \prod_q U_q$ is a product of local factors, and so we also have actions of these local Hecke algebras on $C^{\bullet} \left( Y(U), \Z \right)$.

We now to increase level at $p$.

We have a canonical choice of uniformizer in $p$, and then the Hecke operators $\{ T_p^i = \mathrm G(\Zp) \lambda_i(p) \mathrm G(\Zp) \}_{i=1, \ldots, r}$ generate the spherical Hecke algebra $\mathcal H_{\Z} \left( \mathrm G(\Qp), \mathrm G(\Zp) \right)$ by the classical Satake homomorphism. Since this is a commutative algebra, the operators $T_p^i$ commute as $i$ varies.

For each $1 \le b \le c$ we define $I(b,c) \subset \mathrm G(\Zp)$ to be the subgroup of elements $g \in \mathrm G(\Zp)$ such that \begin{itemize}
    \item under the projection $\bmod p^b$, $g$ lands into the central extension $\mathrm Z_{\mathrm G}(\Z / p^b \Z) \times \mathrm U(\Z / p^b \Z)$ of the unipotent subgroup $\mathrm U(\Z / p^b \Z)$.
    \item under the projection $\bmod p^c$, $g$ lands into the Borel subgroup $\mathrm B(\Z / p^c \Z)$.
\end{itemize}
The largest subgroup in this family is $I(1,1)$, which is the central extension of the pro-$p$-Iwahori subgroup corresponding to the Borel $\mathrm B$.

Let also denote by $K(b,c)$ the subgroup of $K_0$ where we change level only at $p$, replacing $K_p$ with $I(b,c)$. The corresponding arithmetic manifold is $Y_{b,c} = Y(K(b,c))$.

Notice that since $I(b,c)$ is normal in $I(1,c)$, every $\alpha \in I(1,c)$ gives rise to a \emph{diamond operator} $\left[ I(b,c) \alpha I(b,c) \right] \in \mathcal H_{\Z} \left( \mathrm G(\Qp), I(b,c) \right)$. We sometimes denote this by $\langle \alpha \rangle$.
On the other hand, we also have operators $U_p^i = \left[ I(b,c) \lambda_i(p) I(b,c) \right] \in \mathcal H_{\Z} \left( \mathrm G(\Qp), I(b,c) \right)$. Unlike the operators $T_p^i$ at spherical level, these depend on the choice of our uniformizer $p$.\footnote{We are making a slight abuse of notation here, since $\langle \alpha \rangle$ and $U_p^i$ do not mention the subgroup $I(b,c)$ that we are looking at - however, this abuse of notation is fairly minor because of the lemma \ref{Heckeincohomology}.}

We will denote by $\langle \alpha \rangle$ and $U_p^i$ both the elements of the Hecke algebra of $I(b,c)$ as well as the operators they induce on $C^{\bullet} \left( Y_{b,c}, \Z \right)$ and $H^* \left( Y_{b,c} , \Z \right)$.
\begin{lem}[Lemma 6.5 in \cite{KT}] \label{Heckeincohomology} Let $A$ be a commutative ring with a trivial $\mathrm G(\Qp)$-action. \begin{enumerate}
    \item The elements $U_p^i$ and $\langle \alpha \rangle$ commute inside $\mathcal H_A \left( \mathrm G(\Qp), I(b,c) \right)$, and thus the corresponding operators on $C^{\bullet} \left( Y_{b,c}, A \right)$ and $H^* \left( Y_{b,c} , A \right)$ commute as well.
    \item Suppose $b' \ge b$ and $c' \ge c$, so that $I(b',c') \subset I(b,c)$.
    Then the operators $U_p^i$ and $\langle \alpha \rangle$ commute with the canonical pullback maps \[ C^{\bullet} \left( Y_{b,c}, A \right) \lra C^{\bullet} \left( Y_{b',c'}, A \right) \textnormal{ and } H^* \left( Y_{b,c} , A \right) \lra H^* \left( Y_{b',c'} , A \right). \]
    \item Suppose $c -1 \ge b$, so that $I(b,c-1) \supset I(b,c)$. Then the operators $U_p^i$ on $C^{\bullet} \left( Y_{b,c}, A \right)$ take values inside the subcomplex $C^{\bullet} \left( Y_{b,c-1}, A \right)$, and thus the map on cohomology factors as \[ H^* \left( Y_{b,c} , A \right) \stackrel{\left[ I(b,c) \lambda_p^i I(b,c-1) \right]}{\lra} H^* \left( Y_{b,c-1} , A \right) \lra H^* \left( Y_{b,c}, A \right), \] where the last map is the canonical pullback.
\end{enumerate}
\end{lem}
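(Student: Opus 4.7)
The plan is to verify each of the three claims by direct computation in the Hecke algebra $\mathcal H_A(\mathrm G(\Qp), I(b,c))$, using the Iwahori decomposition of the level subgroups and the contraction properties of conjugation by the strictly dominant cocharacter $\lambda_i(p)$.

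For part (1), observe first that $I(b,c)$ is normal in $I(1,c)$, so $I(b,c) \alpha I(b,c) = \alpha I(b,c) = I(b,c) \alpha$ is a single coset and $\langle \alpha \rangle$ is a single-coset Hecke operator. The Iwahori decomposition
\[ I(1,c) = \bigl(\mathrm U(\Zp) \cap I(1,c)\bigr) \cdot \bigl(\mathrm T \cap I(1,c)\bigr) \cdot \bigl(\mathrm U^-(\Zp) \cap I(1,c)\bigr) \]
combined with the easy identities $\mathrm U(\Zp) \cap I(1,c) = \mathrm U(\Zp) \subset I(b,c)$ and $\mathrm U^-(\Zp) \cap I(1,c) = \mathrm U^-(1 + p^c \Zp) \subset I(b,c)$ shows that every class in $I(1,c)/I(b,c)$ is represented by some $t \in \mathrm T(\Zp) \cap I(1,c)$. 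It thus suffices to treat $\alpha = t$. Now $t$ commutes with $\lambda_i(p)$ (both lie in the commutative torus), and the pinning relation (\ref{pinningproperty}) shows $t$ normalizes $I(b,c)$, so $t I(b,c) = I(b,c) t$; consequently $t \cdot I(b,c) \lambda_i(p) I(b,c) = I(b,c) \lambda_i(p) I(b,c) \cdot t$ as subsets of $\mathrm G(\Qp)$, which translates to $\langle t \rangle \cdot U_p^i = U_p^i \cdot \langle t \rangle$ in the Hecke algebra.

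For part (2), the pullback $C^{\bullet}(Y_{b,c}) \to C^{\bullet}(Y_{b',c'})$ is simply the inclusion of $(\mathrm G(\Q) \times I(b,c))$-equivariant cochains into $(\mathrm G(\Q) \times I(b',c'))$-equivariant cochains. In the standard decomposition $I(b,c) \lambda_i(p) I(b,c) = \coprod_u u \lambda_i(p) I(b,c)$ the coset representatives $u$ range over $\mathrm U(\Zp) / \lambda_i(p) \mathrm U(\Zp) \lambda_i(p)^{-1}$, as one sees by Iwahori-factorizing $\lambda_i(p) I(b,c) \lambda_i(p)^{-1} \cap I(b,c)$ and noting that conjugation by $\lambda_i(p)$ affects only the $\mathrm U$ factor. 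Crucially, this set of representatives is the same at the smaller level $I(b',c')$, because $\mathrm U(\Zp) \cap I(b,c) = \mathrm U(\Zp) = \mathrm U(\Zp) \cap I(b',c')$. Hence the Hecke formula $\phi \mapsto \sum_u (u \lambda_i(p)) \phi((u \lambda_i(p))^{-1} \cdot)$ is literally identical at the two levels, yielding the commutativity with pullback. The argument for $\langle \alpha \rangle$ is even simpler since it is a single-coset operator.

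For part (3), the crux is the equality of subsets $I(b,c-1) \lambda_i(p) I(b,c) = I(b,c) \lambda_i(p) I(b,c)$ in $\mathrm G(\Qp)$: once this is shown, left multiplication by $I(b,c-1)$ permutes the cosets $g_j I(b,c)$ appearing in $U_p^i$, so the output of $U_p^i$ applied to a cochain in $C^{\bullet}(Y_{b,c})$ is automatically $I(b,c-1)$-invariant and thus lies in the subcomplex $C^{\bullet}(Y_{b,c-1})$, giving the claimed factorization. The inclusion $\supset$ is clear. For $\subset$, take $\gamma \in I(b,c-1)$ and Iwahori-factor $\gamma = v t w$ with $v \in \mathrm U(\Zp)$, $t \in \mathrm T \cap I(b,c-1) = \mathrm T \cap I(b,c)$, and $w \in \mathrm U^-(1 + p^{c-1} \Zp)$. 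Both $v$ and $t$ already lie in $I(b,c)$. For $w$, the pinning relation (\ref{pinningproperty}) combined with strict dominance of $\lambda_i$, which gives $-\langle \lambda_i, \beta \rangle \ge 1$ for every negative root $\beta$, implies $\lambda_i(p)^{-1} w \lambda_i(p) \in \mathrm U^-(1 + p^c \Zp) \subset I(b,c)$. Since $t \lambda_i(p) = \lambda_i(p) t$, one rewrites $\gamma \lambda_i(p) = v \cdot \lambda_i(p) \cdot t \cdot (\lambda_i(p)^{-1} w \lambda_i(p))$, exhibiting $\gamma \lambda_i(p) \in I(b,c) \cdot \lambda_i(p) \cdot I(b,c)$ as required. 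The main obstacle across all three parts is the careful bookkeeping of valuation shifts under conjugation by $\lambda_i(p)$ on the various root subgroups; once those are in hand, everything is a mechanical verification.
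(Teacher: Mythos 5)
Your argument is essentially correct and runs along the same lines as the paper's: Iwahori factorization of the level subgroups, bookkeeping of valuation shifts under conjugation by $\lambda_i(p)$, and the observation that the coset representatives for $I(b,c)\lambda_i(p)I(b,c)/I(b,c)$ lie in $\mathrm U(\Zp)$ and are independent of $b$ and $c$ — parts (2) and (3) in particular are exactly the paper's proof. The one substantive difference is in part (1). You treat $\langle\alpha\rangle$ as a single-coset operator represented by a torus element $t$ normalizing $I(b,c)$ and commuting with $\lambda_i(p)$, which cleanly gives $[U_p^i,\langle\alpha\rangle]=0$; the paper instead proves the stronger structural statement that \emph{all} double cosets $C\xi C$ with $\xi$ in the integral dominant semigroup $X_*^+(\Zp)$ span a commutative subalgebra of $\mathcal H_{\Z}(\mathrm G(\Qp),C)$, via the coset count $|C\xi C\xi'C/C|=|C\xi C/C|\cdot|C\xi'C/C|$ forcing $C\xi C\xi'C=C\xi\xi'C$. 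The extra generality is not idle: it also yields $[U_p^i,U_p^j]=0$, which the paper needs later to form $U_p=\prod_i U_p^i$ and the ordinary idempotent, and which your single-coset argument does not address (neither $U_p^i$ nor $U_p^j$ is a single coset). If the statement is read as asserting that all of these elements pairwise commute, you should supplement part (1) with an argument for $U_p^iU_p^j=U_p^jU_p^i$ — e.g.\ by the same coset-representative computation you already set up in part (2), which shows $I\lambda_i(p)I\lambda_j(p)I=I\lambda_i(p)\lambda_j(p)I$ by counting. Two minor points: the congruence subgroup of $\mathrm U^-$ should be written $\mathrm U^-(p^c\Zp)$ rather than $\mathrm U^-(1+p^c\Zp)$ (the root groups are parametrized additively by $\mathbf G_a^{d_\alpha}$), and in part (3) you should note, as you implicitly do, that left translation by $\gamma\in I(b,c-1)$ permutes the left cosets $g_jI(b,c)$ up to right multiplication by elements of $I(b,c)$, which is what makes $U_p^i\phi$ invariant under the larger group.
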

\begin{proof} This is a standard result, whose proof relies on Hida's proposition 2.2 in \cite{Hida2}. That result is only proved in the setup of general (and special) linear groups, so we spell out the details of the case of a reductive group here.

We fix an ordering of the positive roots in nondecreasing order of height (see for instance Conrad \cite{Conrad1} proposition 1.4.11), so that the multiplication map \begin{equation} \label{algebraiciso} \prod_{\alpha \in \Phi^+} \mathrm U_{- \alpha} \times \mathrm T \times \prod_{\alpha \in \Phi^+} \mathrm U_{\alpha} \lra \mathrm G \end{equation} is an open embedding of $\Zp$-group schemes.

The preimage $X_*^+(\Zp)$ of the dominant cone under the quotient map $\mathrm T(\Qp) \twoheadrightarrow \mathrm T(\Qp) / \mathrm T(\Zp)$ coincides with $N_{\mathrm T(\Qp)} \mathrm B(\Zp)$, the elements of $\mathrm T(\Qp)$ normalizing $\mathrm B(\Zp)$.
This is a subsemigroup of $\mathrm T(\Qp)$ consisting of the torus elements that shrink (or leave invariant) $\mathrm U (\Zp)$. Consequently, these are the same elements that enlarge (or leave invariant) the opposite unipotent subgroup $\mathrm U^- (\Zp)$.
In particular, if $\xi \in X_*^+(\Zp)$ is such that $\xi \mathrm U^- \left( \Zp \right) \xi^{-1} \subset \mathrm U^- \left( \Zp \right)$, then in fact $\xi \in \mathrm T(\Zp)$ and $\xi \mathrm U^- \left( \Zp \right) \xi^{-1} = \mathrm U^- \left( \Zp \right)$.

Let now $C$ be an open compact subgroup of $\mathrm G(\Qp)$ with $I(b',c') \subset C \subset I(b,c)$.
We can consider $\Delta = C X_*^+(\Zp) C \subset \mathrm G(\Qp)$, and in fact we look at the functions of $\mathcal H_{\Z} \left( \mathrm G(\Qp), C \right)$ supported on $\Delta$: we aim to understand this submodule.
\begin{prop} $\Delta$ is a commutative subalgebra of $\mathcal H_{\Z} \left( \mathrm G(\Qp), C \right)$.
\end{prop}
\begin{proof}
Let $\xi = \tau \lambda(\varpi) \in X_*^+(\Zp)$ be the generic element of this integral dominant cone, with $\tau \in \mathrm T(\Zp)$ and $\lambda \in X_*^+$.
Since $C$ is open compact in $\mathrm G(\Qp)$, $C \cap \xi C \xi^{-1}$ has finite index in $C$, and thus we can choose a finite set of representatives $\eta \in X(\xi)$ to get \[ C = \coprod_{\eta \in X(\xi)} \eta \left( C \cap \xi C \xi^{-1} \right). \]
Right multiplying by $\xi C \xi^{-1}$ yields \[ C \xi C \xi^{-1} = \coprod_{\eta \in X(\xi)} \eta \xi C \xi^{-1} \Rightarrow C \xi C = \coprod_{\eta \in X(\xi)} \eta \xi C. \]
We now show we have a canonical, nice way to choose the set of coset representatives $X(\xi)$.
Let $c \in C$, then by the morphism in formula \ref{algebraiciso}, we have \[ c = \left( \prod_{\alpha \in \Phi^+} u_{- \alpha} \right) \tau \left( \prod_{\alpha \in \Phi^+} u_{\alpha} \right) \] where for each $\alpha \in \Phi^+$ \[ u_{\alpha} \in \mathrm U_{\alpha} (\Zp)  \textnormal{ and } u_{- \alpha} \in \mathrm U_{- \alpha} \left( p^c \Zp \right), \] and $t \in \mathrm T \left(1 + p^b \Zp \right)$.

Suppose $c$ also belongs to $ \xi C \xi^{-1}$, then we must have \[ C \ni \xi^{-1} c \xi = \left( \prod_{\alpha \in \Phi^+} \xi^{-1} u_{- \alpha} \xi \right) \tau \left( \prod_{\alpha \in \Phi^+} \xi^{-1} u_{\alpha} \xi \right). \]
The $\mathrm T(\Zp)$-factor in $\xi$, $\tau$, does not change the valuation of the $u_{\alpha}$'s and $u_{- \alpha}$'s upon conjugation, so it is irrelevant when discussing the condition that $\xi^{-1} c \xi \in C$, and we assume $\xi = \lambda(p)$.

Then, \[  \lambda(p)^{-1} c \lambda(p) = \left( \prod_{\alpha \in \Phi^+} \lambda(p)^{-1} u_{- \alpha} \lambda(p) \right) \tau \left( \prod_{\alpha \in \Phi^+} \lambda(p)^{-1} u_{\alpha} \lambda(p) \right) = \] \[ = \left( \prod_{\alpha \in \Phi^+} \left( p^{\langle - \lambda, - \alpha \rangle} \cdot u_{- \alpha} \right) \right) \tau \left( \prod_{\alpha \in \Phi^+} \left( p^{ \langle - \lambda, \alpha \rangle} \cdot u_{\alpha} \right) \right). \]
Since $\lambda \in X_*^+$, we have $\langle - \lambda, - \alpha \rangle = \langle \lambda, \alpha \rangle \ge 0$ for all $\alpha \in \Phi^+$, and $ \left( p^{ \langle - \lambda, \alpha \rangle} \cdot u_{- \alpha} \right) \in \mathrm U_{- \alpha} \left( p^c \Zp \right) $.
On the other hand, $\langle - \lambda, \alpha \rangle \le 0$, and thus to ensure that $\left( p^{ \langle - \lambda, \alpha \rangle} \cdot u_{\alpha} \right)$ is in $U_{\alpha} (\Zp)$, we have to require $u_{\alpha} \in U_{\alpha} \left( p^{\langle \lambda, \alpha \rangle} \Zp \right)$ for all $\alpha \in \Phi^+$.

We conclude that \[ C / \left( C \cap \xi C \xi^{-1} \right) \cong \prod_{\alpha \in \Phi^+} \left( U_{\alpha}(\Zp) / U_{\alpha} \left( p^{\langle \lambda, \alpha \rangle} \Zp \right) \right). \]
We choose then coset representatives for each $U_{\alpha} (\Zp) / U_{\alpha} \left( p^{\langle \lambda, \alpha \rangle} \Zp \right)$, and obtain as products the coset representatives for $X(\xi) = X(\lambda)$: \[ X(\lambda) = \prod_{\alpha \in \Phi^+} \left( U_{\alpha}(\Zp) / U_{\alpha} \left( p^{\langle \lambda, \alpha \rangle} \Zp \right) \right). \]
\begin{rem}
Notice in particular that these coset representatives \emph{do not depend on $C$, $b, b', c$ or $c'$}.
\end{rem}
Recall that $d_{\alpha} = \dim U_{\alpha}$, its dimension as an algebraic group, and define \[ \deg \xi = \deg \lambda = - \sum_{\alpha \in \Phi^+} \langle \lambda, \alpha \rangle d_{\alpha}. \]
The previous discussion shows that \[ \left| C \xi C / C \right| =  \left[ C : C \cap \xi C \xi^{-1} \right] = p^{\deg \xi}. \]
From our definition it is also clear that $\deg (\xi \xi') = \deg \xi + \deg \xi'$ and thus that \[ \left| C \xi \xi' C / C \right| = \left| C \xi C / C \right| \cdot \left| C \xi' C / C \right|. \]
It is obvious that $C \xi C \xi' C \supset C \xi \xi' C$, and to show equality it suffices then to show that $| C \xi C \xi' C / C| \le |C \xi C / C| \cdot |C \xi' C / C | $.
However, this last inequality is clear, since \[ C \xi C \xi' C = \coprod_{\nu \in X(\xi} \nu \xi C \xi' C = \coprod_{\nu \in X(\xi)} \coprod_{\nu' \in X(\xi')} \nu \xi \nu' \xi' C. \]
This shows that \[ C \xi C \xi' C = C \xi \xi' C = C \xi' \xi C = C \xi' C \xi C \qquad \forall \xi, \xi' \in X_*^+(\Zp), \] proving the proposition.
\end{proof}
The first claim of the lemma follows now immediately, since $U_p^i$ and $\langle \alpha \rangle$ are both elements of this subalgebra in the case where $C = I(b,c)$.

The second part of the lemma follows from the remark above that the coset representatives used to define the Hecke action do not depend on $b$ or $c$. We can thus use the same coset representatives to define the Hecke action on $C^{\bullet} \left( Y_{b,c}, A \right)$ and $ C^{\bullet} \left( Y_{b',c'}, A \right)$, which proves commutativity with the pullback map. This is the same argument as in \cite{Geraghty}, lemma 2.3.3.

The third claim of the lemma follows again by showing that \[ I(b,c) \lambda_i(p) I(b,c) = I(b,c-1) \lambda_i(p) I(b,c), \] which follows as before and as in the proof of lemma 2.5.2 of \cite{Geraghty} from the fact that the aforementioned coset representatives can be chosen independently of $b$ and $c$.
\end{proof}

\subsection{Ordinary cohomology}
We describe the ordinary summand of arithmetic cohomology and its properties.

The cover $Y_{b,c} \twoheadrightarrow Y_{1,c}$ is Galois, with Galois group \[ K(1,c) / K(b,c) \cong I(1,c) / I(b,c) \cong \mathrm T(\Zp)_p / \mathrm T(1 + p^b \Zp)_p \cong \mathrm T(\Z / p^b \Z)_p =:T_b. \]
Here we use crucially that $I(b,c)$ intersects trivially with $\mathrm Z_{\mathrm G}(\Q)$ (thanks to our assumption on the center not having any $p$-torsion), and thus $T_b$ acts freely on $Y_{b,c}$.

Let us denote $\Lambda_b = \Zp \left[ T_b \right]$, $\Lambda = \varprojlim_b \Lambda_b \cong \Zp \left[ \left[ \mathrm T(\Zp)_p \right] \right]$, $\Lambda_{\Qp} = \Lambda \otimes_{\Zp} \Qp = \Lambda \left[ \frac{1}{p} \right]$ and more generally $\Lambda_F = \Lambda \otimes_{\Zp} F$ for any finite extension $F$ of $\Qp$.

By lemma \ref{Heckeincohomology}, for each $c' \ge c, b' \ge b$ the natural pullback map \[ H^i \left( Y_{b,c}, \Z / p^n \Z \right) \lra H^i \left( Y_{b',c'}, \Z / p^n \Z \right) \] is equivariant for the action of each $U_p^i$.
Denote then $U_p = \prod_{i=1}^r U_p^i$.

\begin{defn} The ordinary part of $H^i \left( Y_{b,c}, \Z / p^n \Z \right)$ is the $\Z / p^n \Z$-submodule where $U_p$ acts invertibly. We denote it by $H^i \left( Y_{b,c}, \Z / p^n \Z \right)_{\ordi}$.
\end{defn}
See section 2.4 of \cite{KT} for a discussion on ordinary parts with respect to any endomorphism.
In particular, thanks to lemma \ref{Heckeincohomology}, we have that the pullback map preserves ordinary part: \[ H^i \left( Y_{b,c}, \Z / p^n \Z \right)_{\ordi} \lra H^i \left( Y_{b',c'}, \Z / p^n \Z \right)_{\ordi}. \]

We now prove a result which relates ordinary part in towers, and will be crucial in the next section.
\begin{lem} \label{ordinaryisom} For all $ c \ge b \ge 1$ and all $n \ge 1$, the pullback map \[ \pi: H^* \left( Y_{b,b}, \Z / p^n \Z \right)_{\ordi} \lra H^* \left( Y_{b,c}, \Z / p^n \Z \right)_{\ordi} \] is an isomorphism.
\end{lem}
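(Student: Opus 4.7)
The plan is to induct on $c - b \geq 0$. The base case $c = b$ is trivial since the pullback is the identity. For the inductive step it suffices to show, for any $c > b \geq 1$, that the pullback \[ \pi: H^* \left( Y_{b, c-1}, \Z/p^n\Z \right)_{\ordi} \lra H^* \left( Y_{b,c}, \Z/p^n\Z \right)_{\ordi} \] is an isomorphism, and then compose.

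The strategy is to establish two complementary Hecke identities. For each $i$, introduce the intertwining operator \[ S_i := \left[ I(b,c) \lambda_i(p) I(b,c-1) \right]: H^* \left( Y_{b,c}, \Z/p^n\Z \right) \lra H^* \left( Y_{b, c-1}, \Z/p^n\Z \right). \] By part (3) of Lemma \ref{Heckeincohomology}, one has $\pi \circ S_i = U_p^i$ on $H^* \left( Y_{b,c}, \Z/p^n\Z \right)$. I claim the dual identity $S_i \circ \pi = U_p^i$ on $H^* \left( Y_{b,c-1}, \Z/p^n\Z \right)$ also holds. This is the crux of the argument and is the step I expect to be the main obstacle. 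It follows from the explicit coset computation carried out in the proof of Lemma \ref{Heckeincohomology}: the set of representatives $X(\lambda_i) = \prod_{\alpha \in \Phi^+} \left( \mathrm U_{\alpha}(\Zp) / \mathrm U_{\alpha} \left( p^{\langle \lambda_i, \alpha \rangle} \Zp \right) \right)$ for $I(b,c) \lambda_i(p) I(b,c) / I(b,c)$ is \emph{independent of $b$ and $c$}, and gives coset representatives for $I(b,c-1) \lambda_i(p) I(b,c-1) / I(b,c-1)$ as well. On the cochain level, applying $S_i \circ \pi$ to an $I(b,c-1)$-invariant cochain $\phi$ first regards $\phi$ as $I(b,c)$-invariant and then sums over $X(\lambda_i)$ of translates, yielding exactly the same sum as $U_p^i|_{Y_{b,c-1}} \phi$.

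Setting $U_p = \prod_i U_p^i$ and $S = \prod_i S_i$ (the order is irrelevant by the commutativity established in Lemma \ref{Heckeincohomology}(1)), we obtain the two identities \[ \pi \circ S = U_p|_{Y_{b,c}}, \qquad S \circ \pi = U_p|_{Y_{b,c-1}}. \] Now localize at the ordinary part, i.e., invert $U_p$. Since $S \circ \pi = U_p|_{Y_{b,c-1}}$ is invertible on the ordinary summand of the source, $\pi$ is injective on $H^*(Y_{b,c-1}, \Z/p^n\Z)_{\ordi}$; since $\pi \circ S = U_p|_{Y_{b,c}}$ is invertible on the ordinary summand of its source, $\pi$ is surjective onto $H^*(Y_{b,c}, \Z/p^n\Z)_{\ordi}$. (Note that both $U_p^i$ and $\pi$ respect the ordinary decomposition by parts (1) and (2) of Lemma \ref{Heckeincohomology}, so this localization is well-defined.) This completes the inductive step and therefore the proof.
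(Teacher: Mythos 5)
Your proof is correct and follows essentially the same route as the paper: both use the factorization $U_p = \pi \circ r$ from Lemma \ref{Heckeincohomology}(3) together with the independence of the coset representatives from $b,c$ to produce a two-sided "inverse up to $U_p$" for the pullback, and then invert $U_p$ on the ordinary part. The only cosmetic difference is that you verify the second identity $S \circ \pi = U_p$ explicitly on cochains, whereas the paper deduces the isomorphism from the existence of the diagonal map $r$ by citing the formal lemma 6.10 of \cite{KT}.
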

\begin{proof}
By lemma \ref{Heckeincohomology}, the action of $U_p$ commutes with the pullback map, even at the level of complexes, so we have a commutative diagram \begin{displaymath} \xymatrix{ H^* \left( Y_{b,b}, \Z / p^n \Z \right) \ar[r]^{U_p} \ar[d]_{\pi} & H^* \left( Y_{b,b}, \Z / p^n \Z \right) \ar[d]^{\pi} \\  H^* \left( Y_{b,c}, \Z / p^n \Z \right) \ar[r]_{U_p} & H^* \left( Y_{b,c}, \Z / p^n \Z \right)  } \end{displaymath}
On the other hand, the last part of the lemma says that in fact the operator $U_p$ on $H^* \left( Y_{b,c}, \Z / p^n \Z \right)$ factors as $\pi \circ \left[ I(b,c) \prod_i \lambda_i(p) I(b,c-1) \right]$, so our diagram becomes \begin{displaymath} \xymatrix{ H^* \left( Y_{b,c-1}, \Z / p^n \Z \right) \ar[r]^{U_p} \ar[d]_{\pi} & H^* \left( Y_{b,c-1}, \Z / p^n \Z \right) \ar[d]^{\pi} \\  H^* \left( Y_{b,c}, \Z / p^n \Z \right) \ar[r]_{U_p} \ar[ur]_{r} & H^* \left( Y_{b,c}, \Z / p^n \Z \right)  } \end{displaymath} where we denote $r = \left[ I(b,c) \prod_i \lambda_i(p) I(b,c-1) \right]$.
The existence of the map $r$ implies then an isomorphism between the submodule of ordinary parts, as in lemma 6.10 of \cite{KT}.
\end{proof}

We now want to interpolate the ordinary subspaces in the tower $\left\{ H^* \left( Y_{c,c} , \Zp \right)_{\ordi} \right\}_{c \ge 1}$. It turns out it is more convenient to interpolate ordinary subspaces in homology, and then dualize if necessary. The following result is proposition 6.6 of \cite{KT}.
\begin{prop} \label{complexordinarytower} There exists a minimal complex $F_{\infty}^{\bullet}$ of $\Lambda$-modules, together with morphisms for every $c \ge 1$:
\[ \begin{split} g_c: F_{\infty}^{\bullet} \otimes_{\Lambda} \Lambda_c/p^c \lra C_{d - \bullet} \left( K(c,c), \Z / p^c \Z \right), \\ g_c': C_{d - \bullet} \left( K(c,c), \Z / p^c \Z \right) \lra F_{\infty}^{\bullet} \otimes_{\Lambda} \Lambda_c / p^c \end{split} \]
satisfying the following conditions: \begin{enumerate}
    \item $g_c' g_c = 1$, while $g_c g_c'$ is an idempotent in $\End_{\mathcal D \left( \Lambda_c/p^c \right)} \left( C_{d - \bullet} \left( K(c,c), \Z / p^c \right) \right)$.
    \item For each $c \ge 1$, the following diagram is commutative up to chain homotopy \begin{displaymath} \xymatrix{ F_{\infty}^{\bullet} \otimes_{\Lambda} \Lambda_{c+1} / p^{c+1} \ar[r]^{g_{c+1}} \ar[d] & C_{d - \bullet} \left( K(c+1,c+1), \Z / p^{c+1} \Z \right) \ar[d]^{\pi} \ar[r]^{g_{c+1}'} & F_{\infty}^{\bullet} \otimes_{\Lambda} \Lambda_{c+1} / p^{c+1} \ar[d] \\  F_{\infty}^{\bullet} \otimes_{\Lambda} \Lambda_c / p^c \ar[r]_{g_c} & C_{d - \bullet} \left( K(c,c), \Z / p^c \Z \right) \ar[r]_{g_c'} & F_{\infty}^{\bullet} \otimes_{\Lambda} \Lambda_c / p^c  } \end{displaymath} where the middle vertical map is the natural pushforward.
    \item For each $c \ge 1$, the map induced by $g_c g_c'$ on homology is the natural projection onto the ordinary subspace along the non-ordinary one: \[ H_* \left( g_c g_c' \right): H_{d - *} \left( Y_{c,c}, \Z / p^c \Z \right) \lra H_{d - *} \left( Y_{c,c} , \Z / p^c \Z \right)_{\ordi}. \]
\end{enumerate}
The minimal complex $F_{\infty}^{\bullet}$ is uniquely determined in $\mathcal D \left( \Lambda \right)$ by these properties, up to unique isomorphisms.

Finally, there is a unique homomorphism lifting the Hecke action away from $p$ to this complex: \[ \Phi: \mathcal H \left( \mathrm G \left( \A^{\infty, p} \right), K^p_0 \right) \lra \End_{\mathcal D \left( \Lambda \right)} \left( F_{\infty}^{\bullet} \right) \] such that for all $f \in \mathcal H \left( \mathrm G \left( \A^{\infty, p} \right), K^p_0 \right)$ one has \[ \Phi(f) \otimes_{\Lambda} \Lambda_c/p^c = g_c' f g_c \textnormal{ as endomorphisms of } F_{\infty}^{\bullet} \otimes_{\Lambda} \Lambda_c / p^c. \]
\end{prop}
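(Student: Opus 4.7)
The plan is to follow the template of Hida's control theorems, lifted to the complex level. First, fix $c \ge 1$ and consider the complex $C_{d-\bullet}(K(c,c), \Z/p^c\Z)$. The Galois cover $Y_{c,c} \twoheadrightarrow Y_{1,c}$ has Galois group $T_c = \mathrm T(\Z/p^c\Z)_p$, and because the center has no $p$-torsion this action is free, so this complex inherits the structure of a complex of $\Lambda_c/p^c$-modules which, after replacing by a perfect resolution (using that $Y_{c,c}$ is a finite CW complex up to homotopy equivalence and that $T_c$ acts freely), may be taken to consist of finite free $\Lambda_c/p^c$-modules in finitely many degrees.

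Second, cut out the ordinary summand by Hida's idempotent $e = \lim_n U_p^{n!}$, which converges on the perfect complex modulo $p^c$ because $U_p$ acts on a finitely generated $\Z/p^c\Z$-module in each degree after passing to a suitable quasi-isomorphic replacement. The resulting perfect summand is then reduced to a \emph{minimal} complex $F_c^\bullet$ of free $\Lambda_c/p^c$-modules by successively peeling off contractible direct summands via Nakayama over the local ring $\Lambda_c/p^c$; the inclusion and retraction yield the required $g_c$ and $g_c'$ with $g_c' g_c = 1$ and $g_c g_c'$ a chain-homotopic idempotent inducing precisely the projection onto the ordinary summand, as in condition (3).

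The heart of the argument, and the main obstacle, is to establish the tower compatibility $F_{c+1}^\bullet \otimes^{\mathbb L}_{\Lambda_{c+1}/p^{c+1}} \Lambda_c/p^c \simeq F_c^\bullet$. For this, combine Lemma \ref{Heckeincohomology}, which asserts that $U_p$ commutes with the natural pullback maps at the level of complexes, with Lemma \ref{ordinaryisom}, which identifies the ordinary parts along the tower. Together these show that the ordinary idempotent commutes with the derived base change along $\Lambda_{c+1}/p^{c+1} \twoheadrightarrow \Lambda_c/p^c$; by the uniqueness of minimal free complexes over a local Noetherian ring, this compatibility in $\mathcal D$ lifts to an actual quasi-isomorphism of minimal complexes, chosen compatibly.

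Finally, define $F_\infty^\bullet = \varprojlim_c F_c^\bullet$. The transition maps are surjective in each degree and the limit is a perfect complex of $\Lambda$-modules whose reductions recover the $F_c^\bullet$, proving the commutativity of the diagram in (2) up to chain homotopy. Uniqueness of $F_\infty^\bullet$ in $\mathcal D(\Lambda)$ follows from the corresponding uniqueness of minimal free complexes over the complete local Noetherian ring $\Lambda$. For the Hecke action: any $f \in \mathcal H(\mathrm G(\A^{\infty,p}), K_0^p)$ commutes with $U_p$ and with all pullback maps by Lemma \ref{Heckeincohomology}, so it preserves each ordinary subcomplex compatibly; the composition $g_c' f g_c$ therefore defines a system of endomorphisms of $F_\infty^\bullet \otimes_\Lambda \Lambda_c/p^c$ compatible with the transition maps, which by the uniqueness in $\mathcal D(\Lambda)$ assembles into a well-defined $\Phi(f) \in \End_{\mathcal D(\Lambda)}(F_\infty^\bullet)$.
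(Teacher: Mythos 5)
Your proposal is correct and follows essentially the same route as the paper, which packages your steps (perfect complexes over $\Lambda_c/p^c$ via the free $T_c$-action, the ordinary idempotent, minimal complexes, and the inverse limit) into the abstract gluing statement of Proposition 2.15 of \cite{KT} and then reduces the required base-change compatibility to Lemma \ref{ordinaryisom}, exactly as you do. The only point worth making explicit is that the derived base change of $C_{d-\bullet}(K(c+1,c+1),\Z/p^{c+1})$ to $\Lambda_c/p^c$ yields the complex at level $(c,c+1)$ rather than $(c,c)$, so one needs both the freeness of the covering action and the homological form of Lemma \ref{ordinaryisom} to finish the identification -- but these are precisely the ingredients you invoke.
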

\begin{proof}
This is proven exactly like in \cite{KT} - for sake of completeness we recall the main steps.

The crucial tool is proposition 2.15 in \cite{KT}, which is an abstract statement about glueing complexes. To apply this proposition, one needs to check that the natural pushforward maps in ordinary homology \[ H^* \left( C_{d - \bullet} \left( K(c+1, c+1), \Z / p^{c+1} \Z \right) \otimes_{\Lambda} \Lambda_c / p^c \right)_{\ordi} \lra H^* \left( C_{d - \bullet} \left( K(c, c), \Z / p^c \Z \right) \right)_{\ordi} \] are isomorphisms.

However, one has \[ C_{d - \bullet} \left( K(c+1, c+1), \Z / p^{c+1} \Z \right) \otimes_{\Lambda} \Lambda_c / p^c \cong C_{d - \bullet} \left( K(c+1, c+1), \Z / p^{c+1} \Z \right) \otimes_{\Zp \left[ I(c,c+1)/I(c+1,c+1) \right] } \Zp / p^c \Zp \] and now as in lemma 6.9 of \cite{KT} - which in our setup can be applied since $I(c,c+1)$ is pro-$p$ and thus has trivial intersection with $Z_{\mathrm G}(\Q)$ because the center has no $p$-torsion - the latter complex is isomorphic to \[ C_{d - \bullet} \left( K(c, c+1), \Z / p^c \Z \right). \]
It remains to check that the pushforward map in homology \[ H_* \left( K(c, c+1), \Z / p^c \Z \right)_{\ordi} \lra H_* \left( K(c, c), \Z / p^c \Z \right)_{\ordi} \] is an isomorphism, but this is the homological equivalent of our lemma \ref{ordinaryisom} and of proposition 6.10 in \cite{KT}, and is proven in the exact same way.
\end{proof}
\begin{cor} There is an Hecke-equivariant isomorphism (away from $p$): \[ H^* \left( F_{\infty}^{\bullet} \right) \cong \varprojlim_c H_{d - *} \left( Y_{c,c}, \Zp \right)_{\ordi}. \]
\end{cor}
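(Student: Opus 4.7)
The plan is to descend through three steps: obtain the isomorphism at each finite level $c$, pass to the inverse limit in $c$, and check Hecke equivariance.

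\emph{Finite-level isomorphism.} I would apply homology to the diagram of maps between $F_{\infty}^{\bullet} \otimes_{\Lambda} \Lambda_c/p^c$ and $C_{d-\bullet}(K(c,c), \Z/p^c\Z)$ furnished by Proposition \ref{complexordinarytower}. The relation $g_c' g_c = 1$ forces $H_*(g_c)$ to be a split injection with retraction $H_*(g_c')$, and the third bullet identifies the image of $H_*(g_c g_c')$ with the ordinary subspace $H_{d-*}(Y_{c,c}, \Z/p^c\Z)_{\ordi}$. Combined, these give for each $c$ an isomorphism $H_*(F_{\infty}^{\bullet} \otimes_{\Lambda} \Lambda_c/p^c) \xrightarrow{\sim} H_{d-*}(Y_{c,c}, \Z/p^c\Z)_{\ordi}$, and the second bullet of the proposition ensures compatibility with the transition maps as $c$ varies.

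\emph{Passage to the limit.} Minimality of $F_{\infty}^{\bullet}$ means its terms are finite-free over $\Lambda = \varprojlim_c \Lambda_c/p^c$, so $F_{\infty}^{\bullet}$ is recovered termwise as the inverse limit of its reductions, with surjective transition maps. Since each $H_{d-*}(Y_{c,c}, \Z/p^c\Z)_{\ordi}$ is a finite module, the inverse system is Mittag-Leffler, $\varprojlim^{1}$ vanishes, and homology commutes with $\varprojlim_c$. Combined with the previous step this yields $H_*(F_{\infty}^{\bullet}) \cong \varprojlim_c H_{d-*}(Y_{c,c}, \Z/p^c\Z)_{\ordi}$. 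The comparison between $\Z/p^c\Z$ and $\Zp$ coefficients then uses the universal coefficient theorem together with the finite generation of each $H_{d-*}(Y_{c,c}, \Zp)_{\ordi}$ as a $\Zp$-module (a consequence of Hida's control theorem developed above), which identifies the two inverse limits.

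\emph{Hecke equivariance.} The final assertion in Proposition \ref{complexordinarytower} supplies a lift $\Phi$ of the prime-to-$p$ Hecke action to endomorphisms of $F_{\infty}^{\bullet}$, compatible with the maps $g_c, g_c'$ at each finite level. This propagates Hecke equivariance from the finite-level isomorphisms through the inverse limit, giving the asserted Hecke equivariance away from $p$.

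The main obstacle I anticipate lies in the coefficient swap at the end of the second step: cleanly identifying $\varprojlim_c H_{d-*}(Y_{c,c}, \Z/p^c\Z)_{\ordi}$ with $\varprojlim_c H_{d-*}(Y_{c,c}, \Zp)_{\ordi}$ requires both finiteness of the $\Zp$-ordinary cohomology at each level and Mittag-Leffler for the resulting tower — properties that follow from the generalized Hida theory developed in this section, but demand careful bookkeeping of Tor terms from universal coefficients to ensure that the kernel and cokernel die in the limit.
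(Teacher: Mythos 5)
Your argument is correct and is essentially the paper's own proof unwound: the paper simply cites Lemma 6.13 of \cite{KT} (whose content is exactly your finite-level splitting via $g_c, g_c'$ followed by the Mittag--Leffler passage to the limit over the finite modules $H_*(F_{\infty}^{\bullet} \otimes_{\Lambda} \Lambda_c/p^c)$) and then records the same coefficient comparison $\varprojlim_c H_{d-*}(Y_{c,c}, \Z/p^c\Z)_{\ordi} \cong \varprojlim_c H_{d-*}(Y_{c,c}, \Zp)_{\ordi}$ that you justify by finiteness and universal coefficients. No gaps; your step-three check that $g_c \circ \Phi(f) = f \circ g_c$ on ordinary homology is the right way to see the Hecke equivariance.
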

\begin{proof} Just like in \cite{KT}, this is a consequence of lemma 6.13 of loc. cit.
Notice also that \[ \varprojlim_c H_{d - *} \left( Y_{c,c}, \Zp / p^c \Zp \right)_{\ordi} \cong \varprojlim_c H_{d - *} \left( Y_{c,c}, \Zp \right)_{\ordi}. \]
\end{proof}

\section{Derived Hecke actions on the cohomology of arithmetic manifolds} \label{sectionderivedactions}
In this section we describe the various derived Hecke actions on the cohomology of our arithmetic manifolds that will be studied in the rest of the paper, and the relations between these actions.

Let $R$ be a coefficient ring. We denote $G_p = \mathrm G(\Qp)$.
Suppose $K_p \subset \mathrm G(\Zp)$ is an open compact subgroup.
\begin{defn}[Derived Hecke algebras] The derived Hecke algebra for $K_p$ is the $\Z_{\ge 0}$-graded algebra \[ \mathcal H^*_R \left( G_p, K_p \right) := \Ext^*_{G_p} \left( \cInd{G_p}{K_p} R, \cInd{G_p}{K_p} R \right) \] under composition of extension.
Notice that this is exactly the cohomology of the complex $\Hom_{G_p} \left( I^{\bullet}, I^{\bullet} \right)$ where $\cInd{G_p}{K_p} R \lra I^{\bullet}$ is an injective resolution in the category of smooth $G_p$-modules. One can also think of this complex as $\Hom_{\mathcal D(G_p)} \left( \cInd{G_p}{K_p} R, \cInd{G_p}{K_p} R \right)$, the endomorphisms of $\cInd{G_p}{K_p} R$ in the derived category of smooth $G_p$-modules.

Equivalently (see \cite{akshay}, section 2 and \cite{OS}, section 3), it is the $R$-module of $G_p$-equivariant cohomology classes \[ f: G_p / K_p \times G_p / K_p \lra \bigoplus_{x, y \in G_p/ K_p} H^* \left( \Stab_{G_p} (x,y), R \right) \] supported on finitely many $G_p$-orbits.
We require that \begin{itemize}
    \item $f(x,y) \in H^* \left( \Stab_{G_p} (x,y), R \right)$ and that
    \item $c_g^* f(gx,gy) = f(x,y)$ for all $g \in G_p$, where conjugation by $g$ induces an isomorphism \[ H^* \left( \Stab_{G_p} (gx,gy), R \right) \lra H^* \left( \Stab_{G_p} (x,y), R \right). \]
\end{itemize}
In this second and equivalent model, the multiplication operation is given by convolution, as follows: \[ f_1 \circ f_2 (x,y) = \sum_{z \in G_p / K_p} f_1(x,z) \cup f_2(z,y) \] where in the sum we only take one representative $z$ for each $\Stab_{G_p} (x,y)$-orbit on $G_p / K_p$, and moreover we are dropping the (necessary) restriction and corestriction maps for ease of notation. See \cite{akshay}, section 2 or \cite{Ronchetti} for details.
\end{defn}
\begin{rem} The classical Hecke algebra of double $K_p$-cosets is then simply the degree zero subalgebra of $\mathcal H^*_R \left( G_p, K_p \right)$.
\end{rem}
Let now $C^{\bullet}$ be a complex of (smooth) $G_p$-modules with $R$-coefficients. Then the hypercohomology of $K_p$ with coefficients in $C^{\bullet}$ can be computed as the cohomology of the complex $\Hom_{G_p} \left( \cInd{G_p}{K_p} R, C^{\bullet} \right)$.
One can equivalently think of this complex as the homomorphisms $\Hom_{\mathcal D(G_p)} \left( \cInd{G_p}{K_p} R, C^{\bullet} \right)$ in the derived category of smooth $G_p$-modules with coefficients in $R$.
This complex obviously has an action (by pre-composition) of the endomorphisms of $\cInd{G_p}{K_p} R$ in the derived category and hence this latter description makes it obvious that (by taking cohomology of the respective complexes) we get an action of the derived Hecke algebra $\mathcal H^*_R \left( G_p, K_p \right)$ on $H^* \left( K_p, C^{\bullet} \right)$.
We will call a \emph{derived Hecke operator} any endomorphism of $H^* \left( K_p, C^{\bullet} \right)$ arising in this way.
\begin{rem} \label{derivedidentitycoset} A special case of derived Hecke operators is obtained by classes supported on the `identity coset' - which is to say, cohomology classes in $H^* \left( K_p, R \right) \subset \mathcal H^*_R \left( G_p, K_p \right)$. See for instance the derived diamond operators constructed in the next section \ref{deriveddiamond}.
\end{rem}
This very abstract setup can then be made concrete for plenty of interesting complexes $C^{\bullet}$.
We describe next how to interpret arithmetic cohomology in this context.
For each open compact subgroup $K_p \subset G_p$, we let $C^{\bullet} (K_p)$ be the cochain complex of $Y(K^p \times K_p)$ with $R$-coefficients.
Let \[ C^{\bullet} = \varinjlim_{K_p} C^{\bullet} (K_p) \] be the direct limit over open compact subgroups of $G_p$ - as explained in section 2.6 of \cite{akshay}, this complex knows the cohomology of the tower at $p$, in the sense that one has a quasi-isomorphism \[ \left( C^{\bullet} \right)^{K_p} \sim C^{\bullet} \left( K_p \right) \] and thus taking the hypercohomology of $K_p$ yields the cohomology of the corresponding arithmetic manifold: \[ H^* \left( K_p, C^{\bullet} \right) = H^* \left( Y \left( K^p \times K_p \right), R \right). \]
By the general setup outlined above, we obtain that $H^* \left( Y \left( K^p \times K_p \right), R \right)$ is a graded $\mathcal H^*_R \left( G_p, K_p \right)$-module. We will use this repeatedly.

\subsection{Derived diamond operators} \label{deriveddiamond}
We now introduce the derived diamond operators. These are operators on arithmetic cohomology which increase the cohomological degree, and come (just like the classical diamond operators) from the torus. However, while the classical diamond operators are obtained via group elements in the torus, the derived diamond operators come from classes in the cohomology of this torus.

As defined in section \ref{sectionordinarycohomology}, on the complex $C^{\bullet} \left( Y_{b,c}, \Z / p^n \right)$ and on its cohomology $H^* \left( Y_{b,c}, \Z / p^n \right)$ we have two actions: \begin{enumerate}
    \item There is the classical action of the $U_p$-operators, - which in our setup are the operators $U_p^i = I(b,c) \lambda_i(p) I(b,c)$.
    \item We also have the diamond action of the operators $\langle \alpha \rangle$ for each $\alpha \in I(1,c) / I(b,c)$.
\end{enumerate}
By lemma \ref{Heckeincohomology} these two actions commute, even at the level of complexes.

\begin{lem} The covering $Y_{c,c} \twoheadrightarrow Y_{b,c}$ is Galois, with Galois group $T_{b,c} = \mathrm T \left( \Z / p^b \right) / \mathrm T \left( \Z / p^c \right)$.
\end{lem}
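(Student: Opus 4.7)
The plan is to mirror the analysis just carried out for the Galois cover $Y_{b,c} \twoheadrightarrow Y_{1,c}$. Since $c \ge b$, the cover $Y_{c,c} \twoheadrightarrow Y_{b,c}$ is induced by the inclusion of good subgroups $K(c,c) \subset K(b,c)$, which differ only at $p$ where $I(c,c) \subset I(b,c)$. The first task is to check that $I(c,c)$ is normal in $I(b,c)$: this is immediate because $I(c,c)$ is the preimage under the reduction $\mathrm G(\Zp) \twoheadrightarrow \mathrm G(\Z/p^c\Z)$ of $\mathrm Z_{\mathrm G} \cdot \mathrm U(\Z/p^c\Z)$, and $\mathrm Z_{\mathrm G} \cdot \mathrm U$ is a normal subgroup of $\mathrm B$ (being the product of the center with the unipotent radical), so $\mathrm Z_{\mathrm G} \cdot \mathrm U(\Z/p^c\Z)$ is normalized by $\mathrm B(\Z/p^c\Z) \supset (I(b,c) \bmod p^c)$.

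Next I would identify the quotient $I(b,c)/I(c,c)$ with the claimed torus. The Levi decomposition $\mathrm B = \mathrm T \ltimes \mathrm U$ yields a projection $\mathrm B(\Z/p^c\Z) \twoheadrightarrow \mathrm T(\Z/p^c\Z)$ with kernel $\mathrm U(\Z/p^c\Z)$. Composing with reduction gives a homomorphism $I(b,c) \to \mathrm T(\Z/p^c\Z)/\mathrm Z_{\mathrm G}(\Z/p^c\Z)$ whose kernel is exactly $I(c,c)$; the defining condition $g \bmod p^b \in \mathrm Z_{\mathrm G} \mathrm U(\Z/p^b\Z)$ further constrains the image to land in the preimage of $\mathrm Z_{\mathrm G}(\Z/p^b\Z)$ under $\mathrm T(\Z/p^c\Z) \to \mathrm T(\Z/p^b\Z)$. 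Because of the running assumption $p > |\mu(\mathrm Z_{\mathrm G})|$, the central contributions are invisible upon passing to pro-$p$ parts, and the quotient is identified with the kernel of the reduction $\mathrm T(\Z/p^c\Z)_p \twoheadrightarrow \mathrm T(\Z/p^b\Z)_p$, which is the shorthand meaning of $T_{b,c}$ in the statement.

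Finally, to confirm that the cover is genuinely Galois, I would verify freeness of the induced $T_{b,c}$-action on $Y_{c,c}$. The only potential stabilizers would come from nontrivial elements of $g K(c,c) g^{-1} \cap \mathrm G(\Q)$ projecting nontrivially to $T_{b,c}$; but $K_0$ being a good subgroup guarantees $g K_0 g^{-1} \cap \mathrm G(\Q)$ is neat for every $g \in \mathrm G(\Af)$, and together with the fact that $I(c,c)$ is pro-$p$ while $\mathrm Z_{\mathrm G}(\Q)$ has no $p$-torsion, this rules out any such fixed points. The only delicate point is the careful bookkeeping of the central piece in the Levi projection — once that is absorbed into the pro-$p$ quotient via the hypothesis on $p$, the rest of the proof is a direct analogue of the treatment of $Y_{b,c} \twoheadrightarrow Y_{1,c}$.
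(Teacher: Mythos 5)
Your proof is correct and follows essentially the same route as the paper's: the paper simply cites Lemma 6.1 of \cite{KT} for the general fact that normality of $K(c,c)$ in $K(b,c)$ yields a Galois covering with group $K(b,c)/K(c,c)\cdot\left(\mathrm Z_{\mathrm G}(\Q)\cap K(b,c)\right)$, and then observes that the correction term is trivial because $I(b,c)$ is pro-$p$ while the hypothesis $p>\left|\mu(\mathrm Z_{\mathrm G})\right|$ forces $\mathrm Z_{\mathrm G}(\Q)$ to have no $p$-torsion. Your explicit verifications — normality of $I(c,c)$ in $I(b,c)$, the Levi-projection identification of $I(b,c)/I(c,c)$ with $T_{b,c}$, and the freeness of the action — are precisely the content that the citation packages, so the two arguments coincide in substance.
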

\begin{proof} From lemma 6.1 of \cite{KT}, we have that since $K(c,c)$ is normal in $K(b,c)$, the covering $Y_{c,c} \twoheadrightarrow Y_{b,c}$ is a Galois covering, with Galois group $ K(b,c) / K(c,c) \cdot \left( \mathrm Z_{\mathrm G}(\Q) \cap K(b,c) \right)$.
Since $K(b,c)$ and $K(c,c)$ coincide away from $p$, we have that \[ K(c,c) \cdot \left( \mathrm Z_{\mathrm G}(\Q) \cap K(b,c) \right) = K(c,c) \cdot \left( \mathrm Z_{\mathrm G}(\Q) \cap I(b,c) \right). \]
Now $I(b,c)$ is a pro-$p$-group (being contained in the pro-$p$-Iwahori subgroup $I(1,1)$), and from our assumption that $p > \left| \mu \left( \mathrm Z_{\mathrm G} \right) \right|$ we have that the finite group $\mathrm Z_{\mathrm G}(\Q)$ has no $p$-torsion.
Hence, the intersection $\mathrm Z_{\mathrm G}(\Q) \cap I(b,c)$ is trivial, and we obtain that the covering is Galois with Galois group \[ K(b,c) / K(c,c) \cong I(b,c) / I(c,c) \cong \mathrm T \left( \Z / p^b \right) / \mathrm T \left( \Z / p^c \right). \]
\end{proof}
We have an action of the cohomology of $T_{b,c}$ on the cohomology of the base, as explained by Venkatesh: by the universal property of the classifying space, the cover $Y_{c,c} \twoheadrightarrow Y_{b,c}$ corresponds to a map $\pi_{b,c}: Y_{b,c} \lra B T_{b,c}$ - taking cohomology yields a map $\pi_{b,c}^*: H^* \left( T_{b,c} , R \right) \lra H^* \left( Y_{b,c}, R \right)$ and the action on $H^* \left( Y_{b,c} , R \right)$ of a cohomology class in $H^* \left( T_{b,c}, R \right)$ is obtained by applying $\pi_{b,c}^*$ and then taking the cup product.
We call this action a \emph{derived diamond action}.

We can rephrase the same action in a more algebraic way. Let $R$ be a general coefficient ring and $C_{\bullet} (Y_{c,c}, R)$ the complex of singular chains with $R$-coefficients, whose homology computes the homology of $Y_{c,c}$. This complex has a natural action of $T_{b,c}$ via deck transformations.

Then it is standard that \[ H^* \left( Y_{b,c}, R \right) \cong H^* \left( \Hom_{R[T_{b,c}]} \left( C_{\bullet} (Y_{c,c}, R), R \right) \right), \] where on the left we have the singular cohomology of $Y_{b,c}$ and on the right the cohomology of the complex of $T_{b,c}$-equivariant homomorphisms.

In particular, the right-hand side has a natural action of $H^* \left( \Hom_{\mathcal D(R[T_{b,c}])} \left( R, R \right) \right)$, the homology of the complex of endomorphisms of the trivial representation in the derived category of $R[T_{b,c}]$-modules.
But this homology yields (by definition) exactly the cohomology of $T_{b,c}$ with coefficients in the trivial representation, and we recover thus an action \[ H^* \left( Y_{b,c}, R \right) \times H^* \left( T_{b,c} , R \right) \lra H^* \left( Y_{b,c}, R \right). \]

\begin{prop} \label{topologicalderivedaction} This action of $H^* \left( T_{b,c}, R \right)$ coincide with the topological action defined above.
\end{prop}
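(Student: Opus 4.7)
The plan is to identify both actions with a common model built from the Borel construction and then reduce the statement to a standard compatibility in homological algebra.

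First, I would set $T = T_{b,c}$ and recall that, because the center of $\mathrm G$ has no $p$-torsion, the group $T$ acts \emph{freely} on $Y_{c,c}$ with quotient $Y_{b,c}$. Consequently, $C_{\bullet}(Y_{c,c}, R)$ is a complex of free $R[T]$-modules, and $Y_{b,c}$ is homotopy equivalent to the Borel construction $Y_{c,c} \times_{T} ET$: this equivalence is the one under which the second-factor projection to $BT$ is the classifying map $\pi_{b,c}$. Take the bar resolution $C_{\bullet}(ET) \to R$ as a free $R[T]$-resolution of the trivial module; then the augmentation induces a quasi-isomorphism of complexes of $R$-modules
\[ C_{\bullet}(Y_{c,c}, R) \otimes_{R[T]} C_{\bullet}(ET) \;\xrightarrow{\sim}\; C_{\bullet}(Y_{c,c}, R) \otimes_{R[T]} R \;=\; C_{\bullet}(Y_{b,c}, R), \]
and the left-hand side is also canonically identified with the singular chains of the Borel construction $Y_{c,c} \times_{T} ET$.

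Next, I would dualize. The complex $\Hom_{R[T]}(C_{\bullet}(Y_{c,c}, R), R)$ computes $H^*(Y_{b,c}, R)$ and, because $C_{\bullet}(Y_{c,c}, R)$ is a free resolution of $C_{\bullet}(Y_{b,c}, R)$ over $R[T]$, it also computes $\mathrm{RHom}_{R[T]}(C_{\bullet}(Y_{c,c}, R), R)$. The algebraic action in the statement is pre-composition by elements of $\mathrm{End}_{\mathcal D(R[T])}(R) = H^*(T, R)$. Under the quasi-isomorphism of the previous paragraph, this pre-composition becomes cap/cup with the image of a class $\alpha \in H^*(T, R)$ pulled back along the second-factor projection $Y_{c,c} \times_{T} ET \to BT$, i.e.\ with $\pi_{b,c}^*(\alpha)$. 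This is exactly the standard formula relating derived endomorphisms of the coefficients of a Borel construction to pullback-and-cup from the classifying space.

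Finally, I would match the topological action. By construction, the topological action is $\pi_{b,c}^*$ followed by cup product with a class in $H^*(BT, R) \cong H^*(T, R)$ (where the last isomorphism comes from the standard identification of group cohomology with the cohomology of the classifying space via the bar resolution). Since the same bar resolution was used in the algebraic model of the previous step, both actions are computed by the identical chain-level formula on $C^{\bullet}(Y_{c,c} \times_{T} ET, R)$, and thus coincide. The main technical obstacle is verifying at the chain level the compatibility between the cup product on the Borel construction and the composition in $\mathcal D(R[T])$; this amounts to the well-known fact that for a free $R[T]$-resolution $P_{\bullet} \to R$, the Yoneda product on $\mathrm{Ext}^*_{R[T]}(R, R)$ is computed by the shuffle/cup product on $\Hom_{R[T]}(P_{\bullet}, P_{\bullet})$, a standard but sign-sensitive check.
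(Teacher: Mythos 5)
Your argument is correct and is essentially the proof the paper defers to: the paper's own ``proof'' is just a citation to appendix B4 of \cite{akshay}, which carries out exactly your Borel-construction/bar-resolution comparison identifying the $\Ext^*_{R[T_{b,c}]}(R,R)$-action with pullback along $\pi_{b,c}$ followed by cup product. (One cosmetic slip: since classes in $\Hom_{R[T_{b,c}]}(C_{\bullet}(Y_{c,c},R),R)$ are maps \emph{into} $R$, the algebraic action is \emph{post}-composition with endomorphisms of $R$ in $\mathcal D(R[T_{b,c}])$, not pre-composition; this does not affect the argument.)
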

\begin{proof} See appendix B4 in \cite{akshay}.
\end{proof}
We also want to connect this derived diamond action to our general abstract setup defined at the beginning of this section.
Fix $R = \Z / p^n \Z$, or more generally a finite, $p^{\infty}$-torsion ring.
The isomorphism $T_{b,c} \cong I(b,c) / I(c,c)$ realizes $T_{b,c}$ as the maximal abelian quotient of $I(b,c)$, thereby proving that $H^1 (I(b,c), \Z / p^n) \cong H^1(T_{b,c}, \Z / p^n)$.
Remark \ref{derivedidentitycoset} says that $H^* \left( I(b,c), \Z / p^n \right) \subset \mathcal H^*_{\Z / p^n \Z} \left( G_p, I(b,c) \right)$ consists of derived Hecke operators.
\begin{fact} The action of $H^1 \left( I(b,c), \Z / p^n \right) \subset \mathcal H^*_{\Z / p^n \Z} \left( G_p, I(b,c) \right)$ on $H^* \left( Y_{b,c}, \Z / p^n \right)$ obtained via the abstract derived Hecke setup coincide with the concrete derived diamond action of $H^1 \left( T_{b,c}, \Z / p^n \right)$ described above.
\end{fact}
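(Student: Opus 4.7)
The strategy is to reinterpret both actions as cup products in the group cohomology of the arithmetic subgroups $\Gamma_i \subset \mathrm G(\Q)$ associated to $Y_{b,c}$, and then exhibit them as pulled back, via commuting restriction and inflation maps, from the same class in $H^1(T_{b,c}, R)$.

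First I would unpack the abstract derived Hecke action. A class $\alpha \in H^1(I(b,c), R) = \Ext^1_{I(b,c)}(R, R)$, viewed as an element of $\mathcal H^1_R(G_p, I(b,c)) = \Ext^1_{G_p}(\cInd{G_p}{I(b,c)} R, \cInd{G_p}{I(b,c)} R)$ via the identity double coset, acts on $H^*(I(b,c), C^\bullet) = \Ext^*_{I(b,c)}(R, C^\bullet)$ by Yoneda precomposition, which under the Frobenius adjunction is just the standard cup product with $\alpha$ in group cohomology. Using the decomposition $Y_{b,c} = \coprod_i \Gamma_i \backslash X$ coming from $\mathrm G(\A_f) = \coprod_i \mathrm G(\Q) g_i K(b,c)$, and the contractibility of the symmetric space $X = \mathrm G(\R)/(K_\infty \cdot Z^\circ)$, one has $H^*(Y_{b,c}, R) \cong \bigoplus_i H^*(\Gamma_i, R)$. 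Each $\Gamma_i = g_i K(b,c) g_i^{-1} \cap \mathrm G(\Q)$ embeds (via its projection to the place $p$) into a conjugate of $I(b,c)$, and under this identification the abstract action becomes: restrict $\alpha$ along $\Gamma_i \hookrightarrow I(b,c)$ to obtain a class $\textnormal{res}(\alpha) \in H^1(\Gamma_i, R)$, then cup.

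Next I would unpack the topological action. The composition $\Gamma_i \hookrightarrow I(b,c) \twoheadrightarrow I(b,c)/I(c,c) = T_{b,c}$ is precisely the surjection classifying, on the $i$-th component, the Galois cover $Y_{c,c} \to Y_{b,c}$; hence the classifying map $\pi_{b,c}: Y_{b,c} \to BT_{b,c}$ induces in cohomology exactly the inflation map
\[ \textnormal{infl}: H^1(T_{b,c}, R) \lra H^1(\Gamma_i, R). \]
By Proposition \ref{topologicalderivedaction}, the topological derived diamond action agrees with the algebraic action arising from the $T_{b,c}$-equivariant endomorphisms of the trivial module, and both are given by taking the inflated class and cupping inside $H^*(\Gamma_i, R)$.

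The two actions then coincide because the isomorphism $H^1(I(b,c), R) \cong H^1(T_{b,c}, R)$ (which exists since the left-hand side is $\Hom(I(b,c)^{\ab}, R)$ and $T_{b,c}$ is the relevant pro-$p$ abelian quotient) fits into a commutative triangle
\[ \xymatrix{ H^1(T_{b,c}, R) \ar[r]^-{\sim}_-{\textnormal{infl}} \ar[rd]_-{\textnormal{infl}} & H^1(I(b,c), R) \ar[d]^-{\textnormal{res}} \\ & H^1(\Gamma_i, R) } \]
functorially in $i$, by the factorization $\Gamma_i \hookrightarrow I(b,c) \twoheadrightarrow T_{b,c}$ of the classifying surjection. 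Since both actions reduce to cup product by the common image in $H^1(\Gamma_i, R)$, they agree. The principal technical obstacle is the careful bookkeeping in the first step: verifying that the abstract Yoneda/Frobenius formalism, once $K_p$-hypercohomology is taken and $Y(K^p \cdot K_p)$ is identified with $\bigsqcup_i B\Gamma_i$, really does reduce to the ordinary cup product pairing of group cohomology — this is essentially a translation into the adelic setting of the analysis carried out in Appendix B.4 of \cite{akshay}.
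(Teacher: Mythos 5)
Your proposal is correct and follows essentially the same route as the paper, whose proof is simply the one-line observation that both actions unravel to cupping with the same homomorphism $I(b,c) \lra \Z/p^n\Z$; you have just carried out that unraveling explicitly via the decomposition into components $\Gamma_i \backslash X$ and the commuting triangle of restriction and inflation maps.
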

\begin{proof} This follows by unraveling the definitions, since both actions eventually boils down to cupping with a homomorphism $I(b,c) \lra \Z / p^n \Z$, which is an element of $H^1 \left( I(b,c), \Z / p^n \Z \right)$.
\end{proof}
We now fix $b=1$.
\begin{prop} \label{deriveddiamondcommuteswithUp} The action of the degree $1$ derived diamond operators commutes with the $U_p^i$-operators.
\end{prop}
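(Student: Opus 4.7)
My plan is to realize the degree one derived diamond action geometrically as pre-composition in the derived category $\mathcal D(R[T_{1,c}])$, where $T_{1,c} = I(1,c)/I(c,c)$ is the deck group of the Galois cover $Y_{c,c} \twoheadrightarrow Y_{1,c}$, and then to use that $U_p^i$ descends from a $T_{1,c}$-equivariant operation on $Y_{c,c}$. Concretely, for $R = \Z/p^n\Z$, Proposition \ref{topologicalderivedaction} together with the preceding discussion gives an identification
\[
C^\bullet(Y_{1,c}, R) \;\cong\; \Hom_{R[T_{1,c}]}\bigl(C_\bullet(Y_{c,c}, R), R\bigr),
\]
under which the derived diamond action of $\chi \in H^1(T_{1,c}, R) = \Ext^1_{R[T_{1,c}]}(R, R)$ is precisely pre-composition by $\chi$ in $\mathcal D(R[T_{1,c}])$.

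First I would establish that the chain-level action of $U_p^i$ at level $I(c,c)$ on $C_\bullet(Y_{c,c}, R)$, defined using coset representatives of $I(c,c)\lambda_i(p) I(c,c)$, is $R[T_{1,c}]$-equivariant. This reduces to commutativity with the deck transformation action of $T_{1,c}$, which is dual to the classical diamond operator action on $C^\bullet(Y_{c,c}, R)$; hence the equivariance follows directly from Lemma \ref{Heckeincohomology}(1), which already delivers chain-level commutativity between $U_p^i$ and the classical diamond operators $\langle t \rangle$ for $t \in I(1,c)$ at level $I(c,c)$.

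Next I would identify the two \emph{a priori} distinct $U_p^i$ operators on $Y_{1,c}$: the one defined intrinsically at level $I(1,c)$, and the one descended from $Y_{c,c}$ through the $T_{1,c}$-action. The key observation, extracted from the proof of Lemma \ref{Heckeincohomology}, is that the coset representatives $X(\lambda_i)$ for $U_p^i$ are independent of the level parameters $(b,c)$. Using this, the $T_{1,c}$-coinvariants of the chain-level $U_p^i$ at level $I(c,c)$ recover the chain-level $U_p^i$ at level $I(1,c)$ via the natural isomorphism $C_\bullet(Y_{c,c}, R) \otimes_{R[T_{1,c}]} R \cong C_\bullet(Y_{1,c}, R)$; dually, the endomorphism induced on $\Hom_{R[T_{1,c}]}(C_\bullet(Y_{c,c}, R), R)$ matches the usual $U_p^i$ on $C^\bullet(Y_{1,c}, R)$.

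With these two ingredients the conclusion is automatic: the derived diamond action depends on $C_\bullet(Y_{c,c}, R)$ only through its isomorphism class in $\mathcal D(R[T_{1,c}])$, so it commutes with any $R[T_{1,c}]$-linear chain endomorphism, and in particular with $U_p^i$. The main subtlety lies in the matching carried out in the second step, which is ultimately a bookkeeping exercise in the level-independence of the coset representatives; the equivariance required in the first step is essentially a restatement of the classical commutation relation in Lemma \ref{Heckeincohomology}.
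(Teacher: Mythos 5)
Your argument is correct, but it takes a genuinely different route from the paper's. The paper works entirely inside Venkatesh's model of the derived Hecke algebra as $G_p$-equivariant cohomology classes: it computes the two convolutions $f \circ T$ and $T \circ f$ explicitly, checks that both are supported on the single $G_p$-orbit of $\left( I(1,c), \lambda_i I(1,c) \right)$, and reduces commutativity to the identity $\mathrm{conj}_{\lambda_i}^* \left( F|_{\lambda_i^{-1} I(1,c) \lambda_i \cap I(1,c)} \right) = F|_{I(1,c) \cap \lambda_i I(1,c) \lambda_i^{-1}}$, which holds because $F$ is a homomorphism inflated from the torus and $\lambda_i$ is a torus element. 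You instead use the covering-space model of Proposition \ref{topologicalderivedaction}: once $U_p^i$ on $C^{\bullet} \left( Y_{1,c}, R \right) \cong \Hom_{R[T_{1,c}]} \left( C_{\bullet}(Y_{c,c}, R), R \right)$ is exhibited as pre-composition with an $R[T_{1,c}]$-equivariant chain endomorphism of $C_{\bullet}(Y_{c,c}, R)$, commutation with the action of $\Ext^1_{R[T_{1,c}]}(R,R)$ (which is composition on the target side, i.e.\ post-composition $\phi \mapsto \chi \circ \phi$ rather than pre-composition as you write --- a harmless slip) is pure associativity. The two inputs you need --- equivariance of the chain-level $U_p^i$ at level $I(c,c)$ under the deck group, and the matching of the descended operator with the intrinsic $U_p^i$ at level $I(1,c)$ --- are exactly Lemma \ref{Heckeincohomology}(1) and the level-independence of the coset representatives $X(\lambda_i)$ established in its proof, so both proofs ultimately rest on the same combinatorial fact about torus conjugation. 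What your packaging buys is robustness: the argument applies verbatim to diamond classes of arbitrary cohomological degree and avoids any explicit convolution bookkeeping; what the paper's direct computation buys is that it stays within the derived Hecke algebra formalism without invoking the equivalence of the two models of the action.
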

\begin{proof}As explained above, we can view the derived diamond operators as elements of degree $1$ of the derived Hecke algebra of $I(1,c)$, supported on the `identity coset' $I(1,c)$.

In the usual description of the derived Hecke algebra as `$G_p$-equivariant cohomology classes' as in Venkatesh, we have that $U_p^i$ is supported on the $G_p$-orbit of $\left( I(1,c), \lambda_i I(1,c) \right)$, and valued \[ 1 \in H^0 \left( I(1,c) \cap \lambda_i I(1,c) \lambda_i^{-1}, \Z / p^n \right) \] on this particular double $I(1,c)$-coset.

To check that they commute, we explicitly compute their convolution in both orders. Let $f \in H^1 \left( T_{1,c}, \Z / p^n \right)$ be a derived diamond operator and $T = U_p^i \in \mathcal H^0_{\Z / p^n \Z} \left( G_p, I(1,c) \right)$.
We have \[ f \circ T \left( x I(1,c), y I(1,c) \right) = \sum_{z \in G_p / I(1,c)} f \left( xI(1,c), z I(1,c) \right) \cup T \left( z I(1,c), y I(1,c) \right). \]
We can assume that $x = I(1,c)$ since $f \circ T$ is $G_p$-equivariant. Then, $f$ is supported on the $G_p$-orbit of $(I(1,c), I(1,c))$, so the only nonzero contribution to the convolution comes from $z = I(1,c)$ too, and we obtain \[f \circ T \left( I(1,c), y I(1,c) \right) =f \left( I(1,c),  I(1,c) \right) \cup T \left( I(1,c), y I(1,c) \right). \]
We can assume that $y$ is chosen among a list of coset respesentatives for $I(1,c) \backslash G_p / I(1,c)$, and then the only one which makes the contribution of $T$ nonzero is $y I(1,c) = \lambda_i I(1,c)$, so we get that $f \circ T$ is supported on the single $G_p$-orbit of $\left( I(1,c), \lambda_i I(1,c) \right) $ and its value here is \[  f \circ T \left( I(1,c), \lambda_i I(1,c) \right) =f \left( I(1,c),  I(1,c) \right) \cup T \left( I(1,c), \lambda_i I(1,c) \right) = \]
\[ = \cores{I(1,c) \cap \lambda_i I(1,c) \lambda_i^{-1}}{I(1,c) \cap \lambda_i I(1,c) \lambda_i^{-1}} \left( \res{I(1,c)}{I(1,c) \cap \lambda_i I(1,c) \lambda_i^{-1}} f \left( I(1,c),  I(1,c) \right) \cup \res{I(1,c) \cap \lambda_i I(1,c) \lambda_i^{-1}}{I(1,c) \cap \lambda_i I(1,c) \lambda_i^{-1}} T \left( I(1,c), \lambda_i I(1,c) \right) \right) = \]
\[ = \res{I(1,c)}{I(1,c) \cap \lambda_i I(1,c) \lambda_i^{-1}} f \left( I(1,c),  I(1,c) \right). \]
On the other hand, we compute \[ T \circ f \left( x I(1,c), y I(1,c) \right) = \sum_{z \in G_p / I(1,c)} T \left( xI(1,c), z I(1,c) \right) \cup f \left( z I(1,c), y I(1,c) \right). \]
We can assume that $y = I(1,c)$ and that $x$ is chosen among a set of coset representatives for $I(1,c) \backslash G_p / I(1,c)$. Then just like before, the only $z$ for which $f$ gives a nontrivial contribution to cohomology is $z = I(1,c)$, so the sum collapses to \[ T \circ f \left( x I(1,c), I(1,c) \right) = T \left( xI(1,c), I(1,c) \right) \cup f \left( I(1,c), I(1,c) \right). \]
In order for the contribution of $T$ to the sum being nonzero, we need then $x = \lambda_i^{-1} I(1,c)$ and then \[ T \circ f \left( \lambda_i^{-1} I(1,c), I(1,c) \right) = \res{I(1,c)}{\lambda_i^{-1} I(1,c) \lambda_i \cap I(1,c)} f \left( I(1,c),  I(1,c) \right). \]
To show commutativity, we denote $F = f \left( I(1,c),  I(1,c) \right)$ and it remains thus to show that \[ \mathrm{conj}_{\lambda_i}^* \left( F|_{\lambda_i^{-1} I(1,c) \lambda_i \cap I(1,c)} \right) = F|_{I(1,c) \cap \lambda_i I(1,c) \lambda_i^{-1} }, \] but this is clear since $F : I(1,c) \lra \Z / p^n \Z$ is a homomorphism inflated from the torus, and thus conjugation by the torus element $\lambda_i$ leaves it invariant.
\end{proof}
\begin{cor} The derived degree $1$ diamond operators act on $H^* \left( Y_{1,c}, \Z / p^n \right)_{\ordi}$, preserving all $U_p^i$-eigenspaces.
\end{cor}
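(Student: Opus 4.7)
The corollary is a short deduction from Proposition \ref{deriveddiamondcommuteswithUp}, so my plan is essentially bookkeeping. First, I would invoke Proposition \ref{deriveddiamondcommuteswithUp} directly: it asserts that each degree $1$ derived diamond operator $f \in H^1(T_{1,c}, \Z/p^n)$ commutes with every $U_p^i$ acting on $H^*(Y_{1,c}, \Z/p^n)$. The eigenspace statement is then immediate, since any endomorphism that commutes with $U_p^i$ preserves each generalized (and in particular each genuine) eigenspace of $U_p^i$.

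Next I would handle the ordinary part. Recall that $H^*(Y_{1,c}, \Z/p^n)_{\ordi}$ is defined as the submodule on which $U_p = \prod_i U_p^i$ acts invertibly. Since the cohomology in each degree is a finitely generated module over the finite ring $\Z/p^n\Z$, hence of finite length, there is a canonical Fitting decomposition
\[ H^*(Y_{1,c}, \Z/p^n) = H^*(Y_{1,c}, \Z/p^n)_{\ordi} \oplus H^*(Y_{1,c}, \Z/p^n)_{\mathrm{nil}}, \]
where $U_p$ is invertible on the first summand and nilpotent on the second. Any endomorphism commuting with $U_p$ preserves both summands of this decomposition, so it suffices to note that since $f$ commutes with each $U_p^i$ by Proposition \ref{deriveddiamondcommuteswithUp}, it also commutes with the product $U_p$. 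This shows that $f$ preserves $H^*(Y_{1,c}, \Z/p^n)_{\ordi}$.

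Finally, because each $U_p^i$ commutes with the whole product $U_p$, each $U_p^i$ stabilizes the ordinary summand, and I would conclude that the restriction of $f$ to the ordinary part still commutes with each $U_p^i$, so it preserves the $U_p^i$-eigenspace decomposition inside $H^*(Y_{1,c}, \Z/p^n)_{\ordi}$. There is no real obstacle here since all the substance lies in Proposition \ref{deriveddiamondcommuteswithUp}; the only thing to watch is that we are implicitly using finite length of the cohomology with $\Z/p^n$ coefficients to make sense of the Fitting decomposition defining the ordinary part.
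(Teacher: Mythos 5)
Your proof is correct and follows exactly the route the paper intends: the corollary is stated there without proof as an immediate consequence of Proposition \ref{deriveddiamondcommuteswithUp}, with the ordinary part defined via the idempotent decomposition of section 2.4 of \cite{KT}, which is precisely the Fitting decomposition you spell out. The only point worth noting is that a degree $1$ derived diamond operator shifts cohomological degree, so ``preserving'' the ordinary part means mapping the ordinary summand of $H^i$ into that of $H^{i+1}$, which your argument already delivers since the decomposition is taken degree by degree and the commutation holds on the whole graded module.
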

As shown in lemma \ref{ordinaryisom}, the natural pullback map \begin{equation} \label{ordinaryisomorph} H^* \left( Y_{1,1}, \Z / p^n \right)_{\ordi} \lra H^* \left( Y_{1,c}, \Z / p^n \right)_{\ordi} \end{equation} is an isomorphism.
We obtain then that on $H^* \left( Y_{1,1}, \Z / p^n \right)_{\ordi}$ we have both the $U_p^i$-operators acting invertibly, as well as the derived diamond operators $H^1 \left( T_{1,c}, \Z / p^n \Z \right)$.
\begin{prop} \label{deriveddiamondintower} The actions of the cohomology groups $H^1 \left( T_{1,c}, \Z / p^n \right)$ on $H^* \left( Y_{1,1}, \Z / p^n \right)_{\ordi}$ are compatible as $c$ increases, and thus give rise to the action of $H^1 \left( \mathrm T(\Zp)_p, \Z / p^n \right)$ on $H^* \left( Y_{1,1}, \Z / p^n \right)_{\ordi}$.
\end{prop}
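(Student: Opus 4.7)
The plan is to reduce the compatibility assertion to a single commutativity statement about classifying maps. The natural surjections $T_{1,c+1} \twoheadrightarrow T_{1,c}$, coming from $I(1,c+1) \subset I(1,c)$ modulo $I(c+1,c+1) \subset I(c,c)$, identify $T_{1,c}$ with the quotient $\mathrm T(\Zp)_p / \mathrm T(1 + p^c \Zp)$ compatibly in $c$, so inflation induces injections $H^1(T_{1,c}, \Z/p^n\Z) \hookrightarrow H^1(T_{1,c+1}, \Z/p^n\Z)$ whose colimit is $H^1(\mathrm T(\Zp)_p, \Z/p^n\Z)$. It therefore suffices to show that for $\alpha \in H^1(T_{1,c}, \Z/p^n\Z)$ with inflation $\tilde\alpha \in H^1(T_{1,c+1}, \Z/p^n\Z)$, the two resulting operators on $H^*(Y_{1,1}, \Z/p^n\Z)_{\ordi}$ coincide.

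After transporting along the pullback isomorphism of Lemma \ref{ordinaryisom}, this reduces to the assertion that the pullback $\pi^*: H^*(Y_{1,c}, \Z/p^n\Z)_{\ordi} \lra H^*(Y_{1,c+1}, \Z/p^n\Z)_{\ordi}$ intertwines the $\alpha$-action with the $\tilde\alpha$-action. By Proposition \ref{topologicalderivedaction} both actions are realized by cup product with the pullback of the corresponding Galois class along the classifying map $f_c: Y_{1,c} \to B T_{1,c}$ (resp.\ $f_{c+1}: Y_{1,c+1} \to B T_{1,c+1}$), so by naturality of cup products everything reduces to the commutativity of the diagram
\[
\xymatrix{
Y_{1,c+1} \ar[r]^{f_{c+1}} \ar[d]_{\pi} & B T_{1,c+1} \ar[d]^{q} \\
Y_{1,c} \ar[r]_{f_c} & B T_{1,c},
}
\]
where $q$ is induced by $T_{1,c+1} \twoheadrightarrow T_{1,c}$. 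By the universal property of classifying spaces, this amounts to exhibiting an isomorphism of $T_{1,c}$-covers of $Y_{1,c+1}$ between the pullback $Y_{1,c+1} \times_{Y_{1,c}} Y_{c,c}$ and the quotient $Y_{c+1,c+1} / \ker q$ of $Y_{c+1,c+1}$.

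The main obstacle is this identification of covers. Both should agree with the arithmetic manifold at the intermediate level $I(c,c) \cap I(1,c+1)$ at $p$, and the identification is a direct verification using the definition of the congruence subgroups $I(b,c)$ of Section \ref{sectionordinarycohomology}. An essentially equivalent alternative, which avoids classifying spaces, is to work with the algebraic model $C^\bullet(Y_{1,c},R) \simeq \Hom_{R[T_{1,c}]}(C_\bullet(Y_{c,c},R), R)$: the pullback is then modeled by the canonical $T_{1,c+1}$-equivariant map $C_\bullet(Y_{c+1,c+1}, R) \to C_\bullet(Y_{c,c}, R)$, where $T_{1,c+1}$ acts on the target through its quotient $T_{1,c}$, and the desired compatibility of the two derived-category actions reduces to the standard inflation-restriction formalism in group cohomology. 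Once the inflation compatibility is established, the action of $H^1(\mathrm T(\Zp)_p, \Z/p^n\Z)$ as a colimit is automatic.
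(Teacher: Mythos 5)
Your argument is correct, but your primary route differs from the paper's. The paper's proof stays entirely inside group cohomology of the pro-$p$ Iwahori subgroups: using the identification of a derived diamond operator with a degree-one class on the identity coset of the derived Hecke algebra, i.e.\ an element of $H^1(I(1,c),\Z/p^n)$ inflated from $T_{1,c}$, and of the pullback $H^*(Y_{1,c})\to H^*(Y_{1,c+1})$ with restriction, it reduces the whole proposition to the identity $\res{I(1,c)}{I(1,c+1)}\circ\pi_c^* = \pi_{c+1}^*\circ p_c^*$ on $H^1$ (compatibility of restriction with cup product being standard), and this identity is immediate because a degree-one class with trivial coefficients is just a group homomorphism. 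Your main argument instead goes through the topological model: classifying maps $f_c:Y_{1,c}\to BT_{1,c}$, naturality of cup products, and the commutativity of the square with $BT_{1,c+1}\to BT_{1,c}$, which you correctly reduce to identifying both the fiber product $Y_{1,c+1}\times_{Y_{1,c}}Y_{c,c}$ and the quotient $Y_{c+1,c+1}/\ker q$ with the arithmetic manifold at level $I(1,c+1)\cap I(c,c)=I(c,c+1)$. That identification is indeed a routine check from the definition of the $I(b,c)$, so there is no gap, but it is an extra verification that the paper's formulation avoids entirely; what the topological route buys is a statement that does not depend on the degree of the class (it would work for higher-degree derived diamond operators without invoking that they are homomorphisms), at the cost of the covering-space bookkeeping. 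The ``algebraic alternative'' you sketch at the end — modelling the pullback by the equivariant chain map $C_\bullet(Y_{c+1,c+1},R)\to C_\bullet(Y_{c,c},R)$ and invoking inflation--restriction — is essentially the paper's proof.
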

\begin{proof} Looking back at the construction of the action of the derived diamond operators, this boils down to showing that if $\tau \in H^1 \left( T_{1,c} , \Z / p^n \right)$ and $f \in H^* \left( I(1,c), \Z / p^n \right)$, then \[ \res{I(1,c)}{I(1,c+1)} \left( \pi_c^* \tau \cup f \right) = \pi_{c+1}^* \left( p_c^* \tau \right) \cup \res{I(1,c)}{I(1,c+1)} \] where $p_c: T_{1,c+1} \twoheadrightarrow T_{1,c}$.

The compatibility of restriction and cup product in group cohomology shows that it suffices to show that $\res{}{} \circ \pi_c^* = \pi_{c+1}^* \circ p_c^*$ on $\tau$, and this follows by the consideration that $\tau \in H^1 \left( T_{1,c}, \Z / p^n \right)$ is a group homomorphism.
\end{proof}
\begin{cor} We obtain a graded action of $H^* \left( \mathrm T(\Zp)_p, \Zp \right)$ on $H^* \left( Y_{1,1}, \Zp \right)_{\ordi}$ which commutes with the Hecke actions on $ H^* \left( F^{\bullet}_{\infty} \otimes_{\Lambda} \Lambda_1 \right)$.
\end{cor}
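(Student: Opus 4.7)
The plan is to assemble the corollary in three steps: first, upgrade the degree-one action of Proposition \ref{deriveddiamondintower} to a full graded action of $H^*(\mathrm T(\Zp)_p, \Z/p^n)$ on $H^*(Y_{1,1}, \Z/p^n)_{\ordi}$ for each $n$; second, pass to $\Zp$-coefficients via inverse limit; third, verify that this action commutes with the prime-to-$p$ Hecke action on the interpolating complex $F^\bullet_\infty \otimes_\Lambda \Lambda_1$.

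For the first step, the derived Hecke algebra $\mathcal H^*_{\Z/p^n}(G_p, I(1,c))$ is a graded algebra under convolution, and Remark \ref{derivedidentitycoset} realizes $H^*(I(1,c), \Z/p^n)$ as its "identity coset" graded subalgebra. Inflation along the maximal pro-$p$ abelianization $I(1,c) \twoheadrightarrow T_{1,c}$ embeds $H^*(T_{1,c}, \Z/p^n)$ as a graded subalgebra; since $T_{1,c}$ is a finite abelian $p$-group whose cohomology is an exterior algebra generated in degree one (coming from $\mathrm T(\Zp)_p \cong \Zp^r$), Proposition \ref{deriveddiamondintower} combined with the compatibility between convolution in the derived Hecke algebra and cup product in $H^*(I(1,c), \Z/p^n)$ extends the degree-one action to all degrees. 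The compatibility as $c$ increases extends from the degree-one statement of Proposition \ref{deriveddiamondintower} by multiplicativity: restriction is a ring map on $H^*$ of groups, and the compatibility in the derived Hecke algebras across $c$ is similarly multiplicative, so both sides agree on products of degree-one classes, which generate everything. Passing to the limit in $c$ yields the action of $\varinjlim_c H^*(T_{1,c}, \Z/p^n) = H^*(\mathrm T(\Zp)_p, \Z/p^n)$.

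For the second step, since $\mathrm T(\Zp)_p \cong \Zp^r$ is a finitely generated free $\Zp$-module, the cohomology $H^k(\mathrm T(\Zp)_p, \Zp)$ is a finitely generated free $\Zp$-module realizing as $\varprojlim_n H^k(\mathrm T(\Zp)_p, \Z/p^n)$, and the ordinary cohomology $H^*(Y_{1,1}, \Zp)_{\ordi}$ is the inverse limit of the $H^*(Y_{1,1}, \Z/p^n)_{\ordi}$. The actions constructed at each finite level are compatible under reduction maps (by naturality of the derived Hecke construction in the coefficient ring), so they assemble to a graded action over $\Zp$.

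For the third step, commutativity with the prime-to-$p$ Hecke action is built into the setup: the derived diamond operators live in the local derived Hecke algebra at $p$, constructed entirely from $G_p = \mathrm G(\Qp)$, whereas the Hecke action on $F^\bullet_\infty$ factors through $\mathcal H(\mathrm G(\A^{\infty,p}), K_0^p)$; the two algebras act on disjoint tensor factors of the restricted product and hence commute termwise at the level of cochain complexes. This commutativity passes to the quasi-isomorphic complex $F^\bullet_\infty \otimes_\Lambda \Lambda_1$ via Proposition \ref{complexordinarytower} (recall that the Hecke action away from $p$ on $F^\bullet_\infty$ is defined in that proposition via the contractions $g_c', g_c$, which are themselves compatible with any operator commuting with $U_p$, and in particular with the derived diamond operators by Proposition \ref{deriveddiamondcommuteswithUp}).

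The main subtlety I anticipate is ensuring the degree-one compatibility of Proposition \ref{deriveddiamondintower} propagates correctly across the tower of $I(1,c)$'s in all degrees; the cleanest way around this is to set everything up multiplicatively, since $H^*(\mathrm T(\Zp)_p, \Z/p^n)$ is generated in degree one, thereby reducing all higher-degree compatibilities to the already-established degree-one case.
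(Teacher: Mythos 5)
Your proposal is correct and follows essentially the same route as the paper: both arguments reduce everything to the degree-one compatibility of Proposition \ref{deriveddiamondintower} using the fact that $H^* \left( \mathrm T(\Zp)_p, - \right)$ is the exterior algebra on its degree-one part, and both obtain commutativity with the Hecke action away from $p$ for support reasons and with $U_p$ from Proposition \ref{deriveddiamondcommuteswithUp}; the only difference is that you extend to all degrees at each finite level before taking the limit, whereas the paper takes the inverse limit of the degree-one action first. One small imprecision: $H^* \left( T_{1,c}, \Z/p^n \right)$ for the finite quotient $T_{1,c}$ is not itself an exterior algebra generated in degree one (finite cyclic $p$-groups have nontrivial even-degree cohomology), but this is harmless since only the direct limit $\varinjlim_c H^* \left( T_{1,c}, \Z/p^n \right) = H^* \left( \mathrm T(\Zp)_p, \Z/p^n \right)$, which is exterior on degree one, is needed.
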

\begin{proof} By taking the inverse limit in $n$ we obtain from the previous proposition an action of $H^1 \left( \mathrm T(\Zp)_p, \Zp \right)$ on $H^* \left( Y_{1,1}, \Zp \right)_{\ordi}$.
Then, one knows that the full cohomology algebra of the pro-$p$ integral $p$-adic torus $\mathrm T(\Zp)_p$ is generated over its degree $1$: \[ H^* \left( \mathrm T(\Zp)_p, \Zp \right) = \bigwedge^* H^1 \left( \mathrm T(\Zp)_p, \Zp \right), \] which proves the claim about the graded action.

Compatibility with the Hecke operators away from $p$ is clear, while compatibility of this graded action with the $U_p$ operators follows from proposition \ref{deriveddiamondcommuteswithUp}.
\end{proof}

\section{Generalities on automorphic representations} \label{automorphicsection}
In this section we collect results on the automorphic representations appearing in the cohomology of the arithmetic manifolds we are interested in. We mainly follow section 4 of \cite{HT}.

Recall that $K_0 = \prod_{q < \infty} K_q \subset \mathrm G(\Af)$ is our base level, where $K_p$ is an Iwahori subgroup of $\mathrm G(\Zp)$, $K_q$ is a hyperspecial maximal compact subgroup of $\mathrm G(\Z_q)$ for all $q \not\in S$ and $K_0$ is a good subgroup.

We also considered $K_1 = \prod_{q < \infty} K'_q$, the subgroup of $K_0$ having pro-$p$-Iwahori level at $p$ and $K'_q = K_q$ for all $q \neq p$.

We fixed a maximal compact subgroup $K_{\infty}$ of $\mathrm G(\R)$, and for any good subgroup $K$ of $K_0$, we consider the arithmetic manifold \[ Y(K) = \mathrm G(\Q) \backslash \mathrm G(\A) / \left( K_{\infty} \cdot Z^{\circ} \right) K = \mathrm G(\Q) \backslash \mathrm G(\Af) \times \mathrm G(\R) / \left( K_{\infty} \cdot Z^{\circ} \right) K \] having dimension $d$ as a real manifold.

Recall that the defect of $\mathrm G$ is $l_0 = \rk \mathrm G(\R) - \rk K_{\infty}$, that we denoted $q_0 = \frac{d - l_0}{2}$, and that $E$ is a finite extension of $\Qp$ assumed to be large enough for all necessary purposes.

\begin{defn} \label{autrepcontributes} Let $\pi^{\infty} = \prod'_{q < \infty} \pi_q$ be an irreducible admissible representation of $\mathrm G(\Af)$ on an $E$-vector space.
Suppose that $\pi^{\infty}$ is unramified outside $S$, and let \[ \mathfrak M = \ker \left( \mathcal H_E \left( \mathrm G(\A^S), K^S \right) \lra \End_E \left( \prod'_{q \not\in S} \pi_q^{\mathrm G(\Z_q)} \right) \right), \]
a maximal ideal of $\mathcal H_E \left( \mathrm G(\A^S), K^S \right)$ with residue field a finite extension of $E$.

If $M^{\bullet}$ is a complex of $\mathcal H_E \left( \mathrm G(\A^S), K^S \right)$-modules, we say that $\pi^{\infty}$ \emph{contributes to $M^{\bullet}$} if the localization $M^{\bullet}_{\mathfrak M}$ is nonzero.
If $\pi$ is an automorphic representation with component $\pi^{\infty}$ away from $\infty$, we say that $\pi$ contributes to $M^{\bullet}$ if $\pi^{\infty}$ does.
\end{defn}

As explained in section \ref{sectionordinarycohomology}, we have a commutative subalgebra $E \left[ X_*^+ \left( \Zp \right) \right]$ of $\mathcal H_E \left( \mathrm G(\Qp), I_p' \right)$, where $I_p'$ is the pro-$p$-Iwahori subgroup of $\mathrm G(\Qp)$.

\begin{defn} \label{ordinaryHeckealgebraafterKT}
For every complex $M^{\bullet}$ of $\Lambda_E$-modules with an Hecke action \[ \mathcal H_E \left( G^{\infty, S}, K^S \right) \times E \left[ X_*^+ (\mathrm T) \right] \lra \End_{\mathcal D(\Lambda_E)} \left( M^{\bullet} \right) \] we denote by $\mathbb T^S(M^{\bullet})$ the image of the last action map inside $\End_{\mathcal D(\Lambda_E)} \left( M^{\bullet} \right)$.
\end{defn}
We also remark that since the action commutes with the differential of the complex $M^{\bullet}$, we obtain a homomorphism \[ \mathbb T^S (M^{\bullet}) \lra \mathbb T^S \left( H^* (M^{\bullet}) \right). \]

Following section 4.1 of \cite{Hida3}, suppose that the open compact subgroup $K$ contains $\mathrm U(\Zp)$. Then we notice that the operator $U_p$ constructed in section \ref{sectionordinarycohomology} acts on $H^* \left( Y(K), E \right)$. The idempotent $e = \lim_{n \to \infty} U_p^{n!}$ converges as an endomorphism of $H^* \left( Y(K), E \right)$.
\begin{defn} Let $\pi$ be a cuspidal automorphic representation of $\mathrm G(\A)$ of level $K$, with $K \supset \mathrm U(\Zp)$. $\pi$ is said to be \emph{ordinary} at $p$ if it appears in $e \left( H^* \left( Y(K), E \right) \right)$.
\end{defn}
Equivalently, we can fix a copy $V(\pi)$ of the representation space of $\pi$ inside $H^* \left( Y(K), E \right)$, and require that $e \left( V(\pi) \right) \neq 0$.
\begin{rem} Hida in \cite{Hida3} calls these representations \emph{nearly ordinary}.
\end{rem}
\begin{defn} \label{stronglygenericrep}
Suppose that $\pi$ is unramified at $p$, so that its component $\pi_p$ is an unramified principal series, induced by a unramified character $\chi: \mathrm T(\Qp) \lra E^*$. The character $\chi$ is only defined up to Weyl-conjugation, but this will not matter in the next argument. We call $\pi$ \emph{strongly generic} at $p$ if the stabilizer of $\chi$ under the Weyl group action is the trivial group.
\end{defn}

\begin{thm} \label{autrepincohomology} Let $\pi$ be a regular, algebraic, cuspidal automorphic representation of $\mathrm G(\A)$. Suppose that $\pi$ contributes to $H^* \left( Y(K_{1,1}), E \right)$ and that $\pi$ is strongly generic and ordinary at $p$.
Then \begin{enumerate}
\item The cohomology $H^i \left( Y(K_{1,1}), E \right)$ is only nonzero for $i \in [q_0, q_0 + l_0]$.
\item Let $\mathfrak M$ be the maximal ideal associated to $\pi^{\infty}$ by definition \ref{autrepcontributes}.
Then \[ H^* \left( Y(K_{1,1}), E \right)_{\mathfrak M} = H^* \left( Y(K_{1,1}), E \right) \left[ \mathfrak M \right], \] and therefore $\mathbb T^S \left( H^* \left( Y(K_{1,1}), E \right) \right)_{\mathfrak M}$ is $E$.
\item There exists a positive integer $m \left( \pi, K \right)$ such that \[ \dim_E H^{q_0+i} \left( Y(K_{1,1}), E \right)_{\mathfrak M} = m(\pi, K) \binom{l_0}{i} \qquad \forall i \in [0, l_0]. \]
\end{enumerate}
\end{thm}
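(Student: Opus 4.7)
I would prove this theorem via the Matsushima--Franke decomposition together with Vogan--Zuckerman theory for $(\mathfrak{g}, K_\infty)$-cohomology, and local representation theory at $p$ to control the Hecke action.

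For parts (1) and (3), after enlarging $E$ if necessary, Matsushima--Franke gives a Hecke-equivariant decomposition
\[ H^*(Y(K_{1,1}), E)_{\mathfrak M} \;\cong\; \bigoplus_{\pi'} m(\pi')\,(\pi'_f)^{K_{1,1}} \otimes H^*(\mathfrak g, K_\infty; \pi'_\infty), \]
with the sum ranging over automorphic representations $\pi'$ of $\mathrm G(\mathbb A)$ near-equivalent to $\pi$, i.e.\ having the same Satake parameters at every prime outside $S$. Regularity and algebraicity of the infinitesimal character of $\pi_\infty$ propagate to each such $\pi'_\infty$ by the rigidity of the archimedean component within a near-equivalence class of regular algebraic cuspidal representations; in particular $\pi'_\infty$ is tempered and cohomological with the same infinitesimal character as $\pi_\infty$. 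Vogan--Zuckerman (cf.\ Borel--Wallach) then yields
\[ \dim_E H^{q_0+i}(\mathfrak g, K_\infty; \pi'_\infty) = \binom{l_0}{i}, \quad i \in [0, l_0], \]
and vanishing outside this range. Substituting produces part (1) for the $\mathfrak M$-localization (which is where the statement has content) and part (3) with $m(\pi, K) := \sum_{\pi'} m(\pi') \dim_E (\pi'_f)^{K_{1,1}}$, finite by admissibility and positive by the hypothesis that $\pi$ contributes.

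For part (2), the equality $H^*_{\mathfrak M} = H^*[\mathfrak M]$ is immediate from the decomposition: on each summand, $\mathcal H_E(\mathrm G(\mathbb A^S), K^S)$ acts through its residue field $\mathcal H_E/\mathfrak M \cong E$ by construction of $\mathfrak M$. The sharper claim $\mathbb T^S(H^*)_{\mathfrak M} = E$ requires also the $U_p^i$-operators to act by scalars on the $\mathfrak M$-localization. Strong genericity of $\pi_p$ guarantees that the $|W|$ Weyl conjugates of its Satake parameter give pairwise distinct $U_p^i$-eigensystems on the Iwahori-invariants $(\pi'_p)^{I_p'}$; combined with strong multiplicity one (or the appropriate weaker near-equivalence principle for $\mathrm G$) forcing $\pi'_p \cong \pi_p$ for each contributing $\pi'$, and with the ordinarity hypothesis singling out the unique unit-eigenvalue line, one isolates a single $U_p^i$-eigensystem as the one realized in the $\mathfrak M$-localization.

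The main obstacle is precisely this last step: one must show, despite $(\pi_p)^{I_p'}$ carrying $|W|$ distinct $U_p^i$-eigenlines, that the $\mathfrak M$-localized cohomology is supported on a single such eigensystem. This rests on the interplay of strong genericity (to separate $U_p^i$-eigenvalues), ordinarity (to single out the distinguished line), and the rigidity of regular algebraic cuspidal representations under near-equivalence. I would expect the proof to inspect the explicit Iwahori--Hecke module structure of $(\pi_p)^{I_p'}$ and to exploit the commutativity of the ordinary projector $e$ with the $\mathfrak M$-localization as idempotents on cohomology.
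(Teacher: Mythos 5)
For parts (1) and (3) your route is the paper's: the proof simply cites Theorems 5.1 (Ch.~III) and 5.2 (Ch.~VII) of \cite{BW}, i.e.\ exactly the Matsushima--Franke decomposition plus the Vogan--Zuckerman computation of $(\mathfrak g, K_\infty)$-cohomology of tempered cohomological representations that you spell out. For part (2) the paper takes a different and shorter path: it invokes Corollary 4.3 of \cite{Hida3}, which says that the ordinary part $e\left(H^*\left(Y(K_{1,1}),E\right)\right)$ is a \emph{semisimple} Hecke module, and then deduces $H^*_{\mathfrak M}=H^*[\mathfrak M]$ as in the proof of Proposition 4.2 of \cite{HT}. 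Your claim that this equality is ``immediate from the decomposition'' quietly assumes the very semisimplicity that Hida's result supplies (a priori the localization at $\mathfrak M$ is only a generalized eigenspace, and it is the ordinary projector, together with temperedness, that rules out nontrivial nilpotent Hecke action); so your argument is morally the same but should make that ingredient explicit. More importantly, your proposed treatment of the ``$\mathbb T^S(H^*)_{\mathfrak M}=E$'' clause overreaches: that statement is meant as the immediate consequence of $H^*_{\mathfrak M}=H^*[\mathfrak M]$ for the prime-to-$S$ Hecke algebra, whose residue field at $\mathfrak M$ is $E$. The analysis of the $U_p^i$-eigensystems at (pro-$p$-)Iwahori level is deliberately postponed to the corollaries of Theorem \ref{changeoflevelthm}, where the paper shows that \emph{all} $|W|$ Weyl-conjugate eigensystems $\chi^w$ occur in $H^*_{\mathfrak M}$ -- there are $|W|$ maximal ideals $\mathfrak N_w$ above $\mathfrak M$ -- so one cannot, and need not, ``isolate a single $U_p^i$-eigensystem'' inside the $\mathfrak M$-localization at this stage; the choice of a single $w$ (equivalently of $\widetilde{\mathfrak N_w}$) is made later, and ordinarity does not single out one of the $\chi^w$ here. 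Apart from this misreading of what part (2) asserts, your proposal is sound and matches the intended references.
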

\begin{proof} Theorems 5.1 in chapter III and 5.2 in chapter VII of \cite{BW} implies the first and the third part of the theorem, as explained in \cite{HT}.

Corollary 4.3 of \cite{Hida3} implies that $e \left( H^* \left( Y(K_{1,1}, E \right) \right)$ is a semisimple Hecke module.
In particular, for every ordinary maximal ideal $\mathfrak M$, we necessarily have $H^* \left( Y(K_{1,1}), E \right)_{\mathfrak M} = H^* \left( Y(K_{1,1}), E \right) \left[ \mathfrak M \right]$ as explained in the proof of proposition 4.2 of \cite{HT}.
\end{proof}
From now on, we will assume that our automorphic representation $\pi$ satisfies the conditions of theorem \ref{autrepincohomology}.

\subsection{Comparison of base levels at $p$}
In this section we discuss the relationships between spherical, Iwahori and pro-$p$-Iwahori level at $p$. We follow closely \cite{akshay}, section 6.
This is a very delicate subject when working with torsion coefficients, but we are instead considering cohomology with rational coefficients, and so our situation is much simpler and closely related to the classical, complex, case.
We denote by $G_p, K_p$, $I_p$ and $I_p'$ respectively $\mathrm G(\Qp)$, $\mathrm G(\Zp)$ (a hyperspecial maximal compact), the Iwahori subgroup at $p$ corresponding to the choice of a Borel subgroup we fixed in the introduction and its pro-$p$ Sylow subgroup.

First of all, we have by \cite{HKP}, sections 1.6 and 1.7 an injection \[ \overline {\Qp} \left[ X_* (\mathrm T) \right] \hookrightarrow \mathcal H_{\overline {\Qp}} \left( G_p, I_p \right) \] which is in fact defined over $\Qp (\sqrt p)$, and even over $\Qp$ if $\mathrm G$ is simply connected.

There is a natural action of the Weyl group $W$ on $X_*(\mathrm T)$, and in the above injection we have (see section 2 of \cite{HKP}) that \[ \overline {\Qp} \left[ X_* (\mathrm T) \right]^W \stackrel{\sim}{\lra} Z \left( \mathcal H_{\overline {\Qp}} \left( G_p, I_p \right) \right), \] which is to say that the Weyl-invariant subalgebra of $\overline {\Qp} \left[ X_*(\mathrm T) \right]$ maps isomorphically onto the center of the Iwahori-Hecke algebra (this is the Bernstein isomorphism).
We will identify implicitly identify $\overline {\Qp} \left[ X_* (\mathrm T) \right]^W$ and  $Z \left( \mathcal H_{\overline {\Qp}} \left( G_p, I_p \right) \right)$ via this isomorphism, from now on.

Next, the Satake transform (as in \cite{HKP}, section 4) is an injective morphism \[ \mathcal H_{\overline {\Qp}} \left( G_p, K_p \right)  \lra \overline {\Qp} \left[ X_* (\mathrm T) \right] \] which turns out to be an isomorphism onto $\overline {\Qp} \left[ X_* (\mathrm T) \right]^W$. Again, this is defined over $\Qp(\sqrt p)$, and over $\Qp$ if $\mathrm G$ is simply connected.

Let then suppose that our coefficient field $E$ contains $\Qp ( \sqrt p)$ if $\mathrm G$ is not simply connected.
We obtain \[ \mathcal H_E \left( G_p, K_p \right) \stackrel{\sim}{\lra} Z \left( \mathcal H_E \left( G_p, I_p \right) \right). \]

Consider the $\left( \mathcal H_E \left( G_p, K_p \right), \mathcal H_E \left( G_p, I_p \right) \right)$-bimodule $H_{KI} = \Hom_{E[G_p]} \left( \cInd{G_p}{K_p} E, \cInd{G_p}{I_p} E \right)$ as well as the $\left( \mathcal H_E \left( G_p, I_p \right), \mathcal H_E \left( G_p, K_p \right) \right)$-bimodule $H_{IK} = \Hom_{E[G_p]} \left( \cInd{G_p}{I_p} E, \cInd{G_p}{K_p} E \right)$.

We obtain then \emph{change of level} morphisms \begin{equation} \label{changeoflevel} \begin{gathered} H^* \left( Y(\mathrm G(\Zp)), E \right) \otimes_{\mathcal H_E \left( G_p, K_p \right)} H_{KI} \lra H^* \left( Y_0, E \right) \\ H^* \left( Y_0, E \right) \otimes_{\mathcal H_E \left( G_p, I_p \right)} H_{IK} \lra H^* \left( Y \left( \mathrm G(\Zp) \right),E \right). \end{gathered} \end{equation}
Let $\pi^{\infty} = \prod'_{q < \infty} \pi_q$ be a cuspidal irreducible automorphic representation of $\mathrm G(\Af)$ on an $E$-vector space.
Suppose that $\pi$ is unramified outside $T$, and let \[ \mathfrak N = \ker \left( \mathcal H_E \left( \mathrm G(\A^T), K^T \right) \lra \End_E \left( \prod'_{q \not\in T} \pi_q^{\mathrm G(\Z_q)} \right) \right), \]
a maximal ideal of $\mathcal H_E \left( \mathrm G(\A^T), K^T \right)$ with residue field a finite extension of $E$.
There is an obvious injection $\mathcal H_E \left( \mathrm G(\A^S), K^S \right) \hookrightarrow \mathcal H_E \left( \mathrm G(\A^T), K^T \right)$, and with a small abuse of notation we also denote by $\mathfrak N$ the maximal ideal of $\mathcal H_E \left( \mathrm G(\A^S), K^S \right)$ obtained by pulling back.

Notice that $\mathcal H_E \left( \mathrm G(\A^T), K^T \right)$ acts on $H^* \left( Y(\mathrm G(\Zp)), E \right)$ and $\mathcal H_E \left( \mathrm G(\A^S), K^S \right)$ acts on $H^* \left( Y_0, E \right)$.
We can then localize these cohomology groups and the maps in formula \ref{changeoflevel} at $\mathfrak N$, and we obtain \begin{equation} \label{localizedchangeoflevel} \begin{gathered} H^* \left( Y(\mathrm G(\Zp)), E \right)_{\mathfrak N} \otimes_{\mathcal H_E \left( G_p, K_p \right)} H_{KI} \lra H^* \left( Y_0, E \right)_{\mathfrak N} \\ H^* \left( Y_0, E \right)_{\mathfrak N} \otimes_{\mathcal H_E \left( G_p, I_p \right)} H_{IK} \lra H^* \left( Y \left( \mathrm G(\Zp) \right), E \right)_{\mathfrak N}. \end{gathered} \end{equation}
Let now \[ f = \prod_{\alpha \in \Phi(\mathrm G, \mathrm T)} (1- \alpha^*) \in E \left[ X_*(\mathrm T) \right]^W \cong \mathcal H_E \left( G_p, K_p \right) \] be the \emph{discriminant}, where $\alpha^*=(\alpha^{\vee})^{m_{\alpha}} \in X_*(\mathrm T)$  is a power of the coroot $\alpha^{\vee}$, and $m_{\alpha}$ is the largest positive integer so that $\frac{\alpha}{m_{\alpha}}$ is still in the integral character group $ X^*(\mathrm T)$.

Recall that since we assume that $\pi$ is unramified at $p$, its component $\pi_p$ is an unramified principal series, induced by a unramified character $\chi: \mathrm T(\Qp) \lra E^*$ (defined only up to Weyl-conjugation).

Since $\chi$ is unramified, it corresponds to a homomorphism $\chi: \mathrm T(\Qp) / \mathrm T(\Zp) = X_* \left( \mathrm T \right) \lra E^*$, which we extend linearly to an algebra homomorphism $\chi: E \left[ X_*(\mathrm T) \right] \lra E$.
\begin{thm}[See lemma 6.5 of \cite{akshay}] \label{changeoflevelthm}
\begin{enumerate}
\item The maps in formula \ref{changeoflevel} are isomorphisms upon localization away from $f$.
\item Suppose that the $\chi$ is strongly regular, in the sense that its stabilizer under the Weyl group action on $X_*(\mathrm T)$ consists of the identity element only. In other words, suppose that $\pi$ is strongly generic at $p$ as in Definition \ref{stronglygenericrep}.
Then the maps in formula \ref{localizedchangeoflevel} are isomorphisms, where $\mathfrak N$ is as usual the ideal corresponding to $\pi$.
\end{enumerate}
\end{thm}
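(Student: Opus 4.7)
The plan is to reduce both parts to a local assertion at $p$ that the discriminant $f$ is the precise obstruction for $K_p$-invariants and $I_p$-invariants on unramified principal series to transform correctly into each other. Using the interpretation from Section \ref{sectionderivedactions} of $H^*(Y_0, E)$ and $H^*(Y(\mathrm G(\Zp)), E)$ as the Ext-groups $\Ext^*_{G_p}(\cInd{G_p}{I_p} E, C^{\bullet})$ and $\Ext^*_{G_p}(\cInd{G_p}{K_p} E, C^{\bullet})$ respectively, the change-of-level maps in (\ref{changeoflevel}) are induced by composition with elements of the Hom-bimodules $H_{KI}$ and $H_{IK}$.  It therefore suffices to prove the following local claim: for every smooth $G_p$-representation $V$ in the principal series block, the natural maps
\[
V^{K_p} \otimes_{\mathcal{H}_E(G_p, K_p)} H_{KI} \to V^{I_p}, \qquad V^{I_p} \otimes_{\mathcal{H}_E(G_p, I_p)} H_{IK} \to V^{K_p}
\]
become isomorphisms after inverting $f$.

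To prove this local claim I would invoke the Borel--Bernstein--Casselman theorem, which identifies the Iwahori-spherical block of smooth $G_p$-representations with the category of $\mathcal{H}_E(G_p, I_p)$-modules via the functor $V \mapsto V^{I_p}$.  On an unramified principal series $V = i_B^{G_p}\chi$ one has $\dim V^{I_p} = |W|$, and $V^{I_p}$ is the standard module attached to $\chi$; when $\chi$ is regular one also has $\dim V^{K_p} = 1$.  A direct calculation using the Satake identification $\mathcal{H}_E(G_p, K_p) \cong E[X_*(\mathrm T)]^W$ shows that the discriminant evaluates as $f(\chi) = \prod_{\alpha}(1 - \chi(\alpha^\vee)^{m_\alpha})$. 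Since the pairing $\langle \alpha, \cdot \rangle : X_*(\mathrm T) \to \Z$ has image exactly $m_\alpha \Z$, the relation $s_\alpha \chi = \chi$ is equivalent to $\chi(\alpha^\vee)^{m_\alpha} = 1$; hence $f(\chi) = 0$ precisely when $\chi$ is fixed by some nontrivial Weyl element, i.e., when $\chi$ fails to be strongly regular.  Away from $f$ every principal series $\chi$ we meet is strongly regular, the translates $\{w\chi\}_{w \in W}$ are pairwise distinct, the principal series is irreducible and semisimple, and standard arguments based on the Bernstein presentation of $\mathcal{H}_E(G_p, I_p)$ then identify the two maps above as isomorphisms.

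Part (1) is immediate upon tensoring with $E[1/f]$ throughout.  For part (2), the strong genericity of $\pi_p$ is by Definition \ref{stronglygenericrep} precisely the hypothesis that the Satake parameter $\chi$ of $\pi_p$ has trivial Weyl stabilizer; combined with the previous paragraph this gives $f(\chi) \neq 0$, so $f$ does not lie in $\mathfrak{N}$ under the Satake inclusion $\mathcal{H}_E(G_p, K_p) \hookrightarrow \mathcal{H}_E(G^{\infty, S}, K^S)$.  Localization at $\mathfrak{N}$ therefore factors through inversion of $f$, and part (2) reduces to part (1).

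The main obstacle is the explicit verification of the local claim in the strongly regular locus, i.e., that the compositions $H_{IK} \otimes_{\mathcal{H}_E(G_p, K_p)} H_{KI}$ and $H_{KI} \otimes_{\mathcal{H}_E(G_p, I_p)} H_{IK}$ yield, up to localization away from $f$, the correct endomorphism algebras.  This rests on an enumeration of double cosets $I_p \backslash G_p / K_p$ and a careful tracking of the center $Z(\mathcal{H}_E(G_p, I_p)) \cong \mathcal{H}_E(G_p, K_p)$ under the Bernstein isomorphism; it is exactly the content of the computation invoked from Lemma 6.5 of \cite{akshay}.  Everything else in the argument is a formal consequence of the Bernstein decomposition.
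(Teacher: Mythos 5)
Your overall strategy is the same as the paper's: part (2) is reduced to part (1) by checking that the discriminant does not lie in $\mathfrak N$ when $\chi$ is strongly regular (and your justification via the image of $\langle \alpha, \cdot\rangle$ being $m_\alpha\Z$ is correct, in fact slightly cleaner than the paper's), and part (1) is reduced to the assertion that $H_{KI}$ and $H_{IK}$ implement a Morita equivalence between $\mathcal H_E(G_p,K_p)_f$ and $\mathcal H_E(G_p,I_p)_f$, i.e.\ that the two multiplication maps on the bimodules become isomorphisms after inverting $f$ -- exactly the content of Lemma 4.5/6.5 of \cite{akshay}, whose steps the paper recalls.

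One justification in your sketch is wrong as stated and worth flagging: you claim that away from $f$ ``the principal series is irreducible and semisimple.'' Inverting the discriminant removes only the Weyl-irregular locus; it does not remove the reducibility locus of unramified principal series (e.g.\ for $\Gl 2$ the character with $\chi(\alpha^\vee)=q^{\pm 1}$ is regular but induces a reducible principal series). For such $\chi$ the map $\mathcal H_E(G_p,I_p)\otimes_Z \overline E \to \End(V_\chi^{I_p})$ is \emph{not} surjective, so an argument that literally checks the local claim representation-by-representation over the whole locus $f\neq 0$ would fail there. The paper circumvents this: since all the localized (bi)modules are locally free over $Z_f$ of the matching ranks $|W|^2$, $|W|$, $|W|$, $1$, the failure of the multiplication maps to be isomorphisms is cut out by a single polynomial, so it suffices to verify surjectivity at a set of closed points whose complement has codimension at least $2$ -- and one may then restrict to points where the principal series is irreducible. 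Your reduction of part (2) to part (1) and your identification of the key bimodule computation are otherwise correct; just replace the irreducibility-everywhere claim by this density argument (or by the explicit Bruhat-cell computation of \cite{akshay}).
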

\begin{proof} 
An easy computation shows that $\mathcal H_E(G_p, K_p)$ acts on $\pi_p^{\mathrm G(\Zp)}$ precisely as the character $\chi$, via the identification $\mathcal H_E(G_p,K_p) \cong E \left[ X_*(\mathrm T) \right]^W$ provided by the Satake isomorphism.

Therefore, the image of the discriminant in the map \[ \chi_{\pi}: \mathcal H_E \left( \mathrm G(\A^T), K^T \right) \lra \End_E \left( \prod'_{q \not\in T} \pi_q^{\mathrm G(\Z_q)} \right) \] is \[ \chi_{\pi} \left( \prod_{\alpha \in \Phi(\mathrm G, \mathrm T)} (1- \alpha^*) \right) = \prod_{\alpha \in \Phi(\mathrm G, \mathrm T)} \left( 1- \chi(\alpha^*) \right). \]
Notice that $W$-invariance of the discriminant guarantees that this image is independent of the representative chosen in the Weyl-conjugacy class of $\chi$. 

The definition of $\alpha^*$ is so that $\psi(\alpha^*)=1$ exactly when $\psi \in \Hom \left( X_*(\mathrm T), \Z \right) \cong X^*(\mathrm T)$ is fixed by the reflection $\alpha$. Since we are assuming that $\chi$ is strongly regular, we have that $\chi(\alpha^*) \neq 1$ for all roots $\alpha$, and therefore $f \not\in \mathfrak M$.
The first part of the theorem implies thus the second one, so it remains to prove that the maps in formula \ref{changeoflevel} are isomorphisms upon localization away from $f$.

To prove the first part of the theorem, we again follow \cite{akshay}: the same argument as in the proof of lemma 6.5 of loc. cit. shows that it suffices to prove that \[ \begin{gathered} F: \Mod \left( \mathcal H_E \left( G_p, K_p \right)_f \right) \lra \Mod \left( \mathcal H_E \left( G_p, I_p \right)_f \right) \qquad F(M)= M \otimes_{\mathcal H_E \left( G_p, K_p \right)_f} (H_{KI})_f \\  G: \Mod \left( \mathcal H_E \left( G_p, I_p \right)_f \right) \lra \Mod \left( \mathcal H_E \left( G_p, K_p \right)_f \right) \qquad G(N)= N \otimes_{\mathcal H_E \left( G_p, I_p \right)_f} (H_{IK})_f \end{gathered} \] are functors defining an equivalence of categories.
Notice that here we are using respectively the Satake isomorphism and the Bernstein isomorphism to localize $\mathcal H_E \left( G_p, K_p \right)$ and $\mathcal H_E \left( G_p, I_p \right)$ at $f$.

Lemma 4.5 of \cite{akshay} shows that $F$ and $G$ define an equivalence of categories in the more subtle positive characteristic setup. Since that proof works verbatim in our setting, we only recall here the main steps.
It suffices to show that the multiplication maps \[ \begin{gathered} (H_{IK})_f \otimes_{\mathcal H_E \left( G_p, K_p \right)_f} (H_{KI})_f \lra \mathcal H_E \left( G_p, I_p \right)_f  \\ (H_{KI})_f \otimes_{\mathcal H_E \left( G_p, I_p \right)_f} (H_{IK})_f \lra \mathcal H_E \left( G_p, K_p \right)_f \end{gathered} \] are isomorphisms.
We show that the first one is, the second one will follow similarly (in fact, it will be easier as the dimensions are collapsing).

The standard presentation of $H_E \left( G_p, I_p \right)$ in terms of the affine Weyl group (see for instance section 1 of \cite{HKP}) shows that $H_E \left( G_p, I_p \right)$ is free of rank $|W|$ as a (right) $E \left[ X_*(\mathrm T) \right]$-module.

The Chevalley-Shepard-Todd theorem implies that $E \left[ X_*(\mathrm T) \right]$ is locally free of rank $|W|$ as a (right) $Z = E \left[ X_*(\mathrm T) \right]^W$-module.

We obtain thus that $\left( H_E \left( G_p, I_p \right) \right)_f$ is locally free of rank $|W|^2$ as a $Z_f$-module, and similarly $(H_{IK})_f$ and $(H_{KI})_f$ are locally free of rank $|W|$ as a $Z_f$-module, while clearly (by the Satake isomorphism) $\left( H_E \left( G_p, I_p \right) \right)_f$ is free of rank $1$ as a $Z_f$-module.

Since the ranks match up, it suffices to show that the multiplication map above is onto after quotienting modulo each maximal ideal $\mathfrak p$ of $Z_f$ - in fact, since the failure of the (unlocalized) multiplication map $H_{IK} \otimes_{\mathcal H_E(G_p,K_p)} K_{KI} \lra \mathcal H_E(G_p, I_p)$ to be an isomorphism of $Z$-modules is given by the vanishing of a certain polynomial (again, since the ranks match up), it suffices to consider maximal ideals $\mathfrak p$ whose complement has codimension at least $2$.

The quotient map $Z_f / \mathfrak p \lra \overline E$ can be extended to $\phi: E \left[ X_*(\mathrm T) \right]_f \lra \overline E$, which is a point in the character variety $\mathrm{Spec} \left( E \left[ X_* (\mathrm T) \right] \right)$.

We can assume that $\phi$ is strongly regular, in the sense that its stabilizer under the Weyl-action consists only of the identity element.
Denote also by $\phi$ the corresponding unramified character $\mathrm T(\Qp) \lra \overline E^*$, and let $V_{\phi}$ be the principal series representation of $G_p$ induced from $\phi$.

Strong regularity ensures that $\dim_{\overline E} V_{\phi}^{I_p} = |W|$, with a basis being given by the functions $\left\{ v_w \right\}_{w \in W}$ whose restriction to $K_p$ is the characteristic function of the Bruhat cell indexed to $w$. $Z$ acts on $V_{\phi}^{I_p}$ via the character $\phi$.

This description and an explicit computation show that \[ \mathcal H_E \left( G_p, I_p \right) \otimes_Z \overline E \lra \Hom_{\overline E} \left( V_{\phi}^{I_p}, V_{\phi}^{I_p} \right) \] is an isomorphism, and similarly \[ \begin{gathered} H_{IK} \otimes_Z \overline E \cong \Hom_{\overline E} \left( V_{\phi}^{I_p}, V_{\phi}^{K_p} \right) \\ H_{KI} \otimes_Z \overline E \cong \Hom_{\overline E} \left( V_{\phi}^{K_p}, V_{\phi}^{I_p} \right) \\ \mathcal H_E \left( G_p, K_p \right) \otimes_Z \overline E \cong \Hom_{\overline E} \left( V_{\phi}^{K_p}, V_{\phi}^{K_p} \right) \end{gathered} \]
This shows that upon quotienting modulo $\mathfrak p$, the multiplication map is a surjection among $\overline E$-vector spaces of dimension $|W|^2$, proving the claim.
\end{proof}
Our next goal is to understand the decomposition of $H^* \left( Y_0, E \right)_{\mathfrak N}$ in eigenspaces under the Iwahori-Hecke action, which is not present at the level of the maximal compact subgroup $\mathrm G(\Zp)$.
In fact, to understand these eigenspaces it suffices to study the action of the commutative subalgebra $E \left[ X_*^+ (\mathrm T)^+ \right] \hookrightarrow \mathcal H_E(G_p, I_p)$.

Recall that $\chi: X_*(\mathrm T) \lra E$ is the unramified character inducing the principal series representation $\pi_p$. We are assuming that $\chi$ is strongly regular, and therefore its conjugacy class $\left\{ \chi^w \right\}_{w \in W}$ consists of precisely $|W|$ distinct characters, each inducing a principal series representation isomorphic to $\pi_p$.

\begin{cor} Let $\pi$ be as in theorem \ref{changeoflevelthm}.
The generalized eigenvalues of $E \left[ X_*^+ (\mathrm T) \right]$ on $H^* \left( Y_0, E \right)_{\mathfrak N}$ are all those of the form $\chi^w$, as $w$ varies in the Weyl group.
\end{cor}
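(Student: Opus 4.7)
The plan is to reduce the computation of the $E[X_*^+(\mathrm T)]$-eigenvalues to the classical description of the Bernstein action on the Iwahori-invariants of an unramified principal series, which is essentially already established inside the proof of Theorem \ref{changeoflevelthm}.

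First, I would invoke Theorem \ref{changeoflevelthm}(2): strong genericity of $\pi$ at $p$ upgrades the change-of-level maps to isomorphisms, so
\[ H^*(Y_0, E)_{\mathfrak N} \;\cong\; H^*(Y(K_p), E)_{\mathfrak N} \otimes_{\mathcal H_E(G_p, K_p)} H_{KI} \]
as graded $\mathcal H_E(G_p, I_p)$-modules, hence in particular as modules over its commutative subalgebra $E[X_*^+(\mathrm T)]$. By the Satake isomorphism, the spherical Hecke algebra acts on the factor $H^*(Y(K_p), E)_{\mathfrak N}$ through the Satake parameter of $\pi_p$, i.e.\ through the character $\chi$ restricted to $E[X_*(\mathrm T)]^W \cong \mathcal H_E(G_p, K_p)$, a single Weyl-orbit representative.

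Second, I would use Theorem \ref{autrepincohomology} (applied at spherical level) to write $H^*(Y(K_p), E)_{\mathfrak N}$ as a finite direct sum of copies of the one-dimensional $\mathcal H_E(G_p, K_p)$-module $\pi_p^{K_p}$. Tensoring with $H_{KI}$ then rewrites the right-hand side as the corresponding finite direct sum of copies of
\[ \pi_p^{K_p} \otimes_{\mathcal H_E(G_p, K_p)} H_{KI} \;\cong\; \pi_p^{I_p} \;=\; V_\chi^{I_p}, \]
where the identification is the standard change-of-level isomorphism and is really implicit in the $\overline E$-computation at the end of the proof of Theorem \ref{changeoflevelthm}. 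Thus the question reduces to describing the $E[X_*^+(\mathrm T)]$-eigenvalues on $V_\chi^{I_p}$.

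Third, I would import the explicit basis from that same proof: strong regularity of $\chi$ forces $\dim_{\overline E} V_\chi^{I_p} = |W|$ with a basis $\{v_w\}_{w \in W}$ indexed by Bruhat cells, and the full Bernstein subalgebra $E[X_*(\mathrm T)] \hookrightarrow \mathcal H_E(G_p, I_p)$ acts on $v_w$ through the character $\chi^w$. Restricting to the subalgebra $E[X_*^+(\mathrm T)]$ still distinguishes all $|W|$ characters $\{\chi^w\}_{w \in W}$, which are pairwise distinct by strong regularity; they therefore occur as the full list of generalized eigenvalues on $V_\chi^{I_p}$, and consequently on $H^*(Y_0, E)_{\mathfrak N}$. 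The only step requiring a bit of care is the $\mathcal H_E(G_p, I_p)$-equivariance (rather than merely $E$-linear isomorphism) of the identification $\pi_p^{K_p} \otimes_{\mathcal H_E(G_p, K_p)} H_{KI} \cong \pi_p^{I_p}$; after the categorical equivalence established in the proof of Theorem \ref{changeoflevelthm}, this is essentially automatic, since the functors $F,G$ constructed there are inverse equivalences of module categories localized away from the discriminant.
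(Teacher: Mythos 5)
Your proof is correct and follows essentially the same route as the paper: both pass through the change-of-level isomorphism of Theorem \ref{changeoflevelthm}(2) and reduce to the fact that $\mathcal H_E(G_p,K_p)$ acts on the spherical localization through $\chi$, the paper arguing directly with eigenvectors in the tensor product while you make the identification with $V_\chi^{I_p}$ explicit. (One small imprecision: the characteristic functions $v_w$ of Bruhat cells are generally only a triangularizing basis for $E[X_*(\mathrm T)]$, not literal eigenvectors, but since the diagonal entries are the $\chi^w$ this does not affect the set of generalized eigenvalues.)
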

\begin{proof} We compute with the isomorphism \[ H^* \left( Y_0, E \right)_{\mathfrak N} \cong H^* \left( Y(\mathrm G(\Zp)), E \right)_{\mathfrak N} \otimes_{\mathcal H_E \left( G_p, K_p \right)} H_{KI} \] from theorem \ref{changeoflevelthm}.

Let $\phi: E \left[ X_*^+ (\mathrm T)^+ \right] \lra E$: then $H^* \left( Y(\mathrm G(\Zp)), E \right)_{\mathfrak N} \otimes_{\mathcal H_E \left( G_p, K_p \right)} H_{KI}$ contains an eigenvector with eigencharacter $\phi$ if and only if $H^* \left( Y(\mathrm G(\Zp)), E \right)_{\mathfrak N}$ contains an eigenvector with eigencharacter $\phi^w$ for some $w \in W$, which (since we know that $\mathcal H_E(G_p, K_p)$ acts on $H^*(Y(\mathrm G(\Zp)), E)$ as $\chi$) boils down to $\chi = \phi^w$ for some $w \in W$.
\end{proof}
We denote by $H^* (Y_0, E)_{\mathfrak N, \chi^w}$ the generalized eigenspace of $\chi^w$. An equivalent formulation of the above result in terms of maximal ideals is given by the following corollary.
\begin{cor} There are exactly $|W|$ maximal ideals $\mathfrak N'$ of $\mathcal H_E \left( \mathrm G(\A^S), K^S \right) \times E \left[ X_*^+ \left( \Zp \right) \right]$ above $\mathfrak N$. Each $w \in W$ yields one of these maximal ideals as \[ \mathfrak N' = \mathfrak N_w = \langle \mathfrak N, U_p^1 - \chi^w \left( U_p^1 \right), \ldots, U_p^r - \chi^w \left( U_p^r \right) \rangle. \]
\end{cor}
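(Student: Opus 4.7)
The plan is to derive this as a direct bookkeeping consequence of the previous corollary, combined with the strong regularity hypothesis. The setup is that $H^*(Y_0,E)_{\mathfrak N}$ is a finite-dimensional $E$-vector space on which the commutative $E$-algebra $\mathcal H_E(\mathrm G(\A^S), K^S) \times E[X_*^+(\mathrm T)]$ acts, and maximal ideals of this algebra ``above $\mathfrak N$'' are precisely the maximal ideals of this product algebra that (i) restrict to $\mathfrak N$ on the first factor and (ii) lie in the support of the localized module $H^*(Y_0,E)_{\mathfrak N}$. Equivalently, they correspond bijectively to the generalized eigencharacters of the whole algebra occurring on $H^*(Y_0,E)_{\mathfrak N}$, with restriction to $\mathcal H_E(\mathrm G(\A^S), K^S)$ forced to be the character cut out by $\mathfrak N$.

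First, I would invoke the previous corollary, which identifies the generalized eigencharacters of $E[X_*^+(\mathrm T)]$ on $H^*(Y_0,E)_{\mathfrak N}$ with the Weyl-translates $\{\chi^w\}_{w \in W}$ of the unramified character $\chi$ inducing $\pi_p$. Each such eigencharacter $\chi^w$ packaged with the character of $\mathcal H_E(\mathrm G(\A^S), K^S)$ given by $\mathfrak N$ determines a maximal ideal $\mathfrak N_w$, and this assignment $w \mapsto \mathfrak N_w$ is surjective onto the maximal ideals above $\mathfrak N$ by the preceding discussion.

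Next, I would check injectivity, i.e.\ that the $|W|$ characters $\chi^w$ remain pairwise distinct. This uses the strong regularity hypothesis of Definition~\ref{stronglygenericrep}: the stabilizer of $\chi$ in $W$ is trivial, so the orbit $\{\chi^w\}_{w \in W}$ has cardinality exactly $|W|$, and no two of them coincide as characters of $X_*(\mathrm T)$. Since the cocharacters $\lambda_1,\ldots,\lambda_r$ generate a finite-index subcone of $X_*^+(\mathrm T)$ (and after inverting $p$ generate all of $X_*(\mathrm T) \otimes \Q$), two characters of $X_*(\mathrm T)$ that agree on all $\lambda_i(p)$ must agree on the full cocharacter group; hence the $\chi^w$ remain distinct when restricted to the subalgebra generated by the $U_p^i$. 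This yields exactly $|W|$ maximal ideals above $\mathfrak N$.

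Finally, to read off the explicit presentation, I would note that since each $\chi^w$ is an $E$-algebra character of $E[X_*^+(\mathrm T)]$ (as $E$ is assumed large enough to contain the relevant Hecke fields), and since the $U_p^i = \lambda_i(p)$ generate $E[X_*^+(\mathrm T)]$ up to finite integral extension at finitely many inverted generators, the kernel of $\chi^w$ intersected with the subalgebra generated by the $U_p^i$ is precisely $\langle U_p^i - \chi^w(U_p^i) : i=1,\ldots,r\rangle$. Combining with the ideal $\mathfrak N$ in the first factor gives the asserted description $\mathfrak N_w = \langle \mathfrak N, U_p^1 - \chi^w(U_p^1), \ldots, U_p^r - \chi^w(U_p^r)\rangle$. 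The only delicate point in the argument is ensuring that the Weyl translates remain separated by the chosen generators $U_p^i$, which is exactly what strong regularity provides.
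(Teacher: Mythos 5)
Your overall route is the same as the paper's: this corollary is stated there without a separate proof, as a direct reformulation of the preceding corollary identifying the generalized eigencharacters of $E\left[ X_*^+(\mathrm T)\right]$ on $H^*\left(Y_0,E\right)_{\mathfrak N}$ with the Weyl orbit $\left\{\chi^w\right\}_{w\in W}$, together with strong regularity to see that this orbit has exactly $|W|$ elements. Your translation of ``maximal ideals above $\mathfrak N$'' into ``eigencharacters occurring on $H^*(Y_0,E)_{\mathfrak N}$ extending the character cut out by $\mathfrak N$'' is the intended reading.

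The one step you yourself single out as delicate is, however, resolved incorrectly. You assert that two characters of $X_*(\mathrm T)$ agreeing on $\lambda_1(p),\ldots,\lambda_r(p)$ must agree on all of $X_*(\mathrm T)$ because the $\lambda_i$ generate a finite-index subcone. That implication is false: two characters $X_*(\mathrm T)\to E^*$ agreeing on a finite-index subgroup $L$ differ by a character of the finite quotient $X_*(\mathrm T)/L$, which need not be trivial (already for $X_*(\mathrm T)=\Z$, $L=2\Z$ the sign character is a counterexample). What actually gives the count of exactly $|W|$ maximal ideals is that the algebra in the statement is the monoid algebra on the \emph{full} dominant cone $X_*^+\left(\Zp\right)$: since $X_*^+(\mathrm T)$ generates $X_*(\mathrm T)$ as a group, distinct characters of $X_*(\mathrm T)$ restrict to distinct characters of the cone, and strong regularity then yields $|W|$ pairwise distinct $\chi^w$. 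The displayed presentation of $\mathfrak N_w$ by the $r$ elements $U_p^i-\chi^w\left(U_p^i\right)$ alone is literally a generating set of the maximal ideal only when $\lambda_1,\ldots,\lambda_r$ generate $X_*^+(\mathrm T)$ (e.g., $\mathrm G$ simply connected, as arranged in the notation section); in general one should read $\mathfrak N_w$ as the maximal ideal in the support containing these elements and determined by the full character $\chi^w$. This imprecision is present in the paper's own statement, so it does not invalidate your argument, but the finite-index claim as you wrote it should be removed or replaced by the observation about the full cone.
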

We also record the following result.
\begin{cor} For every $w \in W$ we have isomorphisms (Hecke-equivariant away from $S$) \[ H^* (Y_0, E)_{\mathfrak N, \chi^w} \cong H^* \left( Y(\mathrm G(\Zp)), E \right)_{\mathfrak N} \] where the maps are respectively pushforward in one direction, and pullback followed by projection onto the $\chi^w$-eigenspace in the other.
\end{cor}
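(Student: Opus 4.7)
My plan is to apply the change-of-level isomorphism of Theorem \ref{changeoflevelthm}(2),
\[ H^*(Y_0, E)_{\mathfrak N} \cong H^*\bigl(Y(\mathrm G(\Zp)), E\bigr)_{\mathfrak N} \otimes_{\mathcal H_E(G_p, K_p)} H_{KI}, \]
and then to decompose the right-hand side into $E[X_*^+(\mathrm T)]$-generalized eigenspaces. Since the Satake isomorphism identifies $\mathcal H_E(G_p, K_p)$ with $E[X_*(\mathrm T)]^W$, and this algebra acts on $H^*(Y(\mathrm G(\Zp)), E)_{\mathfrak N}$ through the unramified character $\chi$, the tensor product collapses to
\[ H^*\bigl(Y(\mathrm G(\Zp)), E\bigr)_{\mathfrak N} \otimes_E \bigl( H_{KI} \otimes_{E[X_*(\mathrm T)]^W,\,\chi} E \bigr). \]

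Next I would analyze the second factor as an $E[X_*^+(\mathrm T)]$-module, mirroring the principal-series computation already exploited in the proof of Theorem \ref{changeoflevelthm}: this module identifies with $\Hom_E(V_\chi^{K_p}, V_\chi^{I_p})$, where $V_\chi$ is the unramified principal series attached to $\chi$. Strong regularity guarantees that $V_\chi^{I_p}$ admits a basis $\{v_w\}_{w \in W}$ of $E[X_*^+(\mathrm T)]$-eigenvectors with distinct characters $\chi^w$, while $V_\chi^{K_p}$ is spanned by the spherical vector, so this $\Hom$-space decomposes into $|W|$ one-dimensional $\chi^w$-eigenlines. Tensoring back with $H^*(Y(\mathrm G(\Zp)), E)_{\mathfrak N}$ yields an abstract Hecke-equivariant (away from $S$) decomposition
\[ H^*(Y_0, E)_{\mathfrak N} \;\cong\; \bigoplus_{w \in W} H^*\bigl(Y(\mathrm G(\Zp)), E\bigr)_{\mathfrak N}, \]
in which the $w$-th summand is precisely the $\chi^w$-generalized eigenspace $H^*(Y_0, E)_{\mathfrak N, \chi^w}$.

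To match this abstract isomorphism with the geometric maps stated in the corollary, I would use that, under the principal-series dictionary, the pullback from hyperspecial to Iwahori level corresponds to the inclusion $V_\chi^{K_p} \hookrightarrow V_\chi^{I_p}$; a standard Satake-type expansion shows that the spherical vector has nonzero $v_w$-component for every $w$, so the composition of pullback with projection onto $H^*(Y_0, E)_{\mathfrak N, \chi^w}$ is an isomorphism by dimension count. Dually, the pushforward (trace) corresponds to the $K_p$-averaging map $V_\chi^{I_p} \to V_\chi^{K_p}$, which pairs nontrivially with every $v_w$ because $[K_p : I_p]$ is a unit in the characteristic-zero field $E$. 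Hecke-equivariance away from $S$ then forces these two maps to be inverse isomorphisms in each eigenspace up to a nonzero scalar. The main obstacle I anticipate is the careful bookkeeping required to transport the pullback and trace maps through the Satake and Bernstein identifications and to verify nonvanishing of the resulting local scalars; both reductions hinge on the strong regularity of $\chi$ and on $[K_p : I_p]$ being invertible in $E$.
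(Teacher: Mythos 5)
Your structural setup --- specializing the bimodules $H_{KI}$, $H_{IK}$ at $\chi$ via the principal-series model, so that the eigenspace decomposition is read off from $\Hom_E\left(V_\chi^{K_p}, V_\chi^{I_p}\right)$ and the geometric maps become the inclusion $V_\chi^{K_p}\hookrightarrow V_\chi^{I_p}$ and the averaging map --- is exactly the ``explicit computation'' that the paper outsources to the discussion following Corollary 6.6 of \cite{akshay}, so the route is the right one. The gap is in the final, crucial step: the non-vanishing of the scalars $c_w$ by which (pushforward)$\,\circ\,$(projection onto the $\chi^w$-line)$\,\circ\,$(pullback) acts on the one-dimensional space $V_\chi^{K_p}$. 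Invertibility of $[K_p:I_p]$ only gives $\sum_w c_w \neq 0$ (i.e.\ that pushforward composed with the \emph{full} pullback is invertible), not that each individual $c_w\neq 0$: the kernel of the averaging map is a $(|W|-1)$-dimensional subspace that could a priori contain one of the eigenvectors $v_w$. Likewise, Hecke-equivariance away from $S$ constrains nothing at $p$ and cannot ``force'' the two maps to be mutually inverse on each eigenspace.

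The assertion that the spherical vector has nonzero $v_w$-component for every $w$ is also not a consequence of strong regularity alone. Those components are Casselman--Macdonald $c$-function values, whose numerators vanish precisely when $\chi(\alpha^{\vee})=q^{\pm 1}$ for some root $\alpha$, a condition compatible with regularity. Concretely, for $\Gl{2}$ with $\chi$ regular but $\chi(\alpha^{\vee})=q$, the principal series is reducible, the spherical vector spans the one-dimensional constituent and is itself a $U_p$-eigenvector, so one of the two $c_w$ is zero and the corollary's maps fail to be isomorphisms on that eigenspace. What rescues the statement here is the standing assumption that $\pi_p$ \emph{is} the irreducible principal series induced from $\chi$ (together with temperedness), which excludes $\chi(\alpha^{\vee})=q^{\pm1}$; you need to invoke this and actually carry out the Macdonald-formula evaluation of the $c_w$, as in loc.\ cit., rather than deduce non-vanishing from regularity and the invertibility of the index.
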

\begin{proof} This is an explicit computation as in the discussion following corollary 6.6 of \cite{akshay}.
\end{proof}
We end this section by describing the change from Iwahori level to pro-$p$-Iwahori level. This is simpler than the discussion above (because the pro-$p$-Iwahori is normal in the Iwahori, with finite, abelian quotient) - we again follow section 6 of \cite{akshay}.

Recall that $I_p'$ is a the pro-$p$-Iwahori subgroup.
Inside the Hecke algebra $\mathcal H_E \left( G_p, I_p' \right)$, we have the commutative subalgebra $E \left[ \widetilde X_*^+ (\mathrm T) \right]$, where $\widetilde X_*(\mathrm T) \cong \mathrm T(\Qp) / \mathrm T(\Zp)_p$ fits into the exact sequence \[ 1 \lra \mathrm T(\Zp)_p \lra \mathrm T(\Qp) \lra \widetilde X_*(\mathrm T) \lra 1 \] and $\widetilde X_*^+ (\mathrm T)$ is the preimage of the dominant cone under the quotient map $\widetilde X_* (\mathrm T) \lra X_*(\mathrm T)$.
\begin{prop} The eigenspaces of $E \left[ \widetilde X_*^+ (\mathrm T) \right]$ acting on $H^* \left( Y_1, E \right)_{\mathfrak N}$ coincide with the eigenspaces of $E \left[  X_*^+ (\mathrm T) \right]$ acting on $H^* \left( Y_0, E \right)_{\mathfrak N}$ via the quotient map $E \left[ \widetilde X_*^+ (\mathrm T) \right] \lra E \left[ X_*^+ (\mathrm T) \right]$.
\end{prop}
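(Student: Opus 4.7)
The strategy is to use that the covering $Y_1 \twoheadrightarrow Y_0$ is Galois with deck group $I_p / I_p' \cong \mathrm T(\Fp)$, of order $(p-1)^r$, which is prime to $p$ and therefore invertible in $E$. The plan is first to split $H^*(Y_1, E)$ into $\mathrm T(\Fp)$-isotypic summands, identify the trivial summand with $H^*(Y_0, E)$, and then reduce the proposition to the vanishing of the non-trivial summands after localization at $\mathfrak N$.

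Concretely, I would begin by writing the canonical decomposition
\[ H^*(Y_1, E) = \bigoplus_{\chi : \mathrm T(\Fp) \to E^\times} H^*(Y_1, E)[\chi], \]
where the sum is over $E$-characters $\chi$. Since $\mathrm T(\Fp)$ acts on $Y_1$ by deck transformations over $Y_0$, and the Hecke operators away from $p$ originate from primes different from $p$, the decomposition commutes with this part of the Hecke action and is therefore preserved by $\mathfrak N$-localization. Standard theory of Galois covers with invertible-order deck group identifies the trivial summand with the image of the pullback $H^*(Y_0, E) \hookrightarrow H^*(Y_1, E)$. Next, I would observe that the kernel of the quotient $\widetilde X_*^+(\mathrm T) \twoheadrightarrow X_*^+(\mathrm T)$ is exactly $\mathrm T(\Fp) = \mathrm T(\Zp)/\mathrm T(\Zp)_p$, so that $E[\mathrm T(\Fp)] \subset E[\widetilde X_*^+(\mathrm T)]$ acts on $H^*(Y_1, E)[\chi]$ by the character $\chi$. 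This reduces the statement to two assertions: (a) $H^*(Y_1, E)[\chi]_{\mathfrak N} = 0$ for every non-trivial $\chi$; and (b) on the trivial summand, the residual $E[X_*^+(\mathrm T)]$-action matches the one on $H^*(Y_0, E)_{\mathfrak N}$. Part (b) is a direct unwinding of the Hecke compatibility with pullback (Lemma \ref{Heckeincohomology}) combined with the factorization of $\mathcal H_E(G_p, I_p) \hookrightarrow \mathcal H_E(G_p, I_p')$ modulo the ideal generated by $\mathrm T(\Fp)$-diamond classes.

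The hard part is (a), which I would tackle via the automorphic description. A Matsushima-style decomposition represents $H^*(Y_1, E)[\chi]_{\mathfrak N}$ as a sum over automorphic representations $\sigma$ whose Hecke eigensystem away from $T$ matches $\mathfrak N$ and whose local component $\sigma_p$ admits a nonzero $\chi$-eigenvector for the $I_p/I_p'$-action on $\sigma_p^{I_p'}$. Under the standing assumption (from Theorem \ref{autrepincohomology}) that $\pi$ is strongly generic and unramified at $p$ with a strongly regular Satake character $\chi_\pi$, the Satake parameters of $\sigma$ at all places outside $T$ coincide with those of $\pi$; combined with the classification of Iwahori-spherical representations and the Bernstein decomposition, this forces $\sigma_p$ to lie in the unramified principal series block attached to $\chi_\pi$. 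For such $\sigma_p$, the transformation rule under $B$ is insensitive to $\mathrm T(\Zp)$, hence $\sigma_p^{I_p} = \sigma_p^{I_p'}$ and the $\mathrm T(\Fp)$-action is trivial, so no non-trivial $\chi$ can occur.

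The main obstacle is precisely this local rigidity at $p$: the need to deduce that any $\sigma$ contributing to the $\mathfrak N$-localization must have $\sigma_p$ in the unramified Bernstein block. For $\mathrm G = \mathrm{GL}_n$, strong multiplicity one gives $\sigma \cong \pi$ outright. For general split reductive $\mathrm G$, one must leverage strong regularity of $\chi_\pi$, Theorem \ref{changeoflevelthm} (which pins down the possible Hecke eigencharacters after $\mathfrak N$-localization as a single Weyl orbit of $\chi_\pi$), and local-global compatibility for the attached Galois representation. Once the local rigidity is secured, parts (a) and (b) combine to yield the bijection of eigenspaces stated in the proposition.
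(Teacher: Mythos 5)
Your reduction of the statement to (a) vanishing of the non-trivial $\mathrm T(\Fp)$-isotypic summands of $H^*(Y_1,E)_{\mathfrak N}$ and (b) matching of the residual $E[X_*^+(\mathrm T)]$-action on the trivial summand is the right content, and (b) is fine. But your route is genuinely different from the paper's, and the proof you offer for (a) has a real gap. The paper does not argue through automorphic representations at all: following the last lemma of section 6.7 of \cite{akshay}, it works with $H^*(Y_1,E)\cong\Ext_{E[G_p]}\left(E\left[G_p/I_p'\right],M^{\bullet}\right)$ and produces a finite filtration of $E\left[G_p/I_p'\right]$ by $G_p$-submodules (built from powers of the augmentation ideal of $E[\Delta]$, $\Delta=I_p/I_p'$) whose graded pieces are sums of copies of $E\left[G_p/I_p\right]$ on which $E[\widetilde X_*^+(\mathrm T)]$ acts through its quotient $E[X_*^+(\mathrm T)]$; the proposition then follows by dévissage along the long exact sequence, \emph{before} any localization and with no global input whatsoever. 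This is precisely what keeps Section 4 of the paper free of Galois representations and of any conjectural local-global statements.

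The gap in your step (a) is the passage from ``$\sigma$ has the same Hecke eigensystem as $\pi$ away from $S$'' to ``$\sigma_p$ lies in the unramified principal series block.'' This is a global rigidity statement, not a consequence of the Bernstein decomposition or of the classification of Iwahori-spherical representations: a priori $\sigma_p$ could be a tamely ramified (depth-zero) principal series whose $I_p'$-invariants carry a non-trivial $\mathrm T(\Fp)$-action, and nothing local rules this out. Strong multiplicity one closes the gap only for $\Gl{n}$. Theorem \ref{changeoflevelthm} does not help either: it compares hyperspecial and Iwahori level after localization at $\mathfrak N$, i.e.\ it controls $H^*(Y_0,E)_{\mathfrak N}$, which is exactly the level you are trying to descend \emph{to}; it says nothing about which representations with non-zero $I_p'$-invariants but zero $I_p$-invariants can contribute to $H^*(Y_1,E)_{\mathfrak N}$. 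Finally, local-global compatibility for $\check{\mathrm G}$-valued Galois representations attached to $\sigma$ is conjectural for general split $\mathrm G$ (Conjecture \ref{Galoisrepexists1} is only introduced later), so invoking it would make an unconditional statement conditional. If you want a proof valid for all split reductive $\mathrm G$, you should replace your automorphic argument for (a) by the local module-theoretic filtration of $E\left[G_p/I_p'\right]$ as in the paper, which makes the comparison of eigencharacters a purely algebraic consequence of the structure of $E\left[G_p/I_p'\right]$ as a module over $G_p$ and over $E[\widetilde X_*^+(\mathrm T)]$.
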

\begin{proof} This is proved exactly like in the last lemma of section 6.7 of \cite{akshay}.
We therefore recall the main steps of the argument.
It suffices to prove the claim before localization at $\mathfrak N$.
Recall that \[ \begin{gathered} H^*(Y_1, E) \cong \Ext_{E[G_p]} \left( E \left[ G_p / I_p' \right], M^{\bullet} \right) \\ H^*(Y_0, E) \cong \Ext_{E[G_p]} \left( E \left[ G_p / I_p \right], M^{\bullet} \right) \end{gathered} \] where $M^{\bullet}$ is the direct limit of cochain complexes of finite coverings.
\begin{claim} There exists a filtration of $E \left[ G_p / I_p' \right]$ by $G_p$-submodules: \[ 0=F^0 \subset F^1 \subset \ldots \subset F^s=E \left[ G_p / I_p' \right] \] such that \begin{enumerate}
\item Each successive quotient $F^{j+1}/F^j$ is isomorphic as a $G_p$-module to a direct sum of copies of $E \left[ G_p / I_p \right]$.
\item The filtration is stable for the action of $E \left[ \widetilde X_*^+ (\mathrm T) \right]$, and on each successive quotient $E \left[ \widetilde X_*^+ (\mathrm T) \right]$ acts via its quotient map to $E \left[ X_*^+ (\mathrm T) \right]$.
\end{enumerate}
\end{claim}
Notice that the first requirement of the claim implies in particular that the filtration has finitely many steps, since the index of $I_p'$ in $I_p$ is finite.
\begin{proof}[Proof of claim] Denote $\Delta = I_p / I_p' \cong \mathrm T(\Fp)$ in this proof.
We have $E \left[ G_p / I_p' \right] \cong E [G_p] \otimes_{E[I_p]} E [ \Delta]$. The action of $\delta \in \Delta$ on $E[G_p]$ coincide with the action of $I_p' \delta I_p'$ and therefore commutes with the action of $E \left[ \widetilde X_*^+ (\mathrm T) \right]$.

Let $\mathfrak m \subset E[\Delta]$ be the augmentation ideal (which is the maximal ideal).
One filters $E \left[ \widetilde X_*^+ (\mathrm T) \right]$ by the kernels of successive powers of $\mathfrak m$: \[ F^j = E[G_p] \otimes_{E[I_p]} E[\Delta] \langle \mathfrak m^j \rangle, \] obtaining a $E \left[ \widetilde X_*^+ (\mathrm T) \right]$-stable filtration since the action of this monoid algebra commutes with the action of $\Delta$.

The successive quotients of the filtration are then \[ F^{j+1}/ F^j \cong E[G_p] \otimes_{E[I_p]} \langle \mathfrak m^{j+1} \rangle / \langle \mathfrak m^j \rangle, \] which is indeed a finite direct sum of copies of $E \left[ G_p / I_p \right]$.

To check the second requirement of the claim, one notices that a finite sum of multiplication maps by various $x \in \mathfrak m^j$ yields an isomorphism $F^{j+1} / F^j \lra F^1 / F^0 = F^1$ which is $E \left[ \widetilde X_*^+ (\mathrm T) \right]$-equivariant, as $\Delta$ commutes with $E \left[ \widetilde X_*^+ (\mathrm T) \right]$.
It suffices thus to check the second requirement for $F^1$, but here it is obvious as we have shown that $F^1$ is a direct sum of copies of $E[G_p / I_p]$.
\end{proof}
The proposition now follows by induction on the index $j$ of the filtration, using the long exact sequence in cohomology (which is $E \left[ \widetilde X_*^+ (\mathrm T) \right]$-equivariant by the second requirement of the claim).
\end{proof}
We again rephrase the last result in terms of maximal ideals.
\begin{cor} \label{maxidealsatproplevel} Let $\mathfrak N_w \subset \mathcal H_E \left( \mathrm G(\A^S), K^S \right) \times E \left[ X_*^+ \left( \Zp \right) \right]$ be a maximal ideal associated to $\pi$ appearing in the Hecke decomposition of $H^*(Y_0, E)$.
There is exactly one maximal ideal of $\mathcal H_E \left( \mathrm G(\A^S), K^S \right) \times E \left[ \widetilde X_*^+ \left( \Zp \right) \right]$ above it, defined by \[ \widetilde {\mathfrak N_w} = \langle \mathfrak N, U_p^1 - \chi^w \left( U_p^1 \right), \ldots, U_p^r - \chi^w \left( U_p^r \right) \rangle, \] where $U_p^i = I_p' \lambda_i(p) I_p'$.
\end{cor}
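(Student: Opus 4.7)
The plan is to leverage the preceding Proposition, which asserts that the action of $E \left[ \widetilde X_*^+ (\mathrm T) \right]$ on $H^* \left( Y_1, E \right)_{\mathfrak N}$ factors through the natural quotient map of monoid algebras $E \left[ \widetilde X_*^+ (\mathrm T) \right] \twoheadrightarrow E \left[ X_*^+ (\mathrm T) \right]$, and coincides through it with the Iwahori-level action on $H^*(Y_0, E)_{\mathfrak N}$.  The preceding Corollary already classifies the relevant maximal ideals at Iwahori level as the $\mathfrak N_w$, so the task reduces to transferring this classification across the quotient.

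I would first extend the above quotient to a surjection
\[ \pi : \mathcal H_E \left( \mathrm G(\A^S), K^S \right) \otimes_E E \left[ \widetilde X_*^+ (\mathrm T) \right] \twoheadrightarrow \mathcal H_E \left( \mathrm G(\A^S), K^S \right) \otimes_E E \left[ X_*^+ (\mathrm T) \right], \]
whose kernel $J$ is generated by the augmentation ideal of $E\left[\mathrm T(\Zp)/\mathrm T(\Zp)_p\right]$ (the kernel of $\widetilde X_*(\mathrm T) \twoheadrightarrow X_*(\mathrm T)$).  Because the preceding Proposition shows the action of the left-hand algebra on $H^*(Y_1, E)_{\mathfrak N}$ factors through $\pi$, every maximal ideal that appears in the Hecke decomposition of $H^*(Y_1, E)_{\mathfrak N}$ must contain $J$.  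Since $\pi$ is surjective, such maximal ideals are precisely the preimages under $\pi$ of maximal ideals of the Iwahori-level algebra appearing in $H^*(Y_0, E)_{\mathfrak N}$; by the previous Corollary, the latter are exactly the $\mathfrak N_w$ for $w \in W$.  This gives a unique maximal ideal over $\mathfrak N_w$, namely $\widetilde{\mathfrak N_w} := \pi^{-1}(\mathfrak N_w)$.

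To identify this preimage with the ideal in the claimed presentation, I would observe that under $\pi$, the pro-$p$-Iwahori operator $U_p^i = I_p' \lambda_i(p) I_p'$ lies in the Bernstein subalgebra $E[\widetilde X_*^+(\mathrm T)]$ and maps to the corresponding generator in $E[X_*^+(\mathrm T)]$, which is the Iwahori $U_p^i = I_p \lambda_i(p) I_p$.  Hence $\pi^{-1}(\mathfrak N_w) = \langle \mathfrak N,\, U_p^1 - \chi^w(U_p^1),\, \ldots,\, U_p^r - \chi^w(U_p^r)\rangle$, with the $U_p^i$ now interpreted at pro-$p$-Iwahori level, exactly as asserted.

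The only delicate point is the bookkeeping in the second step: verifying that $J$ really acts by zero on $H^*(Y_1,E)_{\mathfrak N}$, which is what allows the passage from "eigenspaces coincide under the quotient map" to "any maximal ideal in the support contains $J$".  This is precisely the content of the previous Proposition (via its filtration-by-powers-of-the-augmentation argument using $\Delta = I_p/I_p'$), so once that is invoked the rest is a direct ring-theoretic translation.
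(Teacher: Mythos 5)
Your argument is correct and is essentially what the paper intends: the corollary is stated there as a direct rephrasing of the preceding proposition in terms of maximal ideals, and your ring-theoretic translation (maximal ideals in the support must contain the kernel $J$ of the quotient $E\left[\widetilde X_*^+(\mathrm T)\right]\twoheadrightarrow E\left[X_*^+(\mathrm T)\right]$, hence are pullbacks of the $\mathfrak N_w$) is precisely that rephrasing made explicit. One small imprecision: the proposition only gives that every generalized eigencharacter factors through the quotient, so $J$ acts locally nilpotently rather than literally by zero, but this suffices for $J$ to lie in every maximal ideal of the support, which is all your argument uses.
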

\subsection{Inverting $p$} \label{invertingpsubsec}
In this subsection we recall the results on ordinary cohomology and the derived diamond operators from sections \ref{sectionordinarycohomology} and \ref{sectionderivedactions} and apply them to our setup with rational coefficients by inverting $p$.

Recall that $F_{\infty}^{\bullet}$, obtained as in proposition \ref{complexordinarytower}, is the complex interpolating ordinary cohomology in the Hida tower.
We denote $F_{\infty,E}^{\bullet} = F_{\infty}^{\bullet} \otimes_{\Zp} E$, this has an action of $\mathcal H_E \left( G^{\infty, S}, K^S \right) \times E \left[ X_*^+ (\mathrm T) \right]$, and let us denote \[\mathbb T^S_{\ordi}(K, E) = \mathbb T^S \left( F_{\infty,E}^{\bullet} \right), \] as in definition \ref{ordinaryHeckealgebraafterKT}.

Recall that our automorphic representation $\pi$ is as in theorem \ref{autrepincohomology}.
Let $\mathfrak m = \widetilde {\mathfrak N_w}$ be one of the maximal ideals of $\mathbb T^S_{\ordi}(K, E)$ obtained as in corollary \ref{maxidealsatproplevel}. Then, as in lemma 6.17 of \cite{KT}, $\mathfrak m$ appears in the support of $H^* \left( F_{\infty,E}^{\bullet} \right)$.

As in section 2.4 of \cite{KT}, the maximal ideal $\mathfrak m \subset \mathbb T^S_{\ordi} (K,E)$ determines an idempotent $e_{\mathfrak m} \in \End_{\mathcal D(\Lambda_E)} \left( F_{\infty,E}^{\bullet} \right)$.
We denote by $F_{\mathfrak m}^{\bullet}$ the direct factor representing the subcomplex $e_{\mathfrak m} F_{\infty,E}^{\bullet}$, and we get canonical isomorphisms \[ \mathbb T^S_{\ordi}(K,E)_{\mathfrak m} \cong e_{\mathfrak m} \mathbb T^S_{\ordi}(K,E) \cong \mathbb T^S \left( F_{\mathfrak m}^{\bullet} \right). \]

Following section 6.5 of \cite{KT} (and using the fact that $\pi$ is ordinary at $p$), we obtain: \begin{equation} \label{ordinarycohomologycomplex} \begin{split} H^* \left( F_{\mathfrak m}^{\bullet} \otimes_{\Lambda_E} \Lambda_{c,E} \right) \cong H_* \left( Y_{c,c}, E \right)_{ \mathfrak m} \\
H^* \left( F_{\mathfrak m}^{\bullet} \otimes_{\Lambda,E } \Lambda_{c,E}  \otimes_{\Lambda_c,E } E \right) \cong H_* \left( Y_{1,c}, E \right)_{\mathfrak m} \\
H^* \left( \Hom_{\Lambda_{c,E} } \left( F_{\mathfrak m}^{\bullet} \otimes_{\Lambda_E} \Lambda_{c,E} , E \right) \right) \cong H^* \left( Y_{1,c}, E \right)_{\mathfrak m} \\
H^* \left( \Hom_{\Lambda_E } \left( F_{\mathfrak m}^{\bullet} , E \right) \right) \cong H^* \left( Y_{1,1}, E \right)_{ \mathfrak m} \end{split} \end{equation}
Tensoring with $E$ in proposition \ref{deriveddiamondintower} implies then that we obtain an action of $H^* \left( \mathrm T(\Zp)_p, E \right)$ on each eigenspace $H^* \left( Y_{1,1}, E \right)_{\mathfrak m}$.
As in the discussion surrounding proposition \ref{topologicalderivedaction}, this derived action can equivalently be interpreted as the natural action of $\Ext^*_{\Lambda_E} \left( E, E \right)$ on $\Ext^*_{\Lambda_E} \left( F_{\mathfrak m}^{\bullet}, E \right) \cong H^* \left( \Hom_{\Lambda_E} \left( F_{\mathfrak m}^{\bullet} ,E  \right) \right)$.

\section{Generation of cohomology over its bottom degree} \label{generationofcohomologysection}
Recall the definitions of the defect of $\mathrm G$: $l_0 = \rk \mathrm G(\R) - \rk K_{\infty}$ and $q_0 = \frac{d - l_0}{2}$, where $d$ is the dimension of $Y(K)$ as a real manifold.

Let $\pi$ be as in theorem \ref{autrepincohomology}, and let $w$ be any element of the Weyl group.
Then for any level $K \subset K_{1,1}$, with $K^p = I(1,1)$, the pair $(\pi, w)$ determines a maximal ideal $\mathfrak m = \widetilde {\mathfrak N_w}$ of $\mathbb T^S_{\ordi} (K, E)$ as in corollary \ref{maxidealsatproplevel}.
\begin{thm}[Franke, Borel, Matsushima et al.] \label{rationalcohomologyFranke} Let $\mathfrak m \subset \mathbb T^S_{\ordi} (K_{c,c}, E)$ be the maximal ideal corresponding to the pair $(\pi, w)$. 
Then for every $c \ge 1$ we have that $H^i \left( Y_{c,c}, E \right)_{\mathfrak m} \neq 0$ only if $i \in [q_0, q_0 + l_0]$, and moreover $\dim_E H^{q_0+i} \left( Y_{c,c}, E \right)_{ \mathfrak m} = m(\pi, K) \binom{l_0}{i}$ for some positive integer $m=m(\pi, K)$.
\end{thm}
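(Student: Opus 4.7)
The plan is to reduce the statement to the classical description of rational cohomology of arithmetic manifolds in terms of automorphic representations, and then analyze what the localization at $\mathfrak{m} = \widetilde{\mathfrak{N}_w}$ selects. By Franke's theorem (building on Borel) together with the Matsushima formula, we have a decomposition
\[ H^* \left( Y_{c,c}, E \right) \otimes_E \C \;\cong\; \bigoplus_\sigma m(\sigma) \cdot H^* \left( \mathfrak{g}, K_\infty; \sigma_\infty \right) \otimes \sigma_f^{K_{c,c}}, \]
where $\sigma$ runs over automorphic representations of $\mathrm G(\A)$ contributing to cohomology. Localizing at $\mathfrak{m}$ isolates those $\sigma$ such that $\sigma^S$ has the same Hecke eigenvalues as $\pi^S$, with an additional constraint at $p$ coming from the ordinary part of $\mathfrak{m}$.

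The first step is to verify that the $\mathfrak{m}$-localized cohomology is supported on representations $\sigma$ in the global packet of $\pi$. Since $\pi$ is strongly generic at $p$ and ordinary, the results of Section 4.1 (in particular the change-of-level Theorem \ref{changeoflevelthm} and Corollary \ref{maxidealsatproplevel}) imply that the local component $\sigma_p$ must be the same unramified principal series as $\pi_p$, but picked out as the $\chi^w$-eigenline under the action of $E[\widetilde X_*^+(\mathrm T)]$. Together with the hypothesis that $\pi$ contributes to $H^*(Y_{1,1}, E)$ (hence to the relevant stable discrete spectrum), any $\sigma$ surviving the localization must satisfy $\sigma_\infty \cong \pi_\infty$ and $\sigma_f \cong \pi_f$ up to isomorphism.

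The second step is to compute the infinite and finite factors separately. At infinity, since $\pi$ is regular algebraic and cuspidal, $\pi_\infty$ is a tempered cohomological representation, and the classical computation of Borel-Wallach (Theorem 5.1/5.2 of \cite{BW}, as used in Theorem \ref{autrepincohomology}) gives
\[ \dim H^{q_0+i} \left( \mathfrak{g}, K_\infty; \pi_\infty \right) = \binom{l_0}{i} \qquad (0 \le i \le l_0), \]
and zero otherwise; this immediately yields the claimed concentration in degrees $[q_0, q_0+l_0]$ and the binomial shape. At the finite part, the $K_{c,c}$-invariants of $\sigma_f$ in the $\chi^w$-eigenline cut out by $\mathfrak{m}$ have some fixed dimension depending on $\pi$ and $K$, which we bundle into the positive integer $m(\pi,K)$; this dimension is independent of $i$.

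The main subtlety, and the one to handle carefully, is ensuring that the $\mathfrak{m}$-localization on integral (or $E$-valued) cohomology agrees with the naive summation over automorphic representations on the complex side, and that no parasitic non-tempered contribution survives. For this one invokes the semisimplicity of the ordinary cohomology as a Hecke module (Corollary 4.3 of \cite{Hida3}, already used in the proof of Theorem \ref{autrepincohomology}), which forces $H^*(Y_{c,c}, E)_{\mathfrak{m}} = H^*(Y_{c,c}, E)[\mathfrak{m}]$, together with the fact that ordinariness and strong genericity exclude Eisenstein contributions at these Hecke eigensystems. Once this is established, combining the $(\mathfrak{g}, K_\infty)$-cohomology computation with the constancy of the finite part dimension yields the stated formula, with the same integer $m(\pi, K)$ as in Theorem \ref{autrepincohomology} (for $c = 1$), and the argument at general $c$ proceeds in the same way since the isotypic decomposition passes through the levels $K_{c,c}$ for $c \ge 1$.
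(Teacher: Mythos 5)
Your proposal is correct and follows exactly the route the paper itself relies on: the paper gives no independent proof of this theorem, attributing it to Franke/Borel/Matsushima and (as in the proof of Theorem \ref{autrepincohomology}) to Theorems 5.1 (Ch.\ III) and 5.2 (Ch.\ VII) of \cite{BW} together with Hida's semisimplicity of the ordinary part, which are precisely the ingredients you assemble. The only nitpick is that your intermediate claim that every surviving $\sigma$ has $\sigma_f \cong \pi_f$ is stronger than needed (and not justified without strong multiplicity one for general $\mathrm G$); what the argument actually requires is only that each contributing $\sigma$ is cuspidal tempered cohomological, so that each archimedean factor yields the binomial pattern and everything else is absorbed into $m(\pi,K)$.
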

Recall that $F^{\bullet}_{\mathfrak m}$ is the complex of of $\Lambda_E$-modules obtained as in the last subsection \ref{invertingpsubsec}, interpolating the $\mathfrak m$-eigenspaces in the ordinary tower.

Let us now assume the following dimension conjecture (related, but not equivalent, to conjecture \ref{dimensionconjectureforring} which takes a Galois-theoretic perspective), also known as a non-abelian Leopoldt conjecture (after Hida \cite{Hida}).
Recall that $\Lambda_E$ is a regular local ring of dimension $r = \rk \mathrm T$:
\begin{conj}[Dimension conjecture for ordinary complex] \label{dimensionconj}
We have \[ \dim_{\Lambda_E} H^* \left( F_{\mathfrak m}^{\bullet}  \right) = r - l_0. \]
\end{conj}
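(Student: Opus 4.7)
The plan is to attack Conjecture~\ref{dimensionconj} by bounding $\dim_{\Lambda_E} H^*(F_{\mathfrak m}^{\bullet})$ from above and below separately, and to reinterpret each bound through the ordinary deformation ring $R_{\rho_\pi}^{\ordi}$, whose expected dimension is also $r - l_0$ (as computed from the global Euler characteristic of $\ad\rho_\pi$ combined with the ordinary local condition at $p$, cf.\ Appendix~B). Thus the strategy is to establish the coincidence of three quantities: the Krull dimension of $H^*(F_{\mathfrak m}^{\bullet})$ over $\Lambda_E$, the dimension of $R_{\rho_\pi}^{\ordi}$, and the Galois-theoretically predicted $r - l_0$.

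For the upper bound $\dim_{\Lambda_E} H^*(F_{\mathfrak m}^{\bullet}) \le r - l_0$, the starting point is the specialization isomorphism~\eqref{ordinarycohomologycomplex} together with Theorem~\ref{rationalcohomologyFranke}: at every classical arithmetic weight, the fiber $H^*(Y_{c,c},E)_{\mathfrak m}$ has bounded dimension $m(\pi,K)\cdot 2^{l_0}$. Since classical weights are Zariski-dense in $\mathrm{Spec}\,\Lambda_E$, this tight fiberwise control can in principle be promoted to a Krull-dimension bound: a $\Lambda_E$-module whose fibers at a dense set of closed points are uniformly bounded in dimension is finitely generated over $\Lambda_E$, and the deficiency $l_0$ should come from a control / rigidity theorem (in the spirit of Hida's classicality results, possibly refined via eigenvariety techniques) cutting out a codimension-$l_0$ locus carrying the ordinary cohomology.

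For the lower bound, which is the real content of the conjecture, I would patch $F_{\mathfrak m}^{\bullet}$ in a Hida-theoretic Taylor--Wiles system in the style of \cite{KT} and the derived Calegari--Geraghty framework in \cite{GV}. This would produce a patched complex of modules over a power series ring $S_\infty \cong \Lambda_E[[x_1,\dots,x_g]]$ carrying an action of a patched deformation ring $R_\infty$ of dimension $\dim S_\infty - l_0$; balancedness of the Taylor--Wiles data together with depth/dimension estimates on the patched module would then descend to give $\dim_{\Lambda_E} H^*(F_{\mathfrak m}^{\bullet}) \ge r - l_0$. The main obstacle is precisely this lower bound: producing sufficiently many ordinary deformations of $\rho_\pi$ is equivalent to the non-vanishing/dimension statement of the higher-dimensional Leopoldt conjectures discussed in the introduction, and is completely open once $l_0 > 0$. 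It is for exactly this reason that the authors' forthcoming work \cite{AKR} with Allen chooses to argue directly on the patched object, proving Conjecture~\ref{derivedactionthruSelmerconj} without needing to establish Conjecture~\ref{dimensionconj} unconditionally.
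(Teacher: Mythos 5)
There is no proof of this statement in the paper to compare against: Conjecture~\ref{dimensionconj} is stated as a \emph{conjecture} (a non-abelian Leopoldt conjecture in the sense of Hida), and the paper only ever \emph{assumes} it (in Theorem~\ref{complexes}, Corollary~\ref{cohomologygeneratedbybottomdegree}, and, in its Galois-theoretic form Conjecture~\ref{dimensionconjectureforring}, in Theorem~\ref{dimensionality}). Your proposal is therefore not a proof but a strategy outline, and you concede as much: the lower bound $\dim_{\Lambda_E} H^*(F_{\mathfrak m}^{\bullet}) \ge r - l_0$ is exactly the open content of the conjecture once $l_0 > 0$, and no amount of Taylor--Wiles patching currently produces it. Patching in the style of \cite{KT} or \cite{GV} yields a patched module over $S_\infty$ supported on a quotient of $R_\infty$ of dimension at most $\dim S_\infty - l_0$, i.e.\ it reproduces the \emph{upper} bound; to get the lower bound one needs the patched module to have full depth over $R_\infty$, which is equivalent to the very Leopoldt-type nonvanishing one is trying to prove. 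So the central step of your plan is circular, not merely difficult.

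On the upper bound, your route via Zariski density of classical weights is both unnecessary and shaky as stated: uniform bounds on the dimensions of fibers at a dense set of closed points do not by themselves bound the Krull dimension of the support of a $\Lambda_E$-module. The paper's mechanism is simpler and already in the text: $F_{\mathfrak m}^{\bullet}$ is a minimal perfect complex of $\Lambda_E$-modules concentrated in degrees $[q_0, q_0+l_0]$ (this uses Theorem~\ref{rationalcohomologyFranke} together with the control isomorphisms of Proposition~\ref{complexordinarytower} and equation~\eqref{ordinarycohomologycomplex}), and Lemma~\ref{CGlemma} (Calegari--Geraghty) applied to this complex gives $\dim_{\Lambda_E} H^*(F_{\mathfrak m}^{\bullet}) \le r - l_0$ directly, with the additional structural consequences (single nonvanishing cohomology group, projective dimension $l_0$) that the paper exploits in Theorem~\ref{complexes}. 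If you want to engage with this conjecture honestly, the correct framing is: the upper bound is a theorem via Lemma~\ref{CGlemma}, the lower bound is equivalent to a higher Leopoldt conjecture and is open for $l_0 > 0$, and the paper's results (and the forthcoming \cite{AKR}) are precisely designed to extract consequences of the derived Hecke action while either assuming this conjecture or circumventing it.
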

This conjecture allows us to use the following known lemma of Calegari and Geraghty (see \cite{CG} and corollary 3.2 in \cite{KT}):
\begin{lem} \label{CGlemma} Let $S$ be a regular local ring of dimension $d$ and let $0 \le l \le d$. Let $C^{\bullet}$ be a complex of finite-free $S$-modules concentrated in degrees $[q, q+l]$.
Then $\dim_S H^* \left( C^{\bullet} \right) \le d - l$. If equality holds, then there exists a unique nonzero cohomology group $H^i \left( C^{\bullet} \right)$ in degree $i = q +l$, and moreover we have $\projdim_S H^{q + l} \left( C^{\bullet} \right) = l$ and $\dpth_S H^{q+l} \left( C^{\bullet} \right) = d - l$.
\end{lem}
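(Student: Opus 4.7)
The strategy is to combine the Auslander-Buchsbaum-Serre theorem for regular local rings with a minimal-complex analysis via a Tor spectral sequence. The bound $\dim_S H^*(C^\bullet) \le d - l$ is equivalent to the statement that every prime $\mathfrak{p}$ minimal in $\mathrm{Supp}(H^*(C^\bullet))$ satisfies $\mathrm{ht}(\mathfrak{p}) \ge l$, so we reduce to proving this height inequality.

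Fix such a $\mathfrak{p}$ and localize: one obtains a bounded complex $D^\bullet := C^\bullet_\mathfrak{p}$ of finite-free modules over the regular local ring $R := S_\mathfrak{p}$ of dimension $h := \mathrm{ht}(\mathfrak{p})$, concentrated in degrees $[q, q+l]$, with nonzero and \emph{finite-length} cohomology (the latter because $\mathfrak{p}$ is a minimal prime of the support). Replace $D^\bullet$ by a quasi-isomorphic minimal complex $M^\bullet$ of finite-free $R$-modules. Since all differentials of $M^\bullet$ land in $\mathfrak{m}_R M^{\bullet+1}$, the complex $M^\bullet \otimes_R k$ has vanishing differentials, so $M^n \otimes_R k = H^n(M^\bullet \otimes_R k) = H^n(M^\bullet \otimes^L_R k)$. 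Compute this derived tensor product via the convergent spectral sequence
\[ E_2^{-i,j} = \mathrm{Tor}^R_i\bigl(H^j(M^\bullet), k\bigr) \Longrightarrow H^{j-i}(M^\bullet \otimes^L_R k). \]
Because $R$ is regular of dimension $h$ and any nonzero finite-length $R$-module has projective dimension exactly $h$ (Auslander-Buchsbaum, since its depth is zero), $\mathrm{Tor}^R_h(H^j(M^\bullet), k) \ne 0$ for every $j$ with $H^j(M^\bullet) \ne 0$, while $E_2^{0, j_{\max}} = H^{j_{\max}}(M^\bullet) \otimes_R k \ne 0$. Tracking these two extreme $E_\infty$-contributions and using that $M^\bullet$ is concentrated in $[q, q+l]$ forces $j_{\min} \ge q + h$ and $j_{\max} \le q + l$, hence $(j_{\max} - j_{\min}) + h \le l$ and in particular $h \le l$, which establishes the main inequality.

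In the equality case $\dim_S H^*(C^\bullet) = d - l$, choose $\mathfrak{p}$ with $\dim S/\mathfrak{p} = d - l$, so $h = l$. The bound $(j_{\max} - j_{\min}) + h \le l$ then collapses to $j_{\min} = j_{\max} = q + l$, giving uniqueness of the nonzero cohomology degree of $M^\bullet$ over $R$. Global uniqueness is obtained by contradiction: a hypothetical nonzero $H^i(C^\bullet)$ with $i < q + l$ would, upon localizing at a minimal prime $\mathfrak{q}$ of its support, yield a bounded minimal complex of finite-free $R_\mathfrak{q}$-modules concentrated in $[q, q+l]$ with finite-length cohomology not concentrated in top degree, and the spectral-sequence argument above rules this out. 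Once uniqueness is established globally, $C^\bullet$ shifted by $q + l$ is a finite free resolution of $H^{q+l}(C^\bullet)$ of length $l$, so $\mathrm{projdim}_S H^{q+l}(C^\bullet) \le l$; the reverse inequality follows from $\mathrm{projdim}_{S_\mathfrak{p}}(H^{q+l}(C^\bullet))_\mathfrak{p} = l$. Finally, Auslander-Buchsbaum applied over the Cohen-Macaulay ring $S$ gives $\mathrm{depth}_S H^{q+l}(C^\bullet) = d - \mathrm{projdim}_S H^{q+l}(C^\bullet) = d - l$.

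The main obstacle will be the upgrade from \emph{local} uniqueness of nonzero cohomology (at the single prime $\mathfrak{p}$ used to compute $\dim_S H^*$) to \emph{global} uniqueness. One must rerun the minimal-complex analysis at every candidate minimal prime $\mathfrak{q}$ of a putative nonzero lower cohomology group, and in particular handle primes with $\mathrm{ht}(\mathfrak{q}) > l$; there the key observation is that a nonzero finite-length module over $R_\mathfrak{q}$ has projective dimension $\mathrm{ht}(\mathfrak{q}) > l$, which is incompatible with the complex being concentrated in an interval of length $l+1$, again by the same Tor spectral sequence.
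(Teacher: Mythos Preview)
Your spectral-sequence computation is sound, but it proves the \emph{reverse} of what you claim. You set out to show $\mathrm{ht}(\mathfrak p)\ge l$ for every minimal prime $\mathfrak p$ of $\mathrm{Supp}\,H^*(C^\bullet)$, yet the inequalities $j_{\min}\ge q+h$ and $j_{\max}\le q+l$ that you derive give $h\le l$, i.e.\ $\mathrm{ht}(\mathfrak p)\le l$, which yields $\dim_S H^*(C^\bullet)\ge d-l$ rather than $\le d-l$. In fact the inequality $\dim_S H^*(C^\bullet)\le d-l$ is false as written: take $l=1$ and the two-term complex $S\xrightarrow{\;0\;}S$ over any regular local $S$ of dimension $\ge 1$. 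The paper does not prove the lemma; it cites Calegari--Geraghty and Khare--Thorne, and in those sources the correct inequality is $\dim_S H^*(C^\bullet)\ge d-l$ (for $H^*(C^\bullet)\ne 0$), which is precisely what your Tor spectral sequence establishes---it is essentially the New Intersection Theorem over a regular local ring. So the mathematics in your first step is right, but you have mis-identified which direction you proved; the lemma as transcribed has a typo in the inequality sign.

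Your equality-case argument also has a gap. When you localize at a minimal prime $\mathfrak q$ of a putative nonzero $H^i(C^\bullet)$ with $i<q+l$, you assert that the localized complex has \emph{finite-length} cohomology. Only $H^i(C^\bullet)_{\mathfrak q}$ is guaranteed to have finite length; the other groups, in particular $H^{q+l}(C^\bullet)_{\mathfrak q}$, need not, since $\mathfrak q$ may strictly contain a minimal prime of $\mathrm{Supp}\,H^*(C^\bullet)$. The corner term $\mathrm{Tor}^{S_{\mathfrak q}}_{h'}(H^i_{\mathfrak q},k)$ is then no longer protected from outgoing differentials hitting $\mathrm{Tor}$-groups of lower $H^j_{\mathfrak q}$, so the argument does not go through verbatim. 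The references close this by a direct depth/Auslander--Buchsbaum count along the exact truncations of $C^\bullet$ (equivalently, the Peskine--Szpiro acyclicity lemma). Once global concentration in degree $q+l$ is established, your concluding computation of $\mathrm{projdim}$ and $\mathrm{depth}$ is correct.
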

We want to apply this lemma to $F_{\mathfrak m}^{\bullet}$.
We have \[ H^* \left( \Hom_{\Lambda_E} \left( F^{\bullet}_{\mathfrak m} \otimes_{\Lambda_E} \Lambda_{c, E}, E \right) \right) \cong H^i \left( Y_{c,c}, E \right)_{\mathfrak m}, \]
and by theorem \ref{rationalcohomologyFranke} for any $c \ge 1$ the right side is only nonzero for $i \in [q_0, l_0 + q_0]$.

As in sections 2 and 6 of \cite{KT}, we have that $F_{\mathfrak m}^{\bullet}$ is a complex of finite, projective $\Lambda_E$-modules whose cohomology is only nonzero in degrees $[q_0, q_0+l_0]$ - by minimality, we conclude that $F_{\mathfrak m}^{\bullet}$ is concentrated in the same range of degrees.

We can thus apply lemma \ref{CGlemma}, and since the dimension conjecture \ref{dimensionconj} gives that $\dim_{\Lambda_E} H^* \left(  F^{\bullet}_{\mathfrak m}  \right) = r - l_0$, we obtain that $H^i \left( F^{\bullet}_{\mathfrak m}  \right)$ is only nonzero for $i = l_0 + q_0$, and that $H^{q_0+l_0} \left( F_{\mathfrak m}^{\bullet}  \right)$ has projective dimension equal to $l_0$ and depth equal to $r- l_0$ as a $\Lambda_E$-module.

\begin{thm}\label{complexes} Assume Conjecture  \ref{dimensionconj}, and that the multiplicity of $\pi$ in cohomology is $m(\pi,K) = 1$. The complex $F_{ \mathfrak m}^{\bullet} $ is quasi-isomorphic to the quotient of $\Lambda_E$ by a regular sequence of length $l_0$: \[ F_{ \mathfrak m}^{\bullet} \sim \Lambda_E / (f_1, \ldots, f_{l_0}). \]
\end{thm}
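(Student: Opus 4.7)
The plan is to reduce the theorem to a structural statement about the single nonzero cohomology module $M := H^{q_0+l_0}(F_{\mathfrak m}^{\bullet})$. From the preceding Calegari--Geraghty argument, $F_{\mathfrak m}^{\bullet}$ is a minimal complex of finite free $\Lambda_E$-modules supported in degrees $[q_0, q_0+l_0]$ with cohomology concentrated in the single top degree, and $M$ has projective dimension $l_0$ and depth $r-l_0$, hence is Cohen--Macaulay. Minimality and concentration of cohomology force $F_{\mathfrak m}^{\bullet}$ to be (up to a shift by $q_0+l_0$) the minimal free resolution of $M$, and in particular quasi-isomorphic to $M$ viewed as a complex. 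It therefore suffices to exhibit a regular sequence $f_1,\ldots,f_{l_0}$ in $\Lambda_E$ such that $M \cong \Lambda_E/(f_1,\ldots,f_{l_0})$.

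I would first show $M$ is cyclic. Because $F_{\mathfrak m}^{\bullet}$ represents $M$ (up to shift) in the derived category, derived base change to $E$ identifies
\[ H^n(F_{\mathfrak m}^{\bullet} \otimes_{\Lambda_E} E) \cong \Tor_{q_0+l_0-n}^{\Lambda_E}(M, E) \qquad \text{for } n \in [q_0, q_0+l_0]. \]
By equation (\ref{ordinarycohomologycomplex}) at $c=1$, the left-hand side is $H_n(Y_{1,1}, E)_{\mathfrak m}$, and the homological analogue of Theorem \ref{rationalcohomologyFranke} (valid by the same Matsushima/Borel--Wallach input, or equivalently by Poincar\'e duality combined with commutativity of the Hecke algebra) gives $\dim_E H_{q_0+i}(Y_{1,1}, E)_{\mathfrak m} = m(\pi, K) \binom{l_0}{i}$. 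Taking $n = q_0+l_0$ yields $\dim_E(M \otimes_{\Lambda_E} E) = m(\pi,K) = 1$, so Nakayama's lemma gives $M \cong \Lambda_E/I$ for some ideal $I$. Repeating with $n = q_0+l_0-1$ gives
\[ \dim_E \Tor_1^{\Lambda_E}(M,E) = \dim_E H_{q_0+l_0-1}(Y_{1,1}, E)_{\mathfrak m} = m(\pi,K) \binom{l_0}{l_0-1} = l_0, \]
and from the short exact sequence $0 \to I \to \Lambda_E \to M \to 0$ this equals the minimal number of generators of $I$. Pick generators $f_1,\ldots,f_{l_0}$ of $I$.

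To conclude, observe that $\dim \Lambda_E/I = r - l_0$ by Conjecture \ref{dimensionconj}, so $I$ has height $l_0$ in the regular (hence Cohen--Macaulay) local ring $\Lambda_E$. A standard commutative algebra fact (the Cohen--Macaulay criterion for complete intersections; see e.g.\ Matsumura, \S17) says that in a Cohen--Macaulay local ring any ideal of height $h$ generated by $h$ elements can in fact be generated by a regular sequence of length $h$. Applying this with $h = l_0$, after possibly replacing $f_1,\ldots,f_{l_0}$ by suitable linear combinations we obtain a regular sequence with $M \cong \Lambda_E/(f_1,\ldots,f_{l_0})$, finishing the proof.

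The proof is essentially formal once the Hida-theoretic framework is in place: the genuinely nontrivial ingredient is the identification of the low-degree Tor groups of $M$ with the low-degree homology of $Y_{1,1}$ via (\ref{ordinarycohomologycomplex}), and the multiplicity-one hypothesis is used exactly twice --- in degree $0$ to trigger Nakayama and produce cyclicity, and in degree $1$ to force the minimal number of generators to equal $l_0$. The Cohen--Macaulay property of $\Lambda_E$ (following from regularity) is crucial at the final step to promote these generators to a regular sequence; this is the only place where the commutative algebra is not completely automatic.
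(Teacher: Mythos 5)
Your proof is correct and follows essentially the same route as the paper's: multiplicity one in the extreme degree gives cyclicity via Nakayama, the next degree pins down the minimal number of generators of $I$ as $l_0$, and the dimension count $\dim \Lambda_E/I = r - l_0$ in the regular (hence Cohen--Macaulay) ring $\Lambda_E$ forces those generators to be a regular sequence. The only difference is cosmetic: you phrase the two degree computations via $F_{\mathfrak m}^{\bullet} \otimes_{\Lambda_E} E$ and $\Tor_i$, while the paper uses $\Hom_{\Lambda_E}(F_{\mathfrak m}^{\bullet}, E)$ and $\Ext^i$; over the field $E$ these are dual and carry the same information (and your final "replace by linear combinations" is unnecessary, since in a Cohen--Macaulay local ring any $h$ generators of a height-$h$ ideal already form a regular sequence).
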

\begin{rem} The multiplicity one assumption is known to hold for $\mathrm G = \Gl{n}$.
\end{rem}
\begin{proof} The construction of the complex $F_{\mathfrak m}^{\bullet}$ is such that \[ H^* \left( \Hom_{\Lambda_E } \left( F_{\mathfrak m}^{\bullet}  , E \right) \right) \cong H^* \left( Y_{1,1}, E \right)_{\mathfrak m}, \] and by Franke's theorem \ref{rationalcohomologyFranke}, the right side is only nonzero in degrees $[q_0, q_0+l_0]$.

On the other hand, we have already seen that $F_{\mathfrak m}^{\bullet} $ is concentrated in degree $q_0+l_0$ and since taking $\Hom_{\Lambda_E}$ reverses the indexing on the complex, we obtain that \[ H^i \left( Y_{1,1}, E \right)_{\mathfrak m}  \cong \Ext_{\Lambda_E}^i \left( F_{\mathfrak m}^{\bullet} , E \right) \] is only nonzero for $i \in [0,l_0]$ and has dimension (over $E$) equal to $\binom{l_0}{i}$.

In particular, $\dim_E \Hom_{\Lambda_E} \left( F_{\mathfrak m}^{\bullet} , E \right) = 1$, which means that $F_{\mathfrak m}^{\bullet} $ is a cyclic $\Lambda_E$-module: there exists some ideal $I$ so that $F_{\mathfrak m}^{\bullet} \cong \Lambda_{\Qp} / I$.

Then, we obtain that \[ \Ext_{\Lambda_E}^1 \left( F_{\mathfrak m}^{\bullet} , E \right) \cong \Hom_{\Lambda_E} \left( I, E \right) \] has dimension $l_0$. This means that $I$ can be generated by $l_0$ elements $f_1, \ldots, f_{l_0}$.
Since \[ \dim \Lambda_E - l_0 = r - l_0 = \dim_{\Lambda_E}  F_{\mathfrak m}^{\bullet}   = \dim_{\Lambda_E} \Lambda_E / I = \dim_{\Lambda_E} \Lambda_E / \left( f_1, \ldots, f_{l_0} \right), \] the elements $f_1, \ldots, f_{l_0}$ are necessarily a regular sequence.
\end{proof}

We owe the proof of the  following result to N. Fakhruddin.

\begin{lem}\label{naf} Let $\Lambda$ be a regular local ring with maximal ideal $\mathfrak p$ and residue field $k$, $I = \left( x_1, \ldots, x_n \right)$ be an ideal generated by a regular sequence and let $R = \Lambda / I$.
The  map $\Ext_{\Lambda}^0 \left( R, k \right) \times \Ext^i_{\Lambda} (k,k) \lra \Ext_{\Lambda}^i \left( R, k \right)$ is  identified with  the map  of $k$-vector spaces $\bigwedge^i \Hom (\mathfrak p/\mathfrak p^2,k) \lra \bigwedge^i \Hom (I/\mathfrak pI,k)$.  Thus   the map is surjective for any $i>0$, if and only if   the regular sequence $\left( x_1, \ldots, x_n \right)$ is part of a system of parameters.
\end{lem}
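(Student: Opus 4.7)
The plan is to compute each side explicitly via Koszul resolutions, identify the Yoneda pairing with an exterior-power map induced by the natural inclusion $I/\mathfrak{p}I \hookrightarrow \mathfrak{p}/\mathfrak{p}^2$, and then invoke a purely linear-algebraic criterion. First I would fix a regular system of parameters $y_1, \ldots, y_r$ of $\Lambda$ (where $r = \dim \Lambda$) and form the Koszul complex $K^{\bullet}(\mathbf{y})$; regularity of $\Lambda$ makes this a free resolution of $k$. Because each $y_j$ acts as zero on $k$, the differentials of $\Hom_\Lambda(K^{\bullet}(\mathbf{y}), k)$ vanish identically, yielding
\[
\Ext^i_\Lambda(k,k) \;\cong\; \bigwedge^i \Hom_k(\mathfrak{p}/\mathfrak{p}^2, k).
\]
The same recipe applied to the Koszul complex $K^{\bullet}(\mathbf{x})$ of the regular sequence $x_1, \ldots, x_n$ produces a free resolution of $R$; the $x_i$ are automatically a minimal generating set of $I$ (otherwise $H_1$ of this Koszul complex would not vanish), so $\dim_k I/\mathfrak{p}I = n$ and
\[
\Ext^i_\Lambda(R,k) \;\cong\; \bigwedge^i \Hom_k(I/\mathfrak{p}I, k).
\]
In particular $\Ext^0_\Lambda(R,k) \cong k$ is spanned by the augmentation $\varepsilon : R \twoheadrightarrow k$.

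Next I would construct and track through the identifications a chain-level lift of $\varepsilon$ from $K^{\bullet}(\mathbf{x})$ to $K^{\bullet}(\mathbf{y})$. Writing $x_i = \sum_j a_{ij} y_j$ with $a_{ij} \in \Lambda$, the degree-one component of such a lift is the matrix $A = (a_{ij})$ and its higher-degree components are $\bigwedge^{\bullet} A$. Reducing modulo $\mathfrak{p}$, $\bar{A}$ is precisely the matrix of the natural $k$-linear inclusion $\iota : I/\mathfrak{p}I \hookrightarrow \mathfrak{p}/\mathfrak{p}^2$ in the bases $\{\bar x_i\}$ and $\{\bar y_j\}$. Applying $\Hom_\Lambda(-, k)$ turns the Yoneda composition $\Ext^0(R,k) \times \Ext^i(k,k) \to \Ext^i(R,k)$ into precomposition with this chain map, and under the identifications of the previous paragraph it becomes exactly the map $\bigwedge^i \iota^{*}$. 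This is the first assertion of the lemma.

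For the second assertion I would appeal to a linear-algebra dichotomy. In a regular local ring, $(x_1, \ldots, x_n)$ is part of a system of parameters if and only if the $\bar{x}_i$ are $k$-linearly independent in $\mathfrak{p}/\mathfrak{p}^2$, if and only if $\iota$ is injective, if and only if $\iota^{*} : \Hom_k(\mathfrak{p}/\mathfrak{p}^2, k) \to \Hom_k(I/\mathfrak{p}I, k)$ is surjective. A surjection of finite-dimensional $k$-vector spaces induces surjections on all exterior powers; conversely, if $\iota^{*}$ is not surjective then its image $U$ is a proper subspace of $V^{*} := \Hom_k(I/\mathfrak{p}I, k)$, and for every $1 \le i \le n$ one has $\bigwedge^i U \subsetneq \bigwedge^i V^{*}$, so $\bigwedge^i \iota^{*}$ fails to be surjective. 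I expect the only step requiring genuine care to be the explicit identification of the chain-level lift of $\varepsilon$ and the verification that the induced map on $\Ext$ really is the exterior power of $\iota^{*}$; once this bookkeeping is in place the rest of the argument is formal.
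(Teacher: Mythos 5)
Your proposal is correct and follows essentially the same route as the paper: Koszul resolutions of $k$ and of $R$, identification of the Yoneda pairing with the map induced by $I/\mathfrak p I \to \mathfrak p/\mathfrak p^2$ (which the paper leaves as ``a standard computation of Koszul complexes'' and you make explicit via the comparison matrix $A$ and its exterior powers), and the linear-algebra equivalence between injectivity of that map and the $\bar x_i$ being extendable to a regular system of parameters. The only point to note is that, as in the paper's own proof, ``system of parameters'' in the statement should be read as ``regular system of parameters,'' which is exactly the reading your linear-independence criterion uses.
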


\begin{proof} First, notice that $\Ext_{\Lambda}^0 \left( R, k \right) = \Hom_{\Lambda} (R, k) \cong k$, with a generator being the quotient map $R \twoheadrightarrow k$. Since $\Lambda$ is free over itself and $I$ is generated by a regular sequence, by a standard computation of Koszul complexes we have  isomorphisms \[ \begin{gathered} \Ext^i_{\Lambda} (k,k) \cong  \bigwedge^i \Hom_{\Lambda} \left( \mathfrak p / \mathfrak p^2 , k \right) \\ \Ext^1_{\Lambda} (R,k) \cong  \bigwedge^i \Hom_{\Lambda} \left( I / \mathfrak p I , k \right) \end{gathered}, \] and the  map $ \Ext_{\Lambda}^0 \left( R, k \right)  \times \Ext^i_{\Lambda} (k,k) \cong \Ext^i_{\Lambda}(k,k) \lra \Ext_{\Lambda}^1 \left( R, k \right)$ is  induced by the map $I / \mathfrak p I \lra \mathfrak p / \mathfrak p^2$.
The assumption about the action being surjective implies  the injectivity of $I / \mathfrak p I \lra \mathfrak p / \mathfrak p^2$, which is to say that any element in $I \cap \mathfrak p^2$ has to belong to $\mathfrak p I$.
In particular, no element $x_i$ of the regular sequence generating $I$ can belong to $\mathfrak p^2$ as that would imply $x_i \in \mathfrak p I$, contradicting regularity of the sequence. Regularity of the ring $\Lambda$ implies therefore that $\left( x_1, \ldots, x_n \right)$ can be extended to a regular system of parameters.
\end{proof}

The following result follows easily.

\begin{cor} \label{cohomologygeneratedbybottomdegree}  Assume Conjecture \ref{dimensionconj} and the multiplicity one conjecture $m(\pi, K)=1$.  We can choose $f_1, \ldots, f_{l_0}$ to be part of a system of parameters in $\Lambda_E$ if and only if  action of the derived diamond operators generates $H^* \left( Y_{1,1}, E \right)_{\mathfrak m} $ over the bottom degree $H^{q_0} \left( Y_{1,1}, E \right)_{\mathfrak m}$.
\end{cor}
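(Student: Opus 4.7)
The plan is to rephrase the statement about derived diamond operators as a purely algebraic statement about $\Ext$-groups over $\Lambda_E$, at which point it is a direct consequence of Lemma \ref{naf}.

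First, I would combine equation \ref{ordinarycohomologycomplex} with Theorem \ref{complexes} to record the chain of isomorphisms
\[ H^{q_0+i}(Y_{1,1},E)_{\mathfrak m} \;\cong\; \Ext^i_{\Lambda_E}\bigl(F_{\mathfrak m}^{\bullet},E\bigr) \;\cong\; \Ext^i_{\Lambda_E}\bigl(R,E\bigr), \qquad R := \Lambda_E/(f_1,\ldots,f_{l_0}). \]
The bottom degree $H^{q_0}(Y_{1,1},E)_{\mathfrak m}$ then corresponds to $\Ext^0_{\Lambda_E}(R,E)\cong E$, generated by the surjection $R \twoheadrightarrow E$ onto the residue field of the regular local ring $\Lambda_E$.

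Second, I would recall the identification established in Section \ref{sectionderivedactions} and revisited at the end of subsection \ref{invertingpsubsec}: via Proposition \ref{topologicalderivedaction} together with the standard Koszul computation $\Ext^*_{\Lambda_E}(E,E)\cong\bigwedge^*\Hom(\mathfrak{p}_E/\mathfrak{p}_E^2,E)\cong\bigwedge^*\Hom(\mathrm{T}(\Zp)_p,E)$, the topological derived diamond action of $\bigwedge^*\Hom(\mathrm{T}(\Zp)_p,E)$ on the left-hand side is identified with the Yoneda/composition action of $\Ext^*_{\Lambda_E}(E,E)$ on $\Ext^*_{\Lambda_E}(R,E)$ on the right-hand side. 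In particular, the action of the derived diamond operators on $H^{q_0}(Y_{1,1},E)_{\mathfrak m}$ is identified with the pairing
\[ \Ext^0_{\Lambda_E}(R,E)\otimes \Ext^i_{\Lambda_E}(E,E) \longrightarrow \Ext^i_{\Lambda_E}(R,E). \]

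Third, I would apply Lemma \ref{naf} directly: this lemma asserts that the displayed pairing is surjective for every $i>0$ if and only if the regular sequence $(f_1,\ldots,f_{l_0})$ is part of a system of parameters for $\Lambda_E$. Translating across the two identifications yields exactly the content of the corollary. I do not anticipate any real obstacle, as Lemma \ref{naf} does all the essential algebraic work; the only care required is bookkeeping the cohomological degree shift by $q_0$ and re-using the comparison between the topological definition of the derived diamond action and the Yoneda action on $\Ext^*_{\Lambda_E}$, both of which have already been set up earlier in the paper.
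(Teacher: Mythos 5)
Your proposal is correct and follows essentially the same route as the paper's proof: identify the derived diamond action with the Yoneda action of $\Ext^*_{\Lambda_E}(E,E)$ on $\Ext^*_{\Lambda_E}(F_{\mathfrak m}^{\bullet},E)$, use Theorem \ref{complexes} to replace $F_{\mathfrak m}^{\bullet}$ by $\Lambda_E/(f_1,\ldots,f_{l_0})$, and conclude with Lemma \ref{naf}.
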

\begin{proof}
We have  identified (in proposition \ref{topologicalderivedaction} and at the end of section \ref{sectionderivedactions}) the action of $\Ext^*_{\Lambda_E} \left( E, E \right)$ on $ \Ext^1_{\Lambda_E} \left( F_{\mathfrak m}^{\bullet} , E \right)$ with the derived Hecke action of $H^* \left( \mathrm T(\Zp)_p, E \right)$ on $H^{q_0} \left( Y_{1,1}, E \right)_{\mathfrak m}$.
Thus the action we are considering is the action $\Ext_{\Lambda}^i \left( R, k \right) \times \Ext^j_{\Lambda} (k,k) \lra \Ext_{\Lambda}^{i+j}\left( R, k \right)$.  We conclude the proof using Theorem \ref{complexes} and  Lemma \ref{naf}.
\end{proof}

\section{Galois cohomology and Selmer groups} \label{deformationsection}

In this section we recall (following Patrikis \cite{Patrikis}) several results on $\check {\mathrm G}$-valued deformations and the relevant Selmer groups, which will be used in slightly different setups in the rest of the paper.

For this section, we fix a Galois representation $\sigma: \Gal_{\Q} \lra \check {\mathrm G}(k)$, where $k$ could be a finite extension of either $\Fp$ or $\Qp$. We assume that $\sigma$ is unramified outside a finite set of primes $S$.
We let $A = \left\{ \begin{array}{cc} W(k) & \textnormal{ if } k \supset \Fp \\ k & \textnormal{ if } k \supset \Qp \end{array} \right.$, and we will study deformations of $\sigma$ into local, complete, Noetherian $A$-algebras with residue field $k$.

\subsection{Local deformations}
We record results on the types of local conditions we will use. Let $q$ be any prime.

We denote by $\Lift_{\sigma_q}: \mathcal C_A^f \lra \Sets$ the functor defined by letting $\Lift_{\sigma_q}(R)$ be the set of lifts $\widetilde {\sigma_q}: \Gal_{\Q_q} \lra \check {\mathrm G}(R)$ for every Artinian $A$-algebra $R$.  (We denote by $G_q=\Gal_{\Q_q}$ a decomposition group at $q$.) The tangent space $\mathfrak t_q^{\Box}$ to $\Lift_{\sigma_q}$ is the vector space of cocycles $Z^1 \left( \Q_q, \Lie \check {\mathrm G} \right)$.
\begin{defn} Two lifts $\widetilde {\sigma_q}, \widetilde {\sigma_q}': \Gal_{\Q_q} \lra \check {\mathrm G}(R)$ are said to be \emph{strictly equivalent} if they are conjugate under an element of $\ker \left( \check{\mathrm G}(R) \lra \check{\mathrm G}(k) \right)$.
\end{defn}
The notion of strict equivalence defines an equivalence relation on $\Lift_{\sigma_q}(R)$.
\begin{defn}[Definitions 3.1, 3.2 and 3.5 in \cite{Patrikis}] A \emph{local deformation condition} is a representable subfunctor $\mathcal L_q^{\star}$ of $\Lift_{\sigma_q}$ which is closed under strict equivalence.

A local deformation condition gives rise to a \emph{local deformation functor} $\mathcal D_q^{\star}: \mathcal C^A_f \lra \Sets$, which associated to every Artinian $A$-algebra $R$ the set of strict equivalence classes of elements of $\mathcal L_q^{\star}(R)$.

A local deformation condition $\mathcal L_q^{\star}$ is called \emph{liftable} if its (framed) representation ring $R_q^{\Box, \star}$ is isomorphic to a power series ring over $A$ in $\dim_k \mathfrak t_q^{\Box, \star}$ variables, where $\mathfrak t_q^{\Box, \star} = \mathcal L_q^{\star} \left( k [\varepsilon] \right) = \Hom_{\mathcal C_A} \left( R_q^{\Box, \star}, k [ \varepsilon] \right)$ is the tangent space of $L_q^{\star}$.\footnote{As usual, $k[\varepsilon]$ is the ring of dual numbers.}
\end{defn}

\subsubsection{Deformations at $p$} \label{localatpdeformationsection}
\begin{defn} \label{defnordicrys} We say that $\sigma$ is \emph{ordinary at $p$} if there exists a Borel subgroup $\check {\mathrm B}$ of $\check {\mathrm G}$ defined over $k$ and such that $\im \left( \sigma_p \right) \subset \check{\mathrm B}(k)$.

We say that $\sigma_p$ is \emph{crystalline} if the associated adjoint representation \[ \Ad \sigma_p = \Ad \circ \sigma_p: \Gal_{\Qp} \stackrel{\sigma_p}{\lra} \check {\mathrm G}(k) \stackrel{\Ad}{\lra} \mathrm{GL} \left( \Lie \check{\mathrm G} \right) \] is crystalline in the usual sense.
\end{defn}
Recall that for every representation $V$ of $\Gal_{\Qp}$, the cohomology group $H^1_f \left( \Qp, V \right)$ classifies crystalline extensions.
Explicitly, this \emph{crystalline subgroup} is defined as \[ H^1_f \left( \Qp, V \right) = \ker \left( H^1 \left( \Qp, V \right) \lra H^1 \left( \Qp, V \otimes_{\Qp} B_{\crys} \right) \right).\footnote{$B_{\crys}$ is a period ring from $p$-adic Hodge theory.} \]
If $V$ is a $p$-adic crystalline representation of $\Gal_{\Qp}$, then an extension \[ 0 \lra V \lra W \lra \Qp \lra 0 \] is crystalline if and only if (the class of) $W$ belongs to the subgroup $H^1_f(\Qp, V)$.
\begin{rem} We recall that the subgroup $H^1_f$ is the correct replacement for the classical unramified subgroup in the setting of $p$-adic representation, in the sense that $H^1_f$ has the following important property: under local Tate duality between $V$ and $V^*(1)$, the respective $H^1_f$-subgroups are annihilators of each other.
Sometimes, the property of being in $H^1_f$ is called `crystalline Selmer condition'.
\end{rem}

We assume for the rest of this section that $\sigma$ is ordinary at $p$, and we fix a Borel subgroup $\check {\mathrm B}$ whose $k$-points contain the image of $\sigma_p$. As explained in \cite{Conrad1,Conrad2}, we can fix a smooth $A$-model $\check {\mathrm B}$, now a Borel subgroup of the $A$-group $\check {\mathrm G}$.

The ordinary assumption implies in particular that $\Lie \check {\mathrm B}$ is a $\Gal_{\Qp}$-subrepresentation of $\Lie \check {\mathrm G}$.
\begin{defn}[Ordinary deformation] Let $\Lift_{\sigma_p}^{\ordi}: \mathcal C_A^f \lra \Sets$ be the functor defined as follows: for any Artinian $A$-algebra $R$ we let $\Lift_{\sigma_p}^{\ordi} (R) $ be the set of representations $\widetilde {\sigma_p}: \Gal_{\Qp} \lra \check {\mathrm G}(R)$ lifting $\sigma_p$ and such that there exists $g \in \ker \left( \check {\mathrm G} (R) \lra \check {\mathrm G}(k) \right)$ with $g \widetilde \sigma_p g^{-1} \subset \check {\mathrm B}(R)$.
\end{defn}
\begin{rem} This is a subfunctor of $\Lift_{\sigma_p}$. Notice that our definition of ordinary deformation is different than definition 4.1 in \cite{Patrikis}, because we do not impose (for now) any condition on the action of inertia on the torus quotient. However, most of the proofs of \cite{Patrikis} section 4.1 hold verbatim for our condition as well.
\end{rem}
From now on, we assume that \begin{equation}\tag{REG} \label{regcondition} H^0 \left( \Qp, \frac{\Lie \check {\mathrm G}}{ \Lie \check {\mathrm B}} \right) = 0. \end{equation}
This condition is denoted (REG) also in \cite{Patrikis}, and it is some sort of `big image' condition, because it is saying that $\sigma_p \left( \Gal_{\Qp} \right)$ is a subspace of $\check {\mathrm B}(k)$ large enough to not have any invariants on $\Lie \check {\mathrm G} / \Lie \check {\mathrm B}$ (under the adjoint action).
\begin{prop}[Lemma 4.2 in \cite{Patrikis}] The functor $\Lift_{\sigma_p}^{\ordi}$ is closed under strict equivalence and it is representable.
\end{prop}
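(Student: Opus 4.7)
Closure under strict equivalence is essentially a book-keeping check. If $\widetilde{\sigma_p} \in \Lift_{\sigma_p}^{\ordi}(R)$ is witnessed by $g \in \ker(\check{\mathrm G}(R) \to \check{\mathrm G}(k))$ with $g\widetilde{\sigma_p}g^{-1} \subset \check{\mathrm B}(R)$, and if $\widetilde{\sigma_p}' = h\widetilde{\sigma_p}h^{-1}$ for some $h \in \ker(\check{\mathrm G}(R) \to \check{\mathrm G}(k))$, then the element $gh^{-1}$ also lies in the kernel and satisfies $(gh^{-1})\widetilde{\sigma_p}'(gh^{-1})^{-1} = g\widetilde{\sigma_p}g^{-1} \subset \check{\mathrm B}(R)$, so $\widetilde{\sigma_p}' \in \Lift_{\sigma_p}^{\ordi}(R)$.

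For representability, my plan is to rigidify the problem by replacing the choice of conjugator $g$ with a $\widetilde{\sigma_p}$-fixed point on $\check{\mathrm G}/\check{\mathrm B}$. Concretely, observe that giving $g \in \ker(\check{\mathrm G}(R)\to\check{\mathrm G}(k))$ with $g\widetilde{\sigma_p}g^{-1} \subset \check{\mathrm B}(R)$ is the same as giving a point $y \in (\check{\mathrm G}/\check{\mathrm B})(R)$ reducing to the identity coset in $(\check{\mathrm G}/\check{\mathrm B})(k)$ and fixed by the adjoint action of $\widetilde{\sigma_p}(\Gal_{\Qp})$. Introduce the auxiliary functor $\widetilde{\Lift}_{\sigma_p}^{\ordi}$ whose $R$-points are pairs $(\widetilde{\sigma_p}, y)$ of this form. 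This auxiliary functor is visibly a closed subfunctor of the product of the unrestricted framed lift functor $\Lift_{\sigma_p}$ (which is representable) with the formal completion of $\check{\mathrm G}/\check{\mathrm B}$ at the identity coset (which is a formal affine space), cut out by the Galois-equivariance conditions imposed by a topological generating set of the (profinite, continuous) image $\sigma_p(\Gal_{\Qp})$ — note that only finitely many equations are needed at each Artinian stage. Hence $\widetilde{\Lift}_{\sigma_p}^{\ordi}$ is representable, and the forgetful map $\widetilde{\Lift}_{\sigma_p}^{\ordi} \to \Lift_{\sigma_p}^{\ordi}$ is visibly surjective by definition.

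The main point — and the place where hypothesis \eqref{regcondition} enters crucially — is to show that this forgetful map is in fact an \emph{isomorphism} of functors, i.e.\ that the fixed point $y$ attached to $\widetilde{\sigma_p}$ is unique. I plan to prove this by induction on the length of $R$. At the base case $R = k$ there is nothing to show since $y$ reduces to the identity coset. For the inductive step, if $R \twoheadrightarrow R'$ is a small surjection with kernel $J$ and two fixed points $y_1, y_2 \in (\check{\mathrm G}/\check{\mathrm B})(R)$ have the same reduction $y' \in (\check{\mathrm G}/\check{\mathrm B})(R')$, then their difference lies in the tangent space $T_{y'}(\check{\mathrm G}/\check{\mathrm B}) \otimes_{R'} J$, which after translating back to the identity coset is isomorphic to $(\Lie\check{\mathrm G}/\Lie\check{\mathrm B}) \otimes_k J$. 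Both $y_1$ and $y_2$ being $\widetilde{\sigma_p}$-fixed forces this difference to lie in $H^0(\Qp, \Lie\check{\mathrm G}/\Lie\check{\mathrm B}) \otimes_k J$, which vanishes by \eqref{regcondition}; thus $y_1 = y_2$. Representability of $\Lift_{\sigma_p}^{\ordi}$ then follows from representability of $\widetilde{\Lift}_{\sigma_p}^{\ordi}$.

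The chief obstacle is exactly this rigidity argument: one must be careful that the adjoint action of $\widetilde{\sigma_p}$ on the tangent space $\Lie\check{\mathrm G}/\Lie\check{\mathrm B}$ agrees with the action of $\sigma_p$ modulo the maximal ideal of $R$, so that the invariants over $R$ reduce to computing $H^0$ for $\sigma_p$ with $k$-coefficients — this is what allows \eqref{regcondition}, a condition on $\sigma_p$, to control infinitesimal lifts. Once this is clear, everything else is formal.
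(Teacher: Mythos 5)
Your argument is correct and is essentially the standard proof of this statement (the paper itself gives no argument, deferring to Lemma 4.2 of Patrikis, whose proof — in the lineage of Clozel--Harris--Taylor and Geraghty — proceeds exactly as you do: rigidify by the functor of pairs $(\widetilde{\sigma_p},y)$ with $y$ a fixed point on the flag variety over the identity coset, and use \eqref{regcondition} to show the fixed point is unique along small extensions, so the forgetful map is an isomorphism). Your identification of where \eqref{regcondition} enters, and the remark that the induced action on $\bigl(\Lie\check{\mathrm G}/\Lie\check{\mathrm B}\bigr)\otimes_k J$ only sees $\sigma_p$ mod $\mathfrak m_R$, are precisely the key points.
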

This result says that $\Lift_{\sigma_p}^{\ordi}$ is a local deformation condition.
Therefore we can consider the associated deformation functor $\Def_{\sigma_p}^{\ordi}: \mathcal C_A^f \lra \Sets$: for any Artinian $A$-algebra $R$,  $\Def_{\sigma_p}^{\ordi} (R) $ is the set of strict equivalence classes of elements in $\Lift_{\sigma_p}^{\ordi}(R)$.
\begin{prop}[Lemma 4.3 in \cite{Patrikis}] The tangent space $\mathfrak t_p^{\ordi}$ to $\Def_{\sigma_p}^{\ordi}$ is $H^1 \left( \Qp, \Lie \check {\mathrm B} \right)$.
\end{prop}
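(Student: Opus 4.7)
The plan is to compute $\mathfrak t_p^{\ordi} = \Def_{\sigma_p}^{\ordi}(k[\varepsilon])$ directly, mimicking the classical computation of the unrestricted tangent space but keeping careful track of the ordinary condition, and then invoke (REG) at the end to strip away the ambiguity.

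First I would recall the standard identification: any lift $\widetilde{\sigma_p} \in \Lift_{\sigma_p}(k[\varepsilon])$ can be written uniquely as $\widetilde{\sigma_p}(g) = \exp(\varepsilon \, c(g)) \sigma_p(g)$ for some continuous cochain $c : \Gal_{\Qp} \to \Lie \check{\mathrm G}$, and the multiplicativity of $\widetilde{\sigma_p}$ translates into $c$ being a $1$-cocycle for the adjoint action. Strict equivalence by an element $1 + \varepsilon X$ of $\ker\bigl(\check{\mathrm G}(k[\varepsilon]) \to \check{\mathrm G}(k)\bigr)$ adds the coboundary $(1 - \Ad \sigma_p) X$ to $c$, so the unrestricted tangent space is $H^1(\Qp, \Lie \check{\mathrm G})$.

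Next I would impose the ordinary condition. A lift $\widetilde{\sigma_p}$ lies in $\Lift_{\sigma_p}^{\ordi}(k[\varepsilon])$ iff there exists $X \in \Lie \check{\mathrm G}$ such that $(1 + \varepsilon X)\widetilde{\sigma_p}(1 - \varepsilon X)$ takes values in $\check{\mathrm B}(k[\varepsilon])$. Unwinding, this says exactly that the image $\overline c$ of $c$ in the cochains with values in $\Lie \check{\mathrm G}/\Lie \check{\mathrm B}$ is a coboundary, i.e.\ that $[\overline c] = 0$ in $H^1(\Qp, \Lie \check{\mathrm G}/\Lie \check{\mathrm B})$. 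Hence $\mathfrak t_p^{\ordi}$ is the kernel of the connecting map
\[
H^1(\Qp, \Lie \check{\mathrm G}) \lra H^1(\Qp, \Lie \check{\mathrm G}/\Lie \check{\mathrm B})
\]
coming from the short exact sequence $0 \to \Lie \check{\mathrm B} \to \Lie \check{\mathrm G} \to \Lie \check{\mathrm G}/\Lie \check{\mathrm B} \to 0$ of $\Gal_{\Qp}$-modules (which is a sequence because the ordinarity of $\sigma_p$ means $\Lie \check{\mathrm B}$ is stable under $\Ad \sigma_p$).

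Finally I would read off the answer from the long exact sequence in Galois cohomology:
\[
H^0(\Qp, \Lie \check{\mathrm G}/\Lie \check{\mathrm B}) \lra H^1(\Qp, \Lie \check{\mathrm B}) \lra H^1(\Qp, \Lie \check{\mathrm G}) \lra H^1(\Qp, \Lie \check{\mathrm G}/\Lie \check{\mathrm B}).
\]
The tangent space $\mathfrak t_p^{\ordi}$ is the image of $H^1(\Qp, \Lie \check{\mathrm B})$ in $H^1(\Qp, \Lie \check{\mathrm G})$. By exactness this image equals $H^1(\Qp, \Lie \check{\mathrm B})$ modulo the image of $H^0(\Qp, \Lie \check{\mathrm G}/\Lie \check{\mathrm B})$, and by hypothesis (REG) the latter vanishes. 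Thus $\mathfrak t_p^{\ordi} \cong H^1(\Qp, \Lie \check{\mathrm B})$. The only mildly subtle step is verifying that the existence of a conjugating $X$ at the level of groups really is equivalent to the cocycle statement $\overline c \in B^1$ (rather than just $\overline c \in Z^1$), but this is immediate since \emph{any} coboundary in $\Lie \check{\mathrm G}/\Lie \check{\mathrm B}$ lifts to a coboundary operation in $\Lie \check{\mathrm G}$, and the surjectivity of $\check{\mathrm G}(k[\varepsilon]) \twoheadrightarrow \check{\mathrm G}(k)$ lets us exponentiate such a lift to an actual conjugating element.
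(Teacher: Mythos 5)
Your argument is correct and is exactly the standard computation that the paper delegates to Lemma 4.3 of \cite{Patrikis}: identify first-order lifts with cocycles valued in $\Lie \check{\mathrm G}$, observe that the ordinarity of a lift is equivalent to its cocycle becoming a coboundary in $\Lie \check{\mathrm G} / \Lie \check{\mathrm B}$, and then use the long exact sequence together with (REG) to identify the resulting subspace of $H^1 \left( \Qp, \Lie \check{\mathrm G} \right)$ with $H^1 \left( \Qp, \Lie \check{\mathrm B} \right)$. One terminological nit: the map $H^1 \left( \Qp, \Lie \check{\mathrm G} \right) \lra H^1 \left( \Qp, \Lie \check{\mathrm G} / \Lie \check{\mathrm B} \right)$ whose kernel you take is the functorial map induced by the quotient, not the connecting homomorphism of the long exact sequence.
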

To proceed, we need to assume the dual of condition \ref{regcondition}: \begin{equation}\tag{REG*} \label{regdualcondition} H^0 \left( \Qp, \left( \frac{ \Lie \check {\mathrm G} }{ \Lie \check {\mathrm B}  } \right) (1) \right) = 0. \end{equation}
\begin{prop}[Proposition 4.4 in \cite{Patrikis}] \label{dimensionlocalordinaryring} The tangent space $\mathfrak t_p^{\ordi}$ has dimension ${\dim_k \left( \Lie \check {\mathrm B} \right) + \dim_k H^0 \left( \Qp, \Lie \check {\mathrm G} \right)}$, and the (framed) deformation ring $R_p^{\Box, \ordi}$ representing $\Lift_{\sigma_p}^{\ordi}$ is a power series ring over $A$ in $\dim_k \left( \Lie \check {\mathrm G} \right) + \dim_k \left( \Lie \check {\mathrm B} \right)$ variables.

In particular, $\Lift_{\sigma_p}^{\ordi}$ is a liftable local deformation condition.
\end{prop}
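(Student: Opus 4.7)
The plan is to split the statement into two parts: the dimension count for the tangent space $\mathfrak{t}_p^{\ordi}$, and the liftability (smoothness) of the framed deformation ring $R_p^{\Box, \ordi}$. For the first part, I would start from the identification $\mathfrak{t}_p^{\ordi} \cong H^1(\Qp, \Lie \check{\mathrm{B}})$ provided by the preceding lemma, and compute the right-hand side using the long exact cohomology sequence attached to
\[ 0 \to \Lie \check{\mathrm{B}} \to \Lie \check{\mathrm{G}} \to \Lie \check{\mathrm{G}}/\Lie \check{\mathrm{B}} \to 0. \]
Condition (REG) immediately yields $H^0(\Qp, \Lie \check{\mathrm{B}}) \cong H^0(\Qp, \Lie \check{\mathrm{G}})$. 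The local Euler characteristic formula then reduces the computation to establishing $H^2(\Qp, \Lie \check{\mathrm{B}}) = 0$, which I would deduce by combining local Tate duality with the Killing-form self-duality of $\Lie \check{\mathrm{G}}$; this identifies $(\Lie \check{\mathrm{G}}/\Lie \check{\mathrm{B}})^*$ with $\Lie \check{\mathrm{U}}$ and allows the translation of (REG*) into the vanishing of the relevant $H^2$ terms via the long exact sequence. Assembling these ingredients gives $\dim_k H^1(\Qp, \Lie \check{\mathrm{B}}) = \dim_k \Lie \check{\mathrm{B}} + \dim_k H^0(\Qp, \Lie \check{\mathrm{G}})$.

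For the liftability of $R_p^{\Box, \ordi}$, I would introduce the auxiliary framed functor $\Lift_{\sigma_p}^{\Box, \check{\mathrm{B}}}$ parametrizing lifts of $\sigma_p$ (viewed as landing in $\check{\mathrm{B}}(k)$) into $\check{\mathrm{B}}(R)$. Its tangent space is the space of $1$-cocycles $Z^1(\Qp, \Lie \check{\mathrm{B}})$, and its obstructions live in $H^2(\Qp, \Lie \check{\mathrm{B}})$, which has just been shown to vanish; hence $\Lift_{\sigma_p}^{\Box, \check{\mathrm{B}}}$ is pro-represented by a power series ring over $A$. One then constructs a natural morphism
\[ \check{\mathrm{G}} \times^{\check{\mathrm{B}}} \Lift_{\sigma_p}^{\Box, \check{\mathrm{B}}} \lra \Lift_{\sigma_p}^{\ordi}, \qquad (g, \widetilde{\sigma_p}) \mapsto g \widetilde{\sigma_p} g^{-1}, \]
whose essential surjectivity on $R$-points is built into the definition of ordinary, and whose smoothness follows from the smoothness of the quotient map $\check{\mathrm{G}} \to \check{\mathrm{G}}/\check{\mathrm{B}}$. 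This exhibits $\Lift_{\sigma_p}^{\ordi}$ as a smooth pushforward of a power series ring, so $R_p^{\Box, \ordi}$ is itself a power series ring over $A$.

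The number of variables then equals $\dim_k \mathfrak{t}^{\Box, \ordi}$, which is obtained by adding to the unframed tangent dimension the usual framing contribution $\dim_k \Lie \check{\mathrm{G}} - \dim_k H^0(\Qp, \Lie \check{\mathrm{G}})$ arising from the $\check{\mathrm{G}}$-orbit through $\sigma_p$ modulo its centralizer; after cancellation this yields $\dim_k \Lie \check{\mathrm{G}} + \dim_k \Lie \check{\mathrm{B}}$, as claimed. The main obstacle I expect is the vanishing $H^2(\Qp, \Lie \check{\mathrm{B}}) = 0$: once this is in hand both the dimension count and the smoothness arguments fall into place, and in particular the careful translation of (REG*) under local Tate duality combined with the Killing-form identification of $(\Lie \check{\mathrm{G}}/\Lie \check{\mathrm{B}})^*$ with $\Lie \check{\mathrm{U}}$ is the key technical step.
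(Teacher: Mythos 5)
The paper does not actually prove this statement — it is imported verbatim as Proposition 4.4 of \cite{Patrikis} — and your sketch is a faithful reconstruction of Patrikis's argument: identify $\mathfrak t_p^{\ordi}$ with $H^1\left(\Qp, \Lie \check{\mathrm B}\right)$, kill $H^2\left(\Qp, \Lie \check{\mathrm B}\right)$ via local Tate duality together with the Killing-form identification $\left(\Lie \check{\mathrm U}\right)^* \cong \Lie \check{\mathrm G}/\Lie \check{\mathrm B}$ and (REG*), combine the local Euler characteristic formula with the isomorphism $H^0\left(\Qp,\Lie\check{\mathrm B}\right)\cong H^0\left(\Qp,\Lie\check{\mathrm G}\right)$ supplied by (REG), and get smoothness of the framed ring from the unobstructedness of the $\check{\mathrm B}$-valued lifting problem plus the standard framing correction $\dim_k \Lie\check{\mathrm G}-\dim_k H^0\left(\Qp,\Lie\check{\mathrm G}\right)$. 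The only step left implicit is that the vanishing of $H^2\left(\Qp,\Lie\check{\mathrm B}\right)$ also requires $H^0\left(\Qp,\left(\Lie\check{\mathrm T}\right)^*(1)\right)=0$ for the torus graded piece, which holds because the (co)adjoint action on $\Lie\check{\mathrm T}$ is trivial and the cyclotomic character is nontrivial on $\Gal_{\Qp}$ — this is exactly how the same vanishing is handled later in the proof of Proposition \ref{ordivscrys}.
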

\begin{defn}[Crystalline deformation] Let $\Lift_{\sigma_p}^{\crys}: \mathcal C_A^f \lra \Sets$ be the functor defined as follows: for any Artinian $A$-algebra $R$ we let $\Lift_{\sigma_p}^{\crys} (R) $ be the set of representations $\widetilde {\sigma_p}: \Gal_{\Qp} \lra \check {\mathrm G}(R)$ lifting $\sigma_p$ and such that they are crystalline (in the sense of definition \ref{defnordicrys}).
\end{defn}
It is well-known that $\Lift_{\sigma_p}^{\crys}$ is a local deformation condition, and that the tangent space of the associated deformation functor $\Def_{\sigma_p}^{\crys}$ is $\mathfrak t_p^{\crys} = H^1_f \left( \Qp, \Lie \check {\mathrm G} \right)$.

Finally, we want to recall a result relating ordinary representations of a certain weight to crystalline representations. This is very similar to lemma 4.8 of \cite{Patrikis}, and follows from the results of \cite{PerrinRiou} (see in particular proposition 3.1).

Notice that since every deformation $\widetilde \sigma_p \in \Def_{\sigma_p}^{\ordi} (R)$ can be conjugated into $\check {\mathrm B}(R)$, we can quotient by the unipotent radical $\check {\mathrm U}(R)$ and by local class field theory we obtain a map $\widetilde \sigma_p^{\ab} : \Qp^* \lra \check {\mathrm T}(R)$. This map does not depend on the choice of the normalizing element, since $\check {\mathrm B}$ is its own normalizer inside $\check {\mathrm G}$.
\begin{lem} \label{ordithencrys} Suppose that $\alpha \circ \widetilde \sigma_p^{\ab} \neq \omega$  with $\omega$ the
(mod $p$ or $p$-adic)  cyclotomic character  for every simple root $\alpha \in \check \Delta \subset \check \Phi^+ \left( \check{\mathrm G}, \check {\mathrm T} \right)$. Then the deformation $\widetilde \sigma_p$ is crystalline.
\end{lem}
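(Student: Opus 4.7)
The plan is to exploit the Borel-stable structure of $\Ad \widetilde\sigma_p$ coming from ordinarity, together with Perrin-Riou's vanishing of $H^1/H^1_f$ at non-cyclotomic characters, to deduce crystallinity.

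First, I would construct a $\Gal_{\Qp}$-stable complete flag of $\Lie \check {\mathrm G}$. The ordinarity hypothesis gives $\Gal_{\Qp}$-invariance of $\Lie \check {\mathrm U} \subset \Lie \check {\mathrm B} \subset \Lie \check {\mathrm G}$, and the adjoint action on the quotient $\Lie \check {\mathrm G}/\Lie \check {\mathrm B} \cong \Lie \check {\mathrm U}^-$ factors through $\check {\mathrm T}(R)$ via $\widetilde\sigma_p^{\ab}$. Refining using the root-height filtration on $\Lie \check {\mathrm U}$ and on $\Lie \check {\mathrm U}^-$, together with an arbitrary splitting of the trivial-action piece $\Lie \check {\mathrm T}$, produces a flag whose one-dimensional graded pieces carry either the trivial character (for the $r$ pieces from $\Lie \check{\mathrm T}$) or the character $\alpha \circ \widetilde\sigma_p^{\ab}$ for some root $\alpha \in \check \Phi$. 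Each such character, regarded via local class field theory as a character $\Qp^* \to R^*$, is crystalline since it is an $R$-valued lift of a Hodge-Tate character arising from an ordinary residual representation.

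Next, I would invoke Perrin-Riou's proposition 3.1 to upgrade pointwise crystallinity of the flag's graded pieces to crystallinity of the whole adjoint representation. The relevant statement is that for a crystalline $R$-valued character $\chi$ of $\Gal_{\Qp}$ with $\chi \neq \omega$, one has $H^1_f(\Qp, \chi) = H^1(\Qp,\chi)$, so any extension of the trivial character by $\chi$ is automatically crystalline. Applied to the successive one-dimensional quotients of the flag, this shows the whole representation is crystalline provided every relevant ``step'' character, i.e. the ratio of consecutive characters in a well-chosen refinement, is non-cyclotomic. Arranging the flag by a careful traversal of the root system, so that consecutive graded pieces differ by a simple root, reduces the required non-cyclotomicity to the hypothesis $\alpha \circ \widetilde\sigma_p^{\ab} \neq \omega$ for every $\alpha \in \check \Delta$.

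The main obstacle is the combinatorial step of arranging the $\Gal_{\Qp}$-stable flag so that the Perrin-Riou criterion reduces exactly to the simple-root hypothesis: one must verify that a compatible ordering of the flag exists in which every consecutive ratio is (up to sign) a simple-root character, while preserving the $\Gal_{\Qp}$-stability that came from the height filtration and from the root-space decomposition of $\Lie \check {\mathrm G}$. A secondary technical point is extending Perrin-Riou's criterion from $\Qp$-coefficients to $R$-valued representations for a general Artinian $A$-algebra $R$; this is handled by devissage on the length of $R$ using the long exact sequences in Galois cohomology and the fact that $H^1_f$ is compatible with such devissage.
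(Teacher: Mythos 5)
The paper offers no written argument here: it simply points to Lemma 4.8 of \cite{Patrikis} and Proposition 3.1 of \cite{PerrinRiou}. Measured against what those references actually prove, your proposal has a genuine gap, and it sits exactly where you flag ``the main obstacle''. The dévissage does not close. At a typical step you have $0 \to W \to V \to \chi \to 0$ with $W$ crystalline by induction, and you want the class in $H^1\left( \Qp, W \otimes \chi^{-1} \right)$ to be \emph{automatically} crystalline. For a crystalline character $\eta$ one has $H^1_f\left(\Qp,\eta\right)=H^1\left(\Qp,\eta\right)$ only when $\eta$ has negative Hodge--Tate weight and $\eta\neq\omega$. But $\Lie \check{\mathrm G}$ contains the weight-zero piece $\Lie\check{\mathrm T}$ (of dimension $r$) and the positive-weight piece $\Lie\check{\mathrm G}/\Lie\check{\mathrm B}$, so along any complete flag many of the internal Homs $\Hom\left(\mathrm{gr}^i,\mathrm{gr}^j\right)$ have a constituent of non-negative Hodge--Tate weight (already the trivial character occurs between the torus pieces once $r\ge 2$), and there $H^1_f\subsetneq H^1$. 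So ``every consecutive extension is automatically crystalline'' is false for $\Ad\widetilde\sigma_p$ no matter how the flag is ordered, and in any case crystallinity of an iterated extension is not checked on two-step subquotients. What Perrin-Riou actually proves is that an ordinary representation whose Hodge--Tate weights decrease along the filtration is \emph{semistable}, and is crystalline iff the monodromy operator $N$ vanishes; $N$ can only be nonzero between graded pieces whose character ratio is exactly $\omega$.

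Even granting that reduction, your second step does not reduce to simple roots. The character ratios of $\Ad\widetilde\sigma_p$ restricting to $\omega$ on inertia are $(\alpha-\beta)\circ\widetilde\sigma_p^{\ab}$ with $\mathrm{ht}(\alpha)-\mathrm{ht}(\beta)=1$, and these need not be simple-root characters (take $\alpha=\alpha_1+\alpha_2$, $\beta=\alpha_3$ in type $A_3$); no traversal of the root system changes this. The reduction to $\check\Delta$ uses that $\widetilde\sigma_p$ is $\check{\mathrm G}$-valued: Tannakian formalism identifies the monodromy with an element $n\in\Lie\check{\mathrm U}$ acting on $\Ad$ by $\ad(n)$, so its components are indexed by genuine roots; the relation $N\phi=p\phi N$ kills the $\alpha$-component unless $\alpha\circ\widetilde\sigma_p^{\ab}=\omega$, and the inertial weight $\omega^{\mathrm{ht}(\alpha)}$ then forces $\mathrm{ht}(\alpha)=1$. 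That group-theoretic input (the content of Patrikis's Lemma 4.8) is not recoverable from a flag argument carried out in $\End\left(\Lie\check{\mathrm G}\right)$. Your closing remark on dévissage in the Artinian coefficients $R$ is fine, but it is the least of the issues.
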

\begin{rem} In our applications we will consider representations that satisfy  $\alpha \circ \widetilde \sigma_p^{\ab} \neq \omega$, but $(\alpha \circ \widetilde \sigma_p^{\ab})|_{I_p}= \omega$   for every simple root $\alpha \in \check \Delta \subset \check \Phi^+ \left( \check{\mathrm G}, \check {\mathrm T} \right)$.

\end{rem}

\subsubsection{Deformations at $q \neq p$}
At finite primes different from $p$, we often impose no condition at all - in this case $\mathcal L_q = \Lift_{\sigma_q}$.

If $\sigma_q$ is unramified, we may consider unramified lifts: $\Lift_{\sigma_q}^{\unr}$ is the subfunctor of $\Lift_{\sigma_q}$ such that $\Lift_{\sigma_q}^{\unr}(R)$ is the set of unramified lifts $\widetilde {\sigma_q}: \Gal_{\Q_q} \lra \check {\mathrm G}(R)$ for every Artinian $A$-algebra $R$.

We record the well known fact that $\Lift_{\sigma_q}^{\unr}$ is a local deformation condition, and the associated deformation functor $\Def_{\sigma_q}^{\unr}$ has tangent space being the kernel of the restriction map \[\mathfrak t_q^{\unr} = \ker \left( H^1(\Q_q, \Lie  \check G) \lra H^1 \left( I_{\Q_q}, \Lie  \check  G\right) \right)\] ($I_{\Q_q}$  is the inertia subgroup of $\Gal_{\Q_q}$) denoted $H^1_{\unr} \left( \Q_q, \Lie \check G \right)$.

We also recall that the unramified condition at $q \neq p$ is self-dual with respect to local Tate duality - which is to say, the annihilator of $H^1_{\unr} \left( \Q_q, V \right)$ inside $H^1 \left( \Q_q, V^*(1) \right)$ is $H^1_{\unr} \left( \Q_q, V^*(1) \right)$ for any $\Gal_{\Q_q}$-module $V$.
\begin{defn} We say that $\sigma_q$ is generic if $H^2(\Q_q, \Lie \check G)=0$.
\end{defn}
\begin{defn}[Inertial level] A tame  group homomorphism $f: \Gal_{\Q_q} \lra H$ has \emph{inertial level $n$} if the subgroup of $p^n$-powers of the inertia group $I_{\Q_q}$ is contained in $\ker f$.
\end{defn}
The Artin reciprocity  isomorphism implies that if $f: \Gal_{\Q_q} \lra H$ has inertial level $n$, then the homomorphism induced between the abelianizations $\Q_q^* \cong \Gal_{\Q_q}^{\ab} \stackrel{f^{\ab}}{\lra} H^{\ab}$ contains $\Z_q^* / p^n \cong \mathbb F_q^* / p^n$ in its kernel.

We can then consider another type of local deformation condition: suppose that $\sigma_q$ has inertial level $m$ and that $n \ge m$.
We consider the subfunctor $\Lift_q^{\le n}$ of $\Lift_{\sigma_q}$ defined on an Artinian $A$-algebra $R$ as follows: $\Lift_{\sigma_q}^{\le n}(R)$ is the set of lifts $\widetilde {\sigma_q}: \Gal_{\Q_q} \lra \check {\mathrm G} (R)$ having inertial level $n$.

\subsubsection{Deformations at $\infty$}
The Galois representations we will consider are odd in the following sense.
\begin{defn}[definition 2.1 in \cite{Calegari}] \label{oddgaloisrep} Let $\sigma: \Gal_{\Q} \lra \check {\mathrm G}(R)$ be a continuous Galois representation.
$\sigma$ is said to be \emph{odd} or \emph{$\check {\mathrm G}$-odd} if for any complex conjugation $c$, the adjoint action of the involution $\sigma(c) \in \check {\mathrm G}(R)$ on the $R$-Lie algebra $\Lie \check {\mathrm G}$ has minimal trace among all involutions.
\end{defn}
As explained in section 2 of \cite{Calegari}, if $R=k$ this condition is equivalent to \[ \dim_k H^0 \left( \R, \Lie \check{\mathrm G} \right) = \dim_k \Lie \check{\mathrm U} + l_0, \] where $l_0$ is the defect of $\mathrm G$.
\begin{rem} Notice that since $p \neq 2$, any deformation of a $\check {\mathrm G}$-odd representation is automatically $\check {\mathrm G}$-odd.
\end{rem}

\subsection{Global deformations} \label{globaldeformationsubsection}
We consider a finite set of primes $\Sigma$, containing all archimedean primes as well as all primes where $\sigma$ is ramified, and the prime $p$.\footnote{In future work, we will choose $\Sigma = S \cup Q$, where $Q$ is a Taylor-Wiles datum.}  We assume that $\sigma$ is {\it generic}  at all the places in $\Sigma$ away from $p$ and $\infty$, besides the conditions of  $(REG)$ and $(REG^*)$ at $p$, and oddness at infinity.

We can consider then $\sigma: \Gal_{\Q,\Sigma} \lra \check {\mathrm G}(k)$ to be a representation of the Galois group $\Gal_{\Q, \Sigma} = \Gal (\Q^{\Sigma} / \Q)$ of the maximal extension of $\Q$ unramified outside $\Sigma$.
As usual, the advantage of using this smaller Galois group is that it satisfies Schlessinger's representability criterion.

Let $\Lift_{\sigma}: \mathcal C_A^f \lra \Sets$ be the functor defined as follows: for each Artinian $A$-algebra $R$, $\Lift_{\sigma}(R)$ is the set of all group homomorphisms $\widetilde \rho: \Gal_{\Q, \Sigma} \lra \check {\mathrm G}(R)$ lifting $\sigma$.

Fix now liftable local deformation conditions $\mathcal L_q^{\mathcal P_q} \subset \Lift_{\sigma_q}$ for each $q \in \Sigma$, with $\mathcal L_p^{\mathcal P_p} = \Lift_{\sigma_p}^{\ordi}$.

Let $\mathcal L_{\sigma}^{\mathcal P}$ be the subfunctor of $\Lift_{\sigma}$ such that for any Artinian $A$-algebra $R$, $\mathcal L_{\sigma}^{\mathcal P} (R)$ is the set of lifts $\widetilde \sigma: \Gal_{\Q, \Sigma} \lra \check {\mathrm G}(R)$ such that ${\widetilde \sigma}_q \in \mathcal L_q^{\mathcal P_q}(R)$ for all $q \in \Sigma$.
Similarly, we denote by $\mathcal D_{\sigma}^{\mathcal P}$ the deformation functor associated to $\mathcal L_{\sigma}^{\mathcal P}$, which is to say that $\mathcal D_{\sigma}^{\mathcal P}(R)$ is the set of strict equivalence classes of lifts in $\mathcal L_{\sigma}^{\mathcal P}(R)$ for each Artinian $A$-algebra $R$.
\begin{prop} \label{globaldeformationrepresentable} Suppose that $H^0 \left( \Gal_{\Q, \Sigma}, \Lie \check {\mathrm G} \right)=0$. Then $\mathcal D_{\sigma}^{\mathcal P}$ is representable by a ring $R_{\sigma, \Sigma}^{\mathcal P}$.
\end{prop}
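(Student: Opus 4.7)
The plan is to apply Schlessinger's representability criterion in the form adapted to $\check{\mathrm G}$-valued deformation functors (as in Mazur, Kisin, and Patrikis). First I would observe that $\mathcal D_{\sigma}^{\mathcal P}$ is cut out of $\Lift_{\sigma}$ by imposing the local conditions $\mathcal L_q^{\mathcal P_q}$ at each $q\in\Sigma$, and that by hypothesis each such local condition is representable and closed under strict equivalence. Combining these with the fact that $\Gal_{\Q,\Sigma}$ is topologically finitely generated, one checks that $\mathcal D_\sigma^{\mathcal P}$ satisfies the Schlessinger conditions (H1)--(H3): surjectivity of the natural map
\[
\mathcal D_\sigma^{\mathcal P}(R_1 \times_{R_0} R_2) \lra \mathcal D_\sigma^{\mathcal P}(R_1) \times_{\mathcal D_\sigma^{\mathcal P}(R_0)} \mathcal D_\sigma^{\mathcal P}(R_2)
\]
follows for the framed functor from the same property for the product of the local $\Lift_{\sigma_q}$, and descends to the unframed $\mathcal D$-functor because strict equivalence is preserved by the local conditions.

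Next I would identify the tangent space: unraveling the definitions, $\mathcal D_\sigma^{\mathcal P}(k[\varepsilon])$ is the Selmer-type group
\[
H^1_{\mathcal P}\!\left(\Gal_{\Q,\Sigma},\Lie\check{\mathrm G}\right) = \ker\!\left(H^1(\Gal_{\Q,\Sigma},\Lie\check{\mathrm G}) \lra \prod_{q\in\Sigma} H^1(\Q_q,\Lie\check{\mathrm G})/\mathfrak t_q^{\mathcal P_q}\right),
\]
which is finite-dimensional over $k$ by standard finiteness of Galois cohomology for $\Gal_{\Q,\Sigma}$, giving Schlessinger's finiteness condition (H3) and hence at worst pro-representability.

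The main obstacle, and the only place where the hypothesis $H^0(\Gal_{\Q,\Sigma},\Lie\check{\mathrm G})=0$ enters, is upgrading pro-representability to representability, i.e.\ verifying Schlessinger's (H4) (equivalently, showing that the displayed map above is a bijection rather than merely a surjection). Concretely, suppose $\widetilde\sigma$ is a lift to $R_1\times_{R_0}R_2$ whose reductions to $R_1$ and $R_2$ are strictly equivalent to two other given lifts via elements $g_1\in\ker(\check{\mathrm G}(R_1)\to\check{\mathrm G}(k))$ and $g_2\in\ker(\check{\mathrm G}(R_2)\to\check{\mathrm G}(k))$; one needs a single element $g\in\ker(\check{\mathrm G}(R_1\times_{R_0}R_2)\to\check{\mathrm G}(k))$ reducing to $g_1$ and $g_2$. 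The obstruction to glueing $g_1,g_2$ lives in the infinitesimal centralizer of $\sigma$, whose Lie algebra is exactly $H^0(\Gal_{\Q,\Sigma},\Lie\check{\mathrm G})$; an inductive argument along the Artinian filtration then shows that the vanishing of this $H^0$ kills the obstruction at every step. This promotes pro-representability to representability, yielding the ring $R_{\sigma,\Sigma}^{\mathcal P}$. The hardest part is bookkeeping this centralizer obstruction cleanly across the successive square-zero extensions, but no new geometric input beyond $H^0=0$ is required.
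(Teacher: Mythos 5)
Your proposal is correct and follows essentially the same route as the paper, which simply defers to Proposition 3.6 of \cite{Patrikis}: that proof is exactly the Schlessinger-criterion argument you sketch, with the hypothesis $H^0\left( \Gal_{\Q,\Sigma}, \Lie \check{\mathrm G} \right)=0$ forcing the infinitesimal centralizer of $\sigma$ to be trivial so that the conjugating elements $g_1, g_2$ glue and condition (H4) holds. The only (harmless) slip is terminological: (H1)--(H3) yield a hull rather than pro-representability, and it is (H4) that upgrades the hull to the (pro-)representing ring $R_{\sigma,\Sigma}^{\mathcal P}$.
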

\begin{proof} This is the first part of propositions 3.6 of \cite{Patrikis} - see also the discussion at the beginning of section 3.3 of loc. cit.
\end{proof}
Let now $\mathfrak t_q^{\Box, \mathcal P_q} \subset Z^1 \left( \Q_q, \Lie \check {\mathrm G} \right)$ be the tangent space of the subfunctor $\mathcal L_q^{\mathcal P_q}$, and let us denote by $\mathfrak t_q^{\mathcal P_q}$ its image in $H^1 \left( \Q_q, \Lie \check {\mathrm G} \right)$.

We define a Selmer complex using a cone construction, as in \cite{Patrikis}, section 3.2 or \cite{GV}, appendix B: we first set \[ L_q^{\Box, i} = \left\{ \begin{array}{ccc} C^0 \left( \Q_q, \Lie \check {\mathrm G} \right) & \textnormal{ if } & i=0 \\ \mathfrak t_q^{\Box, \mathcal P_q} & \textnormal{ if } & i=1 \\ 0 & \textnormal{ if } & i \ge 2 \end{array} \right. \] then the cone on the restriction map \[ \res{}{}: C^{\bullet} \left( \Gal_{\Q, \Sigma} , \Lie \check {\mathrm G} \right) \lra \bigoplus_{q \in \Sigma} C^{\bullet} \left( \Q_q , \Lie \check {\mathrm G} \right) / L_q^{\Box, \bullet} \] yields a complex which we denote $C^{\bullet}_{\mathcal P} \left( \Gal_{\Q, \Sigma}, \Lie \check {\mathrm G} \right)$.

Taking cohomology of this cone complex yields a long exact sequence \begin{equation} \label{LESfromcone} 0 \lra H^1_{\mathcal P} \left( \Gal_{\Q, \Sigma}, \Lie \check {\mathrm G} \right) \lra H^1 \left( \Gal_{\Q, \Sigma}, \Lie \check {\mathrm G} \right) \lra \bigoplus_{q \in \Sigma} H^1 \left( \Q_q, \Lie \check {\mathrm G} \right) / \mathfrak t_q^{\mathcal P_q} \lra H^2_{\mathcal P} \left( \Gal_{\Q, \Sigma}, \Lie \check {\mathrm G} \right) \lra \ldots \end{equation} which instrinsically defines the Selmer group $H^1_{\mathcal P} \left( \Gal_{\Q, \Sigma}, \Lie \check {\mathrm G} \right)$.

On the other hand, local Tate duality gives us dual conditions $\mathfrak t_q^{\mathcal P_q^{\perp}} \subset H^1 \left( \Q_q, \left( \Lie \check {\mathrm G} \right)^* (1) \right)$, and a dual Selmer group: \begin{equation} \label{dualSelmerdef} H^1_{\mathcal P^{\perp}} \left( \Gal_{\Q, \Sigma}, \left( \Lie \check {\mathrm G} \right)^* (1) \right) = \ker \left( \res{}{}: H^1 \left( \Gal_{\Q, \Sigma}, \left( \Lie \check {\mathrm G} \right)^* (1) \right) \lra \bigoplus_{q \in \Sigma} H^1 \left( \Q_q, \left( \Lie \check {\mathrm G} \right)^* (1) \right) / \mathfrak t_q^{\mathcal P_q^{\perp}} \right). \end{equation}
The cohomology groups involved are then related by the following classical results:
\begin{prop} \label{deformationringsdimensions}  $ $  \begin{enumerate}
    \item \[ \dim_k H^2_{\mathcal P} \left( \Gal_{\Q, \Sigma}, \Lie \check {\mathrm G} \right) = \dim_k H^1_{\mathcal P^{\perp}} \left( \Gal_{\Q, \Sigma}, \left( \Lie \check {\mathrm G} \right)^* (1) \right). \]
    \item \[ \begin{gathered} \dim_k H^1_{\mathcal P} \left( \Gal_{\Q, \Sigma}, \Lie \check {\mathrm G} \right) - \dim_k H^1_{\mathcal P^{\perp}} \left( \Gal_{\Q, \Sigma}, \left( \Lie \check {\mathrm G} \right)^* (1) \right) = \\ = \dim_k H^0 \left( \Q, \Lie \check {\mathrm G} \right) - \dim_k H^0 \left( \Q, \left( \Lie \check {\mathrm G} \right)^* (1) \right) + \sum_{q \in \Sigma} \left( \dim_k \mathfrak t_q^{\mathcal P} - \dim_k H^0 \left( \Q_q, \Lie \check {\mathrm G} \right) \right). \end{gathered} \]
    \item Suppose $H^1_{\mathcal P^{\perp}} \left( \Gal_{\Q, \Sigma}, \left( \Lie \check {\mathrm G} \right)^* (1) \right) = 0$. Then $R_{\sigma, \Sigma}^{\mathcal P}$ is a power series ring over $A$ in $\dim_k H^1_{\mathcal P} \left( \Gal_{\Q, \Sigma}, \Lie \check {\mathrm G} \right)$ variables. We refer to this by saying that the deformation problem $\mathcal P$ is smooth.
\end{enumerate}
\end{prop}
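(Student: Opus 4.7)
The plan is to handle the three assertions in turn, each following a well-known template from Galois deformation theory once the Selmer complex formalism has been set up as in the excerpt.

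For part (1), I would exploit the Poitou–Tate duality. The cone complex $C^{\bullet}_{\mathcal P}(\Gal_{\Q,\Sigma},\Lie \check{\mathrm G})$ sits in a distinguished triangle with the global cochain complex and the quotient complex by the local conditions $L_q^{\Box,\bullet}$. Under local Tate duality for $\Lie \check{\mathrm G}$ vs.\ $(\Lie \check{\mathrm G})^*(1)$, the subcomplexes $\mathfrak t_q^{\Box,\mathcal P_q} \subset Z^1(\Q_q, \Lie \check{\mathrm G})$ and their annihilators $\mathfrak t_q^{\mathcal P_q^{\perp}} \subset H^1(\Q_q,(\Lie \check{\mathrm G})^*(1))$ are exact orthogonal complements. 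The 9-term Poitou–Tate exact sequence then identifies $H^2_{\mathcal P}$ for $\Lie \check{\mathrm G}$ with $H^1$ for $(\Lie \check{\mathrm G})^*(1)$ cut out by the dual local conditions, i.e.\ with $H^1_{\mathcal P^{\perp}}$ as defined in \eqref{dualSelmerdef}. A detailed account is in Appendix B of \cite{GV} and section 3.2 of \cite{Patrikis}.

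For part (2), the plan is to compute the Euler characteristic of $C^{\bullet}_{\mathcal P}$ using the long exact sequence \eqref{LESfromcone}. Writing $\chi_{\mathcal P} = \sum_i (-1)^i \dim_k H^i_{\mathcal P}$, the cone construction gives
\[ \chi_{\mathcal P}(\Lie \check{\mathrm G}) = \chi_{\Gal_{\Q,\Sigma}}(\Lie \check{\mathrm G}) - \sum_{q \in \Sigma} \bigl( \chi_{\Q_q}(\Lie \check{\mathrm G}) - \chi(L_q^{\Box,\bullet}) \bigr). \]
Invoking the global Euler–Poincar\'e characteristic formula over $\Q$ (which contributes only the infinite places and the $H^0$ terms for $\Lie \check{\mathrm G}$ and its Tate dual), the local Euler characteristic formula for each $q \in \Sigma$, and the obvious identity $\chi(L_q^{\Box,\bullet}) = \dim_k H^0(\Q_q,\Lie \check{\mathrm G}) - \dim_k \mathfrak t_q^{\mathcal P_q}$ (using that $\mathfrak t_q^{\Box,\mathcal P_q}$ surjects onto $\mathfrak t_q^{\mathcal P_q}$ with kernel the coboundaries), combining these with part (1) yields the stated formula.

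For part (3), the argument is the standard Mazur obstruction calculus adapted to $\check{\mathrm G}$-valued deformations. Given a small extension $R' \twoheadrightarrow R$ in $\mathcal C_A^f$ with kernel $I$ and a lift $\widetilde{\sigma} \in \mathcal D_{\sigma}^{\mathcal P}(R)$, the obstruction to lifting $\widetilde{\sigma}$ to $R'$ lives in $H^2_{\mathcal P}(\Gal_{\Q,\Sigma}, \Lie \check{\mathrm G}) \otimes_k I$: a cocycle representing the global obstruction is modified using the liftability of each local $\mathcal L_q^{\mathcal P_q}$ (which provides local lifts and hence kills the local components of the obstruction class in $C^{\bullet}(\Q_q,\Lie \check{\mathrm G})/L_q^{\Box,\bullet}$), so the residual class lies in $H^2_{\mathcal P}$. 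By part (1) this vanishes whenever $H^1_{\mathcal P^{\perp}} = 0$, so every such lifting problem has a solution; the tangent space of $R^{\mathcal P}_{\sigma,\Sigma}$ is by definition $H^1_{\mathcal P}(\Gal_{\Q,\Sigma}, \Lie \check{\mathrm G})$, and together these two facts show that $R^{\mathcal P}_{\sigma,\Sigma}$ is a power series ring over $A$ in that many variables.

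The main technical point, and the place where care is most needed, is the compatibility of the local liftability condition with the cone construction at step (3): one must verify that liftability of each $\mathcal L_q^{\mathcal P_q}$ is exactly what allows the obstruction to be represented by a class in $H^2$ of the cone (rather than in some larger obstruction group involving the local quotient complexes). Once this compatibility is in hand, combined with Poitou–Tate in (1) and the Euler characteristic bookkeeping in (2), all three statements follow formally.
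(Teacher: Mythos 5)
Your outline is correct and is essentially a reconstruction of the argument the paper invokes by citation: the paper's proof of this proposition simply refers to Propositions 3.9, 3.10 and Corollary 3.11 of \cite{Patrikis}, whose content is exactly the Poitou--Tate duality for the Selmer cone, the Greenberg--Wiles Euler-characteristic computation, and the unobstructedness argument via liftable local conditions that you describe.
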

\begin{proof} See propositions 3.9 and 3.10 and corollary 3.11 of \cite{Patrikis}.
\end{proof}

\section{Galois cohomology associated to  $\rho$ that arise from cuspidal  automorphic representations $\pi$} \label{Galoisrepforautrep}

\subsection{The representation $\rho_\pi$}
Recall that $\pi$ is a tempered, cuspidal, automorphic representation of $\mathrm G$ on an $E$-vector space which is  unramified, strongly generic and ordinary at $p$ as in Theorem \ref{autrepincohomology},  $\pi_q$ is generic at all  finite places  $q \neq p$  at which $\pi_q$ is ramified, and $\pi$ appears in the cohomology $H^* \left( Y_{1,1}, E \right)_{\ordi}$.   Recall from Definition \ref{stronglygenericrep} that strong genericity of $\pi$ at $p$ corresponds to the fact that $\pi_p$ is unramified, and the corresponding character $\chi: \mathrm T(\Qp) / \mathrm T(\Zp) \lra E^*$ is strongly regular. (Conjecturally $\pi_q$ being generic   is equivalent to its (conjectural)  Langlands parameter, a   Weil-Deligne representation  $(r_q,N_q)$,  being  generic, i.e.,  there is no nontrivial morphism  $(r_q,N_q) \longrightarrow (r_q(1),N_q)$. This is known in the special case when $G=GL_n$, see \S 1.1 of \cite{Allen}.)

Given a Weyl group element $w$, we obtain a maximal ideal $\mathfrak m = \widetilde{\mathfrak N_w}$ of the ordinary Hecke algebra $\mathbb T^S_{\ordi}(K_{1,1}, E)$ away from $S$, associated to $\pi$ and $w$ as in corollary \ref{maxidealsatproplevel}.
Recall that $\check{\mathrm G} $ is a smooth $\Zp$-model of the dual group of $\mathrm G$.

We assume  from here on the following conjecture.  This is known in many cases when $G=GL_n$, see \cite{tenauthors} and its references.

\begin{conj} \label{Galoisrepexists1} [see conjecture 6.1 of \cite{GV}] There exists a Galois representation \[ \rho =\rho_{\pi}: \Gal_{\Q} \lra \check {\mathrm G} (E) \] with the following properties: \begin{enumerate}
    \item For all primes $q \neq p$ where $K_q$ is unramified, $\rho_{\pi}|_{G_q}$ is unramified and we have the usual Hecke-Frobenius compatibility: for each algebraic representation $\tau$ of $\mathrm{ \check G}$, the trace of $\left( \tau \circ \rho_{\pi} \right)(\Frob_q)$ coincide with the image of $T_{\tau,q}$ under the surjection $\mathbb T^S_{\ordi}(K(1,1), E) \twoheadrightarrow \mathbb T^S_{\ordi}(K(1,1), E) / \mathfrak m$, and $T_{\tau,q}$ corresponds to $\tau \in \Rep_E(\check G)$ via the Satake isomorphism.
    \item $\rho_{\pi}$ is $\check {\mathrm G}$-odd, as in definition \ref{oddgaloisrep}.
    \item The restriction ${\rho_\pi}|_p={\rho_{\pi}}|_{G_p}$ of $\rho_{\pi}$ to a  decomposition group $G_p$ at $p$ is ordinary and crystalline. Further
    $(\alpha \circ ({\rho_\pi}|_p)^{\ab})  \neq \omega$ and $(\alpha \circ ({\rho_\pi}|_p)^{\ab})|_{I_p}=\omega$ for for every simple root $\alpha \in \check \Delta \subset \check \Phi^+ \left( \check{\mathrm G}, \check {\mathrm T} \right)$.
\item  for all primes $q \neq p$ where $K_q$ is ramified, $\rho_{\pi}|_{G_q}$ is generic,  namely  $H^2(\Q_q, \Lie \mathrm {\check G} )=0$,
where $ \Lie \mathrm {\check G}$ is as before the adjoint representation associated to  $\rho_{\pi}|_{G_q}$.
\item $\rho_{\pi}$ is absolutely irreducible, in the sense that its image is not contained in $\check {\mathrm P} (\overline E)$ for any proper parabolic subgroup $\check {\mathrm P}$ of $\check{\mathrm G}_{\overline E}$.
\end{enumerate}
\end{conj}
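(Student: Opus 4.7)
The plan is to proceed along the lines of the proofs known for $\mathrm G = \Gl{n}$ over CM or totally real fields (as in \cite{HLTT} and \cite{Scholze}), and flag the points at which the general split reductive case becomes genuinely obstructed. First I would try to realize the $\pi$-eigensystem in the cohomology of an appropriate arithmetic quotient, then use a $p$-adic interpolation argument (eigenvariety or perfectoid Shimura variety techniques) to connect it to a limit eigensystem that already arises in the \'etale cohomology of a Shimura variety for a classical group. The \'etale cohomology of that Shimura variety carries commuting actions of the Hecke algebra and of $\Gal_{\Q}$, and $\rho_\pi$ is extracted from the Galois-stable subspace on which Hecke acts by the eigensystem of $\pi$. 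Hecke-Frobenius compatibility (property 1) would then be automatic from Eichler-Shimura and the Satake isomorphism on the unramified components.

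Second, I would check the local conditions place by place. Oddness (property 2) should follow from the action of complex conjugation on $(\mathfrak g,K_\infty)$-cohomology together with the fact that $\pi$ contributes in tempered degrees $[q_0,q_0+l_0]$, which pins down $\dim H^0(\R, \Lie \check{\mathrm G})$ to the minimal value. Genericity at ramified primes $q\neq p$ (property 4) should come from the assumed genericity of the local components $\pi_q$ through (conjectural) local Langlands. The ordinarity at $p$ in property (3) should be forced by the fact that $\pi$ contributes to $e\,H^*(Y_{1,1},E)$, so the Satake/$U_p$-eigenvalues $\chi^w$ arrange the Hodge-Tate filtration on $\rho_\pi|_{G_p}$ into a $\check{\mathrm B}$-invariant complete flag; Lemma \ref{ordithencrys} combined with the unramified strongly regular character $\chi$ describing $\pi_p$ should then upgrade ordinarity to crystallinity and simultaneously deliver the root-by-root condition $(\alpha\circ \rho_\pi^{\ab})|_{I_p}=\omega$, $\alpha\circ \rho_\pi^{\ab}\neq \omega$. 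Absolute $\check{\mathrm G}$-irreducibility (property 5) should be encoded in the cuspidality of $\pi$ together with a Langlands-functoriality-preserving-cuspidality input.

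The hard part, by a wide margin, is the very first step in the generality stated. Outside classical groups there is no Shimura variety producing $\rho_\pi$ geometrically, so one cannot directly import \'etale cohomology. Replacing geometry by $p$-adic interpolation \`a la \cite{HLTT, Scholze} requires congruences connecting the cohomology of $Y(K)$ for $\mathrm G$ to the cohomology of a Shimura variety for an auxiliary classical group; such bridges are not known in general. Even in the $\Gl{n}$ case, verifying the precise Hodge-Tate weight behavior required in property (3) (the borderline condition $\alpha\circ \rho_\pi^{\ab}\neq \omega$ with $(\alpha\circ \rho_\pi^{\ab})|_{I_p}=\omega$) requires careful control of the infinitesimal character of $\pi_\infty$ and its match with $p$-adic Hodge theory on the Galois side, and for a general $\mathrm G$ this is presently out of reach. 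For this reason the statement is quoted as a conjecture, known in instances (notably $\mathrm G=\Gl{n}$ under standard hypotheses on $\pi$) where the geometric or interpolation route can be made to work, and assumed for the rest of the paper.
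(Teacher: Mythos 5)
There is nothing to compare here: the statement you are addressing is Conjecture \ref{Galoisrepexists1}, and the paper gives no proof of it. It is introduced with the sentence ``We assume from here on the following conjecture. This is known in many cases when $G=GL_n$, see \cite{tenauthors} and its references,'' and every subsequent result that needs $\rho_\pi$ takes the conjecture as a hypothesis. You correctly recognize this in your final paragraph, and your sketch of how the known cases are handled (realizing the eigensystem in cohomology, $p$-adic interpolation into the \'etale cohomology of a Shimura variety for an auxiliary classical group as in \cite{HLTT} and \cite{Scholze}, then checking local conditions) is an accurate account of the state of the art and of why the general split reductive case is out of reach.

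Two caveats on the heuristics in your second paragraph, since they overstate what is ``automatic'' even granting the construction of $\rho_\pi$. Oddness (property 2) is not a formal consequence of $\pi$ contributing in the tempered range $[q_0,q_0+l_0]$; determining the conjugacy class of $\rho_\pi(c)$ is a genuinely hard problem that required separate arguments even for $\Gl{n}$ over CM fields. Likewise, absolute $\check{\mathrm G}$-irreducibility (property 5) does not follow from cuspidality of $\pi$ alone --- this is why the paper lists it as a separate conjectural property rather than deriving it --- and your appeal to Lemma \ref{ordithencrys} for crystallinity is slightly off target: that lemma concerns deformations of a representation already known to be ordinary, whereas the content of property (3) is local-global compatibility at $p$ for the ordinary refinement of $\pi_p$, which in the known cases is established on the automorphic side, not deduced from the lemma. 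None of this affects the correctness of your overall conclusion that the statement is an assumed conjecture.
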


We note the following useful lemma.

\begin{lem}\label{useful}
 The representation ${\rho_\pi}|_{G_p}$ satisfies the condition $(REG^*)$.
\end{lem}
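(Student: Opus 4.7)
The strategy is to exploit the height filtration on $\Lie\check{\mathrm G}/\Lie\check{\mathrm B}$ by negative root spaces, combined with the two parts of the third bullet of Conjecture~\ref{Galoisrepexists1} governing $\alpha \circ (\rho_\pi|_p)^{\ab}$ for simple positive roots $\alpha$. Since $\rho_\pi|_{G_p}$ factors through $\check{\mathrm B}(E)$ by ordinarity, $\Lie\check{\mathrm G}/\Lie\check{\mathrm B}$ is a $G_p$-module via the adjoint action, and the composition $G_p \to \check{\mathrm B}(E) \twoheadrightarrow \check{\mathrm T}(E)$ is exactly $(\rho_\pi|_p)^{\ab}$. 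As a $\check{\mathrm T}$-representation, $\Lie\check{\mathrm G}/\Lie\check{\mathrm B} \cong \bigoplus_{\beta\in\check\Phi^{-}} \Lie\check{\mathrm U}_{\beta}$, and because $[\Lie\check{\mathrm U}_{\alpha},\Lie\check{\mathrm U}_{\beta}]\subset\Lie\check{\mathrm U}_{\alpha+\beta}$ for $\alpha\in\check\Phi^{+}$ together with additivity of height, the subspaces spanned by those $\Lie\check{\mathrm U}_\beta$ with $\mathrm{height}(\beta)\ge -k$ form an increasing $\check{\mathrm B}$-stable (hence $G_p$-stable) filtration of $\Lie\check{\mathrm G}/\Lie\check{\mathrm B}$ whose graded pieces are the (one-dimensional) root spaces $\Lie\check{\mathrm U}_{\beta}$. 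After the Tate twist by $1$, $G_p$ acts on the $\beta$-graded piece through the character $\chi_{\beta} := (\beta\circ(\rho_\pi|_p)^{\ab})\cdot\omega$.

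The heart of the proof is then to show $\chi_\beta \neq 1$ for every $\beta\in\check\Phi^{-}$. Write $-\beta=\sum_{\alpha\in\check\Delta} n_{\alpha}\alpha$ as a nonempty nonnegative sum of simple positive roots, with height $h := \sum n_{\alpha}\ge 1$. The assumption $(\alpha\circ(\rho_\pi|_p)^{\ab})|_{I_p}=\omega|_{I_p}$ for each $\alpha \in \check\Delta$ gives immediately $\chi_{\beta}|_{I_p}=\omega|_{I_p}^{\,1-h}$. If $h\ge 2$, this restriction is already nontrivial, because $\omega|_{I_p}$ has infinite order. If $h=1$, so $-\beta=\alpha\in\check\Delta$, then $\chi_\beta|_{I_p}$ is trivial but $\chi_\beta = (\alpha\circ(\rho_\pi|_p)^{\ab})^{-1}\cdot\omega$ is nontrivial on all of $G_p$ by the complementary half of the hypothesis, $\alpha\circ(\rho_\pi|_p)^{\ab}\neq\omega$. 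Thus every graded piece of the twisted filtration has $H^0(\Qp,-)=0$.

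Finally, the short exact sequences comprising the filtration and the left-exactness of $H^0(\Qp,-)$ give, by a trivial induction on the length of the filtration, the desired vanishing $H^0(\Qp,(\Lie\check{\mathrm G}/\Lie\check{\mathrm B})(1))=0$. There is no real obstacle beyond the character computation: the two statements in Conjecture~\ref{Galoisrepexists1} about simple roots (equality on inertia, inequality on the full decomposition group) are exactly calibrated to kill, respectively, the non-simple ($h\ge 2$) and simple ($h=1$) cases of the negative-root characters $\chi_\beta$.
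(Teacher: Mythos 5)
Your proof is correct and follows exactly the route the paper intends: the paper's own proof simply asserts that $(REG^*)$ is an ``immediate consequence'' of the two conditions on $\alpha \circ (\rho_\pi|_p)^{\ab}$ for simple roots $\alpha$, and your height-filtration and character computation (using the inertial condition to kill the height $\ge 2$ graded pieces and the global inequality $\alpha\circ(\rho_\pi|_p)^{\ab}\neq\omega$ to kill the simple-root pieces) is precisely the verification being left to the reader.
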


\begin{proof}
 This is an immediate consequence of   the condition $(\alpha \circ ({\rho_\pi}|_p)^{\ab})  \neq \omega$ and $(\alpha \circ ({\rho_\pi}|_p)^{\ab})|_{I_p}=\omega$ for for every simple root $\alpha \in \check \Delta \subset \check \Phi^+ \left( \check{\mathrm G}, \check {\mathrm T} \right)$, which implies that \[H^0 \left( \Qp, \left( \frac{ \Lie \check {\mathrm G} }{ \Lie \check {\mathrm B}  } \right) (1) \right) = 0,\] and hence that ${\rho_\pi}|_{G_p}$ satisfies the condition $(REG^*)$.
\end{proof}

\subsection{Local theory of ordinary representations at $p$}
The standard local-globcal compatibility assumptions in conjecture \ref{Galoisrepexists1} predict that $\rho$ is unramified away from $S$, and ordinary at $p$ (because of the corresponding assumptions on $\mathfrak m$ and thus $\pi$). We thus fix once and for all a Borel subgroup $\check {\mathrm B}$ so that $\rho_p$ maps into $\check {\mathrm B}(E)$.

Consider the adjoint representation $\Ad \rho$ with underlying $E$-vector space $\Lie \check{\mathrm G}$. If we restrict this to $\Gal_{\Qp}$, thanks to the ordinary assumption we obtain a subrepresentation $\Lie \check{\mathrm B}$. Notice that as $\check {\mathrm U}$ is normal in $\check {\mathrm B}$, we have another subrepresentation $\Lie \check {\mathrm U}$.

Standard computations similar to those in section 8.14 of \cite{akshay} show that we have isomorphisms \[ \begin{gathered} \Hom \left( \Zp^*, \Lie \check {\mathrm T} \right) \cong \Hom \left( \Zp^*, X_* \left( \check {\mathrm T} \right) \otimes_{\Z} E \right)  \cong \Hom \left( \Zp^*, E \right) \otimes_{\Z} X_* \left( \check {\mathrm T} \right) \cong \\ \cong  \Hom \left( \Zp^* \otimes_{\Z} X_*(\mathrm T) , E \right) \cong \Hom \left( \mathrm T(\Zp), E \right). \end{gathered} \]

We also have a map (denoted $\varepsilon$ in section 8.3 of  \cite{CM}) \[ \varepsilon: H^1 \left( \Qp, \Lie \check {\mathrm B} \right) \lra \Hom \left( \Qp^*, \Lie \check {\mathrm T} \right) \] defined as follows: we have a surjection of Lie algebras \[ \Lie \check {\mathrm B} \twoheadrightarrow \frac{\Lie \check {\mathrm B}}{ \Lie \check {\mathrm U}} \cong \Lie \check {\mathrm T} \] which is Galois-equivariant and thus induces a map in cohomology \[ H^1 \left( \Qp, \Lie \check {\mathrm B} \right) \lra H^1 \left( \Qp, \Lie \check {\mathrm T} \right) \]
Since the coadjoint action of $\check {\mathrm B}(E)$ on $\Lie \check {\mathrm T}$ is trivial, the target cohomology group identifies with $H^1 \left( \Qp, \Lie \check {\mathrm T} \right) \cong \Hom \left( \Qp^*, \Lie \check {\mathrm T} \right)$.

The subgroup $H^1_f \left( \Qp, \Lie \check {\mathrm B} \right)$ of crystalline extensions maps to $H^1_f \left( \Qp, \Lie \check {\mathrm T} \right)$.
A routine computation (see for instance \cite{feng}) shows that $H^1_f \left( \Qp, \Lie \check{\mathrm T} \right)$ picks out the unramified representations, so by fixing a splitting $\Qp^* \cong \Zp^* \times \Z$ we can identify \[ \frac{H^1 \left( \Qp, \Lie \check {\mathrm T}  \right) }{ H^1_f \left( \Qp, \Lie \check {\mathrm T} \right)} \cong \frac{\Hom \left( \Qp^*, \Lie \check {\mathrm T}  \right) }{ \Hom \left( \Z, \Lie \check {\mathrm T}  \right)} \cong \Hom \left( \Zp^*, \Lie \check {\mathrm T}   \right) \] and we obtain a map \[ \frac{H^1 \left( \Qp, \Lie \check {\mathrm B} \right)}{H^1_f \left( \Qp, \Lie \check {\mathrm B} \right)} \lra \Hom \left( \Zp^*, \Lie \check{\mathrm T} \right) \cong \Hom \left( \mathrm T(\Zp), E \right). \]

\begin{prop} \label{ordivscrys} Suppose that condition  (\ref{regdualcondition}) holds. Then the  map   \[ \frac{H^1 \left( \Qp, \Lie \check {\mathrm B} \right)}{H^1_f \left( \Qp, \Lie \check {\mathrm B} \right)} \lra  \Hom \left( \mathrm T(\Zp), E \right) \]  defined above  is an isomorphism.
\end{prop}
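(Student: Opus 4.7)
The plan is to combine the long exact sequence in Galois cohomology for the ordinary filtration $0 \to \Lie \check{\mathrm U} \to \Lie \check{\mathrm B} \to \Lie \check{\mathrm T} \to 0$ with a dimension count on the quotients by $H^1_f$. First I would establish that the map $H^1(\Qp, \Lie \check{\mathrm B}) \to H^1(\Qp, \Lie \check{\mathrm T})$ is surjective. The obstruction to surjectivity lies in $H^2(\Qp, \Lie \check{\mathrm U})$; via the Killing form on $\Lie \check{\mathrm G}$ one has a $\Gal_{\Qp}$-equivariant identification $(\Lie \check{\mathrm G}/\Lie \check{\mathrm B})^* \cong \Lie \check{\mathrm U}$, so local Tate duality translates the vanishing of this $H^2$ exactly into the hypothesis (REG*). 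Since the quotient of a crystalline extension of $\Qp$ by $\Lie \check{\mathrm B}$ through $\Lie \check{\mathrm U}$ is again crystalline, this surjection descends modulo $H^1_f$. Combined with the identification $H^1(\Qp, \Lie \check{\mathrm T})/H^1_f \cong \Hom(\mathrm T(\Zp), E)$ recalled immediately before the proposition, this gives the surjectivity half of the claim and shows that the target has $E$-dimension $r = \rk \mathrm T$.

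It then suffices to show that the source $H^1(\Qp, \Lie \check{\mathrm B})/H^1_f(\Qp, \Lie \check{\mathrm B})$ also has $E$-dimension $r$, forcing the surjection to be an isomorphism. The local Euler-Poincar\'e characteristic formula, together with the vanishing $H^2(\Qp, \Lie \check{\mathrm B}) = 0$ (which follows from $H^2(\Qp, \Lie \check{\mathrm U}) = 0$ above and from $H^2(\Qp, \Lie \check{\mathrm T}) = 0$, since $\Lie \check{\mathrm T}$ is trivial and $\Qp(1)^{G_p} = 0$), gives
\[ \dim_E H^1(\Qp, \Lie \check{\mathrm B}) = \dim_E \Lie \check{\mathrm B} + \dim_E H^0(\Qp, \Lie \check{\mathrm B}). \]
On the other hand, $\Lie \check{\mathrm B}$ is crystalline (since ${\rho_\pi}|_p$ is, by Conjecture~\ref{Galoisrepexists1}), so the Bloch-Kato formula reads
\[ \dim_E H^1_f(\Qp, \Lie \check{\mathrm B}) = \dim_E H^0(\Qp, \Lie \check{\mathrm B}) + \dim_E t_{\Lie \check{\mathrm B}}, \]
where $t_{\Lie \check{\mathrm B}} = D_{\mathrm{dR}}(\Lie \check{\mathrm B})/\mathrm{Fil}^0 D_{\mathrm{dR}}(\Lie \check{\mathrm B})$.

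The main content, and the step where I expect the most care is needed, is the Hodge-Tate analysis giving $\dim_E t_{\Lie \check{\mathrm B}} = \dim_E \Lie \check{\mathrm U}$. Here I would use the weight hypothesis from Conjecture~\ref{Galoisrepexists1}: since $(\alpha \circ ({\rho_\pi}|_p)^{\ab})|_{I_p} = \omega$ for every simple root $\alpha$, any positive root $\gamma$ of height $h$ gives $(\gamma \circ ({\rho_\pi}|_p)^{\ab})|_{I_p} = \omega^h$, so the Galois character on the root space $\Lie \check{\mathrm U}_\gamma$ has Hodge-Tate weight $-h$ (recalling the paper's convention that $\omega$ has weight $-1$); in particular every root-space summand of $\Lie \check{\mathrm U}$ has strictly negative Hodge-Tate weight, while $\Lie \check{\mathrm T}$ is trivial of Hodge-Tate weight $0$. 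Hence precisely the $\Lie \check{\mathrm U}$-graded pieces lie outside $\mathrm{Fil}^0 D_{\mathrm{dR}}(\Lie \check{\mathrm B})$, so $\dim_E t_{\Lie \check{\mathrm B}} = |\Phi^+| = \dim_E \Lie \check{\mathrm U}$. Subtracting the two dimension formulas yields $\dim_E H^1/H^1_f = \dim_E \Lie \check{\mathrm B} - \dim_E \Lie \check{\mathrm U} = \dim_E \Lie \check{\mathrm T} = r$, which matches the target dimension and completes the proof. Outside this Hodge-Tate weight bookkeeping, everything else is standard local duality.
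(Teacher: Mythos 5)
Your proposal is correct and follows essentially the same route as the paper: surjectivity of $H^1(\Qp,\Lie\check{\mathrm B})\to H^1(\Qp,\Lie\check{\mathrm T})$ via the Killing-form identification $(\Lie\check{\mathrm U})^*\cong\Lie\check{\mathrm G}/\Lie\check{\mathrm B}$, local Tate duality and (REG*), followed by a dimension count using the local Euler characteristic, the vanishing of $H^2(\Qp,\Lie\check{\mathrm B})$, and the fact that exactly the $\dim\Lie\check{\mathrm U}$ root-space Hodge--Tate weights are negative. The only (cosmetic) differences are that you kill $H^2(\Qp,\Lie\check{\mathrm B})$ directly from the long exact sequence rather than by dualizing and computing $H^0$ of the Tate twist, and you read off the negativity of the weights from the explicit inertial condition in Conjecture \ref{Galoisrepexists1}(3) rather than from the strong negativity of the Hodge cocharacter.
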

\begin{proof}
The map is obviously $E$-linear, because it is induced by Galois cohomology maps on $E$-linear representations.

We start by proving surjectivity of the map \begin{equation} \label{crystallinequotientmap} \frac{H^1 \left( \Qp, \Lie \check {\mathrm B} \right)}{H^1_f \left( \Qp, \Lie \check {\mathrm B} \right)} \lra \frac{H^1 \left( \Qp, \Lie \check {\mathrm T} \right) }{ H^1_f \left( \Qp, \Lie \check {\mathrm T} \right)}, \end{equation} and then we will show that the domain and target are vector spaces of the same dimension.

We have an exact sequence of $p$-adic representations of $\Gal_{\Qp}$: \[ 1 \lra \Lie \check{\mathrm U}  \lra \Lie \check {\mathrm B} \lra \Lie \check{\mathrm T}  \lra 1. \]
The induced long exact sequence in Galois cohomology reads \[ \ldots \lra H^1 \left( \Qp, \Lie \check {\mathrm B} \right) \lra H^1 \left( \Qp, \Lie \check {\mathrm T}  \right) \lra H^2 \left( \Qp, \Lie \check{\mathrm U} \right) \lra \ldots \]
As explained in \cite{Patrikis}, proof of proposition 4.4, the Killing form on $\mathrm G$ gives a Galois-equivariant non-degenerate pairing $\Lie \check{\mathrm U} \times \frac{\Lie \check{\mathrm G}}{\Lie \check {\mathrm B}} \lra E$.
Therefore, $\left( \Lie \check{\mathrm U} \right)^* \cong \frac{\Lie \check{\mathrm G}}{\Lie \check {\mathrm B}}$ and local Tate duality yields that \[ H^2 \left( \Qp, \Lie \check {\mathrm U}  \right) \cong H^0 \left( \Qp, \left( \Lie \check{\mathrm U} \right)^* (1) \right)^{\vee} = H^0 \left( \Qp, \left( \frac{\Lie \check{\mathrm G}}{\Lie \check{\mathrm B}} \right) (1) \right)^{\vee} = 0, \] where the last equality follows from condition (REG*).

We thus have a surjection \[ H^1 \left( \Qp, \Lie \check {\mathrm B} \right) \twoheadrightarrow H^1 \left( \Qp, \Lie \check {\mathrm T} \right) \]
and therefore the map in equation \ref{crystallinequotientmap} is surjective as well, since it preserves the $f$-subgroups.

It remains to show that domain and target of equation \ref{crystallinequotientmap} have the same dimension over $E$.
Consider first the target space $\frac{H^1 \left( \Qp, \Lie \check {\mathrm T} \right) }{ H^1_f \left( \Qp, \Lie \check {\mathrm T} \right)}$. We proved above that it is isomorphic to $\Hom \left( \mathrm T(\Zp), E \right)$ and hence its dimension over $E$ is equal to $\rk \mathrm T$, the rank of the $\Zp$-torus $\mathrm T$.

Recall from the Euler characteristic formula (see e.g. \cite{feng}) that for any $p$-adic field $F$ and any $\Gal_F$-representation $V$ on a $\Qp$-vector space one has \[ \dim_{\Qp} H^1 \left( \Gal_F, V \right) = \dim_{\Qp} H^2 \left( \Gal_F, V \right) + [F: \Qp] \cdot \dim_{\Qp} V + \dim_{\Qp} H^0 \left( \Gal_F, V \right) \] while (e.g. proof of theorem 4.15 in \cite{feng}) \[ \dim_{\Qp} H^1_f \left( \Gal_F, V \right) = \dim_{\Qp} H^0 \left( \Gal_F, V \right) + [F: \Qp] \cdot \left| \left\{ \textnormal{negative Hodge-Tate weights in $V$} \right\} \right|, \] where we count the negative Hodge-Tate weights on $V$ with multiplicity.

In our case, we obtain \[ \dim_{\Qp} \frac{H^1 \left( \Qp, \Lie \check {\mathrm B} \right)}{H^1_f \left( \Qp, \Lie \check {\mathrm B} \right)} = \] \[ = \dim_{\Qp} H^2 \left( \Qp, \Lie \check {\mathrm B} \right)  + \dim_{\Qp} \left( \Lie \check {\mathrm B} \right) - \left| \left\{ \textnormal{negative Hodge-Tate weights in $\Lie \check {\mathrm B}$} \right\} \right|. \]
We start by showing that the first summand $ \dim_{\Qp} H^2 \left( \Qp, \Lie \check{\mathrm B} \right) $ is zero.
The short exact sequence of $\Gal_{\Qp}$-representations $ 1 \lra \Lie \check{\mathrm U}  \lra \Lie \check{\mathrm B} \lra \Lie \check{\mathrm T}  \lra 1$ can be dualized and Tate-twisted to give \[ 1  \lra \left( \Lie \check{\mathrm T} \right)^* (1)  \lra \left( \Lie \check{\mathrm B} \right)^* (1) \lra \left( \Lie \check{\mathrm U} \right)^* (1)  \lra 1. \]
By what we have shown above, the Galois-invariants on this sequence yield an isomorphism \[ H^0 \left( \Qp, \left( \Lie \check{\mathrm T} \right)^* (1) \right) \cong H^0 \left( \Qp, \left( \Lie \check{\mathrm B} \right)^* (1) \right). \]
Since the adjoint action on $\Lie \check {\mathrm T}$ is trivial, $\left( \Lie \check {\mathrm T} \right)^* (1)$ is a sum of copies of $\Qp(1)$, and hence it has no invariants.
We conclude that \[ 0 = H^0 \left( \Qp, \left( \Lie \check{\mathrm B} \right)^* (1) \right) \cong H^2 \left( \Qp, \Lie \check{\mathrm B} \right)^{\vee} \] where the last isomorphism is local Tate duality.
In particular, $H^2 \left( \Qp, \Lie \check{\mathrm B} \right)=0$.

Next, notice that the adjoint representation $\Lie \check {\mathrm B}$ has trivial Hodge-Tate weights on the Cartan subalgebra $\Lie \check{\mathrm T}$, and the height filtration on $\Lie \check {\mathrm U}$ shows that the Hodge-Tate weight on each root space $\Lie \check {\mathrm U}_{\alpha}$ is the same as $\langle \alpha, \chi_{\rho_p} \rangle$, where $\chi_{\rho_p} \in X_*(\check {\mathrm T})$ is the cocharacter describing the restriction to the inertia subgroup at $p$ of the quotient representation $\Gal_{\Qp} \lra \check {\mathrm B}(E) \twoheadrightarrow \check {\mathrm T}(E)$.

The local-global compatibility assumed in Conjecture \ref{Galoisrepexists1} (known in some situations, see for instance \cite{HLTT}, \cite{tenauthors}, \cite{BlGGT}) ensures that since $\pi$ appears in the cohomology with constant coefficients, the cocharacter $\chi_{\rho_p}$ is in the strongly negative cone, and thus $\langle \alpha, \chi_{\rho_p} \rangle<0$ for every positive root $\alpha$.

We obtain \[ \begin{gathered} \dim_{\Qp} \left( \Lie \check {\mathrm B} \right) - \left| \left\{ \textnormal{negative Hodge-Tate weights in $\Lie \check {\mathrm B}$} \right\} \right| = \\ \dim_{\Qp} \left( \Lie \check{\mathrm B} \right) - \dim_{\Qp} \left( \Lie \check{\mathrm U} \right) = \dim_{\Qp} \Lie \check {\mathrm T} = \rk \mathrm T \cdot [E : \Qp], \end{gathered} \] which shows that $\dim_E \frac{H^1 \left( \Qp, \Lie \check {\mathrm B} \right)}{H^1_f \left( \Qp, \Lie \check {\mathrm B} \right)} = \rk \mathrm T$ and completes the proof.
\end{proof}
The Teichmuller lift gives a canonical splitting of the integral points of the torus: $\mathrm T(\Zp) \cong \mathrm T(\Zp)_p \times \mathrm T(\Fp)$. Since $\mathrm T(\Fp)$ is a finite group and $E$ is torsion-free, we have that $\Hom \left( \mathrm T(\Fp), E \right) = 0$ and so we immediately get the following result.
\begin{cor} \label{ordivscryscor} The  map  \[ \frac{H^1 \left( \Qp, \Lie \check {\mathrm B} \right)}{H^1_f \left( \Qp, \Lie \check {\mathrm B} \right)} \lra  \Hom \left( \mathrm T(\Zp)_p, E \right) \] obtained by composing the morphism in proposition \ref{ordivscrys} with the restriction map $ \Hom \left( \mathrm T(\Zp), E \right) \lra  \Hom \left( \mathrm T(\Zp)_p, E \right) $ is an isomorphism.
\end{cor}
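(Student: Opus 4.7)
The plan is to reduce the corollary immediately to Proposition \ref{ordivscrys} by showing that the additional restriction map $\Hom(\mathrm T(\Zp), E) \to \Hom(\mathrm T(\Zp)_p, E)$ is itself an isomorphism. Since a composition of isomorphisms is an isomorphism, this suffices.

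To see that the restriction map is an isomorphism, I would invoke the Teichm\"uller decomposition. Because $p$ is odd and coprime to $|\mu(\mathrm Z_{\mathrm G})|$ (and in any event coprime to the order of the finite group $\mathrm T(\Fp) \cong (\Fp^*)^r$), the pro-$p$ completion splits off canonically as a direct factor, yielding an isomorphism of topological groups
\[ \mathrm T(\Zp) \cong \mathrm T(\Zp)_p \times \mathrm T(\Fp). \]
Applying $\Hom(-, E)$ (continuous homomorphisms of abelian groups into the $\Qp$-vector space $E$) gives
\[ \Hom(\mathrm T(\Zp), E) \cong \Hom(\mathrm T(\Zp)_p, E) \oplus \Hom(\mathrm T(\Fp), E). \]

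The main (and only) point to check is that the second summand vanishes. This is immediate since $\mathrm T(\Fp)$ is a finite group, every homomorphism to the torsion-free $\Qp$-vector space $E$ must be trivial on torsion elements, hence $\Hom(\mathrm T(\Fp), E) = 0$. Therefore restriction to the pro-$p$ factor is an isomorphism $\Hom(\mathrm T(\Zp), E) \stackrel{\sim}{\to} \Hom(\mathrm T(\Zp)_p, E)$, and composition with the isomorphism of Proposition \ref{ordivscrys} proves the corollary. There is no real obstacle here, as the substantive work (comparing the $H^1/H^1_f$ quotient to $\Hom(\mathrm T(\Zp), E)$ via the Killing form pairing, Euler characteristic, Hodge--Tate weight count, and condition (REG*)) has already been carried out in the proof of Proposition \ref{ordivscrys}; this corollary merely records the trivial passage from $\mathrm T(\Zp)$ to its pro-$p$ radical.
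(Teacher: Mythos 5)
Your proof is correct and is essentially the paper's own argument: the paper likewise uses the Teichm\"uller splitting $\mathrm T(\Zp) \cong \mathrm T(\Zp)_p \times \mathrm T(\Fp)$ and the vanishing of $\Hom(\mathrm T(\Fp), E)$ for the finite group $\mathrm T(\Fp)$, then composes with the isomorphism of Proposition \ref{ordivscrys}. The only cosmetic difference is that the paper's proof also cites Lemma \ref{useful} to confirm that the hypothesis (REG*) of Proposition \ref{ordivscrys} is satisfied for $\rho_\pi|_{G_p}$, a point you implicitly take for granted.
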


\begin{proof}

This follows from  Proposition \ref{ordivscrys} and Lemma \ref{useful}.

\end{proof}

\subsection{Map from derived Hecke algebra to Selmer group}

Let now $\Sigma = S \cup \{ p \}$, and consider the following deformation conditions \[ \mathcal L_q = \left\{ \begin{array}{cc} \Lift_{\rho_q} & \textnormal{ if } q \neq p \\ \Lift_{\rho_p}^{\ordi} & \textnormal{ if } q=p \end{array} \right. \textnormal{ so that } \mathfrak t_q = \left\{ \begin{array}{cc} H^1 \left( \Q_q, \Lie \check {\mathrm G} \right) & \textnormal{ if } q \neq p \\ H^1 \left( \Qp, \Lie \check {\mathrm B} \right) & \textnormal{ if } q=p \end{array} \right. \]
We denote by $\mathcal P_{\ordi}$ the corresponding deformation problem as in subsection \ref{globaldeformationsubsection} and by $R^{\ordi}_{\rho}$ the ring representing the associated deformation functor $\Def_{\rho}^{\ordi}$, obtained as in proposition \ref{globaldeformationrepresentable}.
$R^{\ordi}_{\rho}$ is a local, complete, Noetherian $E$-algebra, and we denote its tangent space by $H^1_{\ordi} \left( \Z \left[ \frac{1}{S} \right] , \Lie \check {\mathrm G} \right) = H^1_{\mathcal P_{\ordi}} \left( \Gal_{\Q, \Sigma} , \Lie \check {\mathrm G} \right)$.

Fix a representative for the universal deformation ${ \widetilde \rho_{\ordi}: \Gal_{\Q, \Sigma} \lra \check {\mathrm G} \left( R_{\rho}^{\ordi} \right) }$.
Then as explained in section \ref{localatpdeformationsection} we obtain a group homomorphism \[ \left( \widetilde \rho_{\ordi} \right)^{\ab} |_{\Zp^*} : \Zp^* \lra \check {\mathrm T} \left( R^{\ordi}_{\rho} \right). \]
Pairing with any element of $X^* \left( \check {\mathrm T} \right) = X_* \left( \mathrm T \right)$ yields a homomorphism $\Zp^* \lra \left( R_{\rho}^{\ordi} \right)^*$ hence by duality between $\mathrm T$ and $\check {\mathrm T}$, the map $\left( \widetilde \rho_{\ordi} \right)^{\ab} |_{\Zp^*}$ corresponds to a continuous group homomorphism ${ \mathrm T(\Zp) \lra \left( R^{\ordi}_{\rho} \right)^* }$.
We can restrict to the pro-$p$-Sylow on the domain and then extend $E$-linearly to obtain a morphism in $\mathcal C_E$: \begin{equation} \label{Hidaalgtodeformationring} \psi: \Lambda_E = \Zp \left[ \left[ \mathrm T(\Zp)_p \right] \right] \otimes_{\Zp} E \lra R_{\rho}^{\ordi}. \end{equation}

Still keeping $\Sigma = S \cup \{ p \}$, we fix the following deformation conditions \[ \mathcal L_q = \left\{ \begin{array}{cc} \Lift_{\rho_q} & \textnormal{ if } q \neq p \\ \Lift_{\rho_p}^{\crys} & \textnormal{ if } q=p \end{array} \right. \textnormal{ so that } \mathfrak t_q = \left\{ \begin{array}{cc} H^1 \left( \Q_q, \Lie \check {\mathrm G} \right) & \textnormal{ if } q \neq p \\ H^1_f \left( \Qp, \Lie \check {\mathrm G} \right) & \textnormal{ if } q=p \end{array} \right. \] and we let $\mathcal P_{\crys}$ be the corresponding global deformation problem as in subsection \ref{globaldeformationsubsection}, and $R_{\rho}^{\crys}$ be the local complete Noetherian $E$-algebra representing the associated deformation functor $\Def_{\rho}^{\crys}$ obtained as in proposition \ref{globaldeformationrepresentable}.

We obtain the Selmer group by specializing equation \ref{LESfromcone} to this situation: \[ H^1_f \left( \Z \left[ \frac{1}{S} \right], \Lie \check {\mathrm G} \right) := H^1_{\mathcal P_{\crys}} \left( \Gal_{\Q, \Sigma}, \Lie \check {\mathrm G} \right) = \ker \left( H^1 \left( \Gal_{\Q, \Sigma}, \Lie \check {\mathrm G} \right) \lra \frac{ H^1 \left( \Qp, \Lie \check {\mathrm G} \right)} {H^1_f \left( \Qp, \Lie \check {\mathrm G} \right)} \right), \] and the dual Selmer group by specializing equation \ref{dualSelmerdef} is \begin{equation} \label{dualselmercrystalline} \begin{gathered} H^1_f \left( \Z \left[ \frac{1}{S} \right], \left( \Lie \check {\mathrm G} \right)^* (1) \right) := H^1_{\mathcal P_{\crys}^{\perp}} \left( \Gal_{\Q, \Sigma}, \left( \Lie \check {\mathrm G} \right)^* (1) \right) = \\ = \ker \left( H^1 \left( \Gal_{\Q, \Sigma}, \left( \Lie \check {\mathrm G} \right)^* (1) \right) \lra \bigoplus_{q \in S} H^1 \left( \Q_q, \left( \Lie \check {\mathrm G} \right)^* (1) \right) \oplus \frac{H^1 \left( \Qp, \left( \Lie \check {\mathrm G} \right)^* (1) \right)}{ H^1_f \left( \Qp, \left( \Lie \check {\mathrm G} \right)^* (1) \right) } \right). \end{gathered} \end{equation} corresponding to the local conditions \[ \mathfrak t_q^{\perp} = \left\{ \begin{array}{cc} 0 & \textnormal{ if } q \neq p \\ H^1_f \left( \Qp, \left( \Lie \check {\mathrm G} \right)^* (1) \right) & \textnormal{ if } q=p \end{array} \right. \]
The restriction map at $p$: $H^1_f \left( \Z \left[ \frac{1}{S} \right], \left( \Lie \check{\mathrm G} \right)^* (1) \right) \lra H^1_f \left( \Qp, \left( \Lie \check{\mathrm G} \right)^* (1) \right)$ induces a map between $E$-duals: \[ H^1_f \left( \Qp, \left( \Lie \check{\mathrm G} \right)^* (1) \right)^{\vee} \lra H^1_f \left( \Z \left[ \frac{1}{S} \right], \left( \Lie \check{\mathrm G} \right)^* (1) \right)^{\vee}. \]

Now the embedding $\Lie \check{\mathrm B} \hookrightarrow \Lie \check{\mathrm G}$ gives an injection of cohomology groups \[ \frac{H^1 \left( \Qp, \Lie \check{\mathrm B} \right)}{H^1_f \left( \Qp, \Lie \check{\mathrm B} \right)} \hookrightarrow \frac{H^1 \left( \Qp, \Lie \check{\mathrm G} \right)}{H^1_f \left( \Qp, \Lie \check{\mathrm G} \right)}. \]
Local Tate duality yields that the target of this map is the $E$-dual of the crystalline subgroup for the Tate twist of the dual (coadjoint) representation: \[ \frac{H^1 \left( \Qp, \Lie \check{\mathrm G} \right)} {H^1_f \left( \Qp, \Lie \check{\mathrm G} \right)} \cong H^1_f \left( \Qp, \left( \Lie \check{\mathrm G} \right)^* (1) \right)^{\vee}. \]

\begin{lem}\label{keylemma}
Under the assumptions of Conjecture \ref{Galoisrepexists1}  we get  a map \[ \Phi: \Hom \left( \mathrm T(\Zp)_p, E \right) \lra H^1_f \left( \Z \left[ \frac{1}{S} \right], \left( \Lie \check{\mathrm G} \right)^* (1) \right)^{\vee}\] obtained by composing the isomorphism  \[   \Hom \left( T(\Zp)_p, E \right)   \stackrel{\sim}{\lra} \frac{H^1 \left( \Qp, \Lie \check {\mathrm B} \right)}{H^1_f \left( \Qp, \Lie \check {\mathrm B} \right)} \]  of corollary \ref{ordivscryscor} with

 \[ \frac{H^1 \left( \Qp, \Lie \check{\mathrm B} \right)}{H^1_f \left( \Qp, \Lie \check{\mathrm B} \right)} \hookrightarrow  H^1_f \left( \Qp, \left( \Lie \check{\mathrm G} \right)^* (1) \right)^{\vee} \lra  H^1_f \left( \Z \left[ \frac{1}{S} \right], \left( \Lie \check{\mathrm G} \right)^* (1) \right)^{\vee}. \]

\end{lem}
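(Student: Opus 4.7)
The plan is to construct the map $\Phi$ as an explicit four-step composition and to verify that each step is a well-defined morphism of $E$-vector spaces; the statement contains no assertion of injectivity or surjectivity, so the task is really one of assembling known pieces.

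First, I would invoke Lemma \ref{useful} to check that the hypothesis $(\mathrm{REG}^*)$ holds for $\rho_p=\rho_\pi|_{G_p}$, since this is exactly the input needed to apply Proposition \ref{ordivscrys} and its Corollary \ref{ordivscryscor}. This gives the first map in the composition as an isomorphism:
\[
\Hom(\mathrm T(\Zp)_p, E) \stackrel{\sim}{\lra} \frac{H^1(\Qp, \Lie \check{\mathrm B})}{H^1_f(\Qp, \Lie \check{\mathrm B})}.
\]

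Second, I would note that the inclusion $\Lie \check{\mathrm B} \hookrightarrow \Lie \check{\mathrm G}$ is a morphism of $\Gal_{\Qp}$-representations on $E$-vector spaces, hence by functoriality of Galois cohomology induces a map $H^1(\Qp, \Lie \check{\mathrm B}) \to H^1(\Qp, \Lie \check{\mathrm G})$. Since the sub-representation $\Lie \check{\mathrm B}$ inherits the crystalline property from $\Lie \check{\mathrm G}$ and functoriality of the Bloch--Kato subgroup $H^1_f$ under morphisms of crystalline representations sends $H^1_f(\Qp, \Lie \check{\mathrm B})$ into $H^1_f(\Qp, \Lie \check{\mathrm G})$, passing to quotients yields a well-defined $E$-linear map
\[
\frac{H^1(\Qp, \Lie \check{\mathrm B})}{H^1_f(\Qp, \Lie \check{\mathrm B})} \lra \frac{H^1(\Qp, \Lie \check{\mathrm G})}{H^1_f(\Qp, \Lie \check{\mathrm G})}.
\]

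Third, I would apply local Tate duality at the prime $p$ for the self-dual-up-to-Tate-twist pair $(\Lie \check{\mathrm G}, (\Lie \check{\mathrm G})^*(1))$. The standard fact that the Bloch--Kato subgroups are exact annihilators of each other under the local Tate pairing yields the canonical isomorphism
\[
\frac{H^1(\Qp, \Lie \check{\mathrm G})}{H^1_f(\Qp, \Lie \check{\mathrm G})} \cong H^1_f\bigl(\Qp, (\Lie \check{\mathrm G})^*(1)\bigr)^\vee.
\]
Finally, from the definition of the global dual Selmer group in equation \eqref{dualselmercrystalline} one has the natural restriction-at-$p$ map
\[
H^1_f\!\left(\Z\!\left[\tfrac{1}{S}\right], (\Lie \check{\mathrm G})^*(1)\right) \lra H^1_f\bigl(\Qp, (\Lie \check{\mathrm G})^*(1)\bigr),
\]
and its $E$-linear dual supplies the last arrow. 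Composing the four maps produces $\Phi$ as claimed. The only subtle point, and the one I would expect to be the main obstacle in carrying this out carefully, is the verification in step two that the inclusion of $\Lie \check{\mathrm B}$ into $\Lie \check{\mathrm G}$ interacts correctly with the crystalline local conditions; for this one appeals to the functoriality of $H^1_f$ established by Bloch--Kato together with the fact that both representations are crystalline at $p$ thanks to the ordinarity and crystallinity of $\rho_p$ guaranteed by Conjecture \ref{Galoisrepexists1}.
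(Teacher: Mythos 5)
Your proposal is correct and follows essentially the same route as the paper, whose proof of Lemma \ref{keylemma} simply assembles the preceding discussion (the inclusion $\Lie \check{\mathrm B} \hookrightarrow \Lie \check{\mathrm G}$ on the quotients $H^1/H^1_f$, local Tate duality, and the dual of restriction at $p$) with Corollary \ref{ordivscryscor}, exactly as you do. The only cosmetic difference is that the paper asserts the middle arrow is injective (which follows from (REG) via $H^0(\Qp, \Lie\check{\mathrm G}/\Lie\check{\mathrm B})=0$), whereas you only verify well-definedness, which is all the existence of $\Phi$ requires.
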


\begin{proof}
This follows from the above discussion and Corollary \ref{ordivscryscor}.
\end{proof}
The map $\psi$ in equation \ref{Hidaalgtodeformationring} gives rise to a map of tangent spaces in the opposite direction: \[ \psi^{\vee}: \mathfrak t_{R_{\rho}^{\ordi}} = \Hom_* \left(R_{\rho}^{\ordi}, E[\varepsilon] / \varepsilon^2 \right) \lra \Hom_* \left(\Lambda_E, E[\varepsilon] / \varepsilon^2 \right) = \mathfrak t_{\Lambda_E}. \]

The tangent space to the deformation ring is \[ \mathfrak t_{R_{\rho}^{\ordi}} = H^1_{\ordi} \left( \Z \left[ \frac{1}{S} \right], \Lie \check{\mathrm G} \right) = \ker \left( H^1 \left( \Gal_{\Q, \Sigma}, \Lie \check {\mathrm G} \right) \lra \frac{ H^1 \left( \Qp, \Lie \check {\mathrm G} \right)} {H^1 \left( \Qp, \Lie \check {\mathrm B} \right)} \right) \] 
and on the other hand we have \[ \mathfrak t_{\Lambda_E} = \Hom( I/I^2, E) =\Hom_* \left(\Lambda_E, E[\varepsilon] / \varepsilon^2 \right) \cong \Hom \left( \mathrm T(\Zp)_p, E \right) \cong \frac{H^1 \left( \Qp, \Lie \check {\mathrm B} \right)} {H^1_f \left( \Qp, \Lie \check {\mathrm B} \right)} \] where the last isomorphism follows from corollary \ref{ordivscryscor}.

The construction of $\left( \widetilde \rho_p \right)^{\ab}$ and therefore of $\psi$ shows then that under the identifications above, $\psi^{\vee}$ is the local restriction map in Galois cohomology: \[ \psi^{\vee}: \mathfrak t_{R_{\rho}^{\ordi}} = H^1_{\ordi} \left( \Z \left[ \frac{1}{S} \right], \Lie \check{\mathrm G} \right) \lra \frac{H^1 \left( \Qp, \Lie \check {\mathrm B} \right)} {H^1_f \left( \Qp, \Lie \check {\mathrm B} \right)} = \mathfrak t_{\Lambda_E}. \]

\subsection{Dimension formulas}

Let us compute the dimension of $H^1_f \left( \Z \left[ \frac{1}{S} \right], \left( \Lie \check {\mathrm G} \right)^* (1) \right)$. By proposition \ref{deformationringsdimensions}, we have \[ \begin{gathered} \dim_E H^1_f \left( \Z \left[ \frac{1}{S} \right], \Lie \check {\mathrm G} \right) - \dim_E H^1_f \left( \Z \left[ \frac{1}{S} \right], \left( \Lie \check {\mathrm G} \right)^* (1) \right) = \\ = \dim_E H^0 \left( \Q, \Lie \check {\mathrm G} \right) - \dim_E H^0 \left( \Q, \left( \Lie \check {\mathrm G} \right)^* (1) \right) + \sum_{q \in S \cup \{ p, \infty \} } \left( \dim_E \mathfrak t_q^{\mathcal P_f} - \dim_E H^0 \left( \Q_q, \Lie \check {\mathrm G} \right) \right). = \\ = \left( \dim_E H^1_f \left( \Qp, \Lie \check {\mathrm G} \right) - \dim_E H^0 \left( \Qp, \Lie \check {\mathrm G}  \right)  \right) + \left( - \dim_E \Lie \check {\mathrm U} - l_0 \right),  \end{gathered} \] since the only difference with the $\mathcal P_{\ordi}$-condition used to compute the dimension of $\mathcal R_{\rho}^{\ordi}$ is at the prime $p$.

The dimension formula for $p$-adic representations (see for example \cite{bellaiche}, section 2.2) yields that \[ \dim_E H^1_f \left( \Qp, \Lie \check {\mathrm G} \right) - \dim_E H^0 \left( \Qp, \Lie \check {\mathrm G}  \right) = \# \textnormal{negative Hodge-Tate weights of } \Lie \check {\mathrm G}. \]
Recall that our ordinary assumption says that the image of $\rho_p$ is contained on $\check {\mathrm B}(E)$, and thus $\Lie \check {\mathrm U}$ and $\Lie \check {\mathrm B}$ are subrepresentations of $\Lie \check {\mathrm G}$.
The hypothesis of proposition \ref{ordivscrys} - which is a standing assumption since it is necessary to construct the map $\psi$ - says that the Hodge-Tate weights on $\Lie \check {\mathrm U}$ are all negative.

On the other hand, the quotient $\frac{\Lie \check {\mathrm B}}{\Lie \check {\mathrm U}}$ is isomorphic to $\Lie \check {\mathrm T}$, with the adjoint action of $\check {\mathrm B}(E)$ factoring through $\check {\mathrm B}(E) / \check {\mathrm U}(E) \cong \check {\mathrm T}(E)$.
In particular, this adjoint action is trivial and hence the Hodge-Tate weights on $\frac{\Lie \check {\mathrm B}}{\Lie \check {\mathrm U}}$ are $0$.

Finally, since the adjoint action is algebraic, the Hodge-Tate weights on $\frac{\Lie \check {\mathrm G}}{\Lie \check {\mathrm B}} \cong \Lie \check {\mathrm U}^-$ are the opposite of those on $\Lie \check {\mathrm U}$, in particular are all positive.

We conclude that \[ \# \textnormal{negative Hodge-Tate weights of } \Lie \check {\mathrm G} = \dim_E \Lie \check {\mathrm U}, \] and therefore \[ \dim_E H^1_f \left( \Z \left[ \frac{1}{S} \right], \Lie \check {\mathrm G} \right) - \dim_E H^1_f \left( \Z \left[ \frac{1}{S} \right], \left( \Lie \check {\mathrm G} \right)^* (1) \right) = - l_0 \] so that \[ \dim_E H^1_f \left( \Z \left[ \frac{1}{S} \right], \left( \Lie \check {\mathrm G} \right)^* (1) \right) = l_0 + \dim_E H^1_f \left( \Z \left[ \frac{1}{S} \right], \Lie \check {\mathrm G} \right). \]

We also record that \[ \begin{gathered} \dim_E \coker \left( \psi^{\vee} \right) = \dim_E \mathfrak t_{\Lambda_E} - \dim_E \im \left( \psi^{\vee} \right) = \\ =  \dim_E \mathfrak t_{\Lambda_E} - \left( \dim_E H^1_{\ordi} \left( \Z \left[ \frac{1}{S} \right], \Lie \check{\mathrm G} \right) - \dim_E \ker \left( \psi^{\vee} \right) \right) \end{gathered} \]

\subsection{Poitou-Tate duality and  Selmer groups}

The following key proposition follows from Proposition 3.10 of \cite{HT} specialized to the ordinary case.

\begin{prop}\label{PT}
 We have an exact sequence
 \[ 0 \lra  H^1_{\ordi^\perp} \left( \Z \left[ \frac{1}{S} \right], \left( \Lie \check{\mathrm G} \right)^* (1) \right)  
 \lra  H^1_f \left( \Z \left[ \frac{1}{S} \right], \left( \Lie \check{\mathrm G} \right)^* (1) \right) 
 \lra \left( \frac{H^1 \left( \Qp, \Lie \check {\mathrm B} \right)} {H^1_f \left( \Qp, \Lie \check {\mathrm B} \right) } \right)^\vee \]
 \[ \lra H^1_{\ordi} \left( \Z \left[ \frac{1}{S} \right], \Lie \check{\mathrm G} \right)^\vee  \lra    H^1_f \left( \Z \left[ \frac{1}{S} \right], \Lie \check {\mathrm G} \right)^\vee  \lra 0 \]
\end{prop}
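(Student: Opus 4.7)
The plan is to obtain the sequence from the standard Poitou--Tate/Greenberg--Wiles comparison formalism applied to the two global Selmer structures $\mathcal P_f$ and $\mathcal P_{\ordi}$, together with a careful local identification at $p$. I would first observe that the two Selmer problems agree at every place away from $p$: at each $q \in S\setminus\{p\}$ both use the unrestricted local tangent space $\mathfrak t_q = H^1(\Q_q, \Lie \check{\mathrm G})$, and at $\infty$ nothing happens. They differ only at $p$, where $\mathfrak t_p^f = H^1_f(\Qp, \Lie \check{\mathrm G})$ while $\mathfrak t_p^{\ordi} = H^1(\Qp, \Lie \check{\mathrm B})$, viewed as a subspace of $H^1(\Qp, \Lie \check{\mathrm G})$ via the injection guaranteed by (REG) (cf.\ the proof of Proposition \ref{ordivscrys}).

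The key local step is to establish the containment $\mathfrak t_p^f \subseteq \mathfrak t_p^{\ordi}$ inside $H^1(\Qp, \Lie \check{\mathrm G})$ and to identify the quotient. For this, I would show that the natural map
\[ H^1_f(\Qp, \Lie \check{\mathrm B}) \lra H^1_f(\Qp, \Lie \check{\mathrm G}) \]
induced by $\Lie \check{\mathrm B} \hookrightarrow \Lie \check{\mathrm G}$ is an isomorphism. Injectivity follows from (REG). Equality of dimensions follows from the standard formula $\dim_E H^1_f(\Qp,V) = \dim_E H^0(\Qp,V) + \#\{\textnormal{negative Hodge--Tate weights of } V\}$: the negative Hodge--Tate weights on $\Lie \check{\mathrm B}$ are exactly those on $\Lie \check{\mathrm U}$ (since $\Lie \check{\mathrm T}$ has weight zero), and these coincide with the negative Hodge--Tate weights on $\Lie \check{\mathrm G}$ (since $\Lie \check{\mathrm G}/\Lie \check{\mathrm B} \cong \Lie \check{\mathrm U}^-$ carries only positive weights, as established in the proof of Proposition \ref{ordivscrys}), while (REG) gives $H^0(\Qp,\Lie \check{\mathrm B}) = H^0(\Qp, \Lie \check{\mathrm G})$. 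Consequently $\mathfrak t_p^f \subseteq \mathfrak t_p^{\ordi}$, and
\[ \mathfrak t_p^{\ordi}/\mathfrak t_p^f \;=\; H^1(\Qp, \Lie \check{\mathrm B})\big/H^1_f(\Qp, \Lie \check{\mathrm B}). \]

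Finally, I would invoke the Greenberg--Wiles/Poitou--Tate comparison sequence for the inclusion of Selmer structures $\mathcal P_f \subseteq \mathcal P_{\ordi}$ (this is the content of Proposition 3.10 of \cite{HT} specialized to our situation):
\[ 0 \lra H^1_f(\Z[\tfrac{1}{S}], \Lie \check{\mathrm G}) \lra H^1_{\ordi}(\Z[\tfrac{1}{S}], \Lie \check{\mathrm G}) \lra \bigoplus_v \mathfrak t_v^{\ordi}/\mathfrak t_v^f \lra H^1_{\ordi^\perp}(\Z[\tfrac{1}{S}], (\Lie\check{\mathrm G})^*(1))^\vee \lra H^1_f(\Z[\tfrac{1}{S}], (\Lie\check{\mathrm G})^*(1))^\vee \lra 0. \]
Since the local quotients vanish away from $p$, this reduces to a five-term sequence whose middle term is $H^1(\Qp,\Lie \check{\mathrm B})/H^1_f(\Qp, \Lie \check{\mathrm B})$ by the previous step. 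Dualizing the resulting sequence of finite-dimensional $E$-vector spaces yields exactly the statement of the proposition.

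The main technical obstacle is the local isomorphism $H^1_f(\Qp, \Lie \check{\mathrm B}) \cong H^1_f(\Qp, \Lie \check{\mathrm G})$, for which both (REG) and the precise Hodge--Tate weight analysis (implicit in the assumptions of Conjecture \ref{Galoisrepexists1}) are essential; once this is in place the rest is Poitou--Tate formalism.
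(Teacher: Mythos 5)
Your strategy is the same as the paper's: the paper's entire proof is a one-line citation of the Poitou--Tate comparison (Proposition 3.10 of \cite{HT}) specialized to the ordinary condition, and your write-up supplies exactly the local input at $p$ that this specialization requires. In particular, your verification that $H^1_f\left(\Qp,\Lie\check{\mathrm B}\right)\to H^1_f\left(\Qp,\Lie\check{\mathrm G}\right)$ is an isomorphism — injectivity from (REG), equality of dimensions from $H^0\left(\Qp,\Lie\check{\mathrm B}\right)=H^0\left(\Qp,\Lie\check{\mathrm G}\right)$ together with the count of negative Hodge--Tate weights — is correct and is precisely what shows the crystalline local condition sits inside the ordinary one with quotient $H^1\left(\Qp,\Lie\check{\mathrm B}\right)/H^1_f\left(\Qp,\Lie\check{\mathrm B}\right)$.

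There is, however, one concrete slip: the five-term Greenberg--Wiles comparison sequence for nested Selmer structures $\mathcal P_f\subseteq\mathcal P_{\ordi}$ ends with
\[
\cdots\lra \mathfrak t_p^{\ordi}/\mathfrak t_p^{f}\lra H^1_{f}\left(\Z\left[\tfrac{1}{S}\right],\left(\Lie\check{\mathrm G}\right)^*(1)\right)^{\vee}\lra H^1_{\ordi^\perp}\left(\Z\left[\tfrac{1}{S}\right],\left(\Lie\check{\mathrm G}\right)^*(1)\right)^{\vee}\lra 0,
\]
i.e.\ the dual Selmer group for the \emph{smaller} structure $\mathcal P_f$ appears first, and the last arrow is the dual of the inclusion $H^1_{\ordi^\perp}\subseteq H^1_{f}$ of dual Selmer groups. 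You have written these last two terms in the opposite order. As written your sequence cannot be exact in general (its alternating sum of dimensions is off by $2\left(\dim H^1_f-\dim H^1_{\ordi^\perp}\right)$), and dualizing it would produce a sequence beginning $0\to H^1_f\to H^1_{\ordi^\perp}$ rather than the proposition's $0\to H^1_{\ordi^\perp}\to H^1_f$. With the two terms transposed, dualizing does give the statement verbatim (equivalently, one can apply the same comparison directly to $\left(\mathcal P_{\ordi}\right)^\perp\subseteq\left(\mathcal P_f\right)^\perp$ on $\left(\Lie\check{\mathrm G}\right)^*(1)$ and avoid dualizing altogether). This is a bookkeeping error, not a gap in the method; once corrected the proof is complete.
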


\begin{proof}
  This is an application of  the Poitou-Tate exact sequence as justified in  Proposition 3.10 of \cite{HT}.
\end{proof}

\subsection{A duality pairing}

\begin{thm} \label{cokerneldualSelmer}
Global duality in Galois cohomology gives a natural pairing \[ H^1_f \left( \Z \left[ \frac{1}{S} \right], \left( \Lie \check {\mathrm G} \right)^* (1) \right) \times \coker \left( \psi^{\vee} \right) \lra E \]

This induces a perfect pairing   \[ \im (\psi^{PT}) \times \coker \left( \psi^{\vee} \right)  \lra E \] where $\psi^{PT}$ is the map 

\[H^1_f \left( \Z \left[ \frac{1}{S} \right], \left( \Lie \check{\mathrm G} \right)^* (1) \right)  \lra \left( \frac{H^1 \left( \Qp, \Lie \check {\mathrm B} \right)} {H^1_f \left( \Qp, \Lie \check {\mathrm B} \right) } \right)^\vee \] from   Proposition \ref{PT}.

Moreover, suppose that $R_{\rho}^{\crys} \cong E$. Then $\psi^{\vee}$ is injective, and   if further $R^{\ordi}_\rho$ is smooth of dimension $r - l_0$, the pairing induces  an isomorphism \[ \coker \left( \psi^{\vee} \right) \cong H^1_f \left( \Z \left[ \frac{1}{S} \right], \left( \Lie \check {\mathrm G} \right)^* (1) \right)^{\vee}. \]
\end{thm}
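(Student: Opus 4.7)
My plan is to deduce everything from the Poitou--Tate sequence of Proposition~\ref{PT}. Write $M := H^1(\Qp, \Lie \check{\mathrm B}) / H^1_f(\Qp, \Lie \check{\mathrm B})$. The key observation is that, by construction of $\psi$ and by local Tate duality, the third arrow in Proposition~\ref{PT} is precisely the $E$-dual of the restriction map $\psi^{\vee}: H^1_{\ordi}(\Z[1/S], \Lie \check{\mathrm G}) \to M$, so that the middle segment reads
\[ H^1_f(\Z[1/S], (\Lie \check{\mathrm G})^{*}(1)) \xrightarrow{\psi^{PT}} M^{\vee} \xrightarrow{(\psi^{\vee})^{\vee}} H^1_{\ordi}(\Z[1/S], \Lie \check{\mathrm G})^{\vee}, \]
and exactness yields $\im(\psi^{PT}) = \ker((\psi^{\vee})^{\vee})$. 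Since all vector spaces in sight are finite-dimensional over $E$, dualizing the defining right-exact sequence for $\coker(\psi^{\vee})$ gives a canonical identification $\coker(\psi^{\vee})^{\vee} \cong \ker((\psi^{\vee})^{\vee})$. Combining these yields a canonical isomorphism $\coker(\psi^{\vee})^{\vee} \cong \im(\psi^{PT})$, which is exactly the perfect pairing claimed in the theorem.

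The global pairing $H^1_f(\Z[1/S], (\Lie \check{\mathrm G})^{*}(1)) \times \coker(\psi^{\vee}) \to E$ is then defined by composition: $x$ is sent via $\psi^{PT}$ to $M^{\vee}$, and by the exactness above its image annihilates $\im(\psi^{\vee})$, hence descends to a functional on $\coker(\psi^{\vee})$. By construction this global pairing factors through $\psi^{PT}$ followed by the perfect pairing of the previous paragraph, which is why the surjection $H^1_f \twoheadrightarrow \im(\psi^{PT})$ is the only obstruction to it being a duality.

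For the last statements assume first that $R_{\rho}^{\crys} \cong E$. Since $H^1_f(\Z[1/S], \Lie \check{\mathrm G})$ is the tangent space of $R_{\rho}^{\crys}$, it vanishes; the right end of the Poitou--Tate sequence then forces $(\psi^{\vee})^{\vee}$ to be surjective, equivalently $\psi^{\vee}$ to be injective. For the desired isomorphism $\coker(\psi^{\vee}) \cong H^1_f(\Z[1/S], (\Lie \check{\mathrm G})^{*}(1))^{\vee}$, one must upgrade the surjection $H^1_f \twoheadrightarrow \im(\psi^{PT})$ to an isomorphism, i.e. show that $\psi^{PT}$ is injective. By exactness at $H^1_f$ in Proposition~\ref{PT}, this is equivalent to
\[ H^1_{\ordi^{\perp}}(\Z[1/S], (\Lie \check{\mathrm G})^{*}(1)) = 0. \]

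To obtain this vanishing I would run a Wiles-type Euler characteristic count via Proposition~\ref{deformationringsdimensions}(2) applied to the ordinary deformation problem. The inputs are the formula $\dim \mathfrak t_p^{\ordi} = \dim \Lie \check{\mathrm B} + \dim H^0(\Qp, \Lie \check{\mathrm G})$ from Proposition~\ref{dimensionlocalordinaryring}; the vanishing $H^1(\R, \Lie \check{\mathrm G}) = 0$ (valid since $p$ is odd); the oddness equality $\dim H^0(\R, \Lie \check{\mathrm G}) = \dim \Lie \check{\mathrm U} + l_0$; genericity of $\rho$ at each $q \in S \setminus \{p,\infty\}$ killing the local $H^2$ contribution; and absolute irreducibility of $\rho$ to eliminate the global $H^0$ terms. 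A direct computation then collapses to
\[ \dim H^1_{\ordi}(\Z[1/S], \Lie \check{\mathrm G}) - \dim H^1_{\ordi^{\perp}}(\Z[1/S], (\Lie \check{\mathrm G})^{*}(1)) = \dim \Lie \check{\mathrm T} - l_0 = r - l_0. \]
The smoothness of $R_{\rho}^{\ordi}$ at dimension $r - l_0$ says precisely that the left-hand tangent space has $E$-dimension $r - l_0$, forcing $H^1_{\ordi^{\perp}} = 0$. The main obstacle here is purely the bookkeeping of local and global $H^0$ contributions to the Euler characteristic, and making sure that the absolute irreducibility and oddness hypotheses really do kill every term that is not $r - l_0$; this is essentially standard in Wiles-type setups and should not present any deeper difficulty.
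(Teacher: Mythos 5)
Your proposal is correct and follows essentially the same route as the paper: both the perfect pairing and the final isomorphism are extracted from the exactness of the Poitou--Tate sequence of Proposition \ref{PT}, the only difference being that the paper writes the pairing explicitly as the local Tate pairing at $p$ and checks well-definedness by hand, where you derive it formally from $\coker\left(\psi^{\vee}\right)^{\vee}\cong\ker\left((\psi^{\vee})^{\vee}\right)=\im\left(\psi^{PT}\right)$. The Greenberg--Wiles count you sketch to deduce $H^1_{\ordi^{\perp}}\left(\Z\left[\frac{1}{S}\right],\left(\Lie\check{\mathrm G}\right)^{*}(1)\right)=0$ from smoothness of $R_{\rho}^{\ordi}$ in dimension $r-l_0$ is exactly the computation the paper carries out in the discussion surrounding Conjecture \ref{smoothnessconj}, so that step is sound as well.
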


\begin{proof} We will describe a perfect pairing \[ \langle \cdot , \cdot \rangle: H^1_f \left( \Z \left[ \frac{1}{S} \right], \left( \Lie \check {\mathrm G} \right)^* (1) \right) \times \coker \left( \psi^{\vee} \right) \lra E. \]
Indeed, an element of $\coker \left( \psi^{\vee} \right)$ is a class $\beta_p \in \frac{H^1 \left( \Qp, \Lie \check {\mathrm B} \right)} {H^1_f \left( \Qp, \Lie \check {\mathrm B} \right)}$ modulo the image of $\psi^{\vee}$, which are the global classes.
We take $\alpha \in H^1_f \left( \Z \left[ \frac{1}{S} \right], \left( \Lie \check {\mathrm G} \right)^* (1) \right)$ and we pair \[ \langle \alpha, [\beta_p] \rangle = \left( \alpha_p, \beta_p \right)_p \] where $\left( \cdot , \cdot \right)_p$ is the local Tate duality pairing at $p$.

Global duality ensures that the pairing does not change if we replace $\beta_p$ by $\beta_p + \psi^{\vee} \left( \widetilde \beta \right)$ for $\widetilde \beta \in H^1_{\ordi} \left( \Z \left[ \frac{1}{S} \right], \Lie \check {\mathrm G} \right)$.

For the pairing to be well-defined, we also need to show $\alpha_p \in \mathrm{Ann} \left( H^1_f \left( \Qp, \Lie \check {\mathrm B} \right) \right)$, the annihilator of $H^1_f \left( \Qp, \Lie \check {\mathrm B} \right)$ under local Tate duality.
By equation \ref{dualselmercrystalline}, we have that $\alpha_p \in H^1_f \left( \Qp, \left( \Lie \check {\mathrm G} \right)^* (1) \right)$, and on the other hand the injection $\Lie \check {\mathrm B} \hookrightarrow \Lie \check {\mathrm G}$ yields an injection in cohomology $H^1_f \left( \Qp, \Lie \check {\mathrm B} \right) \hookrightarrow H^1_f \left( \Qp, \Lie \check {\mathrm G} \right)$, so that $\alpha_p$ is indeed in the annihilator of $H^1_f \left( \Qp, \Lie \check {\mathrm B} \right)$.

To prove the first part of the theorem, it remains to check that $\coker \left( \psi^{\vee} \right)$ and $H^1_f \left( \Z \left[ \frac{1}{S} \right], \left( \Lie \check {\mathrm G} \right)^* (1) \right)$ have the same dimension. This follows from the Poitou-Tate exact sequence in Proposition \ref{PT}.

Let us now assume that $R_{\rho}^{\crys} \cong E$ - the tangent space $H^1_f \left( \Z \left[ \frac{1}{S} \right], \Lie \check {\mathrm G} \right)$ is therefore zero and so $\ker \left( \psi^{\vee} \right) = 0$.

If we assume further that $R^{\ordi}_{\rho_\pi}$ is smooth of dimension $r-l_0$, then by Proposition \ref{PT} we get that
\[ H^1_{\ordi^\perp} \left( \Z \left[ \frac{1}{S} \right], \left( \Lie \check{\mathrm G} \right)^* (1) \right)=0,\] the map  $\psi^{PT}$ between  \[  H^1_f \left( \Z \left[ \frac{1}{S} \right], \left( \Lie \check{\mathrm G} \right)^* (1) \right)  \lra \left( \frac{H^1 \left( \Qp, \Lie \check {\mathrm B} \right)} {H^1_f \left( \Qp, \Lie \check {\mathrm B} \right) } \right)^\vee \] is injective and we deduce that  the  natural pairing \[ \coker \left( \psi^{\vee} \right) \times H^1_f \left( \Z \left[ \frac{1}{S} \right], \left( \Lie \check {\mathrm G} \right)^* (1) \right) \lra E \] is a perfect pairing between vector spaces of dimension $l_0$.

\end{proof}

\section{Derived Hecke action and dual Selmer groups} \label{dualSelmersection}

We prove in this section Theorem \ref{dimensionality}  that shows  the action of the degree $1$ derived Hecke operators $H^1 \left( \mathrm T(\Zp)_p, E \right)$ on the arithmetic cohomology $H^* \left( Y_{1,1}, E \right)_{\mathfrak m}$ factors through the dual of  a  dual Selmer group assuming the dimension Conjecture  \ref{dimensionconjectureforring} below.  We then  prove Theorem \ref{main}  that shows under  Conjecture \ref{smoothnessconj}  that the derived Hecke action at $p$ generates 
$H^* \left( Y_{1,1}, E \right)_{\mathfrak m}$  over the bottom degree.

For the proof of both theorems the key ingredients are  our work in  \S  \ref{generationofcohomologysection}, the map constructed in Lemma \ref{keylemma}  from $H^1 \left( \mathrm T(\Zp)_p, E \right)$  to the dual of the dual  dual Selmer group, and the duality result formulated in  Proposition \ref{PT}.

\subsection{ An isomorphism of deformation rings and Hecke rings}

We assume the following hypothesis  which is known under some assumptions (see \cite{KT} and \cite{CGH}).

\begin{hypothesis}\label{R=T}
 We have an isomorphism of $\Lambda_E$-algebras $R_\rho^\ordi = \mathbb T^S_\ordi (K(1,1),E)_{\mathfrak m}$.
\end{hypothesis}

\begin{lem}\label{surjectivity}
 Assume Hypothesis \ref{R=T}. Then 
 the map  $\mathfrak m_{\Lambda_E}/\mathfrak m_{\Lambda_E}^2 \lra \mathfrak m_{R_\rho^\ordi}/\mathfrak m_{R_\rho^\ordi}^2$ is surjective,  $R_\rho^\crys=E$, and
  $H^1_f \left( \Z \left[ \frac{1}{S} \right], \Lie \check {\mathrm G} \right)=0$.
\end{lem}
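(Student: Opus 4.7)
The plan is to first show that the three conclusions are equivalent, and then deduce a single one of them from Hypothesis \ref{R=T}. The equivalence $R_\rho^\crys = E \Leftrightarrow H^1_f(\Z[1/S], \Lie \check{\mathrm G}) = 0$ follows immediately from Nakayama applied to the complete local Noetherian $E$-algebra $R_\rho^\crys$ (with residue field $E$), since $H^1_f(\Z[1/S], \Lie \check{\mathrm G})$ is by construction its tangent space. For the equivalence with the cotangent surjectivity, I would identify $\ker \psi^\vee$ with $H^1_f(\Z[1/S], \Lie \check{\mathrm G})$: an element $\alpha \in H^1_\ordi$ satisfies $\psi^\vee(\alpha) = 0$ iff $\alpha_p \in H^1_f(\Qp, \Lie \check{\mathrm B})$, and a short diagram chase using the exact sequence $0 \to \Lie \check{\mathrm B} \to \Lie \check{\mathrm G} \to \Lie \check{\mathrm G}/\Lie \check{\mathrm B} \to 0$, together with (REG*) and crystallinity of $\rho_p$, shows that this is equivalent to $\alpha_p \in H^1_f(\Qp, \Lie \check{\mathrm G})$. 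Since cotangent surjectivity is equivalent to injectivity of $\psi^\vee$, all three conclusions coincide.

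It therefore suffices to prove that $A := R_\rho^\ordi \otimes_{\Lambda_E} E = R_\rho^\ordi/\mathfrak m_{\Lambda_E} R_\rho^\ordi$ equals $E$: by Nakayama applied to the finite $\Lambda_E$-module $R_\rho^\ordi$, this is exactly the cotangent surjectivity statement. Under Hypothesis \ref{R=T}, $R_\rho^\ordi$ equals the ordinary Hecke algebra $\mathbb T^S_\ordi(K(1,1), E)_\mathfrak m = \mathbb T^S(F_\mathfrak m^\bullet)$, which by construction acts faithfully on the minimal complex $F_\mathfrak m^\bullet$ of finite projective $\Lambda_E$-modules produced by Proposition \ref{complexordinarytower}. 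Specializing at the augmentation of $\Lambda_E$ and invoking Hida's control theorem (inherent in the construction of $F_\mathfrak m^\bullet$ and in the specialization identification $H^*(\Hom_{\Lambda_E}(F_\mathfrak m^\bullet, E)) \cong H^*(Y_{1,1}, E)_\mathfrak m$ from equation \ref{ordinarycohomologycomplex}, following \cite{KT}) would identify $A$ with the classical Hecke algebra $\mathbb T^S(H^*(Y_{1,1}, E)_\mathfrak m)$ acting on weight-zero cohomology. By Theorem \ref{autrepincohomology}, this action is semisimple with the single Hecke eigencharacter coming from $\pi$, so $\mathbb T^S(H^*(Y_{1,1}, E)_\mathfrak m) = E$. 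Combining, $A = E$.

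The main obstacle is establishing the Hida-control identification $A \cong \mathbb T^S(H^*(Y_{1,1}, E)_\mathfrak m)$: a priori one obtains only a natural surjection from $A$ onto the classical Hecke algebra, and upgrading this to an isomorphism requires exploiting the minimality and projectivity of the terms of $F_\mathfrak m^\bullet$ to verify that the base-change map of derived endomorphism rings commutes with specialization at the augmentation of $\Lambda_E$. Once this technical point is settled, the remainder of the argument is formal, and the three equivalent conclusions of the lemma follow simultaneously.
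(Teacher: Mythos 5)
Your proposal is correct and follows essentially the same route as the paper: both reduce everything to showing $R_\rho^{\ordi}/\mathfrak m_{\Lambda_E}R_\rho^{\ordi}=E$ via Hypothesis \ref{R=T}, the control theorem of Proposition \ref{complexordinarytower}, and the multiplicity-one statement of Theorem \ref{autrepincohomology}, then apply Nakayama to get surjectivity of $\Lambda_E\to R_\rho^{\ordi}$ and read off the Galois-theoretic consequences from the identification of $\ker\psi^{\vee}$ with the crystalline Selmer group. You are merely more explicit than the paper about the equivalence of the three conclusions and about the base-change step in the control-theorem identification, which the paper handles by citation.
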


\begin{proof}
  We use  control of ordinary cohomology  in  Proposition  \ref{complexordinarytower} and multiplicity one theorems, as  in the   proof of Theorem \ref{complexes},  which gives  that $T^S_\ordi (K(1,1),E)_{\mathfrak m}/\mathfrak m_{\Lambda_E}=E$.  By Nakayama's lemma  this implies 
  that the map $\Lambda_E \lra \mathbb T^S_\ordi (K(1,1),E)_{\mathfrak m}$ is surjective, and thus by  Hypothesis \ref{R=T} we deduce 
 the map  $\mathfrak m_{\Lambda_E}/\mathfrak m_{\Lambda_E}^2 \lra \mathfrak m_{R_\rho^\ordi}/\mathfrak m_{R_\rho^\ordi}^2$ is surjective,  $R_\rho^\crys=E$, and
  $H^1_f \left( \Z \left[ \frac{1}{S} \right], \Lie \check {\mathrm G} \right)=0$.
\end{proof}

We also record the following conjecture, which is closely related to conjecture \ref{dimensionconj} but is phrased in terms of a Galois deformation ring, rather than the complex interpolating ordinary cohomology of arithmetic groups.
\begin{conj}[Dimension conjecture for ordinary deformation ring] \label{dimensionconjectureforring}
The ring $R_{\rho}^{\ordi}$ has dimension $\dim \Lambda_E - l_0 = r- l_0$.
\end{conj}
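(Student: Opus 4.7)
The plan is to compute $\dim R_{\rho}^{\ordi}$ using the global Euler-characteristic formula of Proposition \ref{deformationringsdimensions}(2) applied to the deformation problem $\mathcal P_\ordi$. The local contributions assemble as follows: at $p$, Proposition \ref{dimensionlocalordinaryring} gives $\dim \mathfrak t_p^{\ordi} - \dim H^0(\Qp, \Lie \check{\mathrm G}) = \dim \Lie \check{\mathrm B}$; at each finite $q \in S$, genericity ($H^2(\Q_q, \Lie \check{\mathrm G}) = 0$) combined with the local Euler-characteristic formula forces a zero contribution; at infinity, the oddness hypothesis yields $\dim H^0(\R, \Lie \check{\mathrm G}) = \dim \Lie \check{\mathrm U} + l_0$, while $H^1(\R, \Lie \check{\mathrm G}) = 0$ since $p$ is odd, for a contribution of $-(\dim \Lie \check{\mathrm U} + l_0)$. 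The absolute irreducibility in Conjecture \ref{Galoisrepexists1}(5) kills the global invariants $H^0(\Q, \Lie \check{\mathrm G})$ and $H^0(\Q, (\Lie \check{\mathrm G})^*(1))$. Summing up yields
\[ \dim H^1_{\ordi}\left( \Z[1/S], \Lie \check{\mathrm G} \right) - \dim H^1_{\ordi^{\perp}}\left( \Z[1/S], (\Lie \check{\mathrm G})^*(1) \right) = \dim \Lie \check{\mathrm B} - \dim \Lie \check{\mathrm U} - l_0 = r - l_0, \]
consistent with the calculation already performed in the paper for the crystalline problem, corrected by the $r$-dimensional shift coming from Corollary \ref{ordivscryscor}.

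With this Euler-characteristic identity in hand, the conjecture follows at once from Proposition \ref{deformationringsdimensions}(3) provided one can establish the vanishing of the dual Selmer group $H^1_{\ordi^{\perp}}(\Z[1/S], (\Lie \check{\mathrm G})^*(1))$; in that case $R_{\rho}^{\ordi}$ is formally smooth over $E$ of dimension $\dim H^1_{\ordi} = r - l_0$. This dual-Selmer vanishing is the heart of the matter, and the main obstacle: as the introduction explains, it is a higher-dimensional, non-abelian analog of Leopoldt's conjecture, entirely open when $l_0 > 0$. A direct Galois-cohomological attack via Poitou--Tate and careful control of classes ramified only at $p$ runs into all of the difficulties of the classical Leopoldt problem, so I do not expect this route to succeed unconditionally.

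A parallel, and presumably more tractable, strategy is to exploit Hypothesis \ref{R=T} to identify $R_{\rho}^{\ordi}$ with $\mathbb T^S_{\ordi}(K(1,1), E)_{\mathfrak m}$: Hida's control theorem (Proposition \ref{complexordinarytower}) then provides automatically the upper bound $\dim R_{\rho}^{\ordi} \leq r = \dim \Lambda_E$, and sharpening this to $r - l_0$ reduces to the companion dimension Conjecture \ref{dimensionconj} for $H^{*}(F^{\bullet}_{\mathfrak m})$, itself a Leopoldt-type statement. For the matching lower bound, the natural tool is Taylor--Wiles patching (in the Hida-theoretic form of \cite{KT}, following the strategy alluded to in the forthcoming \cite{AKR} and in Lemma 8.25 of \cite{akshay}): one builds patched deformation rings whose quotients of the expected dimension map onto $R_{\rho}^{\ordi}$, and transfers dimension estimates downward. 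Either way, the genuinely irreducible hard input is the non-abelian Leopoldt vanishing, and pinning it down with present techniques seems out of reach.
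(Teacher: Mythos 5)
The statement you were asked to prove is labelled a \emph{conjecture} in the paper, and the paper offers no proof of it: it is used only as a hypothesis (in Theorem \ref{dimensionality} and in Theorem \ref{main}(1)), and the authors explicitly describe the case $l_0>0$ as completely open. Your proposal correctly refuses to claim a proof, and your analysis of why one is out of reach matches the paper's own framing. In particular, your Euler-characteristic bookkeeping — contribution $\dim\Lie\check{\mathrm B}$ at $p$ from Proposition \ref{dimensionlocalordinaryring}, zero at generic finite $q\in S$, and $-(\dim\Lie\check{\mathrm U}+l_0)$ at infinity from oddness, with global invariants killed by absolute irreducibility — is exactly the computation the paper carries out in the discussion following Conjecture \ref{smoothnessconj}, and your identification of the vanishing of $H^1_{\ordi^{\perp}}\left(\Z\left[\frac{1}{S}\right],\left(\Lie\check{\mathrm G}\right)^*(1)\right)$ as the irreducible non-abelian Leopoldt input is the paper's point of view as well.

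Two precise caveats. First, the conjecture asks only that $\dim R_\rho^{\ordi}=r-l_0$, not that $R_\rho^{\ordi}$ be smooth; dual-Selmer vanishing is sufficient (via Proposition \ref{deformationringsdimensions}(3)) but is a priori strictly stronger — it is Conjecture \ref{smoothnessconj}. The genuine content of the dimension conjecture is the upper bound $\dim R_\rho^{\ordi}\le r-l_0$, since the lower bound already follows from the obstruction-theoretic presentation of $R_\rho^{\ordi}$ over $E$ on $\dim H^1_{\mathcal P_{\ordi}}$ generators with $\dim H^2_{\mathcal P_{\ordi}}=\dim H^1_{\ordi^\perp}$ relations, together with the Euler-characteristic identity you derived. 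Second, your proposed reduction via Hypothesis \ref{R=T} to Conjecture \ref{dimensionconj} should be flagged as a genuine transfer rather than a tautology: the paper is careful to say that Conjectures \ref{dimensionconj} and \ref{dimensionconjectureforring} are ``related, but not equivalent,'' so even granting $R=\mathbb T$ one must still argue that $\dim_{\Lambda_E}H^*\left(F^{\bullet}_{\mathfrak m}\right)$ computes $\dim R_\rho^{\ordi}$. Neither caveat is a gap in your reasoning — you present both routes as conditional — but they are worth stating if this discussion is to sit next to the conjecture in the text.
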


\subsection{Dimension conjecture implies that derived Hecke action factors through dual of dual Selmer}

We now study the following conjecture. Recall our standing assumption of genericity of $\pi$ at $p$, which gives that $\rho_\pi$ 
of Conjecture \ref{Galoisrepexists1} satisfies our conditions $(REG), (REG^*)$ at $p$.

\begin{conj} \label{derivedactionthruSelmerconj} Assume that $\rho_{\pi}$ is as in Conjecture \ref{Galoisrepexists1}.
Then the derived diamond action of $\Hom \left( \mathrm T(\Zp)_p, E \right)$ on $H^* \left( Y_{1,1}, E \right)_{\mathfrak m}$ factors through the map $\Phi$ of lemma \ref{keylemma} to an action of $H^1_f \left( \Z \left[ \frac{1}{S} \right], \left( \Lie \check{\mathrm G} \right)^* (1) \right)^{\vee}$.
\end{conj}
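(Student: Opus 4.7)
The plan is to reduce the statement to a duality computation: to identify the kernel of the derived diamond action in degree $1$ with the image of the tangent-space map $\psi^{\vee}$ of Section \ref{Galoisrepforautrep}, and then to use the Poitou-Tate sequence of Proposition \ref{PT} to identify the kernel of $\Phi$ with the same subspace. Assume Hypothesis \ref{R=T} together with the dimension conjecture \ref{dimensionconjectureforring} and multiplicity one for $\pi$; the goal is then to show that both kernels coincide, forcing the desired factorization.

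First, Theorem \ref{complexes} yields a quasi-isomorphism $F^{\bullet}_{\mathfrak m} \sim \Lambda_E / I$ where $I$ is generated by a regular sequence of length $l_0$ and $\Lambda_E / I \cong R^{\ordi}_{\rho}$, with the surjection $\psi: \Lambda_E \twoheadrightarrow R^{\ordi}_{\rho}$ of (\ref{Hidaalgtodeformationring}) having kernel $I$. Via the identifications (\ref{ordinarycohomologycomplex}), this gives $H^{q_0+i}(Y_{1,1}, E)_{\mathfrak m} \cong \Ext^{i}_{\Lambda_E}(R^{\ordi}_{\rho}, E)$, and the derived diamond action is recovered as the natural action of $\Ext^{*}_{\Lambda_E}(E, E)$. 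Applying Lemma \ref{naf} in degree one, the action map $\Hom(\mathrm T(\Zp)_p, E) \to \Hom(I/\mathfrak p I, E)$ is the $E$-linear dual of the natural inclusion-induced map $I/\mathfrak p I \hookrightarrow \mathfrak p_{\Lambda_E}/\mathfrak p_{\Lambda_E}^{2}$, whose image equals $\ker(\psi^{*})$ for $\psi^{*}: \mathfrak p_{\Lambda_E}/\mathfrak p_{\Lambda_E}^{2} \twoheadrightarrow \mathfrak p_{R^{\ordi}_{\rho}}/\mathfrak p_{R^{\ordi}_{\rho}}^{2}$ the cotangent map. Taking annihilators, the kernel of the degree one action inside $\Hom(\mathrm T(\Zp)_p, E)$ is precisely $\im(\psi^{\vee})$.

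To match this with $\ker(\Phi)$, one unwinds the construction of $\Phi$ in Lemma \ref{keylemma} and of the Poitou-Tate map $\psi^{PT}$ in Proposition \ref{PT}: both are built from local Tate duality at $p$ applied through the inclusion $\Lie\check{\mathrm B} \hookrightarrow \Lie\check{\mathrm G}$ together with the global-to-local restriction map. This identifies $\Phi$ with the $E$-linear transpose of $\psi^{PT}$, so $\ker(\Phi) = (\im \psi^{PT})^{\perp}$ inside $\Hom(\mathrm T(\Zp)_p, E)$. By exactness of the Poitou-Tate sequence, $\im(\psi^{PT})$ is the kernel of the next arrow $(\psi^{\vee})^{\vee}$, hence equals $(\im \psi^{\vee})^{\perp}$. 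Double-orthogonalizing gives $\ker(\Phi) = \im(\psi^{\vee})$, matching the kernel of the derived action computed above. The factorization through $\bigwedge^{*} \Phi$ in all degrees follows at once since both sides are exterior algebras generated by their degree one parts.

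The main obstacle is the careful duality bookkeeping: verifying that $\Phi$ and $\psi^{PT}$ are genuine transposes requires tracking the compatibility between the Killing-form identification $H^{1}(\Qp, \Lie \check{\mathrm G})/H^{1}_{f}(\Qp, \Lie \check{\mathrm G}) \cong H^{1}_{f}(\Qp, (\Lie \check{\mathrm G})^{*}(1))^{\vee}$, the subrepresentation inclusion $\Lie\check{\mathrm B} \hookrightarrow \Lie\check{\mathrm G}$, and the global-local pairing in Poitou-Tate. The argument is also heavily conditional on the auxiliary inputs (Hypothesis \ref{R=T}, dimension conjecture, multiplicity one) needed to invoke Theorem \ref{complexes} and present $F^{\bullet}_{\mathfrak m}$ as a quotient by a regular sequence; without regularity, Lemma \ref{naf} does not apply and the clean cotangent-space description of the derived action breaks down. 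Relaxing these hypotheses (while preserving genericity of $\pi$ at $p$) via Taylor-Wiles patching is the intended aim of the forthcoming \cite{AKR}.
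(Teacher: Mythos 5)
Your proposal is correct as a proof of the conjecture under the same auxiliary inputs the paper uses (Hypothesis \ref{R=T}, the dimension conjecture, multiplicity one --- i.e.\ it proves Theorem \ref{dimensionality}), and its skeleton is the paper's: present $F^{\bullet}_{\mathfrak m}\sim\Lambda_E/I$ with $I$ generated by a regular sequence of length $l_0$, use Lemma \ref{naf} to turn the derived diamond action into wedging against the dual of the conormal map $I/\mathfrak m_{\Lambda_E}I\lra\mathfrak m_{\Lambda_E}/\mathfrak m_{\Lambda_E}^2$, and feed this into the Poitou--Tate sequence of Proposition \ref{PT}. Where you genuinely diverge is the final step. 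The paper, following the proof of Theorem 4.13 of \cite{HT}, identifies the conormal map \emph{itself} with $\psi^{PT}$, so that under the induced isomorphism $\Hom\left(I/\mathfrak m_{\Lambda_E}I,E\right)\cong H^1_f\left(\Z\left[\frac{1}{S}\right],\left(\Lie\check{\mathrm G}\right)^*(1)\right)^{\vee}$ the degree-one action map \emph{is} $\Phi$, which pins down the resulting dual-Selmer action canonically. You instead match only kernels: $\ker(\textnormal{action})=\im(\psi^{\vee})$ by linear algebra on the conormal sequence (note your ``$\hookrightarrow$'' for $I/\mathfrak m_{\Lambda_E}I\lra\mathfrak m_{\Lambda_E}/\mathfrak m_{\Lambda_E}^2$ overstates things --- injectivity is exactly the smoothness not being assumed here --- but you only use its image, so nothing breaks), and $\ker(\Phi)=\im(\psi^{\vee})$ by a double-annihilator computation in Proposition \ref{PT}. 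This is enough for the factorization statement and spares you the more delicate map-level identification from \cite{HT}, at the cost of producing the action of $H^1_f\left(\Z\left[\frac{1}{S}\right],\left(\Lie\check{\mathrm G}\right)^*(1)\right)^{\vee}$ only on $\im(\Phi)$ canonically, and of not supporting the finer comparisons (with the Tor-action of \cite{HT}, or the rationality conjecture) for which the actual identification of the conormal map with $\psi^{PT}$ is needed. Two compatibilities carry the whole weight and deserve to be made explicit: that the $\Lambda_E$-algebra structure in Hypothesis \ref{R=T} is what makes the quotient $\Lambda_E\twoheadrightarrow\Lambda_E/I$ equal to the map $\psi$ of equation \ref{Hidaalgtodeformationring}, so that the commutative-algebra $\im(\psi^{\vee})$ and the image of $H^1_{\ordi}\left(\Z\left[\frac{1}{S}\right],\Lie\check{\mathrm G}\right)$ under local restriction are the same subspace of $\Hom\left(\mathrm T(\Zp)_p,E\right)$; and that the arrow out of the middle term of Proposition \ref{PT} is the dual of that restriction map. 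Both are established or used in the paper, so neither is a gap, but they are where your duality bookkeeping must be checked.
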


\begin{thm}\label{dimensionality}
The Conjecture  \ref{derivedactionthruSelmerconj} is true if we assume  Hypothesis \ref{R=T} and  Conjecture \ref{dimensionconjectureforring}.
\end{thm}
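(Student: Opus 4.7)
The plan is to show that both the derived diamond action on $H^*(Y_{1,1},E)_{\mathfrak m}$ and the map $\Phi$ descend to the same quotient $\coker(\psi^\vee) = \mathfrak t_{\Lambda_E}/\im(\psi^\vee)$ of $\mathfrak t_{\Lambda_E} \cong \Hom(\mathrm T(\Zp)_p, E)$, with identical degree-one kernel $\im(\psi^\vee) = H^1_{\ordi}(\Z[\frac{1}{S}], \Lie \check{\mathrm G})$. Once the two kernels are shown to coincide, the factorization of $\bigwedge^* \Phi$ will follow by functoriality of exterior powers.

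First I would invoke Hypothesis \ref{R=T} together with Lemma \ref{surjectivity} to secure the surjection $\Lambda_E \twoheadrightarrow R_\rho^\ordi$, the equality $R_\rho^\crys = E$, the vanishing $H^1_f(\Z[\frac{1}{S}], \Lie \check{\mathrm G}) = 0$, and the injectivity of $\psi^\vee: \mathfrak t_{R_\rho^\ordi} \hookrightarrow \mathfrak t_{\Lambda_E}$. The $R=T$ identification also forces the top cohomology $H^{q_0+l_0}(F_{\mathfrak m}^\bullet)$ to be free of rank one over $R_\rho^\ordi = \mathbb T_{\mathfrak m}$, providing the multiplicity-one input $m(\pi,K)=1$. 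Combined with Conjecture \ref{dimensionconjectureforring}, which supplies the expected Krull dimension $r - l_0$ for $R_\rho^\ordi$, Theorem \ref{complexes} then identifies $F_{\mathfrak m}^\bullet$ in the derived category with the cyclic quotient $R_\rho^\ordi \cong \Lambda_E/(f_1,\dots,f_{l_0})$ by a regular sequence. Lemma \ref{naf} describes the derived diamond action of $\bigwedge^* \mathfrak t_{\Lambda_E}$ on $\Ext^*_{\Lambda_E}(R_\rho^\ordi, E) \cong H^*(Y_{1,1}, E)_{\mathfrak m}$ as the exterior powers of the restriction map $\mathfrak t_{\Lambda_E} \to \Hom(I/\mathfrak m_{\Lambda_E} I, E)$, whose degree-one kernel is precisely $\mathfrak t_{R_\rho^\ordi} = \im(\psi^\vee)$.

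On the Galois side, inserting the vanishing $H^1_f(\Z[\frac{1}{S}], \Lie \check{\mathrm G}) = 0$ into the Poitou-Tate exact sequence of Proposition \ref{PT} truncates it to a four-term exact sequence $0 \to H^1_{\ordi^\perp}(\Z[\frac{1}{S}], (\Lie \check{\mathrm G})^*(1)) \to H^1_f(\Z[\frac{1}{S}], (\Lie \check{\mathrm G})^*(1)) \to \mathfrak t_{\Lambda_E}^\vee \to \mathfrak t_{R_\rho^\ordi}^\vee \to 0$, whose middle arrow is $\psi^{PT}$. The pairing part of Theorem \ref{cokerneldualSelmer} (which holds already under $R_\rho^\crys = E$, without needing smoothness of $R_\rho^\ordi$) identifies $\Phi$ with the Pontryagin dual of $\psi^{PT}$, producing the factorization $\Phi: \mathfrak t_{\Lambda_E} \twoheadrightarrow \coker(\psi^\vee) \hookrightarrow H^1_f(\Z[\frac{1}{S}], (\Lie \check{\mathrm G})^*(1))^\vee$, with the first map the canonical quotient and the second an injection. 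Since the derived action and $\Phi$ both have degree-one kernel $\im(\psi^\vee)$, the derived action descends canonically through $\coker(\psi^\vee)$, and composition with the injection into $H^1_f(\Z[\frac{1}{S}], (\Lie \check{\mathrm G})^*(1))^\vee$ gives the sought factorization through $\bigwedge^* \Phi$ in all degrees.

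The most delicate verification will be confirming that the topologically-defined map $\Phi$ from Lemma \ref{keylemma}, built via Corollary \ref{ordivscryscor} and the chain $H^1(\Qp, \Lie\check{\mathrm B})/H^1_f \hookrightarrow H^1_f(\Qp, (\Lie\check{\mathrm G})^*(1))^\vee \to H^1_f(\Z[\frac{1}{S}], (\Lie\check{\mathrm G})^*(1))^\vee$, indeed coincides with the dual of the Poitou-Tate arrow $\psi^{PT}$; this should ultimately reduce to unwinding the compatibility of local Tate duality at $p$ with the embedding $\Lie \check{\mathrm B} \hookrightarrow \Lie \check{\mathrm G}$ and with the global restriction map at $p$, but this bookkeeping will need care.
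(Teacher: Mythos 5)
Your proposal is correct and follows the same skeleton as the paper's proof: Lemma \ref{surjectivity} plus Theorem \ref{complexes} reduce everything to the presentation $F^{\bullet}_{\mathfrak m}\sim\Lambda_E/I$ with $I$ generated by a regular sequence of length $l_0$, Lemma \ref{naf} computes the derived diamond action as the exterior algebra of the dual of $I/\mathfrak m_{\Lambda_E}I\lra\mathfrak m_{\Lambda_E}/\mathfrak m_{\Lambda_E}^2$, and Proposition \ref{PT} supplies the Galois-theoretic input. Where you diverge is the last identification: the paper invokes the argument of Theorem 4.13 of \cite{HT} to identify the map $I/\mathfrak m_{\Lambda_E}I\lra\mathfrak m_{\Lambda_E}/\mathfrak m_{\Lambda_E}^2$ itself with $\psi^{PT}$ (hence its dual with $\Phi$), whereas you only match kernels: exactness of the Poitou--Tate sequence at $\mathfrak t_{\Lambda_E}^{\vee}$, after the vanishing $H^1_f\left(\Z\left[\frac{1}{S}\right],\Lie\check{\mathrm G}\right)=0$ from Lemma \ref{surjectivity}, gives $\ker\Phi=\mathrm{Ann}\left(\im\psi^{PT}\right)=\im\left(\psi^{\vee}\right)$, which is also the annihilator of the image of $I/\mathfrak m_{\Lambda_E}I$ in $\mathfrak m_{\Lambda_E}/\mathfrak m_{\Lambda_E}^2$, i.e.\ the degree-one kernel of the derived action. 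Since factoring through $\Phi$ is equivalent to the containment $\ker\Phi\subseteq\ker(\textnormal{action})$, and the exterior algebra is generated in degree one, this suffices; it is in fact slightly more economical, because the ``delicate verification'' you flag at the end --- that $\Phi$ coincides with $\left(\psi^{PT}\right)^{\vee}$ --- is essentially definitional (both are local Tate duality at $p$ composed with restriction), and the genuinely delicate identification of $I/\mathfrak m_{\Lambda_E}I$ with the dual Selmer group is simply not needed on your route, at the cost of characterizing the induced action only through the kernel rather than explicitly. Two minor imprecisions: the claim that $R=T$ ``forces'' multiplicity one is backwards --- Lemma \ref{surjectivity} derives the surjection $\Lambda_E\twoheadrightarrow\mathbb T^S_{\ordi}(K(1,1),E)_{\mathfrak m}$ from multiplicity one theorems, not conversely; and you should say explicitly that the $\Lambda_E$-algebra structure in Hypothesis \ref{R=T} is what guarantees that the Hecke-theoretic surjection $\Lambda_E\twoheadrightarrow\Lambda_E/I$ and the Galois-theoretic map $\psi$ induce the same map on cotangent spaces, so that the annihilator computed on the topological side really is $\im\left(\psi^{\vee}\right)$.
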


\begin{proof}

The proof uses  our work in \S \ref{generationofcohomologysection} and arguments of Hansen-Thorne \cite{HT}. 
The main ingredient is Poitou-Tate duality (see Proposition \ref{PT} of this paper) as  used in the proof of  Theorem 4.13 of \cite{HT}.

We have the following    exact sequence:
$$ 0  \lra I \cap \mathfrak m_{\Lambda_E}^2/\mathfrak m_{\Lambda_E}I \lra I/\mathfrak m_{\Lambda_E}I \lra \mathfrak m_{\Lambda_E}/\mathfrak m_{\Lambda_E}^2 \lra \mathfrak m_{R_\rho^\ordi}/\mathfrak m_{R_\rho^\ordi}^2 \lra 0.$$ The exactness on the right comes from Lemma \ref{surjectivity}.  Using the notation of the earlier section we write $R_\rho^\ordi=\Lambda_E/I$ where $I$ is generated by a regular sequence of length $l_0$.

The argument in the proof of  Theorem 4.13 of \cite{HT}  identifies   the map
$I/\mathfrak m_{\Lambda_E}I \lra \mathfrak m_{\Lambda_E}/\mathfrak m_{\Lambda_E}^2$ with the map \[H^1_f \left( \Z \left[ \frac{1}{S} \right], \left( \Lie \check{\mathrm G} \right)^* (1) \right)  \lra  \left( \frac{H^1 \left( \Qp, \Lie \check {\mathrm B} \right)} {H^1_f \left( \Qp, \Lie \check {\mathrm B} \right) } \right)^\vee \]  which is dual to  the map $\Phi$ given by composition of the maps 
 \[   \Hom \left( T(\Zp)_p, E \right)   \stackrel{\sim}{\lra} \frac{H^1 \left( \Qp, \Lie \check {\mathrm B} \right)}{H^1_f \left( \Qp, \Lie \check {\mathrm B} \right)}  \hookrightarrow  H^1_f \left( \Qp, \left( \Lie \check{\mathrm G} \right)^* (1) \right)^{\vee} \lra  H^1_f \left( \Z \left[ \frac{1}{S} \right], \left( \Lie \check{\mathrm G} \right)^* (1) \right)^{\vee} \] considered in Lemma \ref{keylemma}. Also as noted earlier \[ \Hom \left( T(\Zp)_p, E \right)   \stackrel{\sim}{\lra} \frac{H^1 \left( \Qp, \Lie \check {\mathrm B} \right)}{H^1_f \left( \Qp, \Lie \check {\mathrm B} \right)}  \stackrel{\sim}{\lra} \Hom( \mathfrak m_{\Lambda_E}/\mathfrak m_{\Lambda_E}^2, E).\]
This proves our theorem when combined with the arguments in   Lemma \ref{naf} which show that  the action of  $\bigwedge^i \Hom \left( \mathrm T(\Zp)_p, E \right)$ on $H^* \left( Y_{1,1}, E \right)_{\mathfrak m}$, using the isomorphisms above, is the  same as the action of $\bigwedge^* \Hom (\mathfrak m_{\Lambda_E}/\mathfrak m_{\Lambda_E}^2, E)$ on $\bigwedge^* \Hom (I/\mathfrak m_{\Lambda_E}I, E)$ induced by the  composition of the map \[ \bigwedge^* \Hom (\mathfrak m_{\Lambda_E}/\mathfrak m_{\Lambda_E}^2, E) \lra \bigwedge^* \Hom (I/\mathfrak m_{\Lambda_E}I, E) \] (in turn induced by the map  $I/\mathfrak m_{\Lambda_E} I \lra \mathfrak m_{\Lambda_E} /\mathfrak m_{\Lambda_E}^2$) and the maps \[ \bigwedge^i \Hom (I/\mathfrak m_{\Lambda_E}I, E) \times \bigwedge^j \Hom (I/\mathfrak m_{\Lambda_E}I, E) \lra \bigwedge^{i+j} \Hom (I/\mathfrak m_{\Lambda_E}I, E). \]

\end{proof}

\subsection{Smoothness of ordinary deformation rings $R_\pi^\ordi$ and bigness of derived Hecke action}

We state the expected smoothness conjecture for the deformation rings we consider which is a higher analog of the classical Leopoldt conjecture.

\begin{conj}[Smoothness conjecture] \label{smoothnessconj}  Suppose that the cohomology group $H^2_{\mathcal P_{\ordi}} \left( \Gal_{\Q, \Sigma}, \Lie \check {\mathrm G} \right)$ is $0$. Thus  (as we justify below)  $R_{\rho}^{\ordi}$ is smooth of dimension $r-l_0$.
\end{conj}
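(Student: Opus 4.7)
The plan is to deduce the smoothness and dimension assertion as a direct consequence of the Galois-cohomological machinery already recorded in Proposition \ref{deformationringsdimensions}. First I would use part (1) of that proposition to translate the hypothesized vanishing $H^2_{\mathcal P_\ordi}(\Gal_{\Q,\Sigma}, \Lie \check{\mathrm G}) = 0$ into the vanishing of the dual Selmer group $H^1_{\mathcal P_\ordi^\perp}(\Gal_{\Q,\Sigma}, (\Lie \check{\mathrm G})^*(1)) = 0$. This is precisely the hypothesis of part (3) of the proposition, applied with $A = E$ (we are deforming an $E$-valued representation, not a residual one), which produces an isomorphism
\[ R_\rho^\ordi \;\cong\; E \llbracket x_1, \ldots, x_n \rrbracket, \quad n := \dim_E H^1_{\mathcal P_\ordi}\!\left(\Gal_{\Q,\Sigma}, \Lie\check{\mathrm G}\right). \]
Thus smoothness is automatic, and only the count $n = r - l_0$ remains to be verified.

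Next I would apply the Euler-characteristic-style formula of part (2) of Proposition \ref{deformationringsdimensions}. Since the dual Selmer group vanishes, the formula reduces to
\[ n \;=\; \dim_E H^0(\Q, \Lie\check{\mathrm G}) - \dim_E H^0(\Q, (\Lie\check{\mathrm G})^*(1)) + \sum_{q \in \Sigma} \left( \dim_E \mathfrak t_q^{\mathcal P_\ordi} - \dim_E H^0(\Q_q, \Lie\check{\mathrm G}) \right). \]
The two global $H^0$'s vanish by the absolute irreducibility (part (5) of Conjecture \ref{Galoisrepexists1}) together with the cyclotomic-weight assumption of part (3) which rules out a trivial $\Qp(1)$-summand. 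For each finite $q \in S$ with $q \neq p$, we have $\mathfrak t_q^{\mathcal P_\ordi} = H^1(\Q_q, \Lie\check{\mathrm G})$, and the local Euler characteristic formula combined with the genericity assumption $H^2(\Q_q, \Lie\check{\mathrm G}) = 0$ (part (4) of Conjecture \ref{Galoisrepexists1}) makes the bracketed term vanish. At $q = p$, Proposition \ref{dimensionlocalordinaryring} computes $\dim_E \mathfrak t_p^{\ordi} = \dim_E \Lie\check{\mathrm B} + \dim_E H^0(\Qp, \Lie\check{\mathrm G})$, so the bracket is $\dim_E \Lie\check{\mathrm B}$; Lemma \ref{useful} guarantees that condition (REG*) holds, which is what permits invoking that proposition. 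Finally, at the archimedean place, the oddness of $p$ forces $H^i(\R, \Lie\check{\mathrm G}) = 0$ for $i \geq 1$, so $\mathfrak t_\infty^{\mathcal P_\ordi} = 0$, while the $\check{\mathrm G}$-oddness assumption of Conjecture \ref{Galoisrepexists1}(2), in its explicit form $\dim_E H^0(\R, \Lie\check{\mathrm G}) = \dim_E \Lie\check{\mathrm U} + l_0$, gives the archimedean bracket as $-\dim_E \Lie\check{\mathrm U} - l_0$.

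Assembling the local contributions and using $\dim_E \Lie\check{\mathrm B} = \dim_E \Lie\check{\mathrm U} + \dim_E \Lie\check{\mathrm T} = \dim_E \Lie\check{\mathrm U} + r$ yields
\[ n = 0 + \dim_E \Lie\check{\mathrm B} + (-\dim_E \Lie\check{\mathrm U} - l_0) = r - l_0, \]
which is the desired dimension.

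The proof is largely a careful assembly of ingredients already in place. The one subtle point — and the step where the input from this paper beyond Patrikis's framework is essential — is the verification that the local contribution at $p$ is exactly $\dim_E \Lie\check{\mathrm B}$: this requires condition (REG*) for $\rho_p$, which is not assumed a priori but established in Lemma \ref{useful} from the explicit inertial behavior of the simple-root characters encoded in part (3) of Conjecture \ref{Galoisrepexists1}. If one were to try to weaken the Hodge-theoretic hypotheses of that conjecture, this is the step that would need to be revisited most carefully.
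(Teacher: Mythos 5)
Your argument is correct and follows essentially the same route as the paper's own justification: parts (1) and (3) of Proposition \ref{deformationringsdimensions} give smoothness from the vanishing of $H^2_{\mathcal P_{\ordi}}$, and the dimension count via part (2) is carried out place by place exactly as in the text, with the same inputs (absolute irreducibility and the condition at $p$ for the global $H^0$'s, genericity at $q \in S \setminus \{p\}$, Proposition \ref{dimensionlocalordinaryring} together with $(REG^*)$ from Lemma \ref{useful} at $p$, and $\check{\mathrm G}$-oddness at $\infty$). No substantive differences.
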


By proposition \ref{deformationringsdimensions}, $R_{\rho}^{\ordi}$ is thus a power series ring over $E$ in $\dim_E H^1_{\ordi} \left( \Z \left[ \frac{1}{S} \right] , \Lie \check {\mathrm G} \right)$ variables.
Still by proposition \ref{deformationringsdimensions}, we have \[ \begin{gathered} \dim_E H^1_{\ordi} \left( \Z \left[ \frac{1}{S} \right] , \Lie \check {\mathrm G} \right) = \dim_E H^0 \left( \Q, \Lie \check {\mathrm G} \right) - \dim_E H^0 \left( \Q, \left( \Lie \check {\mathrm G} \right)^* (1) \right) + \\ + \sum_{q \in \Sigma} \left( \dim_E \mathfrak t_q^{\mathcal P_{\ordi}} - \dim_E H^0 \left( \Q_q, \Lie \check {\mathrm G} \right) \right). \end{gathered} \]
We dualize and Tate-twist the short exact sequence $0 \lra \Lie \check{ \mathrm B} \lra \Lie \check {\mathrm G} \lra \frac{\Lie \check {\mathrm G}}{\Lie \check {\mathrm B}} \lra 0$, and then taking $\Gal_{\Qp}$-cohomology yields the long exact sequence \[ 0 \lra H^0 \left( \Qp, \left( \frac{\Lie \check {\mathrm G}}{\Lie \check {\mathrm B}} \right)^* (1) \right) \lra H^0 \left( \Qp, \left( \Lie \check {\mathrm G} \right)^* (1) \right) \lra H^0 \left( \Qp, \left( \Lie \check {\mathrm B} \right)^* (1) \right) \lra \ldots. \]
In the proof of proposition \ref{ordivscrys}, we showed that $H^0 \left( \Qp, \left( \Lie \check {\mathrm B} \right)^* (1) \right)=0$.
On the other hand, following \cite{Patrikis}, proposition 4.4, we have that the Killing form gives a Galois-equivariant isomorphism $ \left( \frac{\Lie \check {\mathrm G}}{\Lie \check {\mathrm B}} \right)^* \cong \Lie \check{\mathrm U}$.
Because of our assumption on $\rho_{\pi}|_p$ (see Conjecture \ref{Galoisrepexists1} (3) and Lemma \ref{useful})  we have  $H^0 \left( \Qp, \Lie \check {\mathrm U} (1) \right)=0$, and thus  \[ H^0 \left( \Q, \left( \Lie \check {\mathrm G} \right)^* (1) \right) = 0. \]
The assumption from conjecture \ref{Galoisrepexists1} that $\rho_{\pi}$ is absolutely irreducible allows us to apply lemma A.2 of \cite{FKP} to conclude that \[ H^0 \left( \Q, \Lie \check {\mathrm G} \right) = 0. \]
Notice that lemma A.2 in loc. cit. is dealing with representations in positive characteristic and thus relies on more subtle results than what we need for our $p$-adic representation $\rho$. Either way, the same proof as in loc. cit. holds verbatim (our standing assumption that $p>|W|$ implies in particular that $p \nmid (n+1)$ if $\mathrm G$ contains a factor of type $A_n$).

We obtain then \[ \dim_E H^1_{\ordi} \left( \Z \left[ \frac{1}{S} \right] , \Lie \check {\mathrm G} \right) = \sum_{q \in \Sigma} \left( \dim_E \mathfrak t_q^{\mathcal P_{\ordi}} - \dim_E H^0 \left( \Q_q, \Lie \check {\mathrm G} \right) \right). \]
Consider first $q = p \in \Sigma$: by proposition \ref{dimensionlocalordinaryring}, we have that \[ \dim_E \mathfrak t_p^{\mathcal P_{\ordi}} - \dim_E H^0 \left( \Qp, \Lie \check {\mathrm G} \right) = \dim_E \left( \Lie \check{\mathrm B} \right). \]
Next, let $q \in S - \{ p, \infty \}$ be a bad prime. By smoothness of the deformation ring $ R_{\rho}^{\ordi}$ we have that $H^2 \left( \Q_q, \Lie \check{\mathrm G} \right) =0$, and then the local Euler characteristic formula yields that \[ \dim_E \mathfrak t_q^{\mathcal P_{\ordi}} - \dim_E H^0 \left( \Q_q, \Lie \check {\mathrm G} \right) = \dim_E H^1 \left( \Q_q, \Lie \check {\mathrm G} \right) - \dim_E H^0 \left( \Q_q, \Lie \check {\mathrm G} \right) = 0. \]
Finally, we recall that $\rho$ is $\check {\mathrm G}$-odd as in definition \ref{oddgaloisrep} and thus \[ \dim_E \mathfrak t_{\infty}^{\mathcal P_{\ordi}} - \dim_E H^0 \left( \R, \Lie \check {\mathrm G} \right) = 0 - \left( \dim_E \Lie \check {\mathrm U} + l_0 \right) = - \dim_E \Lie \check {\mathrm U} - l_0. \]
Putting all this together, we obtain that $R_{\rho}^{\ordi}$ is a power series ring over $E$ in \[ \dim_E H^1_{\ordi} \left( \Z \left[ \frac{1}{S} \right], \Lie \check {\mathrm G} \right) = \dim_E \Lie \check{\mathrm B} - \dim_E \Lie \check {\mathrm U} - l_0 = \dim_E \Lie \check {\mathrm T} - l_0 = r - l_0 \] variables.

If we assume the Conjecture \ref{smoothnessconj} then we can verify Hypothesis \ref{R=T}. 

\begin{lem}\label{obv}
 Under the smoothness conjecture we get  an isomorphism of $\Lambda_E$-algebras $R_\pi^\ordi = \mathbb T^S_\ordi (K(1,1),E)_{\mathfrak m}=E$.
\end{lem}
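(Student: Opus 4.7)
The plan is to upgrade Conjecture \ref{smoothnessconj} into the full R=T statement of Hypothesis \ref{R=T} and then read off the trailing $=E$ from Lemma \ref{surjectivity}, which we have already established follows from R=T. The final $=E$ is interpreted in the sense of Lemma \ref{surjectivity}, namely the residual claim $R_\rho^\ordi/\mathfrak m_{\Lambda_E}R_\rho^\ordi = \mathbb T^S_\ordi(K(1,1),E)_\mathfrak m /\mathfrak m_{\Lambda_E}\mathbb T^S_\ordi(K(1,1),E)_\mathfrak m = R_\rho^\crys = E$.

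First I would record what the smoothness conjecture, combined with the tangent space computation immediately preceding the lemma, actually buys: $R_\rho^\ordi$ is a formal power series ring over $E$ in $\dim_E H^1_\ordi(\Z[1/S],\Lie\check{\mathrm G}) = r-l_0$ variables, hence a regular local Noetherian $\Lambda_E$-algebra of Krull dimension exactly $r-l_0$, with residue field $E$. The structural $\Lambda_E$-algebra map factors through the universal abelian quotient of $\widetilde\rho_\ordi|_{G_p}$ as in \eqref{Hidaalgtodeformationring}, and agrees on the nose with Hida's structural map into the Hecke algebra.

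Next I would construct a canonical $\Lambda_E$-algebra morphism $\alpha:R_\rho^\ordi \to \mathbb T^S_\ordi(K(1,1),E)_\mathfrak m$ from Conjecture \ref{Galoisrepexists1}: the $\check{\mathrm G}$-valued Galois representation carried by the localized Hecke algebra (ordinary at $p$, unramified outside $S$) induces $\alpha$ via the universal property of $R_\rho^\ordi$, and the $\Lambda_E$-equivariance is exactly the compatibility between $\psi$ and the Hida map. Surjectivity of $\alpha$ onto the spherical part follows from part (1) of Conjecture \ref{Galoisrepexists1} plus Chebotarev (Frobenius traces generate the away-from-$S$ Hecke action), and the $U_p^i$ and diamond operators lie in the image via part (3) of the same conjecture. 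On the other hand, Proposition \ref{complexordinarytower} combined with the multiplicity-one hypothesis invoked in Theorem \ref{complexes} shows that $\mathbb T^S_\ordi(K(1,1),E)_\mathfrak m$ is a \emph{cyclic} $\Lambda_E$-module, hence a quotient $\Lambda_E/J$; moreover, since $H^*(Y_{1,1},E)_\mathfrak m$ has total $E$-dimension $2^{l_0}$ by Theorem \ref{rationalcohomologyFranke}, $J$ cannot have height exceeding $l_0$, so $\dim \mathbb T^S_\ordi(K(1,1),E)_\mathfrak m \ge r-l_0$.

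The isomorphism then follows from Krull dimension bookkeeping: $\alpha$ is a surjection from the regular local domain $R_\rho^\ordi$ of Krull dimension $r-l_0$ onto a local $\Lambda_E$-algebra of Krull dimension at least $r-l_0$, forcing the kernel to be a height-zero prime in a domain, hence zero. Once $\alpha$ is an isomorphism, the residual claim $R_\rho^\crys=E$ follows verbatim from Lemma \ref{surjectivity}, which yields the $=E$ tail of the statement.

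The main obstacle, and the place where care is required, is the surjectivity of $\alpha$ together with the lower bound $\dim \mathbb T^S_\ordi(K(1,1),E)_\mathfrak m \ge r-l_0$: both steps must be carried out without circularly invoking Hypothesis \ref{R=T}. The surjectivity step uses that the $\Lambda_E$-action on the Hecke algebra factors through the abelianization of $\widetilde\rho_\ordi|_p$ (as in the construction of $\psi$), which is where the ordinarity and local-global compatibility of Conjecture \ref{Galoisrepexists1} genuinely enter; the dimension lower bound ultimately relies on the generation of ordinary cohomology by a regular sequence of the expected length $l_0$ produced in Theorem \ref{complexes}.
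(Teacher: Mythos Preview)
Your overall strategy---build a surjection $\alpha:R_\rho^\ordi\to\mathbb T^S_\ordi(K(1,1),E)_\mathfrak m$, bound $\dim\mathbb T$ from below by $r-l_0$, and conclude by comparing with the smooth source of dimension exactly $r-l_0$---is precisely the paper's approach, and your treatment of the surjection $\alpha$ is in fact more explicit than the paper's one-line proof.

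The one step that needs tightening is your justification of the lower bound $\dim\mathbb T^S_\ordi(K(1,1),E)_\mathfrak m\ge r-l_0$. You write that ``since $H^*(Y_{1,1},E)_\mathfrak m$ has total $E$-dimension $2^{l_0}$ \dots\ $J$ cannot have height exceeding $l_0$.'' But the $E$-dimension $2^{l_0}$ at the closed point of $\mathrm{Spec}\,\Lambda_E$ does not by itself bound the height of $J$: there is no a priori identification of $F_\mathfrak m^\bullet$ with $\Lambda_E/J$ (that is Theorem~\ref{complexes}, which already assumes Conjecture~\ref{dimensionconj}, so invoking it here would be circular). The paper instead uses the \emph{concentration} of $H^*(Y_{c,c},E)_\mathfrak m$ in the range $[q_0,q_0+l_0]$ (also part of Theorem~\ref{rationalcohomologyFranke}) to conclude that the minimal perfect complex $F_\mathfrak m^\bullet$ has amplitude $l_0$; then Lemma~\ref{CGlemma} (Calegari--Geraghty) gives $\mathrm{projdim}_{\Lambda_E}H^{q_0+l_0}(F_\mathfrak m^\bullet)\le l_0$, hence by Auslander--Buchsbaum $\mathrm{depth}\ge r-l_0$, hence $\dim_{\Lambda_E}H^*(F_\mathfrak m^\bullet)\ge r-l_0$. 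Since $\mathbb T$ is by construction a finite $\Lambda_E$-algebra acting faithfully on this module, $\dim\mathbb T\ge r-l_0$ follows. Replace your $2^{l_0}$ sentence with this concentration/depth argument and your proof is complete and agrees with the paper's.
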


\begin{proof}

This follows using the concentration of the cohomology groups $H^* (Y(K_{1,1}),E)_{\mathfrak m}$ in a range of degrees of size $l_0$ which implies by the Calegari-Geraghty lemma that $\dim \mathbb T^S_\ordi (K(1,1),E)_{\mathfrak m} \geq r -l_0$. (A more general result is proved in Hansen's thesis.)
\end{proof}


We prove the following  relation between the bigness of the derived Hecke action and smoothness of $R_\rho^\ordi$.

\begin{thm}\label{main} $ $

\begin{enumerate}

\item Assume Hypothesis \ref{R=T} and Conjecture \ref{dimensionconjectureforring}.  If  the derived Hecke action of $\Hom (T(\Zp)_p, E)$  generates the cohomology $H^*( Y, E)_{\mathfrak m}$ over the bottom degree, and  $R_\rho^\ordi$ is of the expected dimension $r-l_0$,  then  $R_\rho^\ordi$ is  smooth of the expected dimension $r-l_0$.

\item  Assume Conjecture \ref{smoothnessconj}. Then  the derived Hecke action generates the cohomology $H^*( Y ,E)_{\mathfrak m}$ over the bottom degree.

\end{enumerate}

\end{thm}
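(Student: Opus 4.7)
\medskip

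The plan is to package both implications of Theorem \ref{main} in the elementary commutative-algebra fact that, in a regular local ring $(\Lambda_E, \mathfrak p)$ of Krull dimension $r$, an ideal $I = (f_1, \ldots, f_{l_0})$ generated by $l_0$ elements yields a regular quotient $\Lambda_E / I$ of dimension $r - l_0$ if and only if the $f_i$'s can be chosen to be part of a regular system of parameters. I would then bridge this commutative-algebra dichotomy to the derived Hecke action via Corollary \ref{cohomologygeneratedbybottomdegree}, and to the ordinary deformation ring via Theorem \ref{complexes} combined with Hypothesis \ref{R=T}.

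The first step in both parts is to identify $R_\rho^{\ordi}$ with $\Lambda_E / (f_1, \ldots, f_{l_0})$, where $(f_1, \ldots, f_{l_0})$ is the regular sequence furnished by Theorem \ref{complexes}. Under Hypothesis \ref{R=T} we have $R_\rho^{\ordi} \cong \mathbb T^S_{\ordi}(K(1,1), E)_{\mathfrak m}$, and Lemma \ref{surjectivity} provides a surjection $\Lambda_E \twoheadrightarrow R_\rho^{\ordi}$. Since $\mathbb T^S_{\ordi}(K(1,1), E)_{\mathfrak m}$ acts faithfully on $F_{\mathfrak m}^{\bullet}$ by definition of the Hecke algebra, and since $F_{\mathfrak m}^{\bullet}$ is quasi-isomorphic to $\Lambda_E / (f_1, \ldots, f_{l_0})$ by Theorem \ref{complexes}, the kernel of the surjection from $\Lambda_E$ is exactly $(f_1, \ldots, f_{l_0})$. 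In the setting of part (2), Lemma \ref{obv} verifies Hypothesis \ref{R=T} from the smoothness conjecture, and Conjecture \ref{smoothnessconj} likewise implies Conjecture \ref{dimensionconj} and thus the hypotheses of Theorem \ref{complexes}.

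Part (1) is then immediate: assuming the derived diamond action generates $H^*(Y_{1,1}, E)_{\mathfrak m}$ over the bottom degree, Corollary \ref{cohomologygeneratedbybottomdegree} says that $(f_1, \ldots, f_{l_0})$ can be chosen to be part of a regular system of parameters of $\Lambda_E$, so $R_\rho^{\ordi} = \Lambda_E / (f_1, \ldots, f_{l_0})$ is regular of dimension $r - l_0$. For part (2), Conjecture \ref{smoothnessconj} makes $R_\rho^{\ordi}$ regular of dimension $r - l_0$, hence so is $\Lambda_E / (f_1, \ldots, f_{l_0})$; a cotangent-space count then forces the images of $f_1, \ldots, f_{l_0}$ in $\mathfrak p / \mathfrak p^2$ to be $E$-linearly independent, because any dependence would push the embedding dimension of the quotient strictly above $r - l_0$ and break regularity. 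Hence $(f_1, \ldots, f_{l_0})$ extends to a regular system of parameters, and Corollary \ref{cohomologygeneratedbybottomdegree} yields the asserted generation by the bottom degree.

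The technical weight of the argument is borne entirely by Theorem \ref{complexes}, Lemma \ref{obv}, and the local-at-$p$ identification underlying Corollary \ref{cohomologygeneratedbybottomdegree}; once these are in hand, Theorem \ref{main} is a bookkeeping exercise in commutative algebra. The only subtle point that would require care is checking that the identification $R_\rho^{\ordi} \cong \Lambda_E / (f_1, \ldots, f_{l_0})$ intertwines the derived diamond action on $H^*(Y_{1,1}, E)_{\mathfrak m}$ with the natural $\bigwedge^* \Hom(\mathfrak p / \mathfrak p^2, E)$-action on $\bigwedge^* \Hom(I / \mathfrak p I, E)$ described in Lemma \ref{naf}, but this is exactly the compatibility already established in the proof of Theorem \ref{dimensionality}.
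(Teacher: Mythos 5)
Your argument is correct, and its skeleton coincides with the paper's: both identify $R_\rho^{\ordi}$ with $\Lambda_E/I$ for $I=(f_1,\ldots,f_{l_0})$ a regular sequence (via Hypothesis \ref{R=T}, Lemma \ref{surjectivity} and Theorem \ref{complexes}), and both use Lemma \ref{naf} / Corollary \ref{cohomologygeneratedbybottomdegree} as the bridge between generation over the bottom degree and the condition that the $f_i$ extend to a regular system of parameters. Where you genuinely diverge is in the last step: the paper passes from the injectivity of $I/\mathfrak m_{\Lambda_E} I \to \mathfrak m_{\Lambda_E}/\mathfrak m_{\Lambda_E}^2$ to smoothness of $R_\rho^{\ordi}$ by invoking the Poitou--Tate sequence of Proposition \ref{PT} and the vanishing of $H^2_{\mathcal P_{\ordi}}\left(\Gal_{\Q,\Sigma},\Lie\check{\mathrm G}\right)$, i.e.\ the Galois-cohomological smoothness criterion of Proposition \ref{deformationringsdimensions}, whereas you observe that once $R_\rho^{\ordi}$ is literally presented as $\Lambda_E/(f_1,\ldots,f_{l_0})$, regularity of the quotient is equivalent by pure commutative algebra to the $f_i$ being part of a regular system of parameters. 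Your route is more elementary and self-contained for the bare statement of Theorem \ref{main}; what it gives up is the explicit identification of the obstruction with the Galois cohomology group $H^2_{\mathcal P_{\ordi}}$ (equivalently the dual Selmer group), which is the content the paper wants on record for Theorem \ref{dimensionality} and the comparison with \cite{HT}. One point to keep explicit: Theorem \ref{complexes} is stated under Conjecture \ref{dimensionconj} (on the complex $F_{\mathfrak m}^{\bullet}$) rather than Conjecture \ref{dimensionconjectureforring} (on the ring), so in part (1) you must note, as you implicitly do via the faithfulness of $\mathbb T^S_{\ordi}$ on $F_{\mathfrak m}^{\bullet}$, that Hypothesis \ref{R=T} transports the dimension hypothesis from the ring to the complex; the paper makes the same silent identification, so this is a matter of bookkeeping rather than a gap.
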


\begin{proof}  The proof is close to the proof of Corollary \ref{cohomologygeneratedbybottomdegree}.

 We first prove (1). Thus assume Hypothesis \ref{R=T} and Conjecture \ref{dimensionconjectureforring}. Then we know by Lemma \ref{surjectivity} and  Lemma \ref{naf}  that the map  $\Ext_{\Lambda_E}^0 \left( R_{\rho}^{\ordi}, E \right) \times \Ext^i_{\Lambda_E} (E, E) \lra \Ext_{\Lambda}^i \left( R_{\rho}^{\ordi}, E \right)$ is   surjective for all $i \geq 0$ (equivalently for $i=1$)  if and only if the map $I/\mathfrak m_{\Lambda_E}I \lra \mathfrak m_{\Lambda_E}/\mathfrak m_{\Lambda_E}^2$ considered earlier  is injective.  As in proof of Theorem \ref{dimensionality}, by the Poitou-Tate sequence in Proposition \ref{PT} this   is equivalent to the vanishing of  $H^2_{\mathcal P_{\ordi}} \left( \Gal_{\Q, \Sigma}, \Lie \check {\mathrm G} \right)$ which in turn  is equivalent to the ordinary deformation ring $R_\rho^\ordi$  being smooth of dimension $r-\ell_0$.   Furthermore the surjectivity of the map $ \Ext_{\Lambda_E}^0 \left( R_{\rho}^{\ordi}, E \right) \times \Ext^i_{\Lambda_E} (E, E) \lra \Ext_{\Lambda}^i \left( R_{\rho}^{\ordi}, E \right)$  for all $i \geq 0$  is equivalent to   the  derived Hecke action of $\Hom (T(\Zp)_p, E)$  generating the cohomology $H^*( Y, E)_{\mathfrak m}$ over the bottom degree.  This finishes the proof of (1).

   To prove part (2) we  observe
that under  Conjecture \ref{smoothnessconj} we know Hypothesis \ref{R=T} using Lemma \ref{obv}.

\end{proof}

\begin{rem}
One would like to ideally prove   that  if  the derived Hecke action of $\Hom (T(\Zp)_p,E)$  generates the cohomology $H^*(\Gamma, E)_\pi$ over the bottom degree,   then  $R_\rho^\ordi$ is  smooth of the expected dimension $r-l_0$ without additionally  assuming the dimension conjecture \ref{dimensionconjectureforring}.
We would know this if we knew {\em a priori}  (without assuming the dimension conjecture \ref{dimensionconjectureforring})  that the derived diamond action of $H^1 \left( \mathrm T(\Zp)_p , E \right) = \Hom \left( \mathrm T(\Zp)_p, E \right)$ on $H^* \left( Y_{1,1}, E \right)_{\mathfrak m}$ factors through the map $\Phi$ to an action of $H^1_f \left( \Z \left[ \frac{1}{S} \right], \left( \Lie \check{\mathrm G} \right)^* (1) \right)^{\vee}$. We will return to this in the forthcoming work \cite{AKR}.

\end{rem}

\subsection{Comparison to \cite{akshay}}

We sketch an alternative proof of Theorem \ref{dimensionality} (2) along the lines of the arguments in \cite{akshay} of the following proposition (which is slightly weaker than the results above) which we believe gives a useful comparison of the various arguments used here and in \cite{akshay}.

\begin{prop}

Assume that $R_\rho^\ordi$ is  smooth of   dimension $r-l_0$.  Then  the derived Hecke action $H^1 \left( \mathrm T(\Zp)_p , E \right) = \Hom \left( \mathrm T(\Zp)_p, E \right)$ on $H^* \left( Y_{1,1}, E \right)_{\mathfrak m}$ factors through the map $\Phi$ to an action of $H^1_f \left( \Z \left[ \frac{1}{S} \right], \left( \Lie \check{\mathrm G} \right)^* (1) \right)^{\vee}$, and  generates the cohomology $H^*( Y_{1,1} , E )_{\mathfrak m}$ over the bottom degree.

\end{prop}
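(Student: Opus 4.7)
The plan is to combine the $R=\mathbb{T}$ isomorphism that follows from smoothness with the Koszul-theoretic interpretation of the derived diamond action developed for Corollary \ref{cohomologygeneratedbybottomdegree}, and then to identify the resulting quotient map with $\Phi$ via the Poitou-Tate sequence of Proposition \ref{PT}. The argument therefore proceeds along exactly the same backbone as in Venkatesh's treatment, with Lemma \ref{obv} replacing Taylor-Wiles patching to produce an $R=\mathbb{T}$ statement.

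First I would invoke Lemma \ref{obv}: smoothness of $R_\rho^{\ordi}$ of dimension $r-l_0$ gives Hypothesis \ref{R=T}, and hence by Lemma \ref{surjectivity} the map $\psi:\Lambda_E\twoheadrightarrow R_\rho^{\ordi}$ is surjective and $R_\rho^{\crys}\cong E$. Since $\Lambda_E$ and $R_\rho^{\ordi}$ are both regular local of dimensions $r$ and $r-l_0$, the kernel $I=\ker\psi$ is generated by a regular sequence $f_1,\dots,f_{l_0}$ that extends to a regular system of parameters of $\Lambda_E$. As in the proof of Theorem \ref{complexes}, the minimal complex $F_{\mathfrak m}^{\bullet}$ is then quasi-isomorphic to $\Lambda_E/(f_1,\dots,f_{l_0})\cong R_\rho^{\ordi}$, and the chain of isomorphisms in subsection \ref{invertingpsubsec} identifies $H^*(Y_{1,1},E)_{\mathfrak m}$ with $\Ext^*_{\Lambda_E}(R_\rho^{\ordi},E)$ in a way that carries the derived diamond action over to the Yoneda action of $\Ext^*_{\Lambda_E}(E,E)\cong\bigwedge^*\Hom(\mathrm T(\Zp)_p,E)$.

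Next I would apply Lemma \ref{naf}: because $f_1,\dots,f_{l_0}$ is part of a regular system of parameters, the natural map $\bigwedge^*\Hom(\mathfrak m_{\Lambda_E}/\mathfrak m_{\Lambda_E}^2,E)\twoheadrightarrow\bigwedge^*\Hom(I/\mathfrak m_{\Lambda_E}I,E)$ is surjective, the Yoneda action factors through it, and under the Koszul identification $\Ext^*_{\Lambda_E}(R_\rho^{\ordi},E)\cong\bigwedge^*\Hom(I/\mathfrak m_{\Lambda_E}I,E)$ this factored action is simply exterior multiplication. This already furnishes the generation of $H^*(Y_{1,1},E)_{\mathfrak m}$ over $H^{q_0}(Y_{1,1},E)_{\mathfrak m}$ by the derived diamond operators.

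Finally I would identify this abstract quotient map with $\Phi$. Specializing Proposition \ref{PT} using $H^1_f(\Z[\frac{1}{S}],\Lie\check{\mathrm G})=0$ (from $R_\rho^{\crys}\cong E$) and $H^1_{\ordi^\perp}(\Z[\frac{1}{S}],(\Lie\check{\mathrm G})^*(1))=0$ (equivalent to smoothness of $R_\rho^{\ordi}$) collapses the Poitou-Tate sequence to a short exact sequence, whose $E$-dual is the tangent space sequence
\begin{equation*}
0\to \mathfrak t_{R_\rho^{\ordi}}\to \mathfrak t_{\Lambda_E}\to \Hom(I/\mathfrak m_{\Lambda_E}I,E)\to 0
\end{equation*}
attached to $\psi$; under $\mathfrak t_{\Lambda_E}\cong\Hom(\mathrm T(\Zp)_p,E)$ and Poitou-Tate the rightmost arrow is precisely $\Phi$, and at the same time it coincides with the quotient map produced by the Koszul argument. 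The main technical point will be to verify carefully that the Poitou-Tate boundary $\psi^{PT}$ of Proposition \ref{PT} dualizes to exactly the tangent-space quotient coming from $\psi$; this traces back to the construction of $\psi$ from $(\widetilde{\rho_{\ordi}})^{\ab}$ via local class field theory in equation \ref{Hidaalgtodeformationring}, and should follow from the discussion immediately after Lemma \ref{keylemma}.
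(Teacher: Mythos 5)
Your argument is correct, but it does not follow the route the paper takes for this particular proposition. The proposition is explicitly offered as an \emph{alternative} proof in the style of \cite{akshay}: the paper takes the generation statement as already established, uses Theorem \ref{cokerneldualSelmer} to identify the kernel of $\Phi$ with $\im(\psi^{\vee}) = \psi^{\vee}\bigl(\mathfrak t_{R_{\rho}^{\ordi}}\bigr)$, and then shows directly that this image acts trivially on $H^*\bigl(\Hom_{\Lambda_E}((R_{\rho}^{\ordi})^{\oplus m},E)\bigr)$ by choosing compatible coordinates so that $\psi$ kills the last $l_0$ variables and running the explicit Koszul-complex computation of Venkatesh's appendix B. You instead reassemble the statement from Lemma \ref{obv} (smoothness gives $R=\mathbb T$), Lemma \ref{naf} (the Yoneda action factors through $\bigwedge^*\Hom(I/\mathfrak m_{\Lambda_E}I,E)$ and is surjective onto each graded piece), and the Poitou--Tate dualization of the tangent-space sequence used in the proof of Theorem \ref{dimensionality} to recognize the quotient map as $\Phi$ --- in effect you deduce the proposition from Theorems \ref{dimensionality} and \ref{main}(2) rather than reproving it \`a la Venkatesh. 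Both routes rest on the same underlying fact (the $\Ext$-action of $\mathfrak t_{\Lambda_E}$ on $\Ext^*_{\Lambda_E}(R_{\rho}^{\ordi},E)$ kills exactly the functionals vanishing on $I/\mathfrak m_{\Lambda_E}I$, and that kernel is the global ordinary tangent space); the paper's version makes the vanishing explicit via coordinates, while yours outsources it to the abstract Lemma \ref{naf} and to the Hansen--Thorne identification, which you rightly flag as the point needing care. Two minor caveats: your quasi-isomorphism $F_{\mathfrak m}^{\bullet}\sim\Lambda_E/I\cong R_{\rho}^{\ordi}$ silently assumes multiplicity one, whereas the paper's proof works with $H^*(F_{\mathfrak m}^{\bullet})\cong (R_{\rho}^{\ordi})^{\oplus m}$ for arbitrary $m$; and you should note that the concentration of $F_{\mathfrak m}^{\bullet}$ in a single degree (needed before Lemma \ref{naf} applies) comes from the Calegari--Geraghty Lemma \ref{CGlemma} once the dimension count $\dim_{\Lambda_E}H^*(F_{\mathfrak m}^{\bullet})=r-l_0$ is supplied by $R=\mathbb T$ and smoothness.
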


\begin{proof}
  We have already proved  in  \S \ref{generationofcohomologysection} the last part of the proposition and need only prove that the derived Hecke action factors through the map $\Phi$ of Lemma \ref{keylemma}.
  
Theorem \ref{cokerneldualSelmer} gives us a surjection \[ \Hom \left( \mathrm T(\Zp)_p, E \right) \twoheadrightarrow H^1_f \left( \Z \left[ \frac{1}{S} \right], \left( \Lie \check {\mathrm G} \right)^* (1) \right)^{\vee} \] with kernel $H^1_{\ordi} \left( \Z \left[ \frac{1}{S} \right], \Lie \check {\mathrm G} \right)$, so we have to prove that the image under $\psi^{\vee}$ of the tangent space $\mathfrak t_{R^{\ordi}_{\rho}}$ acts trivially on $H^* \left( Y_{1,1}, E \right)_{\mathfrak m}$.

Recall from section \ref{generationofcohomologysection} that the cohomology $H^* \left( Y_{1,1}, E \right)_{\mathfrak m}$ is computed by $H^* \left( \Hom_{\Lambda_E} \left( F_{ \mathfrak m}^{\bullet}, E \right) \right)$.
On the other hand, we have shown that \[ F_{ \mathfrak m}^{\bullet}  \sim \Lambda_E / I \] where $I$ is generated by a system of parameters of length equal to the defect $l_0$.

An application of the Calegari-Geraghty lemma \ref{CGlemma} as in section \ref{generationofcohomologysection} gives then that $H^* \left( F^{\bullet}_{\mathfrak m} \right)$ is a finite free $R_{\rho}^{\ordi}$-module - the rank $m$ is expected to be $1$ if classical multiplicity one results hold, but the rest of the proof holds independently of $m$.
In particular, \[ H^* \left( Y_{1,1},  E \right)_{\mathfrak m} \cong H^* \left( \Hom_{\Lambda_E} \left( F_{ \mathfrak m}^{\bullet} , E \right) \right) \cong H^* \left( \Hom_{\Lambda_E} \left( \left( R_{\rho}^{\ordi} \right)^{\oplus m} , E \right) \right), \] and the derived diamond actions of \[ \Hom \left( \mathrm T(\Zp)_p, E \right) \cong H^* \left( \Hom_{\Lambda_E} \left( E, E \right) \right) \] is the obvious one, by post compositions of homomorphisms:
\begin{equation} \label{derivedactiononExt} H^i \left( \Hom_{\Lambda_E} \left( \left( R_{\rho}^{\ordi} \right)^{\oplus m} , E \right) \right) \times H^j \left( \Hom_{\Lambda_E} \left( E, E \right) \right) \lra H^{i+j} \left( \Hom_{\Lambda_E} \left( \left( R_{\rho}^{\ordi} \right)^{\oplus m} , E \right) \right). \end{equation}

Since $R_{\rho}^{\ordi}$ is a power series ring over $E$ of dimension $r - l_0$, we can compatibily choose coordinates on $\Lambda_E$ and $R_{\rho}^{\ordi}$ so that the surjection $\psi$ kills the last $l_0$ coordinates (see lemma 7.4 of \cite{akshay}): \[ \psi: \Lambda_E \cong \Zp \left[ \left[ X_1, \ldots, X_r \right] \right] \otimes_{\Zp} E \lra \Zp \left[ \left[ Y_1, \ldots, Y_{r-l_0} \right] \right] \otimes_{\Zp} E \cong R_{\rho}^{\ordi} \qquad X_i \mapsto \left\{ \begin{array}{cc} Y_i & \textnormal{ if } i \le r - l_0 \\ 0 & \textnormal{ if } i > r -l_0 \end{array} \right. \]
The action in formula \ref{derivedactiononExt} can then be computed explicitly using the presentation of the $\Lambda_E$-module $R_{\rho}^{\ordi}$ afforded by $\psi$: the Koszul complex argument of appendix B of \cite{akshay} show that the image of $\mathfrak t_{R_{\rho}^{\ordi}}$ inside $\mathfrak t_{\Lambda_E} \cong H^* \left( \Hom_{\Lambda_E} \left( E, E \right) \right)$ acts trivially on $H^i \left( \Hom_{\Lambda_E} \left( R_{\rho}^{\ordi} , E \right) \right)$, which is precisely the statement of the theorem.
\end{proof}

\appendix

\section{Comparison with the work of Hansen-Thorne}\label{HaTho}

Hansen and Thorne construct in \cite{HT} an action of a $\mathrm{Tor}$-group which decreases the cohomological degree, and for them to construct this action it suffices that $F_{ \mathfrak m}^{\bullet}$ is quasi-isomorphic to the quotient of $\Lambda_E$ by a regular sequence (in this subsection $\mathfrak m$ is a maximal ideal of a certain Hecke algebra corresponding to $\pi$ as in section 4 of \cite{HT}).

The actions constructed in this paper and \cite{HT} are dual in the following sense, and their contrasting properties may be summarized qualitatively as follows:

1.  In \cite{HT}, a `` Tor action'' is constructed   assuming a certain  dimension conjecture and : (i) assuming the dimension conjecture, this gives   a big ``surjective''  action of  the exterior algebra of a certain vector space $V_0$ over $E$ of dimension $l_0$ on $H^*(\Gamma,\Q_P)_\pi$, and (ii)  $V_0$ can be identified with a  dual Selmer group  assuming smoothness
statements like the one above.

2.  In the present paper, the derived Hecke ``Ext action'' is constructed unconditionally, and  : (i) it  factors through dual of  dual Selmer assuming dimension conjectures (and in forthcoming work we show this without such an assumption), and (ii) it gives a  big ``surjective'' action assuming smoothness.

In fact, we expect the two actions to be compatible in the sense of Galatius and Venkatesh \cite{GV}, section 15. To make such a comparison  of the two action we assume that  $R^{\ordi}_{\rho}$ is smooth. Hansen and Thorne construct a degree-lowering action of $H^1_f \left( \Z \left[ \frac{1}{S} \right], \Lie \check {\mathrm G} \right)$ on $H^* \left( Y(K), E \right)_{\mathfrak m}$, while  we construct a degree-raising action of the dual space $H^1_f \left( \Z \left[ \frac{1}{S} \right], \Lie \check {\mathrm G} \right)^{\vee}$ on the same cohomology group $H^* \left( Y(K), E \right)_{\mathfrak m}$.
\begin{conj} Our action is compatible with that of Hansen and Thorne in the sense of \cite{GV}: for every $v \in H^1_f \left( \Z \left[ \frac{1}{S} \right], \Lie \check {\mathrm G} \right)^{\vee}$, $v^* \in H^1_f \left( \Z \left[ \frac{1}{S} \right], \Lie \check {\mathrm G} \right)$ and $f \in H^* \left( Y(K), E \right)_{\mathfrak m}$ we have \[ v^*. \left( v.f \right) + v. \left( v^*.f \right) = \langle v^*, v \rangle \cdot f. \]
\end{conj}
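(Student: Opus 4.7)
The plan is to carry out the comparison in the explicit Koszul model made available by the smoothness hypothesis on $R_\rho^{\ordi}$. By Theorem \ref{main} together with Theorem \ref{complexes}, the complex $F_{\mathfrak m}^{\bullet}$ is then quasi-isomorphic to $R_\rho^{\ordi} \cong \Lambda_E/I$, where $I = (f_1, \ldots, f_{l_0})$ is generated by part of a regular system of parameters of $\Lambda_E$. Replacing $F_{\mathfrak m}^{\bullet}$ by the Koszul resolution $K^{\bullet}$ on $f_1, \ldots, f_{l_0}$ gives an identification
\[
H^{*}(Y_{1,1}, E)_{\mathfrak m} \;\cong\; \Ext^{*}_{\Lambda_E}(R_\rho^{\ordi}, E) \;\cong\; \bigwedge\nolimits^{*} V^{*},
\]
where $V := I/\mathfrak m_{\Lambda_E} I$ is an $E$-vector space of dimension $l_0$ (we treat the multiplicity one case first; the general case follows by tensoring with the multiplicity module of Theorem \ref{main}).

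The first step is to identify our derived diamond action in this model. As in the proofs of Lemma \ref{naf} and Theorem \ref{dimensionality}, the action of $v \in \Hom(\mathrm T(\Zp)_p, E) \cong (\mathfrak m_{\Lambda_E}/\mathfrak m_{\Lambda_E}^{2})^{\vee}$ on $\bigwedge^{*} V^{*}$ factors through the surjection $(\mathfrak m_{\Lambda_E}/\mathfrak m_{\Lambda_E}^{2})^{\vee} \twoheadrightarrow V^{*}$, which is $E$-dual to the injection $V \hookrightarrow \mathfrak m_{\Lambda_E}/\mathfrak m_{\Lambda_E}^{2}$ (well-defined precisely because $(f_i)$ is part of a regular system of parameters), and is given by left exterior multiplication $L_v \colon \omega \mapsto \bar v \wedge \omega$ where $\bar v$ denotes the image of $v$ in $V^{*}$. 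By Theorem \ref{cokerneldualSelmer} and Lemma \ref{keylemma}, the image space $V^{*}$ is canonically identified with the target of $\Phi$, i.e.\ with the dual of the appropriate dual Selmer group.

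The second step, which I expect to be the main obstacle, is to identify Hansen--Thorne's degree-lowering $\Tor$-action in the same Koszul model. Their construction (cf.\ Theorem 4.9 of \cite{HT}) realizes $H^{*}(Y(K), E)_{\mathfrak m}$ as a $\Tor$-module over $R_\rho^{\ordi}$ and uses a Selmer parameter --- which, under our Poitou--Tate identifications from Proposition \ref{PT} and the duality in Theorem \ref{cokerneldualSelmer}, matches the space $V$ dual to $V^{*}$ above. One must trace their $\Tor$-action through the explicit Koszul resolution $K^{\bullet}$ and check that an element $v^{*} \in V$ acts as the interior product $\iota_{v^{*}} \colon \bigwedge^{k} V^{*} \to \bigwedge^{k-1} V^{*}$. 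This is exactly the pattern of comparison carried out in \S 15 of \cite{GV}, and the argument should adapt essentially verbatim to our ordinary-at-$p$ setting once the deformation-theoretic inputs are matched; the real work lies in verifying that the derived cap-product constructed by Hansen--Thorne becomes contraction against the Koszul syzygies, with the correct sign conventions and normalizations.

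Once both actions have been presented as $L_{\bar v}$ and $\iota_{\overline{v^{*}}}$ on $\bigwedge^{*} V^{*}$, the classical Clifford identity on the exterior algebra
\[
L_{\bar v} \circ \iota_{\overline{v^{*}}} + \iota_{\overline{v^{*}}} \circ L_{\bar v} \;=\; \langle \overline{v^{*}}, \bar v \rangle \cdot \mathrm{id}_{\bigwedge^{*} V^{*}}
\]
yields the conjectured anticommutation relation, with the scalar $\langle v^{*}, v \rangle$ matching the natural pairing in the statement via the identifications of Steps 1 and 2. The final bookkeeping --- checking that the pairing $\langle v^{*}, v \rangle$ produced by the Clifford computation really agrees with the evaluation pairing between $H^1_f(\Z[\tfrac{1}{S}], \Lie \check{\mathrm G})^{\vee}$ and $H^1_f(\Z[\tfrac{1}{S}], \Lie \check{\mathrm G})$ rather than differing by a sign or a duality twist --- is the last subtlety, and should follow from the functoriality of the Poitou--Tate sequence used throughout Section \ref{Galoisrepforautrep}.
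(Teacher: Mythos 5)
This statement is labeled a \emph{conjecture} in the paper, and the paper offers no proof of it: the only indication given is the single remark that ``this can be proved along the lines of the proof of Theorem 15.2 in \cite{GV}.'' Your proposal is precisely a fleshed-out version of that strategy, so in terms of approach you are aligned with what the authors intend: pass to the Koszul model $F^{\bullet}_{\mathfrak m}\sim \Lambda_E/I$ afforded by smoothness of $R^{\ordi}_\rho$, identify $H^*(Y_{1,1},E)_{\mathfrak m}$ with $\bigwedge^* V^*$ for $V=I/\mathfrak m_{\Lambda_E}I$, realize the derived diamond action as left exterior multiplication (this part genuinely is in the paper, via Lemma \ref{naf} and Theorems \ref{complexes}, \ref{dimensionality}), realize the Hansen--Thorne action as contraction, and conclude by the Clifford identity.

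The gap is the one you yourself flag: the identification of the Hansen--Thorne degree-lowering $\Tor$-action with the interior product $\iota_{v^*}$ on $\bigwedge^* V^*$, including the matching of their Selmer parameter with $V$ under the Poitou--Tate identifications of Proposition \ref{PT} and Theorem \ref{cokerneldualSelmer}, and the sign/normalization bookkeeping. That identification is not a routine verification to be deferred --- it is the entire mathematical content of the statement, which is exactly why the authors leave it as a conjecture rather than a theorem. As written, your argument establishes the (already known) exterior-multiplication description of one of the two actions and then \emph{assumes} the analogous description of the other; the Clifford identity is then immediate but carries no independent force. A related point worth checking before attempting to close the gap: the pairing in the statement is between $H^1_f(\Z[\tfrac{1}{S}],\Lie\check{\mathrm G})^{\vee}$ and $H^1_f(\Z[\tfrac{1}{S}],\Lie\check{\mathrm G})$, whereas the degree-raising action constructed in the body of the paper factors through the dual of the \emph{dual} Selmer group $H^1_f(\Z[\tfrac{1}{S}],(\Lie\check{\mathrm G})^*(1))^{\vee}$; reconciling these two spaces (they have the same dimension $l_0$ under the standing hypotheses, but are not literally the same object) is part of the bookkeeping your Step 2 would have to make precise.
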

This can be proved along the lines of the proof of Theorem 15.2 in \cite{GV}.
As \cite{GV} mentions, when $H^* \left( Y(K), E \right)_{\mathfrak m}$ is generated by $\bigwedge^* H^1_f \left( \Z \left[ \frac{1}{S} \right], \Lie \check {\mathrm G} \right)^{\vee}$ over $H^{q_0} \left( Y(K), E \right)_{\mathfrak m}$ (as in theorem \ref{cohomologygeneratedbybottomdegree}), a formal consequence of this compatibility is that the action of $H^1_f \left( \Z \left[ \frac{1}{S} \right], \Lie \check {\mathrm G} \right)$ defined by \cite{HT} is uniquely determined.
In other words, the derived Hecke action and the action defined in \cite{HT}  determine uniquely one another.

\section{A rationality conjecture}

We make the analog of the rationality conjectures of \cite{akshay} for the action of the dual of the dual Selmer group on cohomology of arithmetic groups that we have defined in the previous sections.  
We have  an isomorphism \[ \frac{H^1 \left( \Qp, \Lie \check {\mathrm B} \right)}{H^1_f \left( \Qp, \Lie \check {\mathrm B} \right)} \lra \Hom \left( \Zp^*, \Lie \check{\mathrm T} \right). \]
Now the embedding $ \Lie \check {\mathrm B} \hookrightarrow  \Lie \check {\mathrm G}$ gives an injection of cohomology groups \[ \frac{H^1 \left( \Qp, \Lie \check {\mathrm B} \right)}{H^1_f \left( \Qp, \Lie \check {\mathrm B} \right)} \hookrightarrow \frac{H^1 \left( \Qp, \Lie \check {\mathrm G} \right)}{H^1_f \left( \Qp, \Lie \check {\mathrm G} \right)}. \] 

Tate duality yields that $\frac{H^1 \left( \Qp, \Lie \check {\mathrm G} \right)}{H^1_f \left( \Qp, \Lie \check {\mathrm G} \right)} \cong H^1_f \left( \Qp,  \left( \Lie \check {\mathrm G} \right)^* (1) \right)$.

The global dual Selmer group was defined in equation \ref{dualselmercrystalline}, and we have a restriction map \[ \res{\Gal_{\Q}}{\Gal_{\Qp}} H^1_f \left( \Z \left[ \frac{1}{S} \right],  \left( \Lie \check {\mathrm G} \right)^* (1) \right) \lra H^1_f \left( \Qp,  \left( \Lie \check {\mathrm G} \right)^* (1) \right) \] inducing a map between $\Qp$-duals: \[ H^1_f \left( \Qp,  \left( \Lie \check {\mathrm G} \right)^* (1) \right)^{\vee} \lra H^1_f \left( \Z \left[ \frac{1}{S} \right],  \left( \Lie \check {\mathrm G} \right)^* (1) \right)^{\vee}. \]

As explained in \cite{akshay}, section 1.2, to the coadjoint representation $\left( \Lie \check {\mathrm G} \right)^*$ one can conjecturally associate a weight zero Chow motive $M_{\coad}$ over $\Q$, with the property that \[ H^*_{\mathrm{et}} \left( M_{\coad} \times_{\Q} \overline \Q, E \right) \cong H^0_{\mathrm{et}} \left( M_{\coad} \times_{\Q} \overline \Q, E \right) \cong \left( \Lie \check {\mathrm G} \right)^*. \]
After Voevodsky and others, one has comparison maps from motivic cohomology to \`etale cohomology - the one we are interested in is: \[ \mathrm{reg}: H^1_{\mathcal M} \left( M_{\coad}, \Q(1) \right) \otimes_{\Q} E \lra H^1 \left( \Q, \left( \Lie \check {\mathrm G} \right)^* (1) \right). \]
In \cite{scholl}, Scholl defines a subspace of `integral classes' inside motivic cohomology (see in particular theorem 1.1.6 of loc cit.).
The above regulator map is then expected to map the integral classes inside the crystalline Selmer subgroup, and moreover the map \[ \mathrm{reg}: H^1_{\mathcal M} \left( (M_{\coad})_{\Z}, \Q(1) \right) \otimes_{\Q} E \lra H^1_f \left( \Z \left[ \frac{1}{S} \right], \left( \Lie \check {\mathrm G} \right)^* (1) \right) \] is conjecturally an isomorphism (see \cite{BK2} conjecture 5.3(ii)).

Taking the $\Qp$-linear duals we obtain an isomorphism: \[ \mathrm{reg}^{\vee} : H^1_f \left( \Z \left[ \frac{1}{S} \right], \left( \Lie \check {\mathrm G} \right)^* (1) \right)^{\vee} \lra \left( H^1_M \left( (M_{\coad})_{\Z}, \Q (1) \right) \otimes_{\Q} E \right)^{\vee} \cong \Hom_{\Q} \left( H^1_M \left( (M_{\coad})_{\Z}, \Q (1) \right), E \right). \]
Here's a diagram with all the maps defined so far: \begin{displaymath} \label{BIGMAP} \xymatrix{ H^1_f \left( \Qp, \left( \Lie \check {\mathrm G} \right)^* (1) \right)^{\vee} \ar[d]_{\mathrm{res}^{\vee}} & \frac{H^1 \left( \Qp, \Lie \check {\mathrm G}  \right)}{H^1_f \left( \Qp, \Lie \check {\mathrm G} \right)} \ar[l]_{\cong} & \frac{H^1 \left( \Qp, \Lie \check {\mathrm B} \right)}{H^1_f \left( \Qp, \Lie \check {\mathrm B} \right)} \ar[r]^{\cong} \ar@{_{(}->}[l] & \Hom \left( \mathrm T(\Zp)_p, E \right)  \\ H^1_f \left( \Z \left[ \frac{1}{S} \right], \left( \Lie \check {\mathrm G} \right)^* (1) \right)^{\vee} \ar[r]^{\mathrm{reg}^{\vee}}_{\cong} & \left( H^1_M \left( (M_{\coad})_{\Z}, \Q (1) \right) \otimes_{\Q} E \right)^{\vee} & &} \end{displaymath}
We obtain thus a map \[ \Hom \left( \mathrm T(\Zp)_p, E \right) \lra \left( H^1_M \left( (M_{\coad})_{\Z}, \Q (1) \right) \otimes_{\Q} E \right)^{\vee} \cong \Hom_{\Q} \left( H^1_M \left( (M_{\coad})_{\Z}, \Q (1) \right), \Q \right) \otimes_{\Q} E. \]
\begin{conj} \label{rationalactionconj}
The action of the derived Hecke algebra on $H^* ( Y(K), E )$ factors through motivic cohomology according to the map above.

Moreover, the resulting action of $\Hom_{\Q} \left( H^1_{\mathcal M} \left( \left( M_{\coad} \right)_{\Z}, \Q(1) \right) , \Q \right) \otimes_{\Q} E$ on $H^* \left( Y(K), E \right) \cong H^* \left( Y(K), \Q \right) \otimes_{\Q} E$ comes from a rational action of $\Hom_{\Q} \left( H^1_{\mathcal M} \left( \left( M_{\coad} \right)_{\Z}, \Q(1) \right) , \Q \right)$ on $H^* \left( Y(K), \Q \right)$.
\end{conj}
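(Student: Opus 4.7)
The plan is to reduce Conjecture \ref{rationalactionconj} to the previously established factorization of the derived Hecke action through dual Selmer (Theorem \ref{dimensionality} or Theorem \ref{main}), and then upgrade this to a motivic factorization using the Beilinson--Bloch--Kato regulator isomorphism invoked just before the conjecture. First I would assume the smoothness hypothesis so that the action of $\bigwedge^{*}\Hom(\mathrm T(\Z_p)_p,E)$ on $H^{*}(Y_{1,1},E)_\pi$ factors through $\bigwedge^{*}H^{1}_{f}(\Z[1/S],(\Lie\check{\mathrm G})^{*}(1))^{\vee}$ via $\bigwedge^{*}\Phi$. Applying the dual regulator $\mathrm{reg}^{\vee}$, which under \cite{BK2} Conjecture 5.3(ii) is an isomorphism, produces the desired factorization through $\Hom_{\Q}(H^{1}_{\mathcal M}((M_{\coad})_{\Z},\Q(1)),\Q)\otimes_{\Q} E$. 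This gives the first assertion of the conjecture conditionally on the standard conjectures that are already in force.

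The second assertion --- that the action descends to an honest $\Q$-linear action of $\Hom_{\Q}(H^{1}_{\mathcal M}((M_{\coad})_{\Z},\Q(1)),\Q)$ on $H^{*}(Y(K),\Q)_\pi$ --- is the genuinely deep part. My approach would mirror the strategy of \cite{akshay} Section 7--8 and \cite{PV}: one chooses a $\Q$-structure on the cohomology coming from the Betti or Borel--Serre rational model of $Y(K)$, and aims to show that derived Hecke operators stabilize this $\Q$-structure after one passes from the $p$-adic side to the motivic side. The key mechanism would be to exhibit two independent incarnations of the action --- one from derived Hecke at $p$ (which lands in a $p$-adic Selmer group) and one from derived Hecke at auxiliary tame primes $\ell\ne p$ (which naturally comes with an $\ell$-adic description) --- and to show that they both descend from a single rational motivic cohomology class. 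Compatibility between these actions, in the spirit of a global reciprocity/independence-of-$\ell$ statement across all places, would then force the motivic class to be defined over $\Q$.

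Concretely, I would (i) extend the constructions of this paper and of \cite{akshay} to produce a joint action of derived Hecke operators at a cofinite set of places on $H^{*}(Y_{1,1},E)_\pi$; (ii) verify that at each place the action factors through the localization at that place of the dual of the global dual Selmer group, via $\Phi$ at $p$ and via the tame analog away from $p$; (iii) use Poitou--Tate global duality (Proposition \ref{PT}) to show these local factorizations glue to a single global class; and (iv) invoke the regulator isomorphism to identify this class with an element of $H^{1}_{\mathcal M}((M_{\coad})_{\Z},\Q(1))\otimes_{\Q}E$. Rationality of the resulting functional on motivic cohomology would be forced by the fact that it is characterized by values on a $\Q$-structure of the Betti cohomology.

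The main obstacle will be step (iv) --- proving the motivic class is actually rational, not merely $E$-valued. Even in the tame setting of \cite{akshay} this is a central open problem, and at $p$ it is arguably harder because the crystalline regulator and the $p$-adic Selmer picture are less well understood than their $\ell$-adic tame counterparts; in particular one cannot directly appeal to independence-of-$\ell$ arguments. A realistic intermediate target, modeled on \cite{GV} Section 15 and our compatibility conjecture with Hansen--Thorne in Appendix \ref{HaTho}, would be to first prove that the $p$-adic derived Hecke action and a (conjecturally rational) Venkatesh-style tame derived Hecke action are mutually adjoint under a rational pairing on cohomology; rationality of one would then transfer to the other, bootstrapping the rationality of the $p$-adic action from the tame case once the latter is established.
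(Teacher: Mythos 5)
This statement is a \emph{conjecture} in the paper, not a theorem: the authors do not prove it in general, and the only thing established in the text is a sanity check, namely a complete verification in the case where $\mathrm G = \mathrm T$ is an anisotropic torus. Your proposal, by contrast, is a strategy outline for the general case, and by your own admission its decisive step --- the rationality of the motivic functional in your step (iv) --- is open even in the tame setting of \cite{akshay}. So what you have written is a plausible research program, not a proof, and it cannot be accepted as one. Moreover, several of the intermediate steps you treat as available are themselves conditional or not constructed anywhere: the factorization of the $p$-adic derived diamond action through the dual of the dual Selmer group requires Hypothesis \ref{R=T} together with Conjecture \ref{dimensionconjectureforring} (Theorem \ref{dimensionality}) or smoothness; the identification $\mathrm{reg}^{\vee}$ is the Bloch--Kato conjecture 5.3(ii), not a theorem; and the ``joint action at a cofinite set of places glued by Poitou--Tate'' in your steps (i)--(iii) does not exist in the literature --- the tame and $p$-adic derived Hecke actions have never been shown to arise from a single global class, and this is precisely the content of the conjecture rather than a tool for proving it.

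What the paper actually does, and what your proposal omits entirely, is the abelian verification: for an anisotropic torus the arithmetic manifold is a finite union of quotients $\mathcal S / \Delta$ with $\Delta \subset \mathrm T(\mathcal O_F)$ a congruence subgroup, the coadjoint motive has $p$-adic realization $X_*(\mathrm T) \otimes_{\Z} \Q_p(1)$, the Kummer sequence gives $H^1_f(F, \Q_p(1)) \cong \mathcal O_F^* \otimes_{\Z} \Q_p$, the integral motivic cohomology is $\Delta \otimes_{\Z} \Q$, and the entire diagram preceding the conjecture collapses to the restriction map $\Hom\left( \mathrm T\left( \mathcal O_{F_v} \right), \Q_p \right) \lra \Hom\left( \Delta, \Q_p \right)$. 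Since the derived diamond action on $H^*(\mathcal S/\Delta)$ is literally pullback of a homomorphism through $\Delta \hookrightarrow \mathrm T\left( \mathcal O_{F_v} \right)$ followed by cup product, the rationality is visible by inspection. If you want your write-up to carry any content beyond restating the conjecture's difficulty, you should at minimum carry out this torus case, where every object in the chain of maps can be computed explicitly and the rational structure is manifest.
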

As a sanity check, we prove this in the case that $\mathrm G$ is a torus.

\begin{prop} Let $\mathrm G = \mathrm T$ be an anisotropic torus, then the conjecture \ref{rationalactionconj} holds.
\end{prop}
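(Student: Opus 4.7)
The plan is to carry out the verification by explicit computation, leveraging the fact that for $\mathrm G = \mathrm T$ an anisotropic torus every object in play — the arithmetic manifold, the Galois representation, and the coadjoint motive — admits a very concrete description.

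First I would analyze the cohomology of $Y(K)$. By Dirichlet's unit theorem for algebraic tori, the torsion-free part $\mathrm T(\Z[1/S])_{\mathrm{tf}}$ of the $S$-integral points is a free abelian group of rank $l_0$, and $Y(K)$ identifies with a finite disjoint union of quotients of $\mathrm T(\R)/K_\infty$ by copies of this lattice. Each connected component is thus diffeomorphic to a real torus $(S^1)^{l_0}$, and on each component $H^*(Y(K), \Q)$ is canonically identified with $\bigwedge^* \Hom(\mathrm T(\Z[1/S])_{\mathrm{tf}}, \Q)$, matching the Franke/Borel dimensions $\binom{l_0}{i}$.

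Next I would unwind the derived diamond action. By Proposition \ref{topologicalderivedaction}, $\phi \in \Hom(\mathrm T(\Zp)_p, E)$ acts as cup product with the pullback of $\phi$ along the classifying map $Y(K) \to B\mathrm T(\Zp)_p$ coming from the Hida tower. For a torus, this classifying map factors naturally through $B\mathrm T(\Z[1/S])_{\mathrm{tf}}$ via the inclusion $\mathrm T(\Z[1/S])_{\mathrm{tf}} \hookrightarrow \mathrm T(\Zp) \twoheadrightarrow \mathrm T(\Zp)_p$, so the action of $\phi$ is cup product with the restriction $\phi|_{\mathrm T(\Z[1/S])_{\mathrm{tf}}} \in \Hom(\mathrm T(\Z[1/S])_{\mathrm{tf}}, E) = H^1(Y(K), \Q) \otimes E$.

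Third, I would identify the motivic side. Since $\check{\mathrm T}$ is abelian, $\Ad \rho_\pi$ is trivial, but the residual Galois action on $\Lie \check{\mathrm T}$ (coming from the fact that $\mathrm T$ is non-split) exhibits $(\Lie \check{\mathrm T})^* \cong X_*(\mathrm T) \otimes E$ as an Artin representation factoring through $\Gal(K_{\mathrm T}/\Q)$, where $K_{\mathrm T}$ is the splitting field of $\mathrm T$. Hence $M_{\coad}$ is the Artin motive over $\Q$ with \'etale realization $X_*(\mathrm T) \otimes \Q_\ell$, and Borel's computation of motivic cohomology for Artin motives gives
\[ H^1_{\mathcal M}((M_{\coad})_\Z, \Q(1)) \cong (X_*(\mathrm T) \otimes \OO_{K_{\mathrm T},S}^* \otimes \Q)^{\Gal(K_{\mathrm T}/\Q)} \cong \mathrm T(\Z[1/S]) \otimes \Q, \]
via the Beilinson regulator. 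A direct check using Kummer theory for algebraic tori (which identifies $H^1_f(\Z[1/S], (\Lie \check{\mathrm T})^*(1))$ with $\mathrm T(\Z[1/S]) \otimes \Qp$) together with local Tate duality shows that the composite $\mathrm{reg}^{\vee} \circ \Phi$ of Lemma \ref{keylemma} is precisely the dual of inclusion $\mathrm T(\Z[1/S])_{\mathrm{tf}} \hookrightarrow \mathrm T(\Zp)_p$, i.e., the restriction map $\phi \mapsto \phi|_{\mathrm T(\Z[1/S])_{\mathrm{tf}}}$.

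Combining paragraphs two and three, the derived diamond action factors through the conjectural composite to $\Hom_\Q(H^1_{\mathcal M}((M_{\coad})_\Z, \Q(1)), \Q) \otimes E$ exactly as predicted. Rationality is then automatic: cup product with a class in $H^1(Y(K), \Q) = \Hom(\mathrm T(\Z[1/S])_{\mathrm{tf}}, \Q)$ is, by construction, a rational operation on $H^*(Y(K), \Q)$, and the resulting action of $\Hom_\Q(H^1_{\mathcal M}((M_{\coad})_\Z, \Q(1)), \Q)$ is simply the $E$-linear extension of a rational one. The main technical obstacle is verifying the agreement between the regulator and $\Phi$ at the integral level, but this reduces to a standard comparison between the Beilinson regulator, Kummer theory for tori, and local Tate duality; no new ideas are required beyond carefully tracking the chain of identifications.
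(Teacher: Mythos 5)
Your argument is essentially the paper's (which in turn follows Venkatesh, \S 9.1): identify each component of $Y(K)$ with a compact torus whose fundamental group is an arithmetic lattice $\Delta$ in $\mathrm T$, observe that the derived diamond operator attached to $\phi \in \Hom(\mathrm T(\Zp)_p, E)$ acts by cup product with the restriction of $\phi$ to $\Delta$, identify the integral motivic cohomology $H^1_{\mathcal M}((M_{\coad})_{\Z}, \Q(1))$ with $\Delta \otimes_{\Z} \Q$, and conclude that the whole composite is the restriction map, so rationality is automatic. The one slip is your use of the $S$-integral points $\mathrm T(\Z[1/S])$: the fundamental group of a component of $Y(K)$ is a congruence subgroup of $\mathrm T(\Z)$ (resp.\ $\mathrm T(\OO_F)$), not of the $S$-integral points, and the latter group has rank strictly larger than $l_0$ once $S$ is nonempty; likewise the dual Selmer group $H^1_f\left(\Z\left[\frac{1}{S}\right], (\Lie \check{\mathrm T})^*(1)\right)$ --- despite the notation --- is cut out by local conditions at the places of $S$ that force it to be $\mathrm T(\OO_F) \otimes \Qp$ (global units, via the Kummer sequence), not $S$-units. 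Since you made the substitution consistently on both sides the shape of the argument survives, and replacing $\mathrm T(\Z[1/S])_{\mathrm{tf}}$ by the congruence subgroup $\Delta$ throughout yields exactly the paper's proof.
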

\begin{proof}
The proof follows almost verbatim section 9.1 of \cite{akshay}. For simplicity, we assume the coefficient field $E = \Qp$.
Let $F / \Q$ be a number field and let $\mathrm T$ be an anisotropic $F$-torus and fix a finite set of primes $S$ such that $\mathrm T$ admits a smooth $\OO_F \left[ \frac{1}{S} \right]$-model - which we also denote by $\mathrm T$.
The associated symmetric space is \[ \mathcal S = \mathrm T \left( F \otimes_{\Q} \R \right) / \textnormal{maximal compact} \] and for a good open compact subgroup $K \subset \mathrm T(\Af)$, the arithmetic manifold $Y(K)$ is a finite union of orbifolds $\mathcal S / \Delta$, where $\Delta \subset \mathrm T \left( \OO_F \right)$ is a congruence subgroup.

Our general setup assumes that $\mathrm T$ is split at any place $v$ above $p$, so in this case it coincides with its maximal, $F_v$-split subtorus, and the Satake homomorphism is the identity map between the same Hecke algebra $\mathcal H^* (G_v, K_v) \cong \mathcal H^* (T_v, T_v \cap K_v)$.

Any cohomological, cuspidal automorphic form for $\mathrm T$ factors through the identity component $\mathrm T^0 \left( F \otimes_{\Q} \R \right)$ - so that (as explained in \cite{akshay}, section 9.2) we are just considering finite order id\'ele class characters for $\mathrm T$.

As for the associated Galois representation $\rho: \Gal_F \lra \check {\mathrm T}(\Qp)$, we notice that the coadjoint representaion $\left( \Lie \check {\mathrm G} \right)^*$ is the trivial representation of $\Gal_F$ on $(\Lie \check {\mathrm T})^*$, because the coadjoint action of $\check {\mathrm T}$ on $(\Lie \check {\mathrm T})^*$ is trivial.
In particular, since \[ ( \Lie \check {\mathrm T})^* \cong \left( \Lie \Gm \otimes_{\Z} X_* ( \check {\mathrm T} ) \right)^* \cong X^* (\check {\mathrm T}) \otimes_{\Z} \Qp \cong X_* ( \mathrm T) \otimes_{\Z} \Qp \] we infer that the Tate twist of the coadjoint motive $M_{\coad}$ has $p$-adic realization \[ M_p= (M_{\coad})_p (1) \cong X_*( \mathrm T) \otimes_{\Z} \Qp (1). \]

As mentioned earlier, the Galois action on $\left( \Lie \check {\mathrm T} \right)^*$ is trivial, and upon applying the Tate twist we obtain $\left( \Lie \check {\mathrm T} \right)^* \cong X_*( \mathrm T) \otimes_{\Z} \Qp (1)$, where the Galois-action on the first tensor factor is the trivial one.

We have then \[ H^1_f \left( F, \left( \Lie \check {\mathrm T} \right)^* (1) \right) \cong H^1_f \left( F, X_*( \mathrm T) \otimes_{\Z} \Qp (1) \right) \cong H^1_f \left( F, \Qp(1) \right) \otimes_{\Z} X_*(\mathrm T), \] where we can pull out $X_*(\mathrm T)$ since the Galois action on it is trivial.

Now a standard computation with the Kummer sequence (see for example \cite{feng} or \cite{bellaiche}, proposition 2.12) gives that \[ H^1_f \left( F, \Qp(1) \right) \cong \OO_F^* \otimes_{\Z} \Qp \] so that \[ H^1_f \left( F, \Qp(1) \right) \otimes_{\Z} X_*(\mathrm T) \cong X_*(\mathrm T) \otimes_{\Z} \OO_F^* \otimes_{\Z} \Qp \cong \mathrm T \left( \OO_F \right) \otimes_{\Z} \Qp. \]

Again like in \cite{akshay}, the motivic cohomology $H^1_{\mathcal M} \left( M, \Q (1) \right)$ identifies with $\mathrm T (F) \otimes_{\Z} \Q$, and the integral classes are \[ H^1_{\mathcal M} \left( M_{\Z}, \Q(1) \right) \cong \Delta \otimes_{\Z} \Q \subset \mathrm T(F) \otimes_{\Z} \Q. \]
Moreover, in this case $\mathrm G = \mathrm B = \mathrm T$, so that the previous diagram reduces to \begin{displaymath} \label{BIGMAP2} \xymatrix{ H^1_f \left( F_v, \left( \Lie \check{\mathrm T} \right)^* (1) \right)^{\vee} \ar[d]_{\mathrm{res}^{\vee}} & \frac{H^1 \left( F_v, \Lie \check {\mathrm T} \right)}{H^1_f \left( F_v, \Lie \check{\mathrm T} \right)} \ar[r]^{\cong} \ar[l]_{\cong} & \Hom \left( \mathrm T \left( \OO_{F_v} \right), \Qp \right)  \\ \Hom_{\Qp} \left( T \left( \OO_F \right) \otimes_{\Z} \Qp , \Qp \right) \ar[r]^{\mathrm{reg}^{\vee}}_{\cong} & \Hom_{\Qp} \left( \Delta \otimes_{\Z} \Qp, \Qp \right) \ar[r]^{\cong} & \Hom \left( \Delta , \Qp \right) } \end{displaymath}
In other words, the map reduces to the restriction map $\Hom \left( \mathrm T \left( \OO_{F_v} \right) , \Qp \right) \stackrel{ \mathrm{res}^{\vee}}{\lra} \Hom \left( \Delta, \Qp \right)$.
Thus, to prove the claim it suffices to show that the derived diamond operators act on $H^* \left( Y(K) , \Qp \right)$ by pulling back via $\Delta \hookrightarrow \mathrm T \left( \OO_{F_v} \right)$.

This is described in \cite{akshay}, section 9.1, using the fact that the arithmetic manifold is a finite union of copies of $\mathcal S / \Delta$, so that the action of each derived diamond operator is obtained by pulling back the cohomology class through $\Delta \hookrightarrow \mathrm T \left( \OO_{F_v} \right)$, and then applying it to $H^* \left( \mathcal S / \Delta \right)$.
\end{proof}

\newpage

{\bf Address of authors:} {Department of Mathematics, UCLA, Los Angeles, CA 90095-1555,
  USA}

{\tt email:} {\tt shekhar@math.ucla.edu, niccronc@math.ucla.edu}

\end{document}